\pgfplotsset{compat = 1.18}
\numberwithin{equation}{section}
\numberwithin{figure}{section}
\theoremstyle{plain}
\newtheorem{theorem}{Theorem}[section]
\newtheorem{lemma}[theorem]{Lemma}
\newtheorem{proposition}[theorem]{Proposition}
\newtheorem{corollary}[theorem]{Corollary}
\newtheorem{definition}[theorem]{Definition}
\newtheorem{remark}[theorem]{Remark}
\newtheorem{assumption}{Assumption}
\newtheorem*{definition*}{Definition}
\newtheorem{assumptionPrime}{Assumption}
\newtheorem{openproblem}{Open Problem}
\newcommand{\X}{\mathbf{X}}
\newcommand{\x}{\mathbf{x}}
\newcommand{\eps}{\varepsilon}
\newcommand{\R}{\ensuremath{\mathbb{R}}}
\newcommand{\N}{\ensuremath{\mathbb{N}}}
\newcommand{\bbE}{\mathbb E}
\newcommand{\bbP}{\mathbb P}
\newcommand{\bbI}{\mathbb I}
\newcommand{\bbH}{\mathbb H}
\newcommand{\bbF}{\mathbb{F}}
\newcommand{\mP}{\mathcal{P}}
\newcommand{\mS}{\mathcal{S}}
\newcommand{\mX}{\mathcal{X}}
\newcommand{\mL}{\mathcal{L}}\newcommand{\mC}{\mathcal{C}}
\newcommand{\mH}{\mathcal{H}}
\newcommand{\mI}{\mathcal{I}}
\newcommand{\mF}{\mathcal{F}}
\newcommand{\bv}{\bar{v}}
\newcommand{\rt}{{0}}
 \newcommand{\TV}{{\rm TV}}
\newcommand{\entropy}{\mathcal{H}}
\newcommand{\kap}{\kappa}
\newcommand{\mue}{\mu^\eps}
\newcommand{\energy}{\mathbb{H}}
\newcommand{\potential}{g}
\newcommand{\mPS}{{\mathcal{M}_{\kappa, d}}}
\newcommand{\lyap}{\mathbb{H}}
\newcommand{\info}{\mathbb{I}}
\newcommand{\bbT}{\mathbb{T}}
\newcommand{\mM}{\mathcal{M}}
\newcommand{\CMVE}{$\kap$-MLFE}
\newcommand{\freeEnergy}{sparse free energy}
\newcommand{\mPSb}{\mathcal{Q}_{\kap, d}}
\newcommand{\FPE}{Cayley fixed point}
\newcommand{\FPES}{Cayley fixed points}
\newcommand{\newW}{Q}
\newcommand{\intPot}{W}
\newcommand{\Y}{\mathbf{Y}}
\newcommand{\pathMeasures}{\mathcal{P}_{\kap, d, T}}
\title[An H-theorem for the $\kappa$-MLFE]{An H-theorem for a conditional McKean-Vlasov process related to interacting diffusions on regular trees}
\author{Kevin Hu}
\address{Kevin Hu: Division of Applied Mathematics, Brown University.}
\email{\href{mailto:kevin_hu@brown.edu}{kevin\_hu@brown.edu}}
\author{Kavita Ramanan}
\address{Kavita Ramanan: Division of Applied Mathematics, Brown University.}
\email{\href{mailto:kavita_ramanan@brown.edu}{kavita\_ramanan@brown.edu}}
\date{\today}
\begin{document}

\begin{abstract}
 We study the long-time behavior of
  the $\kappa$-Markov local-field equation ($\kappa$-MLFE), which is a conditional McKean-Vlasov equation
  associated with locally interacting diffusions
on the  $\kappa$-regular tree, for $\kappa \geq 2$. Under suitable assumptions on the coefficients, we prove well-posedness of the $\kappa$-MLFE. We also
     establish an H-theorem by
     identifying an energy functional $\mathbb{H}_\kap$, which we refer to  as the sparse free energy,
     that decreases along the measure flow of the $\kappa$-MLFE, with the rate of decrease governed by a nonnegative functional
     that can be viewed as a 
     modified Fisher information. 
     Moreover, we show that the zeros of the latter functional 
     coincide with the set of
     stationary distributions of the $\kappa$-MLFE and are also 
      marginals of  splitting Gibbs measures
    on the $\kappa$-regular tree.  Furthermore, we show that for a
    natural class of  initial conditions, the corresponding measure flow of the $\kappa$-MLFE  
    converges as $t \rightarrow \infty$ to a stationary distribution,  
thus demonstrating that      $\mathbb{H}_\kap$ acts as a global Lyapunov function.
Under mild additional conditions, in the case $\kappa = 2$  we prove that the energy functional $\mathbb{H}_2$  arises naturally as the renormalized limit of certain relative entropies. We exploit this characterization
to prove a modified logarithmic Sobolev inequality and  establish an exponential rate of convergence of the 
$2$-MLFE measure flow to its unique stationary distribution.
\end{abstract}

\maketitle

\noindent \textbf{Key words:} conditional McKean-Vlasov equation, local-field equation, H-theorem, sparse free energy, $\kappa$-regular tree,   nonlinear Fokker-Planck equation, logarithmic Sobolev inequality, continuous Gibbs measures \\
\noindent \textbf{MSC 2020 subject classifications:}  Primary 60K35, 60J60; Secondary 60J70, 82C22, 35Q84, 82C31

\section{Introduction}

\subsection{Background and Motivation.}
\label{subsub:background}
Fix $\kap \in\N$ with $\kappa \geq 2$, and let $\bbT^1_\kap = \{0, 1, \ldots, \kap\}$ denote the set of vertices of  the root neighborhood of the $\kap$-regular tree rooted at $0$. We study the long-time behavior of the following system of stochastic differential equations (SDEs) indexed by the set  $\bbT^1_\kap$: 
\begin{equation}
    \begin{aligned}dX_0(t) &= - \bigg(\nabla U\big(X_0(t)\big) + \sum_{v = 1}^\kappa \nabla W\big(X_0(t) - X_v(t)\big)\bigg) dt + \sqrt{2}dB_0(t), \\
dX_v(t) &= -\gamma\big(t, X_v(t), X_0(t)\big) dt + \sqrt{2}dB_v(t), \quad v = 1, \ldots, \kap,\\
\mu_t &= \text{Law}\big(X_0(t), \ldots, X_\kappa(t)\big),\end{aligned}
\label{eq:intro:CMVE}
\end{equation}where $\{B_v\}_{v \in \{0, \ldots \kap\}}$ is a family of independent standard $d$-dimensional Brownian motions, $U, W: \R^d \rightarrow \R$ are smooth functions, and $\gamma$ is a map from $\R_+ \times \R^d \times \R^d$ to $\R^d$ 
takes the form of a conditional expectation: 
\begin{equation}
    \gamma(t, x, y):= \mathbb{E}\bigg[\nabla U\big(X_0(t)\big) + \sum_{v = 1}^\kappa \nabla \intPot\big(X_0(t) - X_v(t)\big) \,\bigg|\,X_0(t) = x,\, X_1(t) =y \bigg].
    \label{eq:intro:gamma}
\end{equation} 
Note that $\gamma(t, x, y) = J(\mu_t, x, y)$ for a suitable functional $J: \mP((\R^d)^{1 + \kap}) \times \R^d \times \R^d \rightarrow \R^d$, where ${\mathcal P}( (\R^d)^{1 + \kap})$ is the space of probability measures on $(\R^d)^{1 + \kap}$. Equation \eqref{eq:intro:CMVE} is therefore an example of a \textit{McKean-Vlasov equation}, which is a stochastic differential equation with a measure-dependent drift of the form
\begin{equation}
\begin{aligned}
    dZ(t) &= - F\big(t, \mu_t, Z(t) \big) dt + \sqrt{2}dB(t), \quad    \mu_t = \text{Law}\big(Z(t)\big),
    \label{eq:intro:Fdrift}
\end{aligned}
\end{equation}
where $B$ is a standard $m$-dimensional Brownian motion for some $m \in \N$, and $F$ is a suitable drift functional that maps $\R_+ \times {\mathcal P}(\R^m) \times \R^m$ to $\R^m$.
Solutions to such McKean-Vlasov equations are also referred to as nonlinear Markov processes because their associated Kolmogorov  forward equations take the form of nonlinear partial differential equations called \textit{nonlinear Fokker-Planck equations} (NFPE). McKean-Vlasov equations arise in the study of interacting particle systems and have applications in many fields, including physics, neuroscience, biology, and economics. 

A particularly important family of McKean-Vlasov equations is characterized 
by drifts that exhibit a   measure dependence of convolution form:  
\begin{equation}
\label{eq:intro:MV}
     dZ(t) = - \bigg( \nabla U\big( Z(t) \big) +\int \nabla \intPot\big( Z(t) - y\big) \mu_t(dy)\bigg) dt + \sqrt{2}dB(t), \quad \mu_t = \text{Law}\big(Z(t)\big),
\end{equation} 
for suitable confining and interaction potentials $U$ and $W$.
Such equations arise in the study of  pairwise interacting diffusions, and 
correspond to the dynamics in \eqref{eq:intro:Fdrift}, with  a drift functional $F$ that is  nonlocal  in the sense that  $F(t,\nu,x)$  depends on the whole measure $\nu$ and not just the density of $\nu$ at $x$, but has a  simple  affine dependence on the measure.   
In contrast, the drift functional in  \eqref{eq:intro:gamma} has a more complex  nonlinear dependence on the measure through its conditional distributions, and exhibits both local and nonlocal measure dependence.  Thus, the equation \eqref{eq:intro:CMVE} falls within the class of \textit{conditional McKean-Vlasov equations} (henceforth abbreviated to CMVE). CMVE 
 appear in numerous applications, for example, in stochastic Lagrangian models \cites{bossy2011stochasticLagrangian, bossy2019wellposedness}, stochastic volatility \cites{lacker2020inverting, djete2022nonregular},
 McKean-Vlasov equations with common noise \cites{delarue2024ergodicity, jianhai2024long, maillet2023note},
and entropic optimal transport \cite{conforti2023projected}.   
The study of CMVE is much more delicate than that of their non-conditional counterparts because conditional expectations, such as $\gamma$ in \eqref{eq:intro:gamma}, typically lack nice regularity properties and hence, many classical results for McKean-Vlasov equations do not apply \cite{buckdahn2023cmve}. 

In this paper we study the long-time behavior of the CMVE \eqref{eq:intro:CMVE}. A key motivation for analyzing this equation is that it  arises in the study of interacting diffusions on sparse random graphs.  It was shown 
in \cite{lacker2021marginal}  that the limit of the neighborhood empirical measure of interacting diffusions on random $\kappa$-regular graphs over any finite interval of time admits an autonomous characterization via a functional CMVE known as the local-field equation 
(see also \cite{ganguly2022thesis,GanRam24,ramanan2023sparse} for corresponding convergence results for interacting jump processes).   
   The local-field equation has a similar form to \eqref{eq:intro:CMVE}, except that at each time $t \geq 0$, the drift $\gamma$ is replaced by a path-dependent drift functional $\Gamma$  of  the form 
\[   \Gamma(t, f, g):= \mathbb{E}\bigg[\nabla U\big(X_0(t)\big) + \sum_{v = 1}^\kappa \nabla \intPot\big(X_0(t) - X_v(t)\big) \,\bigg|\,X_0[t] = f,\, X_1[t] = g \bigg],  \]
for continuous functions $f, g: [0,t] \to \R^d$, 
where for $u \in \{0,1,\ldots, \kappa\}$, 
 $X_u[t] := \{X_u(s), s \in [0,t]\}$ represents the history of the process.
Thus, the local-field equation describes a non-Markovian process
on $(\R^d)^{\kappa+1}$. The equation  \eqref{eq:intro:CMVE} can  be interpreted as a Markovian version of the local-field equation, and thus we refer to it henceforth as the \emph{$\kappa$-regular Markovian local-field equation}, or $\kappa$-MLFE (see Definition \ref{def:int:local} for a precise definition). 

Although analysis of  the original (non-Markovian) local-field equation may appear daunting,
its long-time behavior fortunately coincides in many cases with that of the $\kappa$-MLFE of  \eqref{eq:intro:CMVE},
even though the finite time marginals of the two in general differ (see Remark \ref{rk:notMarkovProjection}). \
This coincidence of long-time behavior has  been established 
in a companion paper \cite{hu2024gaussian} for the case when $U$ and $K$ are quadratic, $\kappa = 2$ and $d =1$,
though it is expected to  hold more generally, and  will be investigated  in forthcoming work.
Related  results have also been established for 
jump processes (e.g., see \cites{cocomello2023exact,ganguly2022thesis,GanRam25}).
  Thus, just as 
 ergodic properties of the  McKean-Vlasov equation 
\eqref{eq:intro:MV} have informed the understanding of  metastable behavior of particle systems (e.g., see \cites{Bas20,delgadino2023phase}), study of the long-time behavior of the \CMVE{} \eqref{eq:intro:CMVE} 
can shed light on the metastable behavior of the associated particle systems on random regular graphs, which is of considerable interest  \cite{ramanan2023sparse}.

\subsection{Main Contributions}

\subsubsection{Well-posedness of the $\kappa$-MLFE}
\label{subsub:wp}

Our first result,  Theorem \ref{thm:results:bddWp}, establishes well-posedness  (i.e., existence and uniqueness in law  of solutions)  of the $\kappa$-MLFE 
 \eqref{eq:intro:CMVE} under a suitable linear growth condition on the gradients of the potentials (see Assumption \ref{as:results:lyapunov}) and uniform boundedness of the gradient of the interaction potential. 
 The challenge here stems from the difficulty in verifying standard assumptions that  guarantee well-posedness of McKean-Vlasov equations such as  Lipschitz (or H\"older) continuity of the drift with respect to the measure and spatial variables. 
 In general,  well-posedness of CMVE is a more delicate issue than that of McKean-Vlasov equations and, like most 
 works on CMVE  (e.g., \cite{buckdahn2023cmve}), our well-posedness results are restricted to  bounded drifts; however, many of our subsequent results hold in greater generality as long as the  $\kappa$-MLVE is well-posed.  
 We  utilize a  Schauder fixed point argument combined with interior H\"older regularity estimates for Fokker-Planck equations, in the spirit of the recent work \cite{conforti2023projected},
 although we need to take
 extra care to ensure that solutions to the \CMVE{} are invariant under automorphisms of $\bbT_\kap^1$.
 It should be mentioned that the setting in \cite{conforti2023projected}  allows for more general integrable drifts by exploiting the fact that the CMVE measure-flow preserves marginal distributions. However, this special property does not hold in our setting.   Nevertheless, in a companion work \cite{hu2024gaussian}, we take a step towards relaxing the boundededness assumption by showing that the \CMVE{} is well-posed when $U$ and $\intPot$ are quadratic (so that the drifts are linear) when $\kap = 2$ and $d = 1$.

 Curiously,  well-posedness of the seemingly more complicated local-field equation is obtained in the case of unbounded coefficients (that satisfy certain linear growth conditions) in \cite{lacker2021marginal}.  There, the authors utilize a connection between the local-field equation and an infinite particle system which is not available for the \CMVE{}. It would certainly be of interest to provide broader conditions for well-posedness. 

\begin{openproblem}
{\em 
Identify more general  conditions on the drift, such as linear growth or integrability, under which the \CMVE{} is well-posed. } 
\end{openproblem}

\subsubsection{H-theorem,  sparse free energy and  long-time convergence of solutions to the \CMVE{} }
\label{subs:Htheorem}

Given well-posedness of the $\kappa$-MLFE, we show in Theorem \ref{thm:results:lyap} that  
 for each integer $\kappa \geq 2$, there exists an 
energy functional $\energy = \energy_\kap$ that decreases  along the flow $\{\mu_t, t \geq 0\}$ of the $\kappa$-MLFE   \eqref{eq:intro:CMVE}, and satisfies the following  {\em energy dissipation identity},  
\begin{equation}
    \energy (\mu_t) - \energy(\mu_s) = - \int_s^t \info (\mu_r) dr, 
    \label{eq:Htheorem}
\end{equation}
where $\info := \info_\kap$ is the  nonnegative modified Fisher information functional defined in \eqref{eq:results:info}. 
Specifically, we show that 
the energy functional $\mathbb{H}_\kappa$ takes the form 
\begin{equation}
    \energy_\kappa (\nu) :=  \int_{(\R^d)^{1 + \kap}} \bigg( \log \nu(\x) - \frac{\kap}{2} \log \bar{\nu}(x_0, x_1) + U(x_0) + \frac{1}{2}\sum_{v = 1}^\kap W(x_0 - x_v)\bigg) \nu(d \mathbf{x}),    
    \label{eq:intro:energykappa}
\end{equation} 
for any admissible probability measure  $\nu$ on $(\R^d)^{1+\kappa}$  (in the sense of Definition 
\ref{def:not:symProb}). Here, $\nu(\x)$ represents the density of $\nu$ at $x$  and $\bar{\nu}$ represents the $0$-$1$ marginal density of $\nu$ (see Definition \ref{def:not:marginal}).  
From  the non-negativity of the integrand on the right-hand side of \eqref{eq:Htheorem}, it is clear that any stationary distribution of the  $\kappa$-MLFE \eqref{eq:intro:CMVE} must be a  zero of the functional $\mathbb{I}_\kappa$. 
Under additional coercivity conditions  on $U$ and $W$ (see Assumption \ref{as:results:invMeasure}),  in Theorem \ref{thm:results:stationary} we prove that in fact the zeros of 
$\mathbb{I}_\kappa$ are in one-to-one correspondence with the set of stationary distributions of 
the $\kappa$-MLFE. Furthermore, under slightly stronger conditions
(see Assumptions \ref{as:results:strongCoercivity} and \ref{as:results:fixedPoint})
in Theorem \ref{thm:results:convergence} we show that 
from any admissible initial condition, the $\kappa$-MLFE measure flow converges to a stationary distribution.  Together 
with the finiteness of 
$\energy^*_\kappa := \inf_\nu \mathbb{H}_\kappa (\nu)$ (see Proposition \ref{prop:results:energyBound}), this establishes that $\energy_\kappa - \energy^*_\kappa$  acts as a {\em global} Lyapunov function for the measure flow $\{\mu_t\}_{t \geq 0}$  of the $\kappa$-MLFE even when the latter admits multiple stationary distributions. 

\noindent 
{\em Discussion of related prior work. } 
Theorem \ref{thm:results:lyap} falls under the rubric of H-theorems, whose  origin goes back to  Boltzmann's H-theorem. This celebrated result from  statistical mechanics   states that the Boltzmann-Gibbs entropy  is non-decreasing
along the flow of  Boltzmann's kinetic equation, and is constant only  at stationary states known as Maxwellians (see \cite{villani2002review} for a review). In the context of Langevin diffusions, an H-theorem of the form \eqref{eq:Htheorem} famously holds for solutions to linear Fokker-Planck equations with gradient drift.  These equations possess a unique invariant distribution $\pi$ and  satisfy  
\eqref{eq:Htheorem} with $\mathbb{H} = \mathcal{H} (\cdot|\pi)$, where ${\mathcal H}$ is the relative entropy functional and $\mathbb{I}$ is the relative Fisher information (see Section 2 of \cite{jordan1998variational} or Theorem 5.2.2 of \cite{bakry2014diffusion}).

Extending such H-theorems  to other nonlinear PDEs arising in statistical mechanics 
is  also of great interest, although this endeavor is presented with additional challenges. In particular, stationary distributions for such equations may be non-unique and the usual Boltzmann-Gibbs or relative entropy functionals will in general not be non-increasing along the measure flow. Therefore a central difficulty lies in the identification of the correct energy functional for the NFPE. Works that have focused on  H-theorems for NFPEs include \cite{schwammle2007consequences}, which provides a  non-rigorous  way of deriving the correct energy functional for NFPEs with local nonlinearity,  and \cite{barbu2024nfpe} (see also references therein), which 
proves regularity and establishes H-theorems for
singular NFPE of Nemytskii type. 
Moreover, it is well known (e.g., see \cite{tamura1987free, carrillo2003kinetic})  that the identity \eqref{eq:Htheorem} holds for a class of granular media equations, including McKean-Vlasov equations of type \eqref{eq:intro:MV}, with the energy functional taking the form of the  so-called \textit{free energy} given by
\begin{equation}
\begin{aligned}
    \mathbb{H}_{\text{FE}}(\nu) :&= \int_{\R^d} \nu(x) \log \nu(x) dx + \int_{\R^d} U(x) \nu(x) dx + \frac{1}{2}\int_{\R^d} \int_{\R^d} \intPot(x - y) \nu(x) \nu(y) dx dy,    
\end{aligned}
    \label{eq:intro:freeEnergy}
\end{equation}
for absolutely continuous $\nu$ in $\mP(\R^d)$, the space of probability measures on $\R^d$. The corresponding functional $\mathbb{I} = \mathbb{I}_{\text{FE}}$ is sometimes referred to as the \textit{entropy dissipation functional}.

These results, however, do not extend naturally to CMVE such as the \CMVE{}; since $\gamma$ takes the form of a conditional expectation, the nonlinearity in \eqref{eq:intro:gamma} is neither affine (as in \eqref{eq:intro:MV}) nor purely local (as in the setting of \cite{barbu2024nfpe}). In fact, the nonlinear measure dependence in \eqref{eq:intro:gamma} simultaneously exhibits local influence through the conditioning variables and nonlocal effects through the expectation. Two papers that are closer in spirit to our setting have appeared in the sampling literature,  
 in the context of biased adaptive forcing algorithms \cite{lelievre2008forcing} and entropic optimal transport \cite{conforti2023projected}, with the crucial difference that both start with a given target distribution $\pi$ and construct ergodic CMVEs that converge to $\pi$. 
 The convergence analysis in \cite{lelievre2008forcing}   has the flavor
  of an H-theorem with relative entropy as the energy functional, and in \cite{conforti2023projected} an explicit  H-theorem is
established, again with relative entropy as the energy functional. 
  However in our setting the stationary distributions of the \CMVE{} are {\em a priori} unknown and may be non-unique, and so standard relative entropy cannot be used to characterize convergence.  We
  also briefly mention works that have used alternative approaches such as coupling
  techniques used to characterize long-time convergence of McKean-Vlasov equations
\cite{cattiaux2008probabilistic, eberle2019QuantHarrisThm} and 
a class of CMVEs that arise as the limit of mean-field systems with common noise
(see \cite{maillet2023note, jianhai2024long, delarue2024ergodicity} and the references therein).  
However, the \CMVE{} is quite different from common-noise CMVE as the dependency
structure of \eqref{eq:intro:CMVE} is more complex, and seems not easily amenable to coupling
constructions.

Theorem \ref{thm:results:lyap} shows that the functional $\mathbb{H}_\kappa$  identified in \eqref{eq:intro:energykappa} is in fact the correct energy functional for the \CMVE{}, which will henceforth be referred to  as the {\em sparse free energy}.  As with other NFPEs, the H-theorem for the \CMVE{} provides substantial information about long-time behavior. For example, it supplies a mechanism by which one can identify stationary distributions.  A natural next step is to understand the topology of the set of stationary distributions of the \CMVE{}, in a manner similar to that of \cite{Bas20} in the mean-field setting. 

\begin{openproblem}
    {\em When the \CMVE{} admits multiple stationary distributions, characterize the basin of attraction  of each stationary distribution, or equivalently, characterize the map that associates to each admissible initial condition (in the sense of Definition \ref{def:results:entropyVariance}) the corresponding stationary distribution to which the flow starting from that initial condition converges. }
\end{openproblem}

\subsubsection{Correspondence with stationary distributions of particle systems on $\kappa$-regular trees}
\label{subsec:gibbs}

Along the way to  proving a bijection between the set of stationary distributions of the $\kappa$-MLFE and
the zeros of the modified Fisher information functional $\mathbb{I}_\kappa$ mentioned in the last section, we also show that
the zeros of $\info_\kap$ can be characterized as fixed points of a certain recursion on the $\kappa$-regular tree 
in Theorem \ref{thm:results:zeros}. 
We refer to these as  \FPES{}  since the associated recursion 
 is related to classical recursions on regular (Cayley) trees that 
characterize Markov chains or splitting
Gibbs measures on trees with discrete spaces (see Chapter 12 of \cite{georgii1988gibbs}, or
\cite{spitzer1975markov,zachary1983countable})  and those used 
to establish uniqueness of continuous Gibbs measures on trees
(e.g., see \cite{gamarnik2019unique,rozikov2013book}). 
We also identify another bijection between \FPES{} and fixed points of a different recursion, which was first introduced in \cite{lacker2023stationary} to study  $2$-particle 
marginals of a class of continuous Gibbs measures on regular trees (see Corollary \ref{cor:LackerZhang}). Additionally, it is possible to show that 
the \FPES{}  are in one-to-one correspondence with (root neighborhood) marginals of automorphism invariant 2-MRF stationary distributions
of the associated system of interacting diffusions on the infinite $\kappa$-regular tree
(via an argument similar to the one used in  \cite{ganguly2022thesis,GanRam25} for  interacting pure jump processes). 
As a result, we establish 
 a  connection between the dynamical local-field
equations of \cite{lacker2021marginal} and $\kappa$-MLFE on the one hand, and $2$-particle marginals of continuous Gibbs measures that solve the stationary local equations  of  \cite{lacker2023stationary} on the other, thereby resolving an open problem stated in  \cite{lacker2023stationary}.
This connection also provides additional support for the link between the 
long-time behaviors of the \CMVE{} and the associated local-field equation, which
would be desirable to establish in full generality, beyond the linear case considered in \cite{hu2024gaussian}.

\begin{openproblem}
  Does $\bbH_\kap - \bbH_\kap^*$, where recall $\bbH_\kap^* := \inf_{\nu} \bbH_\kap(\nu)$, also  serve as a global Lyapunov function
 for the (non-Markovian) local-field equation? 
\end{openproblem}

\subsubsection{Renormalized relative entropies}
\label{subsub:intro:k2}

We now describe additional results that we obtain in the case $\kappa = 2$, when $\mathbb{T}_2$ can be identified with the integers $\mathbb{Z}$ (viewed as a rooted graph with edges between consecutive integers,  and the root at $0$).  First, we adopt 
a probabilistic approach to identify the sparse free energy  functional $\mathbb{H}_2$. This takes inspiration from
the fact that  the long-time behavior of the \CMVE{} and the local-field equation are expected to coincide, and that the local-field equation describes the limit of the marginal dynamics on the root  vertex  neighborhood of the following system of  interacting diffusions on $\mathbb{T}_2^n$, the  $2$-tree truncated at level $n$,
in the  $n \rightarrow \infty$  asymptotic regime (see e.g., \cite{lacker2021marginal}):
\begin{equation}
  \label{IPS-T2n}
  dX_i^n(t) = - \left( \nabla U (X_i^n(t)) + \sum_{j \in  {\mathcal N}_i}\nabla W(X_i^n(t) - X_{j}^n(t)) \right)
dt  + \sqrt{2} dB_i(t), \quad i  \in \{-n, \ldots, n\},  
\end{equation}
where  ${\mathcal N}_i$ is the  set of neighbors of $i$ in $\mathbb{T}_2^n$ and $\{B_i\}_{i \in \{-n, \ldots, n\}}$ are independent standard Brownian motions.  Given that each $n$-system described above has a unique stationary distribution $\theta^n$ (see Definition \ref{def:results:gibbs}),   it is well known (as mentioned in Section \ref{subs:Htheorem}) that  
the functionals given by 
\begin{equation}
    \label{lin-Lyap}
\mu \mapsto \mathbb{H}^{(n)}(\mu) := {\mathcal H} (\mu\| \theta^n) \quad
\text{ and } \quad \mu \mapsto \widehat{\mathbb{H}}^{(n)}(\mu) := {\mathcal H} (\pi^n \| \mu),
\end{equation}
both decrease along the corresponding linear Fokker-Planck measure flow on ${\mathcal P}((\R^d)^{2n+1})$.

To  analyze the flow of the $2$-MLFE, we show the existence of a lift map 
  \begin{equation}
      \label{eq-liftmap}
  \psi^n: {\mathcal P} ((\R^d)^3) \mapsto 
  {\mathcal P} ( (\R^d)^{2n+1})\:\text{ with }\: \nu \mapsto \psi^n_\nu,
  \end{equation}
  such that for every admissible $\nu \in {\mathcal P}((\R^d)^3)$ (in the sense of Definition \ref{def:results:entropyVariance}), the following limit of renormalized energy functionals  exists and coincides with $\mathbb{H}_2(\nu) - \mathbb{H}_2^*$:
  \begin{equation}
      \label{eq-renorm}
    \lim_{n \rightarrow \infty} \frac{1}{n} \mathbb{H}^{(n)} \big(\psi^n_\nu\big) =     \lim_{n \rightarrow \infty}\frac{1}{n} {\mathcal H} (\psi^n_\nu \| \theta^n) = \bbH_2(\nu) - \bbH_2^*. \end{equation}

  The lift map $\psi^n$ we choose is the unique extension of any probability $\nu$ on $(\R^d)^3$ to a second-order Markov random field ($2$-MRF for short) on $(\R^d)^{2n+1}$; see Definition \ref{def:MRF} and Definition \ref{def:results:2measures} for precise definitions of 2-MRF and $\psi^n$ respectively.   
  Figure \ref{fig:Lift} illustrates this general approach, with $P_t^n$ denoting the semi-group related to the system of $n$ 
  interacting linear diffusions \eqref{IPS-T2n}, and $P_t$ the (nonlinear) semi-group of the $2$-MLFE.
  We believe that this representation  in terms of renormalized limits 
  points to the canonical nature of the sparse free energy, and explains why (its shifted version) 
  exhibits a global Lyapunov property  even when there are multiple stationary distributions. 
  
\begin{figure}[h]
    \[
\begin{tikzcd}[column sep=5 cm, row sep=1.5 cm]
\psi^n_{\mu_0}  \arrow[ r, "\text{$n$-Particle Linear Semigroup} "] & P_t^n \psi^n_{\mu_0} \arrow[d, "\,n \rightarrow \infty"] \arrow[ r, "\text{Lyapunov function: }  \bbH^{(n)}(P_t^n\psi^n_{\mu_0})", "t \rightarrow \infty"'] & \theta^n \arrow[d, dashed, "n \rightarrow \infty"] \\
\mu_0\arrow[u, "\text{Lift Map}" ]  \arrow[r, "\text{Nonlinear Semigroup}"] & {\mu}_t   \arrow[r, "\text{Lyapunov function: }  \bbH_2(\mu_t)", "t \rightarrow \infty"'] & \mathcal{S} 
\end{tikzcd}
\]
\caption{Convergence diagram for entropy renormalization. Here, $\mS$ is the set of zeros of $\bbI_2$, which in our setting are also the limit points of $\mu_t$ (see Theorem \ref{thm:results:convergence}). By Theorem \ref{thm:results:zeros} and Corollary \ref{cor:LackerZhang}, the set $\mathcal{S}$ is also the set of root marginal distributions of continuous Gibbs measures on regular trees, and therefore can be identified as  possible limit points of root marginals of $\theta^n$. This fact is reflected by the dashed arrow.}
  \label{fig:Lift}
\end{figure}
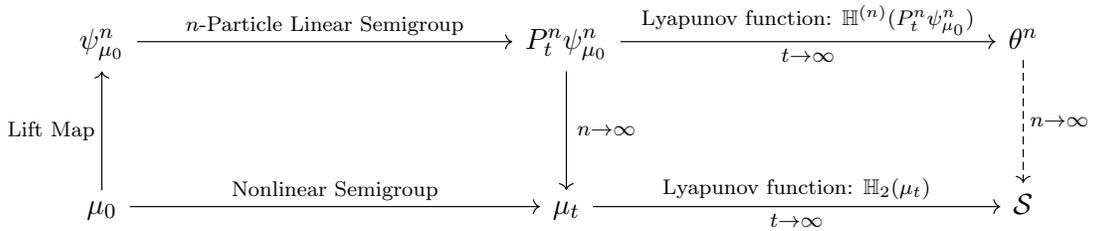

 Although the approach outlined above may seem natural, 
  there are several subtleties.  First, if  one replaces   $\mathbb{H}^{(n)}$ by the functional $\widehat{\mathbb{H}}^{(n)}$ from \eqref{lin-Lyap}, which also  serves as a Lyapunov function for the measure flow associated with the system of $n$ interacting diffusions, in general the limit  in \eqref{eq-renorm} (even when it exists) will  not decrease along the $2$-MLFE measure flow.   Second, 
  the choice of the lift map $\psi^n$ is far from obvious and other {\em a priori} reasonable constructions, such as an extension as a first-order Markov random field ($1$-MRF for short), fail (see  Remark \ref{rk:2MRFNec}).  This may seem all the more  surprising in light of the fact that  the stationary distribution $\theta^n$  of the particle system \eqref{IPS-T2n} is a $1$-MRF.  However, our choice  is linked to the  observation that the {\em trajectories} of the interacting diffusions \eqref{IPS-T2n} on $\mathbb{T}_2^n$  form a 2-MRF, but not in general a $1$-MRF,  even when the initial conditions are i.i.d.
  (see Theorem 2.4 and Section 3.3 of \cite{lacker2021MRF}; and  also \cite{GanRam22} for related results for jump processes).   Finally, it is the local-field equation, and {\em not} its Markovian analog, the $\kappa$-MLFE, which arises
  as the  limit of the marginal dynamics (on the root neighborhood) of interacting diffusions on 
  $\mathbb{T}_2^n$.    Thus, even if the two equations are expected to have the same long-time behavior,
  it is not at all clear that  they should share a common  Lyapunov function.
  However, it is shown that this is indeed  true for the Gaussian case in \cite{hu2024gaussian}, thus pointing
  to the robustness  of the identified sparse free energy. Although our other results hold for all $\kap \in \N$, naive approaches to characterizing $\bbH_\kap$ as a renormalized limit fail for 
  $\kap \geq 3$. This leads naturally
  to the following question. 
  
\begin{openproblem}
    Can $\bbH_\kap$ be represented as the limit of renormalized entropies for $\kap \geq 3$? 
\end{openproblem}

\noindent {\em Relation to Prior Work. } 
A similar probabilistic approach has been used to identify the free energy functional for McKean-Vlasov equations of
the form \eqref{eq:intro:MV} through a connection with  mean-field particle systems.   Consider the system of
$n$ interacting diffusions on the complete graph given by 
\[  dX_i^n (t) = - \left( \nabla U(X_i^n(t)) + \frac{1}{n}\sum_{j =1, j \neq i}^n  \nabla W(X_i^n(t) - X_j^n(t)) \right) dt + \sqrt{2} dB_i (t), \quad i = 1, \ldots, n, 
\]
where $\{B_i\}_{i\in \{1, \ldots, n\}}$ are independent standard Brownian motions.
Then it follows   (see, e.g., Lemma 17 of \cite{guillin2022lsi}) that
  \eqref{eq-renorm} holds with the lift map $\psi^n: \mP(\mathbb{R}^d) \mapsto \mP((\mathbb{R}^d)^n)$
simply given by the  $n$-fold product $\psi^n (\nu) = \nu^{\otimes n}.$     Similar forms of renormalized entropy have also appeared in recent work on quantitative propagation of chaos for singular kernels \cite{jabin2018qpoc}, and 
analogous results have also been established for mean-field jump processes in \cite{budhiraja2015entropy} and \cite{budhiraja2015kolmogorov}.  
In the mean-field setting, the product structure 
of the lift map is natural and is a reflection of propagation of chaos results 
(e.g., see Theorem 1.4 of \cite{sznitman1991poc}),
which imply that for all $m \in \N$, we have 
\begin{equation}
    \text{Law}(X_1^n(t), \ldots, X_m^n(t)) \implies  \mu_t^{\otimes m},
    \label{eq:intro:POC}
\end{equation}
where $\mu_t$ is the law of the limit McKean-Vlasov system \eqref{eq:intro:MV}. 
However, propagation of chaos for sparse particle systems is much more delicate (see \cite{lacker2023localweakconvergence}) and results of the form \eqref{eq:intro:POC} certainly do not hold. Therefore, the correct choice of the lift map in the sparse setting is far from obvious.

\subsubsection{Logarithmic Sobolev inequalities and exponential convergence}
\label{subsub:intro:renorm}

In the case $\kappa = 2$, under additional assumptions we guarantee existence of a unique stationary distribution in Theorem \ref{thm:results:uniqueRegime}.
We then combine the representation of $\energy_2$ as the limit of renormalized entropies discussed in the last section
with a uniform logarithmic Sobolev inequality for $\theta^{n}$  to establish exponential decay of the sparse free energy along the $2$-MLFE flow (see  Theorem \ref{thm:results:LSI}).  When  $\kap \geq 3$, the stationary distributions are continuous Gibbs measures on infinite trees, which may not be unique  (see e.g., \cite{lacker2023stationary, georgii1988gibbs, rozikov2013book}), and thus it is not clear if exponential decay of the sparse free energy  would hold in the regime of non-uniqueness. It would be of interest to determine the rate of convergence even in the uniqueness regime.

\begin{openproblem}
    Obtain rates of convergence of the sparse free energy along the  \CMVE{} measure flow for $\kap \geq 3$. 
\end{openproblem}

\noindent 
{\em Discussion of  related  prior Work. } Several authors have obtained nonlinear log Sobolev inequalities,
by which we mean functional inequalities that upper bound the
free energy $\mathbb{H}_{\text{FE}}$  by a multiple of the entropy dissipation functional $\mathbb{I}_{\text{FE}}$.
 Indeed, this was carried out
 in the  seminal work  of \cite{carrillo2003kinetic}  using both Bakry-\'Emery and HWI techniques, and then used to provide  exponential rates of relaxation   (see also 
 \cites{carrillo2020long, delgadino2023phase} and  references therein).
 Our technique parallels that of  \cite{guillin2022lsi}, which instead uses the previously described characterization of the free energy $\mathbb{H}_{\text{FE}}$
 as the limit of renormalized relative entropies to prove (Poincar\'{e} and) logarithmic Sobolev type inequalities, and associated exponential convergence of the free energy along the  McKean-Vlasov measure flow.

\subsection{Structure of the Paper} The rest of the paper is structured as follows. In Section 2 we introduce some common notation used throughout the paper. In Section 3 we define the \CMVE{} in Definition \ref{def:int:local} and establish its well-posedness (see Theorem \ref{thm:results:bddWp}). In Section \ref{sec:results:hTheorem} we introduce the functionals $\bbH_\kap$ and $\bbI_\kap$ and present the H-theorem (see Theorem \ref{thm:results:lyap}). In Section \ref{sec:results:stationary} we characterize the stationary distributions of the \CMVE{} as the zeros of $\bbI_\kap$ and show convergence of the \CMVE{} flow to a stationary distribution (see Theorem \ref{thm:results:stationary} and Theorem \ref{thm:results:convergence}). In Section \ref{sec:results:CBP} we introduce the \FPES{} and show that they  coincide both with the zeros of $\bbI_\kap$ and the root marginals of certain continuous Gibbs measures that are stationary distributions of interacting diffusions on the infinite $\kap$-regular tree (see Theorem \ref{thm:results:zeros} and Remark \ref{rk:lackerZhang}). In Section \ref{sec:results:kap2}, we present additional results when $\kap = 2$, in particular a characterization of $\bbH_\kap$ as the limit of renormalized relative entropies (Theorem \ref{thm:results:2entropy}), uniqueness of stationary distributions (Theorem \ref{thm:results:uniqueRegime}), and a modified logarithmic Sobolev inequality (Theorem \ref{thm:results:LSI}). Section \ref{sec:wp:proof} presents the proof of well-posedness of the \CMVE{} for bounded interactions. Section \ref{sec:lyap:Htheorems} is devoted to the proof of the H-theorem and properties of the sparse free energy $\bbH_\kap$.  In  Section \ref{sec:lyap:zeros} we establish results about the stationary distributions of the  \CMVE{}. Section \ref{sec:k2Stuff} contains the proofs of results particular to the $\kap = 2$ case. Finally, in Appendix \ref{_sec:ap:PDE} we collect several useful facts about linear Fokker-Planck equations.

\section{Notation}
\subsection{Vectors and Gradients}
\label{sec:not:vectors}
Let $\mX$ be a Polish space. We consider vectors of the form $\mathbf{x} = (x_0, x_{\{1, \ldots, \kap\}}) \in \mX^{1 + \kap}$ where we distinguish the element $x_0$. We use $\nabla$ to denote the weak gradient on $\R^d$ (e.g., see Section 1.1 of \cite{fokkerPlanck} for definitions of weak derivative). We let $\nabla_{\x}$ denote the gradient on $(\R^d)^{1 + \kap}$ and for $v \in \{0, \ldots, \kap\}$, let $\nabla_{x_v}$  denote the partial gradient with respect to $x_v$.

\subsection{Probability Measures, Probability Spaces, and Equipped Functionals}
\label{ss:prbMeasure}

Throughout,  $\mX$ denotes a Polish space, and $\mP(\mX)$ denotes the set of Borel probability measures on $\mX$, equipped with the topology of weak convergence. For any measure $\nu \in \mP(\mX)$ and $m \in \N$, we let $\nu^{\otimes m} \in \mP(\mX^m)$ denote the $m$-fold product measure of $\nu$.

Let $(\Omega, \mF, \bbP)$ be a probability space. For a random variable $X:(\Omega, \mF) \rightarrow \mX$, let $\mL(X) \in \mP(\mX)$ denote the law of $X$ under $\bbP$. For a measure $\nu \in \mP(\mX)$, we let $Y$ denote the canonical random variable on the probability space $(\mX, {\mathcal B}(\X), \nu)$ with law $\nu$. For any measurable function $f: \mX \rightarrow \R^d$, we write
\begin{equation*}
    \bbE^\nu[f(Y)] := \int_\mX f(y) \nu(dy).
\end{equation*}
We write $X \stackrel{(d)}{=} Y$ if $X$ and $Y$ are two random variables that are equal in distribution. When $\nu \in \mP(\R^d)$ is absolutely continuous with respect to $d$-dimensional Lebesgue measure, we abuse notation and denote the density $\tfrac{d\nu}{dx}$ by $\nu$ so that $\nu(dx) = \nu(x) dx$. 

For any two measures $\nu, \tilde{\nu} \in \mP(\mX)$, we write $\nu \ll \tilde{\nu}$ to indicate that  $\nu$ is absolutely continuous with respect to $\tilde{\nu}$. The \textit{relative entropy} of $\nu$ with respect to $\tilde{\nu}$ is given by 
    \begin{equation*}
        \entropy\big( \nu\big| \tilde{\nu} \big) = \left\{
        \begin{aligned}
            &\int_\mX \log\bigg( \frac{ d\nu}{d\tilde{\nu}}(x) \bigg) \nu(dx) 
            &\quad& \mbox{if}\quad
            \nu \ll \tilde{\nu}, 
            \\
            &\infty &\quad& \mbox{otherwise. }
        \end{aligned}
        \right.
    \end{equation*}
In the case $\mathcal{X} = \R^m$, we define the \textit{relative Fisher information} of $\nu$ with respect to $\tilde{\nu}$ to be 
    \begin{equation*}
        \mathcal{I}\big( \nu\big| \tilde{\nu} \big) = \left\{
        \begin{aligned}
            &\int_{\R^m} \bigg|\nabla \log\frac{ d\nu}{d\tilde{\nu}}(x) \bigg|^2 \nu(dx) 
            &\quad& \mbox{if}\quad
            \nu \ll \tilde{\nu}, 
            \\
            &\infty &\quad& \mbox{otherwise. }
        \end{aligned}
        \right.
    \end{equation*}

\subsection{Function Spaces}We let $\R_+$ denote the set of non-negative real numbers. Given $m \in \N$ we let $C^k(\R^m)$ denote the space of $k$-times continuously differentiable functions from $\R^m$ to $\R$ and $C^k_b(\R^m)$ denote the space of bounded functions in $C^k(\R^m)$.

For a measure $\nu \in \mP(\mX)$, we abbreviate $\nu$-almost everywhere to $\nu$-a.e., and write simply a.e. when $\nu$ is Lebesgue measure. Let $L^p(\mu)$ denote the usual space of $p$-integrable functions with respect to $\mu$ and $\|\cdot\|_{L^p(\mu)}$ denote the usual $L^p(\mu)$-norm. If $E \subset \R^m$ is a measurable subset and $\mu$ is Lebesgue measure, we simply write $L^p(E)$ instead for convenience.

Let $W^{s, p}(\R^m)$ denote the Sobolev space of measurable functions $f:\R^m \rightarrow \R$ such that $f \in L^p(\R^m)$ and $|\nabla^r f| \in L^p(\R^m)$ for $r = 1, \ldots, s$, where the derivative is taken in the weak sense. The Sobolev norm is given by
\begin{equation*}
    \|u\|_{W^{s, p}(\R^m)} := \bigg[ \sum_{k = 0}^s \|\nabla^s u \|_{L^p(\R^m)}^p\bigg] ^{\frac{1}{p}}.
\end{equation*}
For $R \in (0, \infty)$, let $B_R:=\{x \in \R^m : |x| < R\}$ denote the open ball of radius $R$ in $\R^m$. We let $L^p_{\text{loc}}(\R^m)$ denote the set of measurable functions $f$ such that $f \in L^p(B_R)$ for every $R \in (0, \infty)$. Similarly, define $W^{s, p}_{\text{loc}}(\R^d)$ to be the set of functions in $L^p_{\text{loc}}(\R^d)$ such that $|\nabla^r f| \in L^p_{\text{loc}}(\R^d)$ for all $r = 1, \ldots, s$.

We let $\mC^m_T$ denote the space of continuous functions from $[0, T]$ to $\R^m$ with the topology of uniform convergence. Similarly, let $\mC^m$ denote the space of continuous functions from $[0, \infty)$ to $\R^m$ equipped with the topology of uniform convergence on compact sets.

For $m, k \in \N$, we say a measurable function $f:\R^m \rightarrow \R^k$ satisfies a \textit{linear growth condition} if there exists $c_f > 0$ such that 
\begin{equation}
    |f(x)| \leq c_f(1 + |x|), \quad \text{for a.e. }x \in \R^d.
    \label{eq:not:linGrowth}
\end{equation}
\subsection{Graphs and Markov Random Fields} For a graph $G = (V, E)$ and $v \in V$, let $d_G:V \times V \rightarrow \R_+$ represent the graph distance. For any $A \subset V$, we let $N_A(G) := \{ v \in V: \inf_{u \in A} d_G(v, u) = 1\}$ denote the \emph{boundary} of $A$ in $V$ and $N^2_A(G) := \{ v: 1 \leq \inf_{u \in A} d_G(v, u) \leq 2\}$ denote the double boundary. When the underlying graph $G$ is clear, we write $\partial A := N_A(G)$ for the boundary, $\partial^2 A := N_A^2(G)$ for the double boundary, and set $\bar{A} := N_A(G) \cup A$. We next recall the definition of a Markov random field (e.g. Definition 1.1 of \cite{lacker2021MRF}):

\begin{definition}[Markov random field]
\label{def:MRF}
    Let $G = (V, E)$ be a finite graph. Let $(Y_v)_{v \in V}$ be a random element of $(\R^d)^V$ with distribution $P \in \mP((\R^d)^V)$. Then  $Y = (Y_v)_{v \in V}$, or equivalently its distribution $P$, is said to be a \emph{first-order Markov random field} (abbreviated as 1-MRF) on $(\R^d)^V$ if $Y_A$ is conditionally independent of $Y_{(\bar{A})^c}$ given $Y_{\partial A}$, for every finite set $A \subset V$. Similarly,  $Y = (Y_v)_{v \in V},$ or equivalently its distribution $P$, is said to be a \emph{second-order Markov random field} (abbreviated as 2-MRF) on $(\R^d)^V$ if $Y_A$ is conditionally independent of $Y_{(A \cup \partial^2 A)^c}$ given $Y_{\partial^2 A}$. 
\end{definition}

\section{The $\kappa$-Regular Markovian Local-Field Equation and its Well-posedness}
\subsection{The $\kap$-regular Markovian local-field equation} In this section, we introduce our main equation of study (Definition \ref{def:int:local}). We first define a space of probability measures that captures the relevant symmetries of the equation (i.e. the automorphisms of $\bbT_\kap^1$).
\begin{definition}[Edge marginal]
\label{def:not:marginal}
Fix $\kap \in \N$ with $\kap \geq 2$. Let $\Pi:(\mX)^{1 + \kap} \rightarrow \mX \times \mX$ be the projection map $\Pi(\x) = (x_0, x_1)$. For any $\nu \in \mP((\mX)^{1 + \kap})$, define the \emph{edge marginal} $\bar{\nu} := \nu \circ \Pi^{-1} \in \mP(\mX \times \mX)$ to be the 0-1 marginal of $\nu$.    
\end{definition}
\begin{definition}[Symmetric probability measures]
\label{def:not:symProb}
    The space $\mathcal{M}_{\kap, d}$  is the set of probability measures $\nu \in \mP((\R^d)^{1 + \kap})$ that satisfy the following two properties:
    \begin{enumerate}
        \item (Leaf Exchangeability) For any permutation $\tau$ on $\{1, \ldots, \kap\}$, we have
        \begin{equation}
            \label{eq:not:symmetry1_}
            \nu(dx_0, dx_{\{1, \ldots, \kap\}}) = \nu(dx_0, dx_{\{\tau(1), \ldots, \tau(\kap)\}}).
        \end{equation}
        \item (Edge Symmetry) $\bar{\nu}$ satisfies
        \begin{equation}
                    \label{eq:not:symmetry2_}
            \bar{\nu}(dx_0, dx_1) = \bar{\nu}(dx_1, dx_0).
        \end{equation}
    \end{enumerate}
    In addition, given $T \in (0, \infty)$, we define the space $\mathcal{M}_{k,d}^T$ to be the set of measures $\mu \in \mP((\mC_T^d)^{1 + \kap})$ such that $\mu_t \in \mathcal{M}_{\kap, d}$ for all $t\in [0, T]$. 
\end{definition}
\begin{remark}[Exchangeability of marginals]
\normalfont
\label{rk:not:exchangeability}
Fix $\nu \in \mPS$ and a random vector $(Y_0, Y_1, \ldots, Y_\kap) \sim \nu$. As a consequence of the above definition, we have $$(Y_0, Y_v) \sim \bar{\nu} \quad\text{and}\quad (Y_0, Y_v) \stackrel{(d)}{=} (Y_v, Y_0), \quad v \in \{1, \ldots, \kap\}.$$ In particular, we have $Y_v \stackrel{(d)}{=}Y_0$ for all $v \in \{1,\ldots, \kap\}$. Moreover, since leaf exchangeability and edge symmetry are both clearly preserved by weak convergence, $\mPS$ is a weakly closed subset of $\mP((\R^d)^{1 + \kap})$.
\end{remark}

We now introduce the $\kappa$-regular Markovian local-field equation.
\begin{definition}[$\kappa$-regular Markovian local-field equation]
\label{def:int:local}
    Let $\kap, d \in \N$ and $T \in (0, \infty)$. Let $U, \intPot:\R^d \rightarrow \R$ be continuously differentiable functions. Define the function
\begin{equation}
        b(\x) := \nabla U(x_0) + \sum_{v = 1}^\kappa \nabla \intPot(x_0 - x_v).
        \label{eq:results:drift}
    \end{equation}
Let $\lambda \in \mPS$. A solution to the \emph{$\kappa$-regular Markov local-field equation}, (henceforth abbreviated to \emph{\CMVE{}}), on $[0, T]$ with potentials $(U, \intPot)$ and initial law $\lambda$ is a tuple 
    \begin{equation*}
        \Big((\Omega, \mF, \bbF, \bbP), (\mu, \gamma), (\mathbf{B}, \mathbf{X})\Big)
    \end{equation*}
    such that
    \begin{enumerate}
        \item $(\Omega, \mF, \bbP)$ is a probability space with a filtration $\bbF = (\mF_t)_{t \in [0, T]}$.
        \item $\mathbf{B} := (B_v)_{v \in \{0, \ldots, \kappa\}}$ is a family of $d$-dimensional independent $\bbF$-Brownian motions on $[0, T]$.
        \item $\X := (X_v)_{v \in \{0, \ldots, \kappa\}}$ is a family of $\bbF$-adapted $d$-dimensional continuous processes on $[0, T]$. Moreover, $\X$ is a Markov process with respect to its natural filtration.
        \item $\mu$ satisfies $\mu = \mL(\X)$, $\mu_0 = \lambda$, and $\mu \in \mathcal{M}_{k,d}^T$. 
        \item The measurable function $\gamma: [0, T] \times \R^d\times \R^d \rightarrow \R^d$ satisfies
        \begin{equation}
            \gamma(t, X_0(t), X_1(t)) = \bbE\big[ b(\X(t) ) \big| X_0(t), X_1(t) \big], \quad {\mu}_t\text{-almost surely for a.e. }t \in [0, T]
            \label{eq:not:gamma}
        \end{equation}
        \item $\X$ satisfies the following system of SDEs for $t \in (0, T]$:
        \begin{equation}
        \label{eq:not:mLocal}
        \begin{split}
            dX_0(t) &= -b(\X(t)) dt + \sqrt{2} dB_0(t), \\
            dX_v(t) &= -\gamma(t, X_v(t), X_0(t)) dt + \sqrt{2} dB_v(t), \quad v = 1, \ldots, \kap.
        \end{split}
        \end{equation}
        \item For each $v \in \{1, \ldots, \kap\}$
        \begin{align}
            \int_0^T \Big( \big|b\big(\X(t)\big)\big|^2 + \big| \gamma\big(t, X_0(t), X_v(t)\big) \big| ^2 + \gamma\big(t, X_v(t), X_0(t)\big)\big|^2 \Big) dt < \infty \quad \text{a.s.}
            \label{eq:cmve:integrability}
        \end{align}
    \end{enumerate}
    Further, the tuple is said to be a solution to the \CMVE{} on $[0, \infty)$ if all properties hold with $[0, T]$ replaced with $[0, \infty)$
    \end{definition}
    
\begin{remark}\normalfont
    When the underlying probability space and Brownian motion are clear, we denote a solution to the \CMVE{} simply by $(\mu, \gamma, \X)$, or  just $(\mu, \gamma)$ when we do not need explicit reference to the stochastic process $\X$.
    
    Given a solution $(\mu, \gamma)$ to the \CMVE{} on $[0, T]$ with potentials $(U, \intPot)$ and initial condition $\lambda \in \mPS$, we define $\eta: [0, T] \times (\R^d)^{1 + \kap} \rightarrow (\R^d)^{1 + \kap} $ to be a measurable function that satisfies
    \begin{equation}
        \big(\eta(t, \x)\big)_v :=  \left\{
        \begin{aligned}
            &\nabla U(x_0) + \sum_{v = 1}^\kap \nabla \intPot(x_0 - x_v), &\quad &v = 0,
            \\
            &\gamma(t, x_v, x_0), &\quad &v \in \{1, \ldots, \kap\},
        \end{aligned}
        \right.
    \label{eq:not:eta}
    \end{equation}
for $\mu_t$-a.e. $\x \in (\R^d)^{1 + \kap}$ and $t \in [0, T]$. We can then write the SDE \eqref{eq:not:mLocal} more succinctly as 
\begin{equation}
\label{eq:not:SimpleCMVE}
\begin{aligned}
    d\X(t) &= - \eta(t, \X(t)) dt + \sqrt{2} d\mathbf{B}(t), \\
    \mu_t &= \mathcal{L}(\X_t).
\end{aligned}
\end{equation}
\end{remark}
\begin{remark}[Local-field equation and \CMVE{}]\normalfont
\label{rk:notMarkovProjection}
    In light of classical mimicking results (see for example \cite{gyongy1986mimicking, brunick2013mimic}), it is tempting to believe that the \CMVE{} is a `projected' version of the local-field equation described in \cite{lacker2021marginal} and that the local-field equation and \CMVE{} share the same time-marginals.  This however is incorrect due to the fact that the conditional expectation in \eqref{eq:not:gamma} and the corresponding one in the full $\kappa$-local field equations is taken with respect to only a subset of the variables, with the subset varying for different vertices.  Thus, the \CMVE{} is not the Markovian projection of the local-field equation, and the time-marginals of the two equations do not in general coincide, although they are expected to be related  (see the discussion in Section \ref{subsub:background}). 
\end{remark}
\subsection{Well-posedness} In this section, we introduce some assumptions and discuss well-posedness of the \CMVE{}. We will frequently refer to the subset of measures in $\mPS$ with finite entropy and variance.

\begin{definition}[Set of admissible measures]
    \label{def:results:entropyVariance}
    Let $\kap, d \in \N$ with $\kap \geq 2$. We write $\mPSb$ for the set of measures $\lambda \in \mPS$ with finite entropy and variance, that is $\lambda$ satisfies $$            \bigg|\int_{(\R^d)^{1 + \kap}} \lambda(\x) \log \lambda(\x) d\x\bigg| + \int_{(\R^d)^{1 + \kap}} |\x|^2 \lambda(d\x) < \infty $$
\end{definition}

We impose the following (natural) linear growth assumptions on $\nabla U$ and $\nabla \intPot$ throughout.
\begin{assumption}[Linear growth of potentials]
    \label{as:results:lyapunov}
     The functions $U:\R^d \rightarrow \R$ and $\intPot:\R^d\rightarrow \R$ are continuously differentiable and $W$ is even (e.g., $W(x) = W(-x)$ for all $x \in \R^d$). Moreover, $\nabla U$ and $\nabla \intPot$ satisfy a linear growth condition; that is, there exists $C \in (0, \infty)$ such that
    \begin{equation*}
        \max\big\{ |\nabla U(x)|, |\nabla \intPot(x)|\big\} \leq C(1 + |x|), \quad x \in \R^d.
    \end{equation*}
\end{assumption}
\begin{remark}\normalfont\label{rk:potentials}
Assumption \ref{as:results:lyapunov} implies that there exists (a possibly different) $C \in (0, \infty)$ such that
\begin{equation}
    \max\big\{ |U(x)|, |\intPot(x)|\big\} \leq C(1 + |x|^2), \quad x \in \R^d.
    \label{eq:results:potentialGrowth}
\end{equation}
Thus, $\intPot(0) < \infty$, and since the \CMVE{} only depends on $\nabla \intPot$, we can without loss of generality set $\intPot(0) = 0$. We note here that Assumption \ref{as:results:lyapunov} does not allow for singular interaction potentials.
\end{remark}
Given our assumptions, we need only consider solutions to the \CMVE{} whose drift satisfies a linear growth condition. This assumption is natural in the context of well-posedness for non-singular SDEs (see e.g., Proposition 5.3.6 and Proposition 5.3.10 in \cite{karatzas1991stochastic}).
\begin{definition}[Linear growth solution and well-posedness]
\label{def:not:linGrowth}
Let $(\mu, \gamma)$ be a solution to the \CMVE{} on $[0, T]$ with potentials $(U, \intPot)$ satisfying Assumption \ref{as:results:lyapunov} and initial condition $\lambda \in \mPSb$. We call $(\mu, \gamma)$ a \emph{linear growth solution} to the \CMVE{} if
there exists $C \in (0, \infty)$ depending only on $( \kappa, d, T, b, \lambda)$ such that\begin{equation}
    \label{eq:res:gammaLinGrowth}
        \sup_{t \in [0, T]} \operatorname*{\bar{\mu}_t-ess\,sup}_{x, y \in \R^d} \frac{|\gamma(t, x, y)|} {1 + |x| + |y|} \leq C.
    \end{equation}
    We say that the \CMVE{} with potentials $(U, \intPot)$ is \emph{well-posed} if for all $\lambda \in \mPSb$ there is a linear growth solution to the \CMVE{} on $[0, \infty)$ with initial condition $\lambda$ that is unique among the class of linear growth solutions. 
\end{definition}

Our first result, which is proved in Section \ref{sec:wp:proof}, shows that boundedness of the interaction term $\nabla \intPot$ is sufficient for well-posedness of the \CMVE{}. Our main results on long-time behavior described in the next section assume only well-posedness and thus apply more generally to the situation where $\nabla \intPot$ may be unbounded, but the \CMVE{} is still well-posed (see \cite{hu2024gaussian} for such an example).

\begin{theorem}[Well-posedness]
    \label{thm:results:bddWp}
    Fix $\kap, d \in \N$ with $\kap \geq 2$. Suppose $(U, \intPot)$ satisfy Assumption \ref{as:results:lyapunov}, and $\|\nabla \intPot\|_{L^\infty(\R^d)} < \infty$. Then the \CMVE{} with potentials $(U, \intPot)$ is well-posed.
\end{theorem}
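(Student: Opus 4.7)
The plan is to prove well-posedness by a Schauder fixed point argument on a space of candidate measure flows on $[0, T]$, combined with interior H\"older regularity estimates for linear Fokker-Planck equations, in the spirit of \cite{conforti2023projected} but with modifications to accommodate the automorphisms of $\bbT_\kap^1$. Fix $T > 0$ and an initial law $\lambda \in \mPSb$. First, I would define a candidate set $K$ consisting of continuous curves $(\mu_t)_{t \in [0, T]}$ in $\mPS$ with $\mu_0 = \lambda$, satisfying a uniform second moment bound and whose time marginals $\mu_t$ possess densities enjoying a uniform interior H\"older estimate. Given $\mu \in K$, define the candidate drift
\begin{equation*}
    \gamma_\mu(t, x, y) := \bbE^{\mu_t}\big[ b(\Y) \,\big|\, Y_0 = x,\, Y_1 = y\big],
\end{equation*}
which by the boundedness of $\nabla \intPot$ and Assumption \ref{as:results:lyapunov} satisfies $|\gamma_\mu(t, x, y)| \leq |\nabla U(x)| + \kap \|\nabla \intPot\|_{L^\infty(\R^d)} \leq C(1 + |x|)$ uniformly over $(t, y) \in [0, T] \times \R^d$ and $\mu \in K$, and inherits the H\"older regularity of the edge marginal density $\mrho_t$.

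Next, with $\gamma_\mu$ fixed, I would solve the linear SDE system \eqref{eq:not:mLocal} (with the full drift $b$ on $X_0$ and the projected drift $\gamma_\mu(\cdot, X_v, X_0)$ on each leaf $X_v$) to produce a weak solution $\X^\mu$, and define the self-map $\Phi(\mu)_t := \mL(\X^\mu(t))$. Moment bounds on $\Phi(\mu)$ follow from It\^o's formula and the linear growth of the drifts; since the density of $\X^\mu(t)$ solves a linear Fokker-Planck equation with coefficients of linear growth, the interior H\"older estimates of Appendix \ref{_sec:ap:PDE} provide density regularity uniform over $\mu \in K$. These bounds together yield the convexity and compactness of $K$ (via Arzel\`a-Ascoli on densities plus tightness from the moment bound) and the continuity of $\Phi$ on $K$: H\"older convergence of input densities forces pointwise a.e.\ convergence of the conditional densities and hence of $\gamma_{\mu^{(n)}} \to \gamma_\mu$, after which standard weak-convergence stability for SDEs with linear-growth drift gives $\Phi(\mu^{(n)}) \to \Phi(\mu)$. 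Schauder's theorem would then produce a fixed point, providing a linear growth solution on $[0, T]$; iteration over successive intervals (using that $\Phi(\mu)_T \in \mPSb$) extends this to $[0, \infty)$.

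The hard part will be ensuring $\Phi(K) \subset K$ with respect to the constraint $\mu_t \in \mPS$. Leaf exchangeability is preserved automatically because the leaves $X_1, \ldots, X_\kap$ have identical drift form and i.i.d.\ driving Brownian motions, but edge symmetry is delicate: unlike in \cite{conforti2023projected}, where the CMVE preserves single-vertex marginals, here the drifts on $X_0$ and $X_1$ (namely $b(\X)$ versus $\gamma_\mu(\cdot, X_1, X_0)$) are genuinely different and the SDE does not commute with the $0$-$1$ swap automorphism $\sigma$ of $\bbT_\kap^1$. I would address this by symmetrizing $\Phi$ via averaging its output under $\sigma$, then arguing that at a fixed point the consistency of $\gamma_\mu$ as a conditional expectation of $b$ combined with edge symmetry of $\mu$ forces $\Phi(\mu)$ itself to be edge symmetric, so the symmetrization is trivial and the original \CMVE{} is genuinely solved. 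Finally, uniqueness in the class of linear growth solutions would follow from a Gronwall argument: given two such solutions with the same initial data, boundedness of $\nabla \intPot$ controls the difference of the drifts in terms of the distance between the measure flows, and a Gronwall estimate for the associated Fokker-Planck equations in a suitable weighted distance would yield $\mu_t = \tilde{\mu}_t$ for all $t$.
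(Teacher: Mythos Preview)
Your overall architecture (Schauder fixed point in the spirit of \cite{conforti2023projected}, using interior H\"older estimates for the linear Fokker--Planck equation to get compactness and continuity) matches the paper's proof. The linear-growth bound on $\gamma_\mu$ and the iteration to $[0,\infty)$ are also as in the paper. However, two points need correction.

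\textbf{Edge symmetry.} Your proposed symmetrization under the $0$--$1$ swap $\sigma$ does not work as stated: $\sigma$ is \emph{not} an automorphism of the star graph $\bbT_\kap^1$ (the center $0$ and leaf $1$ have different degrees), so pushing forward by $\sigma$ and averaging destroys leaf exchangeability and takes you out of $\mPS$. The paper avoids this entirely: in the Schauder step it only imposes leaf exchangeability \eqref{eq:not:symmetry1_} on the candidate set (this \emph{is} preserved by $\Phi$, by permutation invariance of the frozen SDE). Edge symmetry \eqref{eq:not:symmetry2_} is verified only \emph{after} the fixed point $\mu$ is found, via a mimicking argument (Lemma~\ref{lem:wp:marginalSDE}, using Corollary~3.7 of \cite{brunick2013mimic}): since $\gamma(t,x,y)=\bbE^{\mu_t}[b(\Y)\mid Y_0=x,Y_1=y]$ at the fixed point, the $(X_0,X_1)$ time marginals coincide with those of the symmetric $2$-dimensional SDE $d\bar X_i=\gamma(t,\bar X_i,\bar X_{1-i})\,dt+\sqrt2\,d\tilde B_i$, whose law is manifestly exchangeable. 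Your parenthetical about ``consistency of $\gamma_\mu$ as a conditional expectation'' is pointing at exactly this mechanism, but the mimicking step is the actual content, and it replaces (rather than justifies) the symmetrization.

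\textbf{Uniqueness.} ``Gronwall in a suitable weighted distance'' is too vague; the paper's argument is specifically an entropy Gronwall. By Lemma~\ref{lem:wp:entropyEstimates}, $\mathcal H(\mu^1[t]\,|\,\mu^2[t])$ equals an integrated $L^2$ difference of the drifts $\gamma^1-\gamma^2$; then the weighted Csisz\'ar--Kullback--Pinsker inequality (Lemma~\ref{lem:wp:pinsker}) and the chain rule bound this drift difference by $C\|\nabla W\|_\infty\,\mathcal H(\mu^1_s\,|\,\mu^2_s)\le C\|\nabla W\|_\infty\,\mathcal H(\mu^1[s]\,|\,\mu^2[s])$, closing the loop. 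A generic metric Gronwall would not obviously close because conditional expectations are not Lipschitz in, e.g., Wasserstein distance.
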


\section{Results on Long-Time Behavior}

\subsection{The sparse free energy and an H-theorem for the \CMVE{}}
\label{sec:results:hTheorem}
Throughout this section, fix  $\kap, d \in \N$ with $\kap \geq 2$ and $U, \intPot \in C^1(\R^d)$. Define the function $\potential: (\R^d)^{1 + \kap} \rightarrow \R$ by
\begin{equation}
    \label{eq:results:potential}
    \potential(\mathbf{x}) := U(x_0) + \frac{1}{2}\sum_{v = 1}^\kap \intPot(x_0 - x_v).
\end{equation}
Recall the definition of the edge marginal $\bar{\nu}$ from Definition \ref{def:not:marginal} and the space $\mPS$ introduced in Definition \ref{def:not:symProb}. We define the \textit{\freeEnergy{}} $\energy_\kap: \mPS \rightarrow [-\infty, \infty]$ as follows:
\begin{equation}
    \energy_\kap(\nu) := \left \{ \begin{aligned}
        &\int_{(\R^d)^{1 + \kap}} \bigg( \log \nu(\x) - \frac{\kap}{2} \log \bar{\nu}(x_0, x_1) + g(\mathbf{x})\bigg) \nu(d \mathbf{x}), & \quad & \nu\ll \text{Lebesgue,} \\
        & \infty, &\quad & \text{otherwise.}
    \end{aligned} \right.
    \label{eq:results:entropy}
\end{equation}
By  \eqref{eq:results:potentialGrowth}  and \eqref{eq:results:potential}, we have that $|\bbH_\kap(\nu)| < \infty$ whenever $\nu \in \mPSb$. We also define the \emph{modified Fisher information} $\info_\kap : \mPS \rightarrow [0, \infty]$, which plays the role of an entropy production functional, by
    \begin{equation}
        \label{eq:results:info}
        \info_\kap(\nu) := \left \{ \begin{aligned}
        &\int_{(\R^d)^{1 + \kappa}} \bigg[ \big| b(\x) + \nabla_{x_0} \log \nu(\x)\big|^2 + \kappa \bigg|\nabla_{x_1} \log \frac{\nu(\x)}{\bar{\nu}( x_0, x_1)}\bigg|^2 \bigg]\nu(d\x), & \quad & \nu\ll \text{Lebesgue,} \\
        & \infty, &\quad & \text{otherwise.}
    \end{aligned} \right.
    \end{equation}

We now present our titular result, which is an H-theorem for the \CMVE{}. 
\begin{theorem}[H-theorem]
\label{thm:results:lyap}
Fix $d, \kap \in \N$ with $\kap \geq 2$ and $\lambda \in \mPSb$, where
$\mPSb$ is 
the set of admissible measures
from Definition  \ref{def:results:entropyVariance}. 
Let $(U, \intPot)$ satisfy Assumption \ref{as:results:lyapunov} and let $\bbH_\kap$ and $\bbI_\kap$ be as defined in \eqref{eq:results:entropy} and \eqref{eq:results:info}, respectively. If $(\mu, \gamma)$ is a linear growth solution to the \CMVE{} on $[0, T]$ with potentials $(U, \intPot)$ and initial condition $\lambda$ in the sense of Definition \ref{def:not:linGrowth}, then the following energy dissipation identity, 
    \begin{equation}
        \energy_\kap(\mu_t) - \energy_\kap(\mu_r) =  - \int_r^t \info_\kap(\mu_s) ds, 
        \label{eq:results:energyIdentity}
    \end{equation}
holds for almost all $0 < r < t < T$. 
     In particular, $\lyap_\kap$ is decreasing along the measure flow $t \mapsto \mu_t$.
\end{theorem}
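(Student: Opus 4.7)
The plan is to establish the dissipation identity in its differential form $\frac{d}{dt}\bbH_\kap(\mu_t) = -\bbI_\kap(\mu_t)$ by a direct computation, then integrate in time. First I would invoke parabolic regularity for the linear Fokker--Planck equation satisfied by $\mu_t$: since $(\mu,\gamma)$ is a linear growth solution, the drift $\eta$ in \eqref{eq:not:eta} has linear growth and the noise is non-degenerate, so standard results (collected in Appendix \ref{_sec:ap:PDE}) yield a smooth, strictly positive density for $t>0$ with locally square-integrable score $\nabla_{\x}\log\mu_t$. Integrating the Fokker--Planck equation over $(x_2,\ldots,x_\kap)$ and crucially using leaf exchangeability together with the defining identity
\[
\gamma(t,x_0,x_1)\,\bar\mu_t(x_0,x_1) \;=\; \int b(\x)\,\mu_t(\x)\,dx_{\geq 2},
\]
yields a linear Fokker--Planck equation for $\bar\mu_t$ on $(\R^d)^2$ with drift components $\gamma(t,x_0,x_1)$ and $\gamma(t,x_1,x_0)$.

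Next I would differentiate each of the three pieces of $\bbH_\kap$ separately by integration by parts against the respective drifts. To recombine the resulting expressions into $-\bbI_\kap(\mu_t)$, four ingredients are needed: (i) the $\gamma$--to--$b$ conversion $\int\varphi(x_0,x_1)\gamma(t,x_0,x_1)\bar\mu_t \,d(x_0,x_1) = \int \varphi(x_0,x_1)b(\x)\mu_t\,d\x$ for any bounded $\varphi$, which shuttles between edge-marginal and full-space integrals; (ii) the orthogonality
\[
\mathbb{E}_{\mu_t}\bigl[\,\nabla_{x_v}\log\mu_t - \nabla_{x_v}\log\bar\mu_t(x_0,x_v) \,\bigm|\, X_0,X_v\bigr] = 0,
\]
obtained by differentiating $\int (\mu_t/\bar\mu_t)\,dx_{\neq 0,v} \equiv 1$, which in turn gives the Pythagorean identity $\int |\nabla_{x_v}\log(\mu_t/\bar\mu_t)|^2\mu_t = \int|\nabla_{x_v}\log\mu_t|^2\mu_t - \int|\nabla_{x_v}\log\bar\mu_t|^2\mu_t$; (iii) edge symmetry of $\bar\mu_t$, which after relabelling implies the crucial cancellation $\int \nabla_{x_1}\log\bar\mu_t\cdot \gamma(t,x_1,x_0)\,\bar\mu_t = \int \nabla_{x_0}\log\bar\mu_t\cdot b\,\mu_t$; and (iv) the pointwise identity $(\nabla_{x_0}+\nabla_{x_v})W(x_0-x_v)\equiv 0$, which together with one further integration by parts delivers the collapse $\sum_{v=1}^\kap\int \nabla W(x_0-x_v)\cdot(\nabla_{x_0}\log\mu_t+\nabla_{x_v}\log\mu_t)\mu_t\,d\x = 0$. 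Combining these identities with leaf exchangeability (so that sums over $v\geq 1$ reduce to $\kap$ times the $v=1$ contribution) consolidates the three derivative contributions into precisely $-\int|b+\nabla_{x_0}\log\mu_t|^2\mu_t - \kap\int|\nabla_{x_1}\log(\mu_t/\bar\mu_t)|^2\mu_t = -\bbI_\kap(\mu_t)$.

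The main obstacle is not the algebraic recombination, which is mechanical once the right cancellations are isolated, but rather the rigorous justification of the manipulations: integration by parts on unbounded domains without \emph{a priori} decay, differentiation under the integral sign, and the potential non-integrability of $\bbI_\kap(\mu_s)$ near $s=0$. I would address these by approximating $\lambda\in\mPSb$ by a sequence of smoother, more rapidly decaying admissible initial conditions (e.g.\ via mollification and Gaussian tilting), deriving \eqref{eq:results:energyIdentity} on this dense subclass where every quantity is manifestly finite, and then passing to the limit using lower semicontinuity of $\bbH_\kap$ and $\bbI_\kap$ (and the finiteness of $\bbH_\kap$ on $\mPSb$, cf.\ Proposition \ref{prop:results:energyBound}) together with stability of linear growth solutions under such approximations. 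The resulting identity then holds for a.e.\ $0<r<t<T$, and the asserted monotonicity of $\bbH_\kap$ follows immediately from $\bbI_\kap\geq 0$.
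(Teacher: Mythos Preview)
Your algebraic plan is essentially the paper's: the same $\gamma$-to-$b$ tower identity, the same edge-symmetry swap, the same Pythagorean identity for $\nabla_{x_v}\log(\mu_t/\bar\mu_t)$, and the same $\nabla W$ antisymmetry (the paper derives the latter via its symmetry Lemma \ref{lem:lyap:symLemma} rather than by integrating by parts, which spares it the extra derivative on $W$ that your route would need). The decomposition the paper uses---split $\bbH_\kap(\mu_t)-\bbH_\kap(\mu_r)$ as $\Upsilon-\tfrac{\kap}{2}\bar\Upsilon$ and expand each via Lemma \ref{prop:lyap:pdeProp}---amounts to the same bookkeeping you outline.

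Where your proposal has a genuine gap is the rigorous justification step. You propose to approximate $\lambda$ by nicer initial data, prove the identity for those, and pass to the limit using ``stability of linear growth solutions'' plus lower semicontinuity. Two problems: first, no stability or continuous-dependence result for the \CMVE{} is available here---even uniqueness is only proved for bounded $\nabla W$, and the H-theorem is stated for any linear growth solution, so you cannot assume approximating solutions exist or converge. Second, even granting stability, lower semicontinuity of $\bbH_\kap$ and $\bbI_\kap$ gives inequalities, not the claimed \emph{equality} in \eqref{eq:results:energyIdentity}. The paper avoids both issues by never perturbing the nonlinear problem: it works directly with the given $(\mu,\gamma)$, treats it as a \emph{linear} Fokker--Planck equation with the fixed drift $\eta$, and invokes an appendix lemma (Lemma \ref{lem:ap1:lyapunov1}) that justifies the integrated energy identity for linear FPEs via mollification and cutoff of the solution density itself. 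This is the correct place to do the approximation---at the PDE level, not at the initial condition---and it uses only the a priori regularity of Proposition \ref{thm:ap1:wellPosed}.
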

 The proof of Theorem \ref{thm:results:lyap} can be found in Section \ref{sec:hTheoremProof}. A key part of the proof is the \textit{a priori} regularity of linear growth solutions to \CMVE{} established in Proposition \ref{thm:ap1:wellPosed} under Assumption \ref{as:results:lyapunov}. Among other things, this guarantees that $\mu_t \in \mPSb$ whenever $\mu$ is a linear growth solution to the \CMVE{}, and therefore the left-hand side of \eqref{eq:results:energyIdentity} is finite. We emphasize that Theorem \ref{thm:results:lyap} does not require boundedness of $\nabla W$; rather it holds more generally whenever the \CMVE{} has a linear growth solution.
 
 In light of Theorem \ref{thm:results:lyap},  when $\bbH_\kap$ is bounded below it can be used to define  a Lyapunov function
 for the flow $t \mapsto \mu_t$.  To guarantee this boundedness property, we impose the following coercivity assumption on $(U, \intPot),$ which will be used repeatedly in the sequel. 
\begin{assumption}[Coercivity]
    \label{as:results:invMeasure} 
    Suppose $U, K: \R^d \rightarrow \R$ are such that there exists a measurable function $q:\R^d \rightarrow \R$ that satisfies the following properties.
    \begin{enumerate}
        \item $q$ is uniformly bounded from below, that is, $\inf_{x \in \R^d} q(x) > - \infty$, and the following quantity is finite: 
        \begin{equation}
            R_q := \int_{\R^d} e^{-q(x)} dx < \infty.
            \label{eq:results:fPartitionFunction}
        \end{equation}
        Moreover for all $p \in [1, \infty)$, we have
        \begin{equation}
            \int_{\R^d}|x|^p e^{-q(x)}dx < \infty.
            \label{eq:results:fMoments}
        \end{equation}
        \item For all $x, y \in \R^d$, we have \begin{equation}
            U(x) + U(y) + \kappa \intPot(x - y) \geq q(x) + q(y).
            \label{eq:results:fBound}
        \end{equation}
    \end{enumerate}
\end{assumption}

The condition \eqref{eq:results:fBound} can be interpreted as requiring the magnitude of the interaction $|\intPot(x-y)|$ to be growing slower than $U(x) + U(y)$ for large $x, y \in \R^d$.

We also introduce the following strengthening of Assumption \ref{as:results:invMeasure} which will imply additional properties of the level sets of the \freeEnergy{} $\mathbb{H}_\kap$.
\begin{assumptionPrime}[Strong coercivity]
\label{as:results:strongCoercivity}
   Suppose $(U, \intPot)$ satisfy Assumption \ref{as:results:lyapunov} and Assumption \ref{as:results:invMeasure}, and for $q$ as in Assumption \ref{as:results:invMeasure}, there exists $\tilde{C}_q \in (0, \infty)$ such that
    \begin{equation}
        \label{eq:zeros:strongCoercivity}
        U(x) + U(y) + \kap \intPot(x - y) - q(x) - q(y) \geq \tilde{C}_q(|x|^2 + |y|^2), \quad x, y \in \R^d.
    \end{equation}
\end{assumptionPrime}

We show in the following proposition, which is proved in Section \ref{sec:levelSets}, that Assumption \ref{as:results:invMeasure} ensures that $\energy_\kap$ is uniformly bounded from below on $\mPSb$ and additionally has compact level sets under Assumption \ref{as:results:strongCoercivity}.

\begin{proposition}[Lower bound and level sets of $\bbH_\kap$] Suppose $(U, W)$ satisfy Assumption \ref{as:results:invMeasure}. 
\label{prop:results:energyBound}
    \label{lem:conv:compactLevelSets}
Then, with $R_q$ as in \eqref{eq:results:fPartitionFunction}, we have
\begin{equation}
   \energy_\kap^* :=  \inf_{\nu\in \mPSb} \energy_\kap(\nu) \geq - \log R_q,
    \label{eq:results:lowerBound}
\end{equation} and $\bbH_\kap$ is lower semicontinuous on $\mPSb$. If $(U, W)$ additionally satisfy Assumption \ref{as:results:strongCoercivity}, then for all $M \in \R$ we have
    \begin{equation}
            \mathcal{R}(M):=\{\nu \in \mPS: \mathbb{H}_\kap(\nu) \leq M\} \subset \mPSb,
            \label{eq:conv:levelsets}
    \end{equation}
and $\mathcal{R}(M)$ is compact in the weak topology. Moreover there exists $C_{q,M} \in (0, \infty)$ such that
     \begin{equation}
        \sup_{\nu \in \mathcal{R}(M)}\bigg\{  \int_{(\R^d)^{1 + \kap}} |\x|^2 \nu(d\x) \bigg\} \leq C_{q, M}.
        \label{eq:conv:secondMoment}
    \end{equation}

\end{proposition}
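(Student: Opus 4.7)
The plan is to express $\bbH_\kap + \log R_q$ as a sum of four manifestly nonnegative quantities, from which both the lower bound \eqref{eq:results:lowerBound} and the level-set assertions will follow cleanly. Introduce the reference probability $\pi_q(dx) := R_q^{-1}e^{-q(x)}\,dx$ and the nonnegative excess potential
\[
S(x_0, x_1) := U(x_0) + U(x_1) + \kap W(x_0 - x_1) - q(x_0) - q(x_1),
\]
which is nonnegative by \eqref{eq:results:fBound}. For any $\nu \in \mPSb$ define the \emph{Bethe lift}
\[
\nu^{\mathrm{B}}(\x) := \nu_0(x_0)^{-(\kap-1)}\prod_{v=1}^{\kap} \bar\nu(x_0, x_v),
\]
a bona fide probability density (the integrals over leaves telescope since $\int \bar\nu(x_0, x_v)\,dx_v = \nu_0(x_0)$). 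Using leaf exchangeability and edge symmetry to rewrite $\int g\,d\nu = \tfrac{1}{2}\bbE^{\bar\nu}[U(Y_0)+U(Y_1)+\kap W(Y_0-Y_1)]$, and the expansions $\entropy(\nu\|\nu^{\mathrm{B}}) = \int \log\nu\,d\nu - \kap \int \log\bar\nu\,d\bar\nu + (\kap-1)\int\log\nu_0\,d\nu_0$ and $\entropy(\bar\nu\|\nu_0^{\otimes 2}) = \int\log\bar\nu\,d\bar\nu - 2\int\log\nu_0\,d\nu_0$, a direct rearrangement yields the key identity
\[
\bbH_\kap(\nu) + \log R_q \;=\; \entropy(\nu\|\nu^{\mathrm{B}}) \;+\; \tfrac{\kap}{2}\,\entropy(\bar\nu\|\nu_0^{\otimes 2}) \;+\; \entropy(\nu_0\|\pi_q) \;+\; \tfrac{1}{2}\,\bbE^{\bar\nu}[S].
\]
All four terms on the right are nonnegative, so $\bbH_\kap(\nu) \geq -\log R_q$; for $\nu$ singular with respect to Lebesgue the inequality is trivial since $\bbH_\kap(\nu) = \infty$.

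For lower semicontinuity on $\mPSb$, one treats the decomposition term by term. The relative entropies $\entropy(\nu_0\|\pi_q)$ and $\entropy(\bar\nu\|\nu_0^{\otimes 2})$ are LSC in $\nu$ by the joint LSC of relative entropy combined with weak continuity of the marginal projections (fixed reference in the first case, mutual information in the second). The expectation $\bbE^{\bar\nu}[S]$ is LSC by Portmanteau since $S$ is continuous and nonnegative. The delicate term is $\entropy(\nu\|\nu^{\mathrm{B}}) = I(Y_1;\ldots;Y_{\kap} \mid Y_0)$, a conditional multi-information whose reference $\nu^{\mathrm{B}}$ depends on $\nu$ through conditional densities, so joint LSC of relative entropy does not apply directly. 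The plan is to show that along any sequence $\nu_n \to \nu$ with $\bbH_\kap(\nu_n)$ uniformly bounded, the resulting uniform bounds on $\entropy((\nu_n)_0\|\pi_q)$ and $\entropy(\bar\nu_n\|(\nu_n)_0^{\otimes 2})$ force uniform integrability of the root and edge densities (via de la Vall\'ee Poussin), upgrading weak convergence to $L^1$ convergence and hence $\nu_n^{\mathrm{B}} \to \nu^{\mathrm{B}}$ in total variation, at which point joint LSC of relative entropy applies. This is the main technical step I anticipate.

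Under Assumption \ref{as:results:strongCoercivity}, \eqref{eq:zeros:strongCoercivity} gives $S(x,y) \geq \tilde{C}_q(|x|^2 + |y|^2)$, so the identity implies, for any $\nu \in \mathcal{R}(M)$,
\[
\tilde{C}_q \int |x|^2 \nu_0(dx) \;=\; \tfrac{\tilde{C}_q}{2}\,\bbE^{\bar\nu}\bigl[|Y_0|^2 + |Y_1|^2\bigr] \;\leq\; \tfrac{1}{2}\,\bbE^{\bar\nu}[S] \;\leq\; M + \log R_q,
\]
and leaf exchangeability upgrades this to \eqref{eq:conv:secondMoment} with $C_{q,M} = (\kap+1)(M+\log R_q)/\tilde{C}_q$. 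For the containment $\mathcal{R}(M) \subset \mPSb$, each of the three relative entropies in the decomposition is likewise bounded by $M + \log R_q$; setting $y = 0$ in \eqref{eq:zeros:strongCoercivity} and invoking the at-most-quadratic growth \eqref{eq:results:potentialGrowth} of $|U|$ and $|W|$ yields $|q(x)| \leq C(1+|x|^2)$, so $\int q\,d\nu_0$ is finite, and combining successively with the finiteness of $\entropy(\nu_0\|\pi_q)$, $\entropy(\bar\nu\|\nu_0^{\otimes 2})$, and $\entropy(\nu\|\nu^{\mathrm{B}})$ one deduces finiteness of $|\int\log\nu_0\,d\nu_0|$, $|\int\log\bar\nu\,d\bar\nu|$, and finally $|\int\log\nu\,d\nu|$, placing $\nu$ in $\mPSb$. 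The uniform second-moment bound delivers tightness of $\mathcal{R}(M)$ by Markov's inequality, the lower semicontinuity established above yields weak closedness, and Prokhorov's theorem gives weak compactness.
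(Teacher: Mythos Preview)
Your four-term decomposition is correct, and in fact your three relative-entropy terms sum exactly to the quantity $\entropy(\nu\mid\alpha_q^{\otimes(1+\kap)}) - \tfrac{\kap}{2}\entropy(\bar\nu\mid\alpha_q^{\otimes 2})$ (with $\alpha_q=\pi_q$) that the paper uses; the paper simply invokes \cite{ChenRamYas} for the nonnegativity and lower semicontinuity of this combination, whereas you unpack it into three nonnegative pieces. For the lower bound, the second-moment estimate, the containment $\mathcal{R}(M)\subset\mPSb$, and the tightness/compactness argument, your reasoning is correct and matches the paper's.

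The genuine gap is in your lower semicontinuity sketch for $\entropy(\nu\|\nu^{\mathrm B})$. The mechanism you propose --- bounded relative entropy gives uniform integrability of the densities, which ``upgrades weak convergence to $L^1$ convergence'' --- is false. On $[0,1]$ with Lebesgue reference, $f_n(x)=1+\sin(2\pi nx)$ are uniformly bounded (hence UI), the associated measures converge weakly to Lebesgue, yet $f_n\not\to 1$ in $L^1$. So you cannot conclude $\nu_n^{\mathrm B}\to\nu^{\mathrm B}$ in total variation this way, and the joint LSC of relative entropy does not kick in. More fundamentally, conditional multi-information $I(Y_1;\ldots;Y_\kap\mid Y_0)$ need not be LSC on its own; it is only the \emph{sum} of your three entropy terms that is LSC. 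The paper does not attempt a direct proof either: it bundles those terms back into $\entropy(\nu\mid\alpha_q^{\otimes(1+\kap)}) - \tfrac{\kap}{2}\entropy(\bar\nu\mid\alpha_q^{\otimes 2})$ and cites \cite{ChenRamYas}, where this combination is identified as a large-deviations rate function and hence LSC. If you want a self-contained argument, you would need to prove LSC of the combined quantity directly rather than term by term.
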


\subsection{Stationary distributions} \label{sec:results:stationary} Our next series of results use the H-theorem to describe the connection between stationary distributions of the \CMVE{} and zeros of $\mathbb{I}_\kap$, which we denote by
\begin{equation}
\label{eq:results:mE}
    \mS_\kap := \{ \nu \in \mPS : \mathbb{I}_\kap(\nu) = 0\}.
\end{equation}
In analogy with the classical McKean-Vlasov setting, (e.g., see \cites{carrillo2003kinetic, guillin2022lsi}), we show that the H-theorem allows one to identify stationary distributions of the \CMVE{}. In particular, Theorem \ref{thm:results:lyap} suggests that $\mS_\kap$ can be interpreted as the set of critical points of the evolution of the sparse free energy $\lyap_\kap$ along the measure flow of the \CMVE{}. We establish this rigorously in Theorem \ref{thm:results:stationary} below. First, we define stationary distributions of the \CMVE{}.

\begin{definition}[Stationary distributions of the \CMVE{}]
    \label{def:results:stationary}
We say $\nu \in \mPS$ is a \emph{stationary distribution} of the \CMVE{} with potentials $(U, \intPot)$ if $\nu \in \mPSb$ and there exists a linear growth solution $(\mu^\nu, \gamma)$ to the \CMVE{} on $[0, \infty)$ with potentials  $(U, \intPot)$ and initial condition $\nu$ such that
    \begin{equation}
    \label{eq:results:stationary}
        \mu^\nu_t = \nu,\quad t \geq 0.
    \end{equation}
\end{definition}

We will utilize the following assumption, which assumes that elements of $\mS_\kap$ satisfy a linear growth condition in the spirit of Definition \ref{def:not:linGrowth}. We discuss in Remark \ref{rk:results:assumptions} many natural settings where Assumption \ref{as:results:fixedPoint} is satisfied.
\begin{assumption}[Linear growth for elements of $\mS_\kap$]    \label{as:results:fixedPoint}
    Suppose $(U, \intPot)$ satisfy Assumption \ref{as:results:lyapunov} and \ref{as:results:invMeasure} and that for all $\nu \in \mS_\kap$, the associated edge marginal $\bar{\nu}$ satisfies the following linear growth condition for some $C \in (0, \infty)$:\begin{equation*}
        \bigg|\int_{\R^d} \nabla \intPot(x - y) \bar{\nu}(x | y) dx \bigg| \leq C(1 + |y|), \quad \text{a.e.-}y \in \R^d. 
    \end{equation*} 
\end{assumption}

We now state the main theorem of this section, which shows that $\mS_\kap$ coincides with the set of stationary distributions of the \CMVE{}. The proof, which can be found in Section \ref{ss:statProof}, leverages a connection  identified in Theorem \ref{thm:results:zeros} between $\mS_\kap$ and a certain fixed point equation (Definition \ref{def:results:CBP}).  
\begin{theorem}[Zeros of $\info_\kap$]
    \label{thm:results:stationary}
    Let $(U, \intPot)$ satisfy Assumption \ref{as:results:fixedPoint}. Suppose $\nu \in \mPS$ is such that there exists a linear growth solution to the \CMVE{} on $[0, \infty)$ with potentials $(U, \intPot)$ and initial condition $\nu$.
    Then $\nu$ is a stationary distribution of the \CMVE{} if and only if $\nu$ lies in the set $\mS_\kap$ defined in \eqref{eq:results:mE}.  
\end{theorem}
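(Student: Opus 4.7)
The two implications can be handled separately, with the H-theorem delivering the forward direction and a direct construction delivering the backward one.

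The forward direction ($\nu$ stationary $\Rightarrow \nu \in \mS_\kap$) will follow immediately from the H-theorem (Theorem \ref{thm:results:lyap}): along any stationary solution $\mu^\nu_t \equiv \nu$, the energy dissipation identity \eqref{eq:results:energyIdentity} collapses to $0 = -\int_r^t \info_\kap(\nu)\,ds$ for all $0 < r < t$, which forces $\info_\kap(\nu) = 0$.

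For the backward direction, the plan is to construct a time-homogeneous solution to the \CMVE{} whose marginal flow is constant equal to $\nu$. From $\info_\kap(\nu) = 0$ I will first extract, via vanishing of the two non-negative integrands in \eqref{eq:results:info}, the pointwise identities
\begin{equation*}
b(\x) + \nabla_{x_0}\log\nu(\x) = 0, \qquad \nabla_{x_v}\log\nu(\x) = \nabla_{x_v}\log\bar{\nu}(x_0, x_v), \quad v = 1,\ldots,\kap,
\end{equation*}
holding $\nu$-a.e., the second being extended from $v = 1$ to all $v$ by leaf exchangeability. Together these imply that $X_1, \ldots, X_\kap$ are conditionally i.i.d.\ given $X_0$ under $\nu$, and Theorem \ref{thm:results:zeros} then supplies the explicit Cayley fixed point structure: the common conditional leaf density takes the form $\rho(z \mid x) \propto e^{-\intPot(x - z) - h(z)}$, where $h$ satisfies the tree recursion $h(x) - U(x) + (\kap - 1)\log\phi(x) = \text{const}$ with $\phi(x) := \int e^{-\intPot(x - z) - h(z)}\,dz$. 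I will then define the time-homogeneous drift $\gamma(x, y) := J(\nu, x, y)$, which by the conditional independence simplifies to $\gamma(x, y) = \nabla U(x) + \nabla \intPot(x - y) - (\kap - 1)\nabla\log\phi(x)$ and satisfies a linear growth condition by Assumption \ref{as:results:fixedPoint}. Standard SDE theory will then yield a unique strong, time-homogeneous Markov solution $\X$ of \eqref{eq:not:mLocal} with $\X(0) \sim \nu$.

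The key verification is that $\mu_t \equiv \nu$, which I will check via the stationary Fokker-Planck equation. The $x_0$-flux $\nu \cdot (b + \nabla_{x_0}\log\nu)$ vanishes directly by the first relation above; the $x_v$-flux equals $\nu \cdot (\gamma(x_v, x_0) + \nabla_{x_v}\log\bar{\nu}(x_0, x_v))$, and after using evenness of $\intPot$ to collapse $\nabla \intPot(x_v - x_0) + \nabla \intPot(x_0 - x_v) = 0$, this reduces to the requirement $\nabla h(x_v) = \nabla U(x_v) - (\kap - 1)\nabla\log\phi(x_v)$, which is precisely the differentiated form of the Cayley fixed point identity. The remaining items in Definition \ref{def:int:local} are routine: \eqref{eq:not:gamma} holds by construction of $\gamma$ once $\mu_t \equiv \nu$ is known, $\mu \in \mathcal{M}_{\kap, d}^T$ follows from $\nu \in \mPS$ and time-homogeneity, and \eqref{eq:cmve:integrability} follows from the linear growth of $b$ and $\gamma$ combined with $\nu \in \mPSb$. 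The main obstacle will be this per-coordinate flux cancellation: the vanishing Fisher information alone provides only integrated information, so closing the identity genuinely requires both the Cayley fixed point structure from Theorem \ref{thm:results:zeros} and the use of evenness of $\intPot$ at exactly the right step.
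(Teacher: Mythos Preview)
Your proposal is correct and follows essentially the same route as the paper: the forward direction via the H-theorem, and the backward direction by invoking Theorem \ref{thm:results:zeros} to obtain the Cayley fixed point structure, then defining the time-homogeneous drift $\gamma(x,y) = \nabla U(x) + \nabla W(x-y) + (\kap-1)\bbE^{\bar\nu}[\nabla W(Y_0-Y_1)\mid Y_0=x]$ (your $-(\kap-1)\nabla\log\phi$ term), verifying $\nabla_{\x}\log\nu + \eta^\nu = 0$ so that $\nu$ solves the stationary Fokker--Planck equation, and checking linear growth via Assumption \ref{as:results:fixedPoint}. The only small gaps in your plan are that $\nu\in\mPSb$ and the pointwise differentiability needed for the flux computation require Proposition \ref{lem:zeros:entropyAndMoments}, and that the passage from the stationary Fokker--Planck equation to a weak (not strong) SDE solution goes through the superposition principle rather than ``standard SDE theory''; both are straightforward to fill in.
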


\begin{remark}
\label{rk:results:assumptions}
    \normalfont The following are examples of $(U, \intPot)$ that satisfy Assumptions \ref{as:results:invMeasure}, \ref{as:results:strongCoercivity}, and\ref{as:results:fixedPoint}.
    \begin{enumerate}
    \item Assumption \ref{as:results:invMeasure} is satisfied if $U$ is superlinear and $\intPot$ is bounded below. 
        \item If $U$ grows quadratically at infinite, that is there exists $c, R \in (0, \infty)$ such that for all $|x| > R$ we have
         $   U(x) \geq c|x|^2,$
        and $\|\nabla \intPot \|_{L^\infty(\R^d)} < \infty$, we have that both Assumption \ref{as:results:strongCoercivity} and Assumption \ref{as:results:fixedPoint} are satisfied.
        \item If $U(x) \geq c_1|x|^2$ and $\kap|\intPot(x)| \leq c_2 |x|^2$ for $c_1 > 2c_2$, then Assumption \ref{as:results:strongCoercivity} is satisfied.
        \item When $d = 1$, inspection of the proof of Theorem 1.10 in \cite{lacker2023stationary} shows that if $(U, \intPot)$ are even, twice continuously differentiable, and
        \begin{equation*}
            \inf_x U''(x) > \kap\big( \|\intPot''\|_{L^\infty(\R)} - \inf_x \intPot''(x)\big),
        \end{equation*}
        then Assumption \ref{as:results:fixedPoint} is satisfied. If, in addition, $U$  grows quadratically at infinity, then Assumption \ref{as:results:strongCoercivity} is also satisfied. 
    \end{enumerate}
\end{remark}

Next, we obtain in Theorem \ref{thm:results:convergence} below the convergence of solutions of the \CMVE{} to $\mS_\kap$.  Its proof is relegated to
Section \ref{sec:convergenceProofs}. 

\begin{theorem}[Convergence to stationary distributions]
\label{thm:results:convergence}
      Let $(U, \intPot)$ satisfy Assumption  \ref{as:results:strongCoercivity} and Assumption \ref{as:results:fixedPoint}, and $\lambda \in \mPSb$ satisfy $\lyap_\kap(\lambda) < \infty$.       Let $(\mu, \gamma)$ be a linear growth solution to the \CMVE{} on $[0, \infty)$ with potentials $(U, \intPot)$ and initial condition $\lambda$.
      Then we have
    \begin{equation}
    \label{eq:results:convergenceToEkap}
        \lim_{t \rightarrow \infty}d_{LP}(\mu_t, \mS_\kap) = 0,
    \end{equation}
    where $d_{LP}$ is the Levy-Prokhorov metric.
\end{theorem}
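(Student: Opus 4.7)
The plan is to implement a LaSalle-style invariance argument using the H-theorem of Theorem \ref{thm:results:lyap} as the Lyapunov structure, together with the level-set compactness provided by Proposition \ref{prop:results:energyBound} and the identification of $\mS_\kap$ as the stationary set furnished by Theorem \ref{thm:results:stationary}.

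First I would exploit the energy dissipation identity \eqref{eq:results:energyIdentity} to conclude that $t \mapsto \bbH_\kap(\mu_t)$ is nonincreasing. Since Assumption \ref{as:results:strongCoercivity} implies Assumption \ref{as:results:invMeasure}, Proposition \ref{prop:results:energyBound} gives $\bbH_\kap(\mu_t) \geq -\log R_q > -\infty$, so $\bbH_\kap(\mu_t) \downarrow L$ for some finite $L$. Setting $M := \bbH_\kap(\lambda) < \infty$, this forces the whole trajectory into $\mathcal{R}(M)$, which is weakly compact under Assumption \ref{as:results:strongCoercivity} by Proposition \ref{prop:results:energyBound}, so $\{\mu_t : t \geq 0\}$ is weakly relatively compact. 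The identity \eqref{eq:results:energyIdentity} also yields the key integrability bound $\int_0^\infty \bbI_\kap(\mu_s)\, ds = M - L < \infty$.

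Next I would show that every subsequential weak limit of $(\mu_t)$ lies in $\mS_\kap$. Given any sequence $t_n \to \infty$, extract (using compactness) a weakly convergent subsequence $\mu_{t_n} \to \mu_\infty$. Fix $T > 0$ and define the shifted flows $\nu^n_s := \mu_{t_n + s}$, each of which is itself a linear growth solution to the \CMVE{} on $[0,T]$ started at $\mu_{t_n}$. Applying \eqref{eq:results:energyIdentity} to $\nu^n$ gives
\begin{equation*}
\int_0^T \bbI_\kap(\nu^n_s)\, ds \;=\; \bbH_\kap(\mu_{t_n}) - \bbH_\kap(\mu_{t_n + T}) \;\xrightarrow{n \to \infty}\; L - L \;=\; 0.
\end{equation*}
I would then argue, via a stability/compactness argument for the \CMVE{} — controlling the family $\{\nu^n\}$ through the uniform second-moment bound \eqref{eq:conv:secondMoment} and the a priori regularity of solutions underlying Theorem \ref{thm:results:bddWp} — that a subsequence $\nu^{n_k}$ converges in a suitable path-space topology to a linear growth solution $\nu^\infty$ of the \CMVE{} with $\nu^\infty_0 = \mu_\infty$. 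Combining weak lower semicontinuity of $\bbI_\kap$ with Fatou's lemma then gives $\int_0^T \bbI_\kap(\nu^\infty_s)\, ds \leq \liminf_k \int_0^T \bbI_\kap(\nu^{n_k}_s)\, ds = 0$, so $\nu^\infty_s \in \mS_\kap$ for a.e.\ $s \in [0,T]$. Since $\mS_\kap$ is weakly closed (by the same lower semicontinuity) and $s \mapsto \nu^\infty_s$ is continuous, letting $s \downarrow 0$ along a full-measure set yields $\mu_\infty = \nu^\infty_0 \in \mS_\kap$.

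Finally, \eqref{eq:results:convergenceToEkap} follows by a standard contradiction: if $d_{LP}(\mu_t, \mS_\kap)$ did not vanish, there would exist $\eps > 0$ and $t_n \to \infty$ with $d_{LP}(\mu_{t_n}, \mS_\kap) \geq \eps$, but the argument above would extract a further subsequence $\mu_{t_{n_k}} \to \mu_\infty \in \mS_\kap$, forcing $d_{LP}(\mu_{t_{n_k}}, \mS_\kap) \to 0$. The main obstacle I expect is the stability step: the drift $\gamma$ is defined only implicitly through a conditional expectation of $b(\X)$ given $(X_0, X_1)$, so propagating the convergence $\nu^n_0 \to \mu_\infty$ through the nonlinear fixed-point that defines this drift is delicate. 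I would handle it by combining tightness in path space with the interior Hölder estimates for Fokker-Planck equations used in the proof of Theorem \ref{thm:results:bddWp}, supplemented by uniform integrability of $\eta(t, \X(t))$ along the trajectory to ensure stability of the drift under weak limits.
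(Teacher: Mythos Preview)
Your overall LaSalle-type strategy is the right one, and the first and last steps (monotone convergence of $\bbH_\kap(\mu_t)$, compactness of $\{\mu_t\}$ in $\mathcal{R}(M)$, and the contradiction argument at the end) match the paper's proof. The genuine gap is in the middle step, where you aim to show every weak limit point lies in $\mS_\kap$.

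You propose to pass the entire \CMVE{} to the limit along shifted flows $\nu^n_\cdot = \mu_{t_n+\cdot}$, obtaining a limiting solution $\nu^\infty$ with $\nu^\infty_0 = \mu_\infty$. This stability step is not available under the hypotheses of the theorem: the interior H\"older estimates you want to invoke from Theorem \ref{thm:results:bddWp} are established only under the extra assumption $\|\nabla W\|_{L^\infty} < \infty$, whereas Theorem \ref{thm:results:convergence} assumes only Assumptions \ref{as:results:strongCoercivity} and \ref{as:results:fixedPoint}. More fundamentally, stability of CMVE solutions under weak convergence of initial data is nowhere proved in the paper and would require controlling the conditional-expectation drift $\gamma$ under weak limits---precisely the difficulty you flag but do not resolve. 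You then also invoke ``weak lower semicontinuity of $\bbI_\kap$'' and closedness of $\mS_\kap$ as if these were known; they are not standard, since $\bbI_\kap$ mixes a Fisher-type term with a \emph{conditional} Fisher term $\int |\nabla_{x_1}\log(\nu/\bar\nu)|^2\,d\nu$.

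The paper avoids the stability step entirely and instead proves directly that $\bbI_\kap(\mu_\infty) \leq \liminf_n \bbI_\kap(\mu_{t_n}) = 0$ along a suitable subsequence. The key idea is: once you know $\sup_n \bbI_\kap(\mu_{t_n}) \leq \eps$ and $\sup_n \int |\x|^2\, d\mu_{t_n} < \infty$ (the latter from \eqref{eq:conv:secondMoment}), you can bound the full Fisher information of both $\mu_{t_n}$ and $\bar\mu_{t_n}$ uniformly in $n$ (using Lemma \ref{lem:conv:marginalInformation} for the marginal). This gives weak $L^2$ compactness of the score functions $\sqrt{\mu_{t_n}}\,\nabla_\x \log \mu_{t_n}$ and $\sqrt{\mu_{t_n}}\,\nabla_{x_1}\log \bar\mu_{t_n}$, from which lower semicontinuity of each summand of $\bbI_\kap$ follows by lower semicontinuity of the $L^2$ norm under weak $L^2$ limits. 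This direct argument on time-marginals is both simpler and valid under the stated assumptions.
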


\begin{remark}
  \label{rem-Lyapunov}
 {\em Theorem \ref{thm:results:lyap}, Proposition \ref{prop:results:energyBound}, and Theorem \ref{thm:results:convergence} together imply that $\nu \mapsto \lyap_\kap(\nu) - \bbH_\kap^*$ is a strong global Lyapunov function for the \CMVE{} measure flow  (namely a non-negative functional on $\mPS$ that is continuous and  decreasing along the measure flow for any admissible initial condition and strictly decreasing outside stationary points; see \cite{cazenave1998semilinear}). It is worth emphasizing that this property holds even when the $\kappa$-MLFE admits multiple stationary distributions.}
  \end{remark}

\subsection{Continuous Gibbs measures and \FPES{}}
\label{sec:results:CBP}
An important ingredient in the proof of Theorem \ref{thm:results:stationary} is the following fixed point problem whose solutions we refer to as  \emph{\FPES{}}. 
As mentioned in the introduction,
these fixed points are marginals of certain continuous Gibbs measures that are stationary distributions of interacting diffusions
on the $\kap$-regular tree $\bbT_\kap$.
We show in Theorem \ref{thm:results:zeros} that the set of \FPES{} coincides with $\mS_\kap$. 

\begin{definition}[\FPES{}]
\label{def:results:CBP}
Let $(U, \intPot)$ satisfy Assumption \ref{as:results:lyapunov} and Assumption \ref{as:results:invMeasure}. We say $\nu \in \mPS$ is a \emph{\FPE{}} if $\nu$ is of the form \begin{equation}
        \nu(\x) = \nu_0(x_0)\prod_{i = 1}^\kap \bar{\nu}(x_i | x_0),
        \label{eq:zeros:1MRF}
    \end{equation}
    where $\nu_0 \in \mP(\R^d)$ is absolutely continuous and solves the following fixed point equation
    \begin{equation}
        \label{eq:zeros:fixedPoint}
        \nu_0(x)^{\frac{1}{\kap}} = \frac{1}{\mathcal{Z}_{\nu_0}} e^{-\tfrac{1}{\kap}U(x)}\int_{\R^d} e^{-\intPot(x-y) - \frac{1}{\kap}U(y)}\nu_0(y)^{\frac{\kap-1}{\kap}} dy, \quad \nu_0\text{-a.e. }x \in \R^d,
    \end{equation}
   where the normalizing constant is given by 
    \begin{equation}
        \label{eq:zeros:zMu}
        \mathcal{Z}_{\nu_0} := \int_{\R^d \times \R^d} \exp\Big( - \tfrac{U(x) + U(y)}{\kap} - \intPot(x -y)\Big) \big[\nu_0(x)\nu_0(y)\big]^{\frac{\kap-1}{\kap}} dx dy,
    \end{equation}
    and $\bar{\nu}(x|y) := \bar{\nu}(x, y) / \nu_0(y)$ for $\nu_0$-a.e. $x, y \in \R^d$ where the edge marginal $\bar{\nu}(x, y)$ of $\nu$ is mutually absolutely continuous with respect to $\nu_0^{\otimes 2}$ and is of the form
    \begin{equation}
        \label{eq:zeros:muBar}
        \bar{\nu}(x,y) = \frac{1}{\mathcal{Z}_{\nu_0}}\exp\Big( - \tfrac{U(x) + U(y)}{\kap} - \intPot(x -y)\Big) \big[\nu_0(x)\nu_0(y)\big]^{\frac{\kap-1}{\kap}}, \quad \nu_0^{\otimes 2}\text{-a.e. }(x, y) \in \R^d \times \R^d.
    \end{equation}
\end{definition}
\begin{remark}
\label{rk:results:FPE}
\normalfont
    Note that by \eqref{eq:zeros:zMu}-\eqref{eq:zeros:muBar},  Assumption \ref{as:results:invMeasure} and H\"older's inequality, the normalizing constant $\mathcal{Z}_{\nu_0}$ defined in \eqref{eq:zeros:zMu} is finite for all $\nu_0 \in \mP(\R^d)$. Furthermore, by \eqref{eq:zeros:1MRF}-\eqref{eq:zeros:muBar} we have 
    \begin{equation}
        \nu(\x) = \frac{1}{\mathcal{Z}_{\nu_0}^\kap} \exp \bigg( - U(x_0) - \sum_{v = 1}^\kap \intPot(x_0 - x_v) \bigg) \prod_{v = 1}^\kap e^{-\frac{1}{\kap}U(x_v)}\nu_0(x_v)^{ \frac{\kap-1}{\kap}}, \quad \nu_0^{\otimes (1 + \kap)}\text{-a.e. }\x \in (\R^d)^{1 + \kap},
        \label{eq:zeros:altCBP}
    \end{equation}
    with $\nu_0$ being a solution to the fixed point equation \eqref{eq:zeros:fixedPoint}.  
    Through \eqref{eq:zeros:1MRF} and \eqref{eq:zeros:muBar}, we see that $\nu$ is entirely determined by $\nu_0$ and \eqref{eq:zeros:zMu} follows from \eqref{eq:zeros:fixedPoint} if $\nu_0$ is a probability measure. In view of this observation, we will sometimes abuse terminology (also as usual conflating the measure $\nu_0$ with its density) and say $\nu_0 \in \mP(\R^d)$ is a \FPE{} when $\nu_0 \in \mP(\R^d)$ satisfies \eqref{eq:zeros:fixedPoint}-\eqref{eq:zeros:zMu}.
\end{remark}

Our next theorem identifies \FPES{} with the zeros of $\bbI_\kap$. Its proof is given in Section \ref{sec:stationaryProofs}.

\begin{theorem}[Characterization of \FPES{}]
\label{thm:results:zeros}
Fix $d, \kap \in  \N$ and $\kap \geq 2$. Let $(U, \intPot)$ satisfy Assumption \ref{as:results:lyapunov} and Assumption \ref{as:results:invMeasure}. Let $\mS_\kap$ be as in \eqref{eq:results:mE}. Then $\nu \in \mS_\kap$ if and only if $\nu \in \mPS$ is a  \FPE{} in the sense of Definition \ref{def:results:CBP}. \end{theorem}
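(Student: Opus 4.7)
The plan is to prove both directions of the equivalence separately. The easier direction ($\Leftarrow$) is a direct computation, whereas the forward direction ($\Rightarrow$) requires extracting structure from the vanishing of the two terms defining $\bbI_\kap$.

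For the forward direction, assume $\nu \in \mS_\kap$. Since $\bbI_\kap(\nu) = 0$ and both integrands in \eqref{eq:results:info} are nonnegative, we get $\nu$-a.e. the two pointwise conditions
\begin{equation*}
  \text{(a)}\quad \nabla_{x_0} \log \nu(\x) = -b(\x), \qquad \text{(b)}\quad \nabla_{x_1} \log \frac{\nu(\x)}{\bar\nu(x_0,x_1)} = 0.
\end{equation*}
I would first exploit (b) together with the leaf exchangeability in Definition \ref{def:not:symProb} to obtain the 1-MRF representation \eqref{eq:zeros:1MRF}. Concretely, (b) means that $\nu(\x)/\bar\nu(x_0,x_1)$ is independent of $x_1$, and rewriting this as $\nu(\x) = \bar\nu(x_1 \mid x_0)\,\nu(x_0,x_2,\ldots,x_\kap)$ states that $x_1$ is conditionally independent of $(x_2,\ldots,x_\kap)$ given $x_0$. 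Applying the analogous identity (which follows from leaf exchangeability) successively for $v=2,\ldots,\kap$ and taking ratios to separate variables produces the factorization $\nu(\x) = \nu_0(x_0)\prod_{v=1}^\kap \bar\nu(x_v \mid x_0)$, where $\nu_0$ is the $x_0$-marginal of $\nu$.

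Next, I would feed this factorized form into condition (a) to pin down $\bar\nu$ and $\nu_0$. Since $\nabla_{x_0}\log\nu(\x) = \nabla \log\nu_0(x_0) + \sum_v \nabla_{x_0}\log\bar\nu(x_v\mid x_0)$, (a) becomes
\begin{equation*}
  \nabla\log\nu_0(x_0) + \nabla U(x_0) = -\sum_{v=1}^\kap\bigl[\nabla_{x_0}\log\bar\nu(x_v\mid x_0) + \nabla W(x_0-x_v)\bigr].
\end{equation*}
The left side depends only on $x_0$; integrating the right side against $\prod_{v \ge 2}\bar\nu(x_v\mid x_0)$ shows that $F(x_0,x_1) := \nabla_{x_0}\log\bar\nu(x_1\mid x_0) + \nabla W(x_0-x_1)$ depends only on $x_0$, and leaf exchangeability forces the same conclusion for each $v$. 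Integrating the identity $\nabla_{x_0}[\log\bar\nu(x_v\mid x_0) + W(x_0-x_v)] = F(x_0)$ produces $\log\bar\nu(x_v\mid x_0) = \Phi(x_0) - W(x_0-x_v) + \psi(x_v)$ for suitable scalar functions $\Phi,\psi$, and likewise $\log\nu_0(x_0) = -U(x_0) - \kap\Phi(x_0) + \text{const}$. Imposing the edge symmetry $\bar\nu(x_0,x_v) = \bar\nu(x_v,x_0)$ from Definition \ref{def:not:symProb} and using that $W$ is even forces the combination $\psi + U + (\kap-1)\Phi$ to be constant, which expresses $\psi$ back in terms of $\nu_0$. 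Eliminating $\Phi$ and $\psi$ recovers exactly the form \eqref{eq:zeros:muBar} for $\bar\nu$, and requiring $\bar\nu$ to be a probability density (or equivalently that $\nu_0$ is the marginal of $\bar\nu$) yields both the normalization \eqref{eq:zeros:zMu} and the fixed point equation \eqref{eq:zeros:fixedPoint}. Integrability of $\mathcal{Z}_{\nu_0}$ follows from Remark \ref{rk:results:FPE}.

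For the converse $(\Leftarrow)$, let $\nu$ be a Cayley fixed point so that, by Remark \ref{rk:results:FPE}, $\nu$ has the explicit form \eqref{eq:zeros:altCBP}. Then $\log \nu(\x)$ depends on $x_0$ only through the term $-U(x_0) - \sum_v W(x_0-x_v)$, so $\nabla_{x_0}\log\nu(\x) = -b(\x)$, verifying (a). A direct subtraction using \eqref{eq:zeros:altCBP} and \eqref{eq:zeros:muBar} shows that $\log\nu(\x) - \log\bar\nu(x_0,x_1)$ is free of $x_1$, verifying (b). Hence $\bbI_\kap(\nu)=0$.

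The main obstacle I expect is justifying the separation-of-variables steps at the level of a.e. identities, in particular rigorously passing from the two conditions (a)–(b) (which hold only $\nu$-a.e.\ and on the support of $\nu$) to pointwise relations that uniquely determine the densities $\bar\nu$, $\nu_0$ on all of $\R^d$. This requires the \emph{a priori} regularity and positivity of densities of zeros of $\bbI_\kap$ (finiteness of $\bbI_\kap$ forces weak differentiability of $\log\nu$ and the absolute continuity of $\bar\nu$ with respect to $\nu_0^{\otimes 2}$), as well as verifying that the potential $\Phi$ obtained by integrating $F$ is globally well-defined. Once these measure-theoretic points are addressed via the Fokker–Planck regularity results of Appendix \ref{_sec:ap:PDE} and the coercivity Assumption \ref{as:results:invMeasure}, the remainder of the proof reduces to the algebraic manipulations sketched above.
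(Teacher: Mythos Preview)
Your proposal is correct and follows the same overall strategy as the paper: extract the two pointwise conditions from $\bbI_\kap(\nu)=0$, use condition (b) with leaf exchangeability to obtain the 1-MRF factorization \eqref{eq:zeros:1MRF}, then feed this into condition (a) and use edge symmetry to pin down $\bar\nu$ and $\nu_0$. The converse direction is also handled identically.

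There is one notable difference in the algebra of the forward direction. In extracting the form of $\bar\nu$, the paper works at the level of functions rather than gradients: since $\nabla_{x_0}$ of the scalar $U(x_0)+\log\nu_0(x_0)+\sum_v[W(x_0-x_v)+\log\bar\nu(x_v\mid x_0)]$ vanishes, the expression itself is independent of $x_0$, and fixing $x_0$ to any point in the support immediately reveals the additive structure $w(x_1)+\cdots+w(x_\kap)$. This sidesteps your need to integrate the vector field $F(x_0)$ to recover $\Phi$, and hence dissolves your concern about global well-definedness of $\Phi$. Your anticipated resolution via the Fokker--Planck regularity of Appendix \ref{_sec:ap:PDE} is misplaced: the paper's argument is purely algebraic and uses no PDE machinery. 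The identities in Definition \ref{def:results:CBP} are themselves stated only $\nu_0$-a.e.\ and $\nu_0^{\otimes 2}$-a.e., so there is no need to upgrade to pointwise relations on all of $\R^d$; positivity and continuity of $\nu_0$ are established separately in Proposition \ref{lem:zeros:entropyAndMoments} under the stronger Assumption \ref{as:results:fixedPoint}, not here. For the converse, the only regularity input is Lemma \ref{lem:zeros:betheRegularity}, which justifies differentiating the Cayley fixed point densities.
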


\begin{remark}[Relationship to \cite{lacker2023stationary}]
\label{rk:lackerZhang}\normalfont
We show in Proposition \ref{lem:zeros:entropyAndMoments} that under Assumption \ref{as:results:fixedPoint}, a  \FPE{} satisfies many nice properties. We also show in Corollary \ref{cor:LackerZhang} that the \FPE{} equation is equivalent to the fixed point problem described in Definition 1.3 of \cite{lacker2023stationary} (or its natural analog in the case $d > 1$). Together with Theorems 1.4-1.6 of \cite{lacker2023stationary}, this shows that the  \FPES{} are the marginals of continuous Gibbs measures on regular trees that are automorphism invariant stationary distributions of interacting diffusions on the infinite regular tree.  Moreover, existence and uniqueness of \FPES{} follows from the conditions found in Theorem 1.9 and 1.10 of \cite{lacker2023stationary}. For instance, Theorem 1.9 of \cite{lacker2023stationary} is used in Theorem \ref{thm:results:uniqueRegime} below to show that there exists a unique solution to the \FPE{} when $\kap = 2$.
\end{remark}

\subsection{$\kappa = 2$: Renormalized entropy and exponential convergence}\label{sec:results:kap2} In this section we summarize the additional results that we obtain for the special case $\kappa = 2$. In Theorem \ref{thm:results:uniqueRegime} we derive an alternative representation of the sparse free energy as the difference of relative entropies. Furthermore, we show in Theorem \ref{thm:results:LSI} that under additional assumptions, we have a uniform-in-$n$ logarithmic Sobolev inequality for a certain Gibbs measure (see Definition \ref{def:results:gibbs}) and exponential decay of the sparse free energy along the \CMVE{} measure flow. 

\subsubsection{The sparse free energy as the limit of renormalized entropies} We start by defining a sequence of line graphs $\{\bbT_2^n\}_{n \geq 1}$ that approximates (or more precisely, converges locally to) the $2$-regular tree $\bbT_2$.

\begin{definition}[Truncated 2-trees]
    For $n \in \N$, define the \emph{truncated 2-tree} $\bbT_2^n := (V_n, E_n)$  with
    \begin{align*}
        V_n &:= \{-n, \ldots, n\}, \quad E_n := \{(v, v+1) : v = -n, \ldots, n-1\}.
    \end{align*}Here, we have $\x = (x_{-1}, x_0, x_1) \in (\R^d)^3$, and we let $\x^{(n)} := (x_{-n}, \ldots, x_n)$ denote a vector in $(\R^d)^{V_n}$.
    \label{def:results:Tn2}
\end{definition}

Our main insight is the identification of the sparse free energy as the limit of renormalized relative entropies in Theorem \ref{thm:results:2entropy}.  A similar, though somewhat simpler,  procedure has been effectively utilized in the mean-field setting, where a Lyapunov function for the mean-field McKean-Vlasov equation was constructed by taking the limit of renormalized relative entropies with respect to a finite-dimensional Gibbs measure (see e.g., Lemma 17 of \cite{guillin2022lsi} and also \cites{budhiraja2015entropy, budhiraja2015kolmogorov} for the case of finite-state pure jump processes). 
\begin{definition}[Gibbs measure]
\label{def:results:gibbs}
    For $n \in \N$, $n > 2$, define the \emph{Gibbs measure} $\theta^n \in \mP((\R^d)^{V_n})$ by
\begin{equation}
    \theta^n(d\x^{(n)}) := \frac{1}{\mathcal{Z}^n} \exp\bigg( - \frac{1}{2}\sum_{(u, v) \in E_n} \newW(x_u, x_v)\bigg)  d\x^{(n)},
    \label{eq:results:gibbs}
\end{equation}
where $\newW$ is given by  
\begin{equation}
    \newW(x, y) := U(x) + U(y) + 2\intPot(x - y), \quad x, y \in \R^d,
    \label{eq:results:W}
\end{equation}
and $\mathcal{Z}^n$ is the associated normalization constant or \emph{partition function}:
\begin{equation}
    \mathcal{Z}^n := \int_{(\R^d)^{V_n}} \exp\bigg( - \frac{1}{2}\sum_{(u, v) \in E_n} \newW(x_u, x_v)\bigg) d\x^{(n)}.
    \label{eq:results:gibbsPFN}
\end{equation}
Note that if $(U, \intPot)$ satisfy Assumption \ref{as:results:invMeasure}, then $\mathcal{Z}^n$ is finite and $\theta^n$ has finite moments of all orders. 
\end{definition}

The second ingredient in our renormalization procedure is a reconstruction of the finite-dimensional particle system from a given marginal distribution. The results of \cite{lacker2021MRF} suggest that there should be a natural 2nd-order Markov random field (2-MRF) structure for interacting diffusions on sparse graphs. Hence, we define the following lift map.

\begin{definition}[Lift map]
    \label{def:results:2measures}
    Let $\nu \in \mathcal{M}_{2, d}$. Define the \emph{lift map} to be the absolutely continuous measure $\psi_\nu^n \in  \mP((\R^d)^{V_n})$ with density
\begin{equation}
    \label{eq:entropy:2reconstruction}
    \psi_\nu^n\big(\x^{(n)}\big) := \prod_{v = -n+1}^{n-1} \nu(x_{v-1}, x_v, x_{v+1}) \prod_{v = -n}^{n-1} \frac{1}{\bar{\nu}(x_v, x_{v+1})}, \quad \x^{(n)} \in (\R^d)^{V_n}
\end{equation}
We note here that any empty products are interpreted as 1.
\end{definition}
    The lift $\psi_\nu^n$ should be thought of as the natural 2-MRF on $\bbT_{2}^n$ with fixed neighborhood marginals $\nu$. See Lemma \ref{lem:entropy:symmetry} for a precise statement. The following theorem, which is proved in Section \ref{sec:entropyRenormalization}, shows that the suitably normalized log partition function of $\theta^n$ converges and identifies the sparse free energy as the limit of renormalized relative entropies. 
\begin{theorem}[Entropy renormalization]
\label{thm:results:2entropy}
    Suppose $(U, \intPot)$ satisfy Assumption \ref{as:results:invMeasure}. Recall the definition of $\mathcal{Z}^n$ from \eqref{eq:results:gibbsPFN}. There exists a finite constant $\bbH_2^* \in \R$ such that
    \begin{equation}
         \bbH_2^* = \lim_{n \rightarrow \infty} \frac{1}{2n+1} \log \mathcal{Z}^n.
        \label{eq:results:Hstar}
    \end{equation}For all $\nu \in \mathcal{Q}_{2, d}$, we have
    \begin{equation}
        \lim_{n \rightarrow \infty} \frac{1}{2n+1} \mathcal{H}(\psi^n_\nu | \theta^{n}) = \energy_2(\nu) - \bbH_2^*,
        \label{eq:results:2convergence}
    \end{equation}
\end{theorem}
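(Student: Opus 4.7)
The plan is to compute $(2n+1)^{-1}\mathcal{H}(\psi^n_\nu \mid \theta^n)$ in closed form by leveraging the explicit product structure of $\psi^n_\nu$ together with its marginal identities from Lemma \ref{lem:entropy:symmetry}, then extract the $\nu$-dependent part as $\bbH_2(\nu)$ (up to $o(1)$) and the $\nu$-independent part as $(2n+1)^{-1}\log\mathcal{Z}^n$. Both assertions of the theorem reduce to showing that the latter converges to a finite limit, which (up to sign) is $\bbH_2^*$. The main technical challenge is establishing existence of that limit; once available, the remaining bookkeeping, including matching with $\inf_\nu \bbH_2(\nu)$ from Proposition \ref{prop:results:energyBound}, is routine.

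For the main computation, writing $-\log\theta^n = \log\mathcal{Z}^n + \tfrac12 \sum_{(u,v)\in E_n}\newW(x_u,x_v)$ and integrating against $\psi^n_\nu$ yields
\begin{equation*}
\mathcal{H}(\psi^n_\nu \mid \theta^n) = \int \psi^n_\nu \log\psi^n_\nu\, d\x^{(n)} + \log\mathcal{Z}^n + \tfrac12\sum_{(u,v)\in E_n} \int \newW(x_u, x_v)\, \psi^n_\nu\, d\x^{(n)}.
\end{equation*}
By Lemma \ref{lem:entropy:symmetry}, the consecutive three-variable marginals of $\psi^n_\nu$ coincide with $\nu$ and the consecutive two-variable marginals with $\bar\nu$. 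Substituting the explicit product form of $\psi^n_\nu$ expresses the differential entropy as a combination of $\int \nu\log\nu$ (with coefficient $2n-1$) and $\int \bar\nu\log\bar\nu$ (with coefficient differing from $-(2n-1)$ only by a bounded boundary contribution), while each of the $2n$ edge-energy integrals equals $\int \newW(x,y)\bar\nu(dx,dy)$. Using the leaf exchangeability and edge symmetry of $\nu \in \mathcal{M}_{2,d}$, a short computation gives $\int \potential\, d\nu = \tfrac12 \int \newW\, d\bar\nu$, so that
\begin{equation*}
\int \nu\log\nu - \int \bar\nu\log\bar\nu + \tfrac12 \int \newW\, d\bar\nu = \bbH_2(\nu).
\end{equation*}
Each of these three integrals is finite since $\nu \in \mathcal{Q}_{2,d}$ has finite entropy and variance and $\newW$ has at most quadratic growth (Remark \ref{rk:potentials}); the $O(1/n)$ boundary corrections are therefore negligible and dividing by $2n+1$ produces $(2n+1)^{-1}\mathcal{H}(\psi^n_\nu \mid \theta^n) = \bbH_2(\nu) + (2n+1)^{-1}\log\mathcal{Z}^n + o(1)$.

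To prove existence of $\lim_n (2n+1)^{-1}\log\mathcal{Z}^n$, I would use a transfer-operator argument. Define the kernel $T(x,y) := \exp[-\tfrac12 U(x) - \tfrac12 U(y) - \intPot(x-y)]$, so that $\mathcal{Z}^n = \iint T^{(2n)}(x,y)\, dx\, dy$ where $T^{(2n)}$ denotes the $2n$-fold iterated kernel. Assumption \ref{as:results:invMeasure}, specifically \eqref{eq:results:fBound}--\eqref{eq:results:fPartitionFunction}, yields enough coercivity to realize $T$ as a positive, irreducible Hilbert--Schmidt operator on $L^2(\R^d, e^{-q(x)} dx)$: one verifies $\iint T(x,y)^2 e^{q(x)+q(y)}\, dx\, dy < \infty$ using H\"older's inequality and the finiteness of $R_q$. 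Jentzsch's theorem then furnishes a simple largest eigenvalue $\lambda_* > 0$ with strictly positive eigenfunction $\phi_*$, and standard spectral asymptotics yield $\mathcal{Z}^n = c_* \lambda_*^{2n}(1+o(1))$, so that $(2n+1)^{-1}\log\mathcal{Z}^n \to \log\lambda_*$.

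Finally, the identification of this limit with $-\bbH_2^*$ (and hence consistency with $\inf_\nu \bbH_2(\nu)$ from Proposition \ref{prop:results:energyBound}) follows in two steps: nonnegativity of the relative entropy combined with the preceding renormalization formula gives $\bbH_2(\nu) \geq -\log\lambda_*$ for every $\nu \in \mathcal{Q}_{2,d}$, hence $\inf_\nu \bbH_2(\nu) \geq -\log\lambda_*$; conversely, taking $\nu^n$ to be the three-variable marginal of $\theta^n$ on a central triple, tightness (from the second-moment bounds in Proposition \ref{lem:conv:compactLevelSets}) plus lower semicontinuity of $\bbH_2$ produces a subsequential limit $\nu^\infty \in \mathcal{Q}_{2,d}$ satisfying $\bbH_2(\nu^\infty) \leq -\log\lambda_*$, using the variational characterization $-\log\mathcal{Z}^n = \inf_\mu [\int \mu\log\mu + \tfrac12 \int \sum \newW\, d\mu]$. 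The main obstacle is the transfer-operator step: since $T$ acts on an unbounded space, verifying the Hilbert--Schmidt and irreducibility hypotheses of Jentzsch requires a careful choice of reference weight and quantitative use of the coercivity from Assumption \ref{as:results:invMeasure}; a Fekete-type subadditivity alternative (obtained by conditioning $\theta^{n+m+1}$ on the central vertex and using coercivity to control the gluing factor) would suffice for existence of the limit but would not automatically provide the spectral interpretation.
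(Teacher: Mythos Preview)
Your computation of $\mathcal{H}(\psi^n_\nu\mid\theta^n)$ is essentially identical to the paper's: both use the product form \eqref{eq:entropy:2reconstruction}, Lemma~\ref{lem:entropy:symmetry} to reduce each summand to an integral against $\nu$ or $\bar\nu$, and the identity $\int g\,d\nu=\tfrac12\int Q\,d\bar\nu$ to recognize $\bbH_2(\nu)$.

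Where you diverge is in handling $\lim_n(2n+1)^{-1}\log\mathcal{Z}^n$. The paper proves this directly by near-subadditivity: bounding the gluing edge via $e^{-\frac12 Q(x_k,y_{-m})}\le e^{-\frac12(q(x_k)+q(y_{-m}))}\le e^{C_q}$ (using only the lower bound on $q$ from Assumption~\ref{as:results:invMeasure}) gives $\log\mathcal{Z}^{k+m}\le C_q+\log\mathcal{Z}^k+\log\mathcal{Z}^m$, and Hammersley's subadditive lemma finishes. This is precisely the ``Fekete-type alternative'' you mention at the end.

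Your primary transfer-operator route is more informative but does not go through under Assumption~\ref{as:results:invMeasure} alone. The specific Hilbert--Schmidt claim you make, $\iint T(x,y)^2 e^{q(x)+q(y)}\,dx\,dy<\infty$, fails: Assumption~\ref{as:results:invMeasure} only gives $U(x)+U(y)+2W(x-y)\ge q(x)+q(y)$, so the integrand is bounded by $1$, which is not integrable on $\R^{2d}$. One can show $T$ is Hilbert--Schmidt on plain $L^2(\R^d,dx)$ (since $\iint T^2\le R_q^2$), but then $\mathcal{Z}^n=\iint T^{(2n)}(x,y)\,dx\,dy$ pairs the iterated kernel against $\mathbf{1}\otimes\mathbf{1}$, and $\mathbf{1}\notin L^2(\R^d)$, so the spectral expansion does not directly yield the asymptotics. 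Making this work requires extra decay (e.g.\ Assumption~\ref{as:results:strongCoercivity}) to place the endpoint functionals in the right space. You correctly flag this as the obstacle; the paper simply sidesteps it with subadditivity.

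Finally, your last paragraph is unnecessary: the theorem merely \emph{defines} $\bbH_2^*$ as the limit $\lim_n(2n+1)^{-1}\log\mathcal{Z}^n$, so there is nothing to identify with $\inf_\nu\bbH_2(\nu)$ at this stage (that connection is made later, under additional hypotheses, in Theorem~\ref{thm:results:uniqueRegime}).
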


Under an additional condition  (Assumption \ref{as:results:LSI}) we establish uniqueness of the stationary distribution of the $2$-MLFE and also identify the constant $\bbH_2^*$ in Theorem \ref{thm:results:2entropy}. We postpone the technical statement of this condition to Section \ref{sec:LSI:LSI}.

\begin{theorem}[Uniqueness of stationary distributions]\label{thm:results:uniqueRegime} Suppose $(U, \intPot)$ satisfy Assumption \ref{as:results:fixedPoint} and Assumption \ref{as:results:LSI}. Then there exists a unique Cayley fixed point $\pi \in \tilde{\mM}_{2, d}$ and we have $\bbH_2^* = \bbH_2(\pi)$. In other words, for $\mathcal{Z}^n$ as in \eqref{eq:results:gibbsPFN}, we have
\begin{equation*}
    \lim_{n \rightarrow \infty} \frac{1}{2n+1} \log \mathcal{Z}^n = \bbH_2(\pi).
\end{equation*}
Furthermore, we have \begin{equation}
        \bbH_2(\nu) - \bbH_2(\pi) = \mH(\nu|\pi) - \mH(\bar{\nu}|\bar{\pi}), \quad \nu \in \mathcal{Q}_{2, d}.
        \label{eq:results:sfeForm}
    \end{equation}
Moreover, if there exists a solution to the 2-MLFE with potentials $(U, \intPot)$ and initial distribution $\pi$, then $\pi$ is the unique stationary distribution of the 2-MLFE in the sense of Definition \ref{def:results:stationary}.
\end{theorem}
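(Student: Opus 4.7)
The plan rests on combining three ingredients: the \FPE{} characterization of zeros of $\bbI_\kap$ (Theorem \ref{thm:results:zeros}), the renormalization limit (Theorem \ref{thm:results:2entropy}), and a one-dimensional uniqueness argument for the Cayley recursion \eqref{eq:zeros:fixedPoint}.

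\emph{Step 1 (existence and uniqueness of $\pi$).} I would invoke Assumption \ref{as:results:LSI} together with Theorem 1.9 of \cite{lacker2023stationary} (as summarized in Remark \ref{rk:lackerZhang}) to conclude that there is a unique \FPE{} $\pi \in \tilde{\mM}_{2, d}$. Let $\nu_0$ denote its common $0$-marginal.

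\emph{Step 2 (establishing \eqref{eq:results:sfeForm}).} This is the heart of the argument. From the \FPE{} form \eqref{eq:zeros:1MRF}--\eqref{eq:zeros:muBar} specialized to $\kap = 2$, a direct computation yields the $1$-MRF factorization
\[
\log \pi(\x) \;=\; \log \bar\pi(x_0, x_1) + \log \bar\pi(x_0, x_2) - \log \nu_0(x_0).
\]
Substituting this into the identity $\mH(\nu|\pi) - \mH(\bar\nu|\bar\pi) = \int [\log \nu - \log \pi - \log \bar\nu(x_0, x_1) + \log \bar\pi(x_0, x_1)] \, d\nu$ reduces \eqref{eq:results:sfeForm} to showing that the quantity
\[
F(\x) := g(\x) + \log \bar\pi(x_0, x_2) - \log \nu_0(x_0)
\]
integrates against every $\nu \in \mathcal{Q}_{2, d}$ to the $\nu$-independent constant $-\log \mathcal{Z}_{\nu_0}$. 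I would verify this by symmetrizing $F$ over $(x_1, x_2)$ using leaf exchangeability and using edge symmetry to identify all three marginals of $\nu$ with a common $\bar\nu_0$; the $U$, $W$, and $\log \nu_0$ contributions then collapse to leave only $-\log \mathcal{Z}_{\nu_0}$. Specializing to $\nu = \pi$ gives $\bbH_2(\pi) = -\log \mathcal{Z}_{\nu_0}$, and the identity \eqref{eq:results:sfeForm} follows.

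\emph{Step 3 (concluding).} Since the data processing inequality gives $\mH(\nu|\pi) \geq \mH(\bar\nu|\bar\pi)$, the identity \eqref{eq:results:sfeForm} yields $\bbH_2(\nu) \geq \bbH_2(\pi)$ for every $\nu \in \mathcal{Q}_{2, d}$, hence $\bbH_2^* = \bbH_2(\pi)$ by Proposition \ref{prop:results:energyBound}. Combined with Theorem \ref{thm:results:2entropy}, this gives the announced limit formula for $(2n+1)^{-1} \log \mathcal{Z}^n$. For uniqueness of the stationary distribution, Theorems \ref{thm:results:stationary} and \ref{thm:results:zeros} imply that any linear growth stationary distribution of the $2$-MLFE is a \FPE{}, and hence coincides with $\pi$ by Step 1.

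The main obstacle I anticipate is Step 2: although each individual algebraic manipulation is elementary, the vanishing of $\int F \, d\nu$ up to the constant $-\log \mathcal{Z}_{\nu_0}$ relies delicately on both leaf exchangeability and edge symmetry of $\nu$ and on the specific $(\kap - 1)/\kap = 1/2$ exponents in the Cayley form when $\kap = 2$; careful bookkeeping is needed to track signs and cancellations. A secondary technical point is to ensure that $\int \log \bar\pi \, d\bar\nu$ and $\int \log \nu_0 \, d\bar\nu_0$ are finite and manipulations are justified for all admissible $\nu$, which should follow from Assumptions \ref{as:results:lyapunov} and \ref{as:results:invMeasure} together with the moment and entropy bounds built into the definition of $\mathcal{Q}_{2, d}$.
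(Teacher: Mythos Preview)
Your Step 2 is correct and is a genuinely more elementary route to the identity \eqref{eq:results:sfeForm} than the paper's. The paper does \emph{not} verify \eqref{eq:results:sfeForm} by direct algebra; instead it first shows $\lim_n (2n+1)^{-1}\mH(\psi^n_\nu|\psi^n_\pi)=\mH(\nu|\pi)-\mH(\bar\nu|\bar\pi)$ via Lemma \ref{lem:entropy:symmetry}, then proves that $(2n+1)^{-1}\mH(\psi^n_\nu|\psi^n_\pi)$ and $(2n+1)^{-1}\mH(\psi^n_\nu|\theta^n)$ agree asymptotically, and finally invokes Theorem \ref{thm:results:2entropy}. Your computation exploiting the $1$-MRF form of $\pi$ and the symmetries of $\nu\in\mM_{2,d}$ collapses all of this to a pointwise calculation, and the integrability you flag is indeed handled by the quadratic bounds on $\log\pi_0$, $\log\bar\pi$ derived in the paper (cf.\ \eqref{eq:LSI:logPi0Bound}--\eqref{eq:LSI:logPiBarBound}).

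However, Step 3 has a real gap. You establish $\inf_{\nu\in\mPSb}\bbH_2(\nu)=\bbH_2(\pi)$, but the $\bbH_2^*$ in the theorem statement is the partition function limit of Theorem \ref{thm:results:2entropy}, not the infimum of Proposition \ref{prop:results:energyBound}. Theorem \ref{thm:results:2entropy} only gives $\bbH_2(\nu)-\bbH_2^*=\lim_n(2n+1)^{-1}\mH(\psi^n_\nu|\theta^n)\geq 0$, hence $\bbH_2^*\leq\bbH_2(\pi)$; the reverse inequality does not follow from anything you have written. This is precisely where Assumption \ref{as:results:LSI} enters in the paper and where your argument has not used it: the uniform LSI \eqref{eq:LSI:preLimitLSI} yields $\mH(\psi^n_\pi|\theta^n)\leq C_\theta\,\mI(\psi^n_\pi|\theta^n)$, and a direct computation using the explicit form of $\psi^n_\pi$ shows $\mI(\psi^n_\pi|\theta^n)\leq\int|\nabla\log\pi_0|^2\,d\pi_0=O(1)$ (the Radon--Nikodym derivative $d\psi^n_\pi/d\theta^n$ depends only on the two boundary coordinates $x_{\pm n}$). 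Hence $(2n+1)^{-1}\mH(\psi^n_\pi|\theta^n)\to 0$, and Theorem \ref{thm:results:2entropy} with $\nu=\pi$ then gives $\bbH_2(\pi)=\bbH_2^*$. You need to supply this boundary Fisher-information estimate (or an equivalent argument) to close the loop.
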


\subsubsection{Logarithmic Sobolev inequalities and exponential convergence} Under the extra condition (Assumption \ref{as:results:LSI}) we also stablish a uniform-in-$n$ logarithmic Sobolev inequality for $\theta^n$. Consequently we also obtain a modified (nonlinear) log-Sobolev inequality for $\lyap_2$ and $\info_2$, and exponential convergence of $\bbH_\kap$ under the \CMVE{} flow. Assumption \ref{as:results:LSI} is reminiscent of (though slightly weaker than) the conditions of Theorem 9 of \cite{guillin2022lsi} for a uniform log-Sobolev inequality for mean-field systems, and sufficient conditions include strong convexity of the potentials (see Remark \ref{rk:LSI:convexity}).

\begin{theorem}[A modified logarithmic Sobolev inequality]
\label{thm:results:LSI}
Suppose $(U, \intPot)$ satisfy Assumption \ref{as:results:LSI}. Then the following properties are satisfied:
\begin{enumerate}
    \item The family of Gibbs measures $\{\theta^n\}_{n \geq 1}$ satisfies a uniform log-Sobolev inequality. That is, there exists $C_\theta \in (0, \infty)$ such that for all $n \in \N$, we have
    \begin{equation}
        \label{eq:LSI:preLimitLSI}
        \entropy(\Lambda | \theta^{n}) \leq 2 C_\theta \mI(\Lambda | \theta^{n}), \quad \Lambda \in \mP\big((\R^d)^{V_n}\big).
    \end{equation}
    \item  There exists $C_0  \in (0, \infty)$ such that for all $\nu \in \tilde{\mM}_{2, d}$ satisfying \begin{equation}
    \int_{(\R^d)^{3}} \Big(|b(\x)|^2 + \big|\nabla_{\x} \log \nu(\x)\big|^2\Big) \nu(d\x) < \infty,
     \label{eq:LSI:finiteFI}
\end{equation}we have the following modified log-Sobolev inequality: \begin{equation}
        \energy_2(\nu) - \bbH_2^* \leq C_0 \info_2(\nu),
        \label{eq:results:LSI}
    \end{equation}
   where $\bbH_\star$ is as defined in \eqref{eq:results:Hstar}.
    \item If $\lambda \in \tilde{\mM}_{2, d}$ and $(\mu, \gamma)$ is a linear growth solution to the $2$-MLFE on $[0, \infty)$, with initial condition $\lambda$, then there exists $c, C \in (0, \infty)$ such that the following inequality holds: 
        \begin{equation}
    \label{eq:results:expConv}
        \energy_2(\mu_t) - \bbH_2^* \leq C \exp(-ct), \quad \text{for a.e. } t \geq 0.
    \end{equation}
\end{enumerate}
\end{theorem}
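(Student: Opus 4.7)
The strategy proceeds in three stages matching the three parts of the theorem. Part~(1) is the linear analytic ingredient; Part~(2) transfers the uniform LSI into a nonlinear inequality by means of the renormalization identity of Theorem~\ref{thm:results:2entropy}; and Part~(3) combines the modified LSI with the H-theorem (Theorem~\ref{thm:results:lyap}) via a Grönwall argument.

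\textbf{Part (1).} I would apply the Bakry--Émery criterion to $\theta^n \propto e^{-\Phi_n}$ with $\Phi_n := \tfrac12 \sum_{(u,v)\in E_n} \newW(x_u,x_v)$. Since $E_n$ consists of nearest neighbors on a line, $\nabla^2 \Phi_n$ is a block-tridiagonal symmetric operator on $(\R^d)^{V_n}$, whose diagonal blocks involve $\nabla^2 U$ together with two contributions of $\nabla^2 W$ and whose off-diagonal blocks involve $\nabla^2 W$. Assumption~\ref{as:results:LSI}, being the appropriate strengthening of Assumption~\ref{as:results:strongCoercivity} in the spirit of Theorem~9 of \cite{guillin2022lsi}, should be formulated precisely so as to produce a uniform spectral lower bound $\nabla^2 \Phi_n \succeq \alpha I$ for some $\alpha > 0$ independent of $n$, either directly from strong convexity of $U$ balanced against a bound on $\|\nabla^2 W\|$, or via an Otto--Reznikoff-type estimate tailored to the tridiagonal structure. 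The Bakry--Émery theorem (e.g.\ Proposition~5.7.1 of \cite{bakry2014diffusion}) then yields \eqref{eq:LSI:preLimitLSI} with $C_\theta = 1/\alpha$.

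\textbf{Part (2).} Applying \eqref{eq:LSI:preLimitLSI} with $\Lambda = \psi_\nu^n$ and combining with Theorem~\ref{thm:results:2entropy} gives
\begin{equation*}
\bbH_2(\nu) - \bbH_2^* = \lim_{n \to \infty} \frac{1}{2n+1}\mH(\psi_\nu^n | \theta^n) \le 2 C_\theta \liminf_{n \to \infty} \frac{1}{2n+1}\mI(\psi_\nu^n | \theta^n),
\end{equation*}
so \eqref{eq:results:LSI} reduces to the asymptotic upper bound
\begin{equation*}
\limsup_{n \to \infty} \frac{1}{2n+1}\mI(\psi_\nu^n | \theta^n) \le C\, \bbI_2(\nu)
\end{equation*}
for some universal $C > 0$, under the finiteness assumption \eqref{eq:LSI:finiteFI}. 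Using the explicit product form \eqref{eq:entropy:2reconstruction}, the density ratio $\log(\psi_\nu^n/\theta^n)$ is a sum of local functionals supported on at most three consecutive vertices plus $\log \mathcal{Z}^n$. For an internal vertex $v$, differentiating in $x_v$ and collecting terms yields an expression which, upon using the 2-MRF structure of $\psi_\nu^n$ (so that the three-vertex local marginals coincide with $\nu$ up to relabeling), the leaf exchangeability of $\nu \in \mathcal{M}_{2, d}$, and the identity $\tfrac12 \nabla_{x_v}(\newW(x_{v-1}, x_v) + \newW(x_v, x_{v+1})) = b(x_{v-1}, x_v, x_{v+1})$ coming from \eqref{eq:results:W}--\eqref{eq:results:drift}, can be matched to the two integrands of \eqref{eq:results:info} centered at $v$. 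Squaring, integrating against the relevant marginal of $\psi_\nu^n$ (which equals $\nu$ after relabeling), and summing over $v$ yields $\mI(\psi_\nu^n | \theta^n) \le (2n+1)\, \bbI_2(\nu) + O(1)$, with the $O(1)$ boundary contribution controlled by \eqref{eq:LSI:finiteFI} together with leaf exchangeability. Dividing by $2n+1$ and sending $n \to \infty$ gives \eqref{eq:results:LSI} with $C_0$ proportional to $C_\theta$.

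\textbf{Part (3) and main obstacle.} The H-theorem identity \eqref{eq:results:energyIdentity} combined with the modified LSI of Part~(2) gives, for a.e.\ $t > 0$,
\begin{equation*}
\frac{d}{dt}\bigl(\bbH_2(\mu_t) - \bbH_2^*\bigr) = -\bbI_2(\mu_t) \le -\frac{1}{C_0}\bigl(\bbH_2(\mu_t) - \bbH_2^*\bigr),
\end{equation*}
provided $\mu_t$ satisfies \eqref{eq:LSI:finiteFI} a.e., which follows from the interior regularity of linear growth solutions established in Proposition~\ref{thm:ap1:wellPosed}. A Grönwall comparison then produces \eqref{eq:results:expConv} with $c = 1/C_0$ and $C = \bbH_2(\mu_0) - \bbH_2^*$. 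The main obstacle lies in the Fisher-information limit in Part~(2): one must verify simultaneously that the boundary contributions to $\mI(\psi_\nu^n | \theta^n)$ remain uniformly bounded under the mild hypothesis \eqref{eq:LSI:finiteFI}, and that the internal telescoping recovers exactly the coefficients $1$ and $\kap = 2$ appearing in the definition \eqref{eq:results:info} of $\bbI_2$; the latter is the delicate point, as it requires careful accounting of which vertex plays the ``root'' versus ``leaf'' role across the shifted three-vertex windows, crucially exploiting leaf exchangeability of $\nu$. A secondary concern is calibrating Assumption~\ref{as:results:LSI} so that the spectral bound in Part~(1) holds uniformly in $n$; should direct convexity fail, the Otto--Reznikoff framework for nearest-neighbor Hamiltonians provides the requisite substitute.
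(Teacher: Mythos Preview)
Your overall architecture (uniform LSI $\Rightarrow$ renormalize $\Rightarrow$ Gr\"onwall) matches the paper, and Part~(3) is exactly right. There are, however, two substantive divergences.

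\textbf{Part (1).} The paper does \emph{not} use Bakry--\'Emery. Assumption~\ref{as:results:LSI} is not a global convexity condition on $\Phi_n$; it is a Dobrushin-type hypothesis: a uniform LSI for the single-site conditionals $\theta^n_v(\cdot\mid x_{\partial v})$ together with a smallness condition on the interaction (encoded through the $\hat c_{\mathrm{Lip},i}$ and $\delta^*_i<1$). The paper verifies Zegarli\'nski's criterion (Theorem~\ref{thm:LSI:zergalinski}), and the key step is to bound $|\nabla_{x_u}\mathbb{E}^{\theta^n_v}[a\mid x_{\partial v}]|$ by $c^n_{uv}\,\mathbb{E}^{\theta^n_v}[|\nabla a|\mid x_{\partial v}]$ via Wu's Poisson-operator estimate (Proposition~\ref{thm:LSI:wu}). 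Your direct Bakry--\'Emery proposal only covers the strongly convex special case discussed in Remark~\ref{rk:LSI:convexity}; under the stated assumption it need not yield any uniform spectral lower bound. Your Otto--Reznikoff fallback is in the right spirit but still not what the paper does.

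\textbf{Part (2).} You correctly identify this as the crux, but the mechanism you sketch is not the one that works. For an internal vertex $v$, $\nabla_{x_v}\log(\psi^n_\nu/\theta^n)$ does not reduce to the two integrands of $\mathbb{I}_2$; it equals $\Theta_1+\Theta_2$ where $\Theta_1=b(x_{\bar v})+\nabla_{x_v}\log\nu(x_{\bar v})$ and $\Theta_2=\nabla_{x_v}\log\nu(x_{v-2}\mid x_{v-1},x_v)+\nabla_{x_v}\log\nu(x_{v+2}\mid x_{v+1},x_v)$. Neither $\Theta_2$ nor the cross term $\Theta_1\cdot\Theta_2$ is visibly controlled by $\mathbb{I}_2$. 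The paper's device is an \emph{orthogonality lemma} (Lemma~\ref{lem:LSI:orthogonal}): any $L^2$ function of $(x_{-n},\dots,x_{u+1})$ is orthogonal under $\psi^n_\nu$ to $\nabla_{x_u}\log\nu(x_{u+2}\mid x_{u+1},x_u)$, because integrating the conditional density in $x_{u+2}$ gives $1$. This kills $\int\Theta_1\cdot\Theta_2$ and also the cross term inside $|\Theta_2|^2$, after which $\int|\Theta_2|^2=2\int|\nabla_{x_1}\log(\nu/\bar\nu)|^2\,d\nu$ by Lemma~\ref{lem:entropy:symmetry}. The upshot is the exact identity $\lim_n(2n+1)^{-1}\mathcal{I}(\psi^n_\nu\mid\theta^n)=\mathbb{I}_2(\nu)$ (Proposition~\ref{lem:LSI:lyapConv}), not merely an inequality. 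Your ``square and match coefficients'' description does not capture this cancellation, and without it one cannot close the bound with any finite constant.
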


    \begin{remark}[2-MRF is necessary]
    \normalfont
    \label{rk:2MRFNec}
     Given that the stationary distribution of the $n$-particle distribution is a 1-MRF, one may wonder why $\psi_\nu^n$ is chosen to form a 2-MRF rather than a 1-MRF. Define the 1-MRF lift of $\bar{\nu}$ by \begin{equation*}
        \phi^{n}_{\bar{\nu}}(\x^{(n)}) := \nu_0(x_0)\prod_{v = 0}^{n-1} \bar{\nu}(x_{v+1}|x_{v})\bar{\nu}(x_{-v-1}|x_{-v}), \quad \x^{(n)} \in (\R^d)^{V_n}.
    \end{equation*}
    By the same argument as in the proof of Theorem \ref{thm:results:2entropy}, it can be shown that 
    \begin{equation*}
    \begin{aligned}
    \frac{1}{2n+1} \entropy \big( \phi^{n}_{\bar{\nu}}\big| \theta^{n}\big) \rightarrow \hat{\energy}_2(\bar{\nu}) - {\mathbb{H}}_*,
    \end{aligned}
    \end{equation*}
    where $\hat{\bbH}_2$ is given by
    \begin{equation*}
        \hat{\energy}_2(\bar{\nu}) := \int_{\R^d \times \R^d} \bigg( U(x_0) + \intPot(x_0 - x_1) + \log \bar{\nu}(x_0, x_1) - \log \nu_0(x_0) \bigg) \bar{\nu}(dx_0, dx_1).
    \end{equation*}
    However, this limiting functional  need not decrease along the $\kappa$-MLFE measure flow, that is, if ${\mu}$ is a solution to the \CMVE{},  the inequality  $\tfrac{d}{dt}\hat{\energy}_2(\bar{\mu}_t) \leq 0$ may not hold for a.e. $t \geq 0$. In particular, we show through a numerical example that this is not the case in Figure \ref{fig:entropy:1MRFv2MRF}. This is because $\hat{\energy}_2$ only sees the $2$-particle marginal $\bar{\nu}$ and not the full distribution $\nu$. Therefore $\hat{\energy}_2$ may not be decreasing along the trajectories of the \CMVE{} when the initial distribution is far from a 1-MRF, which is exactly the case in our numerical example.
    \begin{figure}[h]
        \centering
        \includegraphics[scale = 0.5]{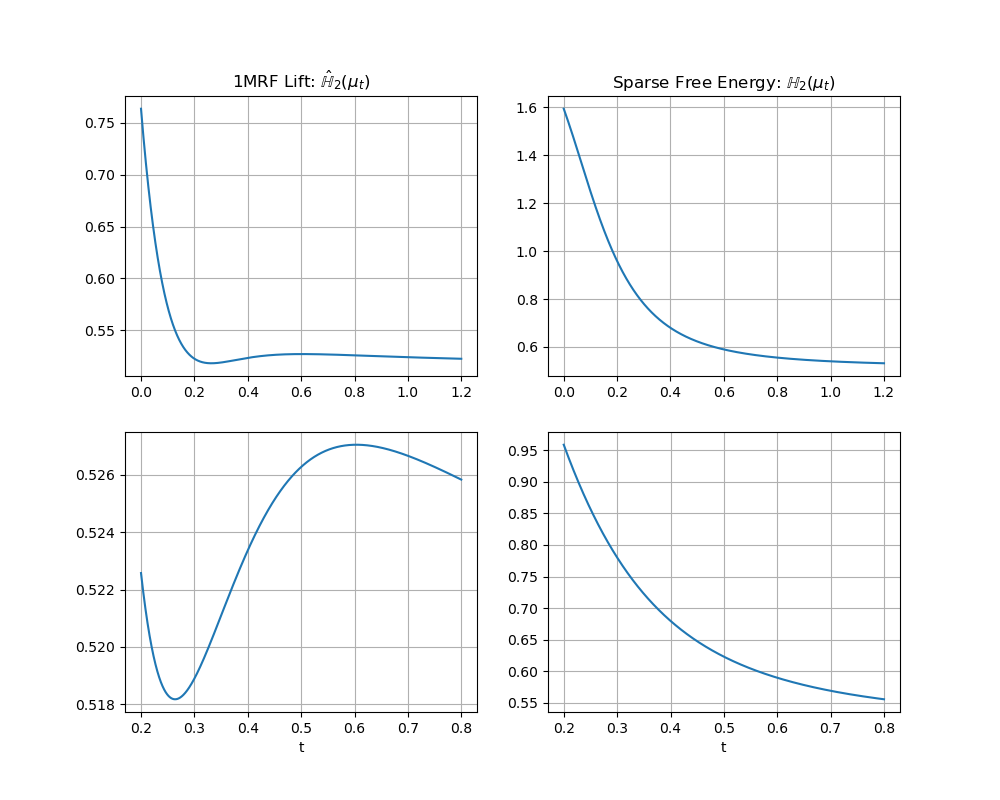}
        \caption{A comparison between $\hat{\energy}_2(\bar{\mu}_t)$ and $\energy_2(\mu_t)$. Here, $\mu_t$ solves the $2$-MLFE with $d = 1$, $\kap = 2$, potentials $U(x) = 7x^2/4$ and $K =-3x^2/8 $, and an initial condition that is not a 1-MRF. The left column shows  the evolution of the 1-MRF renormalized limit   
        $\hat{\bbH}_2(\mu_t)$ and the right column shows the evolution of the sparse free energy $\bbH_2(\mu_t)$. For both columns, the bottom figure shows the top figure zoomed in on the time interval $(0.2, 0.8)$.
        We see that $\energy_2(\mu_t)$ is always decreasing in time while  $\hat{\energy}_2(\bar{\mu}_t)$ is not.}
        \label{fig:entropy:1MRFv2MRF}
    \end{figure}
    \end{remark}

\section{Well-posedness of the \CMVE{}}

\subsection{Preliminary tools}
We first summarize two results from the literature that will be used in the proof of Theorem \ref{thm:results:bddWp}. We start with a weighted Csizar-Kullback-Pinsker inequality. 
\begin{lemma}[Weighted CKP inequality]
    \label{lem:wp:pinsker}
    Let $\mathcal{X}$ be a Polish space, and $\nu$ and $\nu'$ be probability measures in $\mP(\mathcal{X})$. For any non-negative, measurable function $f: \mathcal{X} \rightarrow \R^d$, we have 
    \begin{equation*}
        \big|\bbE^\nu[f(Y)] - \bbE^{\nu'}[f(Y)]\big|^2 \leq 2\bigg(1 + \log \int_{\mathcal{X}}e^{|f|^2} d\nu'\bigg) \mathcal{H}(\nu|\nu').
    \end{equation*}
\end{lemma}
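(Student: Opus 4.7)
The plan is to reduce the $\R^d$-valued inequality to a scalar weighted Pinsker bound and then apply the Donsker--Varadhan variational formula for relative entropy, following the strategy of Bolley--Villani.

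First I would perform a scalar reduction. Set $\Delta := \bbE^\nu[f(Y)] - \bbE^{\nu'}[f(Y)] \in \R^d$. If $\Delta = 0$ there is nothing to prove; otherwise let $u := \Delta/|\Delta|$ and define the scalar function $\tilde f := u \cdot f$. Since $|u|=1$, we have $|\tilde f| \leq |f|$, so $\int e^{\tilde f^{\,2}} d\nu' \leq \int e^{|f|^2} d\nu'$, and moreover $|\Delta| = \bbE^\nu[\tilde f(Y)] - \bbE^{\nu'}[\tilde f(Y)]$. It therefore suffices to establish the scalar inequality
\[
\bigl(\bbE^\nu[\tilde f(Y)] - \bbE^{\nu'}[\tilde f(Y)]\bigr)^{2} \leq 2\Bigl(1 + \log \int e^{\tilde f^{\,2}}\, d\nu'\Bigr) \mathcal{H}(\nu|\nu').
\]

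Next I would invoke the Donsker--Varadhan variational characterization: for any measurable $h$ with $\int e^{h}\, d\nu' < \infty$,
\[
\int h\, d\nu \leq \log \int e^{h}\, d\nu' + \mathcal{H}(\nu|\nu').
\]
Applied with $h = \lambda\bigl(\tilde f - \bbE^{\nu'}[\tilde f(Y)]\bigr)$ for a parameter $\lambda > 0$, this gives
\[
\lambda |\Delta| \leq \log \int e^{\lambda(\tilde f - \bbE^{\nu'}[\tilde f(Y)])}\, d\nu' + \mathcal{H}(\nu|\nu').
\]

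The key remaining step is a log-moment generating function bound: one shows that for the centered variable $Z := \tilde f - \bbE^{\nu'}[\tilde f(Y)]$,
\[
\log \int e^{\lambda Z}\, d\nu' \leq \frac{\lambda^{2}}{2}\Bigl(1 + \log \int e^{Z^{2}}\, d\nu'\Bigr),
\]
which follows from a Taylor-expansion / Jensen argument exploiting that $Z$ is centered and $e^{Z^2}$ is integrable (together with the observation that $\int e^{Z^2} d\nu' \leq e \int e^{\tilde f^2} d\nu'$ after shifting by the mean). Substituting this into the Donsker--Varadhan bound yields the quadratic-in-$\lambda$ inequality
\[
\lambda |\Delta| \leq \frac{\lambda^{2}}{2}\Bigl(1 + \log \int e^{\tilde f^{\,2}}\, d\nu'\Bigr) + \mathcal{H}(\nu|\nu'),
\]
and optimizing over $\lambda > 0$ (taking $\lambda = |\Delta|/(1 + \log \int e^{\tilde f^{\,2}}\, d\nu')$) gives the desired bound. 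I expect the main technical obstacle to be the derivation of the log-moment generating function bound with the sharp $1+\log(\cdot)$ weight, since this is what produces the exact constant in the Bolley--Villani inequality; the other steps are routine duality arguments.
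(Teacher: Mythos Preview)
The paper's proof is essentially a one-line citation: it observes that
\[
\big|\bbE^\nu[f(Y)] - \bbE^{\nu'}[f(Y)]\big| = |\langle \nu - \nu', f\rangle| \leq \|\,|f|\,(\nu - \nu')\|_{TV},
\]
and then invokes Theorem~2.1(ii) of Bolley--Villani directly. No further argument is needed. Your proposal, by contrast, attempts to reprove the Bolley--Villani inequality from scratch via Donsker--Varadhan duality and an optimization in $\lambda$.

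Your scalar reduction and the Donsker--Varadhan step are fine, but the proof has a genuine gap at the log-MGF bound. The inequality you assert,
\[
\log \int e^{\lambda Z}\, d\nu' \;\leq\; \tfrac{\lambda^2}{2}\Bigl(1 + \log \int e^{Z^2}\, d\nu'\Bigr)\qquad\text{for centered }Z,
\]
is not a standard sub-Gaussian estimate, and your sketch (``Taylor-expansion / Jensen argument'') does not supply one. Likewise the auxiliary claim $\int e^{Z^2}\,d\nu' \leq e\int e^{\tilde f^2}\,d\nu'$ after centering is not justified: writing $Z=\tilde f - m$ gives $Z^2 = \tilde f^2 - 2m\tilde f + m^2$, and there is no obvious reason the exponential moment should improve by at most a factor of $e$. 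You correctly flag this as the main obstacle, but it is in fact where the argument breaks down, and the Bolley--Villani paper does \emph{not} proceed this way: their proof of the weighted CKP inequality goes through a pointwise inequality of the form $3(u-1)^2 \leq (2u+4)(u\log u - u + 1)$ combined with Cauchy--Schwarz on $\int |f||h-1|\,d\nu'$ with $h = d\nu/d\nu'$, rather than through a sub-Gaussian MGF bound. A Donsker--Varadhan route can yield an inequality of this type with \emph{some} constant (finiteness of $\int e^{|f|^2}d\nu'$ implies $|f|$ is $\psi_2$ under $\nu'$), but recovering the sharp weight $1+\log(\cdot)$ requires the Bolley--Villani argument or an equivalent.
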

\begin{proof}  
    Note that in the notation of \cite{villani2005pinsker} we have
    \begin{align*}
        \big|\bbE^\nu[f(Y)] - \bbE^{\nu'}[f(Y)]\big|^2 = |\langle \nu - \nu', f \rangle|^2  \leq  \| f(\nu - \nu')\|^2_{TV}.
    \end{align*}
    The claim is then an immediate consequence of Theorem 2.1(ii) of \cite{villani2005pinsker}. 
\end{proof}
We now state a well known entropy estimate for the laws of diffusion processes. Recall that for a path $\omega:[0, T] \rightarrow \R^m$ and $t \in [0, T]$ we write $\omega[t] := \{\omega(s)\}_{s \in [0, t]}$ for the trajectory of $\omega$ on $[0, t]$.
\begin{lemma}[Lemma 3.5 and Remark 3.6 of \cite{conforti2023projected}]
\label{lem:wp:entropyEstimates}
Let $m \in \mathbb{N}$ and $\lambda_0 \in \mathcal{P}(\R^m)$. Suppose $b^1, b^2:[0, T] \times \R^m \rightarrow \R^m$ are measurable and satisfy a linear growth condition, that is there exists $C \in (0, \infty)$ such that
\begin{equation*}
    \sup_{(t, x) \in [0, T] \times \R^d} |b^i(t, x)| \leq C(1 + |x|), \quad i = 1, 2.
\end{equation*}
For $i = 1, 2$, suppose $(\Omega^i, \mathcal{F}^i, \mathbb{F}^i, \mathbb{P}^i)$ is a filtered probability space supporting an $m$-dimensional Brownian motion $B^i$ and a continuous $m$-dimensional $\mathbb{F}^i$-adapted process $Z^i$ satisfying
\begin{equation*}
    dZ^i(t) = b^i(t, Z^i(t)) dt + \sqrt{2}dB^i(t),
\end{equation*}
with $\mL(Z^1(0)) = \mL(Z^2(0)) = \lambda_0$. Then the following identity holds: 
\begin{equation*}
    \mathcal{H} \big( \mathcal{L}(Z^1[T])\,|\, \mathcal{L}(Z^2[T]) \big) = \frac{1}{2} \mathbb{E}^{\mathbb{P}^1} \Bigg[ \int_0^T |b^1(t, Z^1(t)) - b^2(t, Z^1(t))|^2 dt\Bigg]. 
\end{equation*}
\end{lemma}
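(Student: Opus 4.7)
The strategy I would adopt is a Girsanov-based identification of the Radon-Nikodym derivative between the two path measures $P^i := \mathcal{L}(Z^i[T])$ on $\mC^m_T$, followed by direct evaluation of the resulting relative entropy integral. On $(\Omega^1, \mathcal{F}^1, \mathbb{F}^1, \mathbb{P}^1)$ I would introduce the Dol\'{e}ans--Dade exponential
\begin{equation*}
M_t := \exp\Bigl(\tfrac{1}{\sqrt{2}}\int_0^t (b^2 - b^1)(s, Z^1(s)) \cdot dB^1(s) - \tfrac{1}{4}\int_0^t |b^2 - b^1|^2(s, Z^1(s))\, ds\Bigr),
\end{equation*}
and first verify that $M$ is a true $\mathbb{P}^1$-martingale. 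Since the drifts $b^i$ have only linear growth, Novikov's condition does not apply directly; instead I would localize by $\tau_n := \inf\{t : |Z^1(t)| \geq n\}$, invoke standard Gronwall-type moment bounds $\mathbb{E}^{\mathbb{P}^1}[\sup_{s \leq T}|Z^1(s)|^p] < \infty$ for every $p \geq 1$ (available from linear growth of $b^1$), and pass to the limit via a Bene\v{s}- or Krylov-type criterion tailored to linear-growth drifts.

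Having defined $\mathbb{Q}$ by $d\mathbb{Q}/d\mathbb{P}^1 = M_T$, Girsanov's theorem would then yield that $\tilde{B}(t) := B^1(t) - \int_0^t \tfrac{1}{\sqrt{2}}(b^2 - b^1)(s, Z^1(s))\, ds$ is a $\mathbb{Q}$-Brownian motion, and substitution into the SDE for $Z^1$ gives $dZ^1(t) = b^2(t, Z^1(t))\, dt + \sqrt{2}\, d\tilde{B}(t)$ under $\mathbb{Q}$, with initial law $\lambda_0$. By weak uniqueness for SDEs with measurable linear-growth drift and non-degenerate additive noise (via the Stroock--Varadhan martingale problem, together with Krylov estimates and localization), the $\mathbb{Q}$-law of $Z^1[T]$ must coincide with $P^2$.

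To extract the entropy, I would next exploit the SDE identity $\sqrt{2}\, dB^1(s) = dZ^1(s) - b^1(s, Z^1(s))\, ds$ to rewrite the stochastic integral in $\log M_T$ as a stochastic integral against the semimartingale $Z^1$ itself; this shows that $M_T$ is $\sigma(Z^1[T])$-measurable. The pushforward relation then gives $M_T = (dP^2/dP^1)(Z^1[T])$ $\mathbb{P}^1$-almost surely, whence
\begin{equation*}
\mathcal{H}(P^1 \,|\, P^2) \;=\; -\mathbb{E}^{\mathbb{P}^1}[\log M_T].
\end{equation*}
Because $M$ is a true $\mathbb{P}^1$-martingale from the first step, the It\^{o}-integral contribution to $\log M_T$ vanishes in expectation, leaving a constant multiple of $\mathbb{E}^{\mathbb{P}^1}[\int_0^T |b^1 - b^2|^2(t, Z^1(t))\, dt]$; careful tracking of the $\sqrt{2}$ diffusion normalization produces the stated coefficient.

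The principal technical obstacle will be the first step: upgrading $M$ from a local to a genuine martingale under only linear growth of the drifts, which is precisely what legitimises both the definition of $\mathbb{Q}$ as a probability measure and the zero-mean argument for the final It\^{o} integral. A secondary subtlety is weak uniqueness for the drifted SDE, needed to identify $\mathcal{L}^\mathbb{Q}(Z^1[T])$ with $P^2$; while classical for non-degenerate additive noise, it sits outside the standard Lipschitz/Yamada--Watanabe framework. All remaining pieces --- Girsanov, the $\sigma(Z^1[T])$-measurability of $\log M_T$, and the vanishing of the It\^{o} integral in expectation --- should be routine once the martingale property is in hand.
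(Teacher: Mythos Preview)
The paper does not prove this lemma; it is quoted directly from \cite{conforti2023projected}. Your Girsanov approach is the standard one and is sound in outline, but two steps need tightening.

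First, the moment bounds $\mathbb{E}^{\mathbb{P}^1}[\sup_{s\le T}|Z^1(s)|^p]<\infty$ that you invoke require $\lambda_0$ to have $p$-th moments, which is not assumed; the Bene\v{s} criterion you also mention secures the martingale property of $M$ without any moment hypothesis on $\lambda_0$, so lean on that instead. Second, and more substantively, the fact that $M$ is a true $\mathbb{P}^1$-martingale (i.e.\ $\mathbb{E}^{\mathbb{P}^1}[M_T]=1$) does \emph{not} imply that the It\^{o} integral appearing in $\log M_T$ has zero expectation: these are assertions about two different local martingales. What you actually need is that $\int_0^\cdot (b^2-b^1)(s,Z^1(s))\cdot dB^1(s)$ is itself a true martingale, which follows once $\mathbb{E}^{\mathbb{P}^1}\big[\int_0^T|b^1-b^2|^2(s,Z^1(s))\,ds\big]<\infty$; when this expectation is infinite, the right-hand side of the claimed identity is $+\infty$, and a separate localization/Fatou argument is required to show the left-hand side is as well. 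Finally, careful tracking of the $\sqrt{2}$ diffusion coefficient yields a prefactor of $\tfrac14$ rather than the stated $\tfrac12$; this is consistent with the $\tfrac14$ appearing in the paper's own applications of the lemma at \eqref{eq:wp:existenceEntropy} and \eqref{eq:wp:bddUniq1}, so the $\tfrac12$ in the displayed statement appears to be a typo.
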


Next, we present two results that will be used frequently in the sequel. The first one shows that linear growth solutions to the CMVE possess nice regularity and integrability properties.
\begin{proposition}[Properties of linear growth solutions] \label{thm:ap1:wellPosed}
Fix $\kappa, d \in \N$ and $T \in (0, \infty)$. Suppose $\lambda \in \mPSb$ and $(U, \intPot)$ satisfy Assumption \ref{as:results:lyapunov}. Suppose that $(\mu, \gamma)$ is a linear growth solution to the \CMVE{} with potentials $(U, \intPot)$ and initial condition $\lambda$. Then the following properties hold: 
\begin{enumerate}
    \item The time-marginals of $\mu$ have uniformly bounded second moments, that is
\begin{equation*}
    \sup_{t \in [0, T]} \int_{(\R^d)^{1 + \kap}} |\x|^2 \mu_t(d\x) < \infty;
\end{equation*}
        \item There exists a positive locally H\"older continuous function $\mu:[0, T] \times (\R^d)^{1 + \kap} \rightarrow (0, \infty)$ such that $\mu_t(d\x) = \mu_t(\x) d\x$. Moreover, the H\"older coefficient and exponent of $\mu$ depends only on $(\kap, d, T)$, the linear growth constant of $(\gamma, \nabla U, \nabla \intPot)$, and the initial condition $\lambda$.
    \item  We have $\mu_t \in W^{1, 1}((\R^d)^{1 + \kap})$ for almost every $t \in (0, T)$. Moreover the following estimate holds:
\begin{equation*}
    \int_0^T \int_{(\R^d)^{1 + \kap}} \frac{|\nabla_{\x} \mu_t(\x)|^2}{\mu_t(\x)} d\x dt < \infty.
\end{equation*}
\item For all $t \in [0, T]$, we have
\begin{equation*}
    \int \mu_t(\x) |\log \mu_t(\x)| d\x < \infty.
\end{equation*}
\end{enumerate}
In particular, if $\lambda \in \mPSb$ and $(\mu, \gamma)$ is a linear growth solution of the \CMVE{} on $[0, T]$ with initial condition $\lambda$, then $\mu_t \in \mPSb$ for all $t \in [0, T]$.
\end{proposition}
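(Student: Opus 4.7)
My plan is to exploit the fact that once a linear growth solution $(\mu, \gamma)$ is fixed, the drift $\eta$ in \eqref{eq:not:eta} satisfies a linear growth bound in $\x$ uniformly in $t \in [0,T]$: indeed, this is immediate from Assumption \ref{as:results:lyapunov} on $(\nabla U, \nabla \intPot)$ combined with the linear growth bound on $\gamma$ built into Definition \ref{def:not:linGrowth}. Therefore $\X$ is a weak solution of the linear (non-mean-field) SDE \eqref{eq:not:SimpleCMVE} with constant, non-degenerate diffusion and drift of linear growth, and the time-marginal density flow (when it exists) satisfies the linear Fokker-Planck equation
$$\partial_t \mu_t \,=\, \Delta_\x \mu_t + \nabla_\x \cdot \bigl(\eta(t,\cdot)\,\mu_t\bigr).$$
This reduces each of (1)--(4) to standard a priori estimates for such linear equations, and I would invoke the versions collected in Appendix \ref{_sec:ap:PDE}.

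Claim (1) I would handle by applying It\^o's formula to $|\X(t)|^2$, taking expectation after the usual localization of the martingale term, and using the linear growth of $\eta$ together with Cauchy--Schwarz to obtain
$$\bbE[|\X(t)|^2] \,\le\, \int |\x|^2\, \lambda(d\x) + C\int_0^t \bigl(1 + \bbE[|\X(s)|^2]\bigr)\, ds,$$
so that Gr\"onwall yields the uniform bound since $\lambda \in \mPSb$ has finite second moment. Claim (2) would then follow from classical parabolic regularity theory applied to the Fokker-Planck equation above: the diffusion being constant and uniformly elliptic and the drift $\eta$ being locally bounded measurable, interior H\"older estimates produce a locally H\"older continuous density $\mu_t(\x)$ with the stated dependence of exponent and constants, while the parabolic Harnack inequality yields strict positivity. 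For claim (3), the standard approach is to multiply the Fokker-Planck equation by $\log \mu_t$ and integrate by parts in space-time (justified by a mollification and truncation procedure of the sort in Appendix \ref{_sec:ap:PDE}); the cross terms are controlled by the second-moment bound from (1) and the linear growth of $\eta$, yielding the asserted space-time integral bound on $|\nabla_\x \mu_t|^2/\mu_t$.

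The step I expect to be the main obstacle is (4), the propagation of finite entropy, because the drift $\eta$ is unbounded. For the upper bound on $\int \mu_t \log_+ \mu_t\, d\x$, my plan is to apply Lemma \ref{lem:wp:entropyEstimates} with $b^1 = -\eta$ and $b^2 = 0$: the path-space relative entropy of $\mL(\X[T])$ with respect to the law of $\X(0) + \sqrt{2}\,\mathbf{B}[\cdot]$ is then bounded above by $\tfrac{1}{2}\,\bbE \int_0^T |\eta(s, \X(s))|^2\, ds$, which is finite by claim (1) and the linear growth of $\eta$. The data processing inequality transfers this bound to the $t$-marginals, and since $\lambda$ convolved with the Gaussian heat kernel has finite entropy at each $t \in [0,T]$ (as $\lambda$ has finite entropy), we conclude that $\mu_t$ has finite relative entropy, hence $\int \mu_t \log_+ \mu_t\, d\x < \infty$. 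The lower bound $\int \mu_t \log_- \mu_t\, d\x < \infty$ is routine: comparing $\mu_t$ to a centered Gaussian reference $\rho$ of variance exceeding $\sup_{s \le T}\bbE[|\X(s)|^2]$ gives $\int \mu_t \log \rho\, d\x > -\infty$, and non-negativity of $\entropy(\mu_t \| \rho)$ then yields $\int \mu_t \log \mu_t\, d\x > -\infty$, completing the proof.
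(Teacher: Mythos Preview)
Your proposal is correct and follows essentially the same approach as the paper: once the linear growth of $\eta$ is observed, the paper simply notes that $\mu$ solves a linear Fokker--Planck equation with linear growth drift and invokes Theorem \ref{thm:ap1:wellPosed2} from the appendix, which packages exactly the four conclusions you list (with proofs drawn from the Bogachev--Krylov--R\"ockner--Shaposhnikov monograph rather than your direct sketches). Your path-space entropy argument for (4) via Lemma \ref{lem:wp:entropyEstimates} and data processing is a clean probabilistic alternative to the PDE-side citation used there, but the overall route is the same.
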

\begin{proof}
     By \eqref{eq:not:SimpleCMVE} and Assumption \ref{as:results:lyapunov}, if $\gamma$ satisfies \eqref{eq:res:gammaLinGrowth} then $\mu$ solves a linear Fokker-Planck equation with a linear growth drift. The proposition is then an immediate corollary of Theorem \ref{thm:ap1:wellPosed2}. 
\end{proof}
The final lemma of this section characterizes the edge marginals $\{\bar{\mu}_t\}_{t \geq 0}$ of solutions $(\mu, \gamma, \X)$ to the \CMVE{} as solutions to a coupled pair of SDEs.
\begin{lemma}[Marginal \CMVE{}]
\label{lem:wp:marginalSDE}
     Suppose $(U, \intPot)$ satisfy Assumption \ref{as:results:lyapunov} and $(\mu, \gamma, \X)$ is a linear growth solution to \CMVE{} on $[0, T]$ with potentials $(U, \intPot)$  and initial condition $\lambda \in \mPSb$. Then for all $t \in [0, T]$, the edge time marginal $\bar{\mu}_t$ satisfies $\bar{\mu}_t = \mL(\bar{X}_0(t), \bar{X}_1(t))$, where $(\bar{X}_0, \bar{X}_1)$ solve the following SDE with initial condition $\bar{\lambda}$:
        \begin{equation}
        \begin{aligned}
            d{\bar{X}}_0(t) &= \gamma(t, \bar{X}_0(t), \bar{X}_1(t)) dt + \sqrt{2}d\tilde{B}_0(t), \\
            d{\bar{X}}_1(t) &= \gamma(t, \bar{X}_1(t), \bar{X}_0(t)) dt + \sqrt{2}d\tilde{B}_1(t), 
                    \end{aligned}
            \label{eq:wp:marginalSDE}
    \end{equation}
    where $\tilde{B}_0, \tilde{B}_1$ are independent $d$-dimensional Brownian motions. Moreover, we have
    \begin{equation*}
    \int_0^T \int_{\R^d \times \R^d} \frac{|\nabla_{(x_0, x_1)} \bar{\mu}_t(x_0, x_1)|^2}{\bar{\mu}_t(x_0, x_1)} dx_0 dx_1 dt < \infty.
\end{equation*}
\end{lemma}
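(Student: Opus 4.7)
The plan is to derive the Fokker--Planck equation satisfied by $\bar{\mu}_t$ by integrating out the non-marked coordinates of the joint Fokker--Planck equation, then construct a weak solution to \eqref{eq:wp:marginalSDE} via a superposition (mimicking) argument, and finally obtain the Fisher information bound by applying the same PDE regularity result that underlies Proposition \ref{thm:ap1:wellPosed}.

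First, Proposition \ref{thm:ap1:wellPosed} supplies a locally H\"older continuous, strictly positive density $\mu_t(\mathbf{x})$ solving the joint Fokker--Planck equation in the distributional sense. Testing against a function $\phi(x_0, x_1)$ depending only on the two marked coordinates and integrating out $x_2, \ldots, x_\kap$, the total $\nabla_{x_v}$-divergence terms for $v \geq 2$ vanish by integration by parts, while the $v = 1$ term survives directly as $\nabla_{x_1} \cdot [\gamma(t, x_1, x_0) \bar{\mu}_t]$ since $\gamma(t, X_1(t), X_0(t))$ is already a function of $(X_0(t), X_1(t))$. For the remaining $x_0$ term, I would rewrite the conditional expectation identity \eqref{eq:not:gamma} in its density form
$$\int_{(\R^d)^{\kap - 1}} b(\mathbf{x})\, \mu_t(\mathbf{x})\, dx_2 \cdots dx_\kap = \gamma(t, x_0, x_1)\, \bar{\mu}_t(x_0, x_1)$$
to identify the effective drift. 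Combining these, $\bar{\mu}_t$ solves
$$\partial_t \bar{\mu}_t = \Delta_{(x_0, x_1)} \bar{\mu}_t + \nabla_{x_0} \cdot [\gamma(t, x_0, x_1) \bar{\mu}_t] + \nabla_{x_1} \cdot [\gamma(t, x_1, x_0) \bar{\mu}_t]$$
in the weak sense on $[0, T] \times \R^d \times \R^d$, with initial datum $\bar{\lambda}$.

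Next, I would invoke the Figalli--Trevisan superposition principle (or equivalently a Brunick--Shreve mimicking theorem applied to the coordinate projection $\mathbf{x} \mapsto (x_0, x_1)$ of $\mathbf{X}$) on the above two-dimensional linear Fokker--Planck equation. The linear growth bound \eqref{eq:res:gammaLinGrowth} on $\gamma$ and the uniform second-moment bound from Proposition \ref{thm:ap1:wellPosed}(1) for $\bar{\mu}_t$ verify the integrability hypothesis of the superposition principle, yielding a weak SDE solution $(\bar X_0, \bar X_1)$ on some filtered probability space, driven by independent $d$-dimensional Brownian motions $\tilde B_0, \tilde B_1$, that satisfies \eqref{eq:wp:marginalSDE} with initial law $\bar{\lambda}$ and marginals $\bar{\mu}_t$. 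For the Fisher information bound, I would then apply Theorem \ref{thm:ap1:wellPosed2} directly to the marginal Fokker--Planck equation above: its drift is linear-growth by \eqref{eq:res:gammaLinGrowth}, and $\bar{\lambda}$ inherits finite entropy and variance from $\lambda \in \mathcal{Q}_{\kap, d}$, so the desired space-time $L^2$ bound on $|\nabla_{(x_0, x_1)} \bar{\mu}_t|^2 / \bar{\mu}_t$ follows.

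The main obstacle is the rigorous passage from the joint Fokker--Planck equation to the marginal one, since the conditional expectation identity \eqref{eq:not:gamma} defining $\gamma$ holds only $\bar{\mu}_t$-a.e.\ and $\gamma$ is a priori only measurable. To justify this I would rely on the strict positivity and H\"older regularity of $\mu_t$ guaranteed by Proposition \ref{thm:ap1:wellPosed}, together with the integrability \eqref{eq:cmve:integrability} of $b$ against $\mu_t$, which together make the use of Fubini and the identification of the projected drift legitimate.
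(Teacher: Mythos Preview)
Your proposal is correct and arrives at the same conclusion as the paper, but the route is slightly inverted. The paper does not derive the marginal Fokker--Planck equation by integrating out coordinates; instead it applies the Brunick--Shreve mimicking theorem (Corollary 3.7 of \cite{brunick2013mimic}) directly at the SDE level to the projection $\mathbf{X} \mapsto (X_0, X_1)$, using that the drift of $X_1$ is already $\gamma(t, X_1, X_0)$ and that $\gamma(t, X_0, X_1) = \bbE[b(\mathbf{X}(t)) \mid X_0(t), X_1(t)]$ by definition, so the mimicked drift for $X_0$ is exactly $\gamma(t, x_0, x_1)$. Well-posedness of \eqref{eq:wp:marginalSDE} then comes from standard linear-growth SDE theory (Karatzas--Shreve), and the Fisher information bound is obtained by passing from the marginal SDE to its Fokker--Planck equation via Proposition~\ref{prop:ap1:superposition} and invoking Theorem~\ref{thm:ap1:wellPosed2}(3). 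Your approach instead goes PDE $\to$ SDE (joint FPE integrated out, then superposition), while the paper goes SDE $\to$ SDE (mimicking), then SDE $\to$ PDE for the regularity statement. Both are valid; the paper's ordering is marginally cleaner because it sidesteps the need to justify the density-level identity for $\gamma$ and the Fubini step you flag as the main obstacle, since Brunick--Shreve operates with the conditional expectation \eqref{eq:not:gamma} as stated.
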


\begin{proof}
        Since $(\mu, \gamma, \X)$ is a linear growth solution to the \CMVE{}, by \eqref{eq:res:gammaLinGrowth}, and Propositions 5.3.6 and 5.3.10 of \cite{karatzas1991stochastic}, the SDE \eqref{eq:wp:marginalSDE} has a unique weak solution. Let $(X_0, X_1)$ be distributed according to the $(0, 1)$-marginal of $\mu$. Moreover, by Assumption \ref{as:results:lyapunov}, \eqref{eq:results:drift}, \eqref{eq:res:gammaLinGrowth}, and Proposition \ref{thm:ap1:wellPosed}(1), we have 
            \begin{equation*}
        \int_0^T \mathbb{E} \Big[ |b(\X_t)|^2 + |\gamma(t, X_1(t), X_0(t)|^2 + |\gamma(t, X_0(t), X_1(t))|^2\Big] dt < \infty.
    \end{equation*}
    Therefore we can combine Corollary 3.7 of \cite{brunick2013mimic} with the SDE for $\X$ in \eqref{eq:not:mLocal} and the expression \eqref{eq:not:gamma} for $\gamma$ to conclude that the law $\bar{\mu}_t$ of  $(X_0(t), X_1(t))$ is equal to $\mL(\bar{X}_0(t), \bar{X}_1(t))$, the unique in law solution to the SDE \eqref{eq:wp:marginalSDE} with initial condition $\bar{\lambda}$. This proves the first assertion of the lemma.
    
 To prove the second assertion, first note that  since $\gamma$ satisfies the linear growth condition \eqref{eq:res:gammaLinGrowth}, by Proposition \ref{prop:ap1:superposition}, $\{\bar{\mu}_t\}_{t \in [0, T]}$ solves a linear Fokker-Planck equation with a linear growth drift. Then the claimed inequality of the second assertion follows by Theorem \ref{thm:ap1:wellPosed2}(3).
\end{proof}
\subsection{Proof of well-posedness} 
\label{sec:wp:proof} In this section we prove Theorem \ref{thm:results:bddWp} via a fixed point argument. To this end, we start with the definition of an auxilliary SDE that will be used to define the fixed point problem. To lighten notation, we write $\pathMeasures := \mP(\mC_T^{d(1+\kap})$. Recall that $\Y$ denotes the canonical random variable (see Section \ref{ss:prbMeasure}).
\begin{lemma}[Frozen SDE] 
\label{lem:wp:frozenLemma} Suppose $(U, \intPot)$ satisfy Assumption \ref{as:results:lyapunov} with $\|\nabla \intPot\|_{L^\infty} < \infty$. Let $\hat{\mu} \in \pathMeasures$ be a probability measure that satisfies the following integrability condition:\begin{equation}
    \int_0^T \bigg[\int_{(\R^d)^{1 + \kap}} |\x|\hat{\mu}_t(d\x)\bigg]dt < \infty.
    \label{eq:wp:hatMoment}
\end{equation}
Then, given $b$ as defined in \eqref{eq:results:drift},   
there exists a progressively measurable function $\hat{\gamma}_{\hat{\mu}}:[0, T] \times \R^d \times \R^d \rightarrow \R^d$ such that
\begin{equation}
    \hat{\gamma}_{\hat{\mu}}(t, x, y) = \left\{ \begin{aligned}&\bbE^{\hat{\mu}}\big[ b(\mathbf{Y}(t)) \big| Y_0(t) = x, Y_1(t) = y\big], &\quad &\text{for }\hat{\mu}_t\text{-a.e. }x, y \in \R^d,  \\
    &\nabla U(x) &\quad &\text{otherwise}.
    \end{aligned}\right. \quad t \in [0, T].
    \label{eq:wp:hatGamma}
\end{equation}
Moreover, there exists $C \in (0, \infty)$ such that
\begin{equation}
    \sup_{\hat{\mu} \in \pathMeasures}\bigg\{ \sup_{(t, x, y) \in [0, T] \times (\R^d)^2} \frac{|\hat{\gamma}_{\hat{\mu}}(t,  x, y)|}{1 + |x| + |y|} \bigg\}\leq C,
    \label{eq:wp:gammaLinGrowth}
\end{equation}
and a unique in a law solution to the following SDE:\begin{equation}
    \label{eq:wp:frozenEq}
                \begin{aligned}               d\hat{X}_0(t) &= - \bigg( \nabla U\big(\hat{X}_0(t)\big) + \sum_{v = 1}^\kap \nabla \intPot\big(\hat{X}_0(t) - \hat{X}_v(t)\big)\bigg) dt + \sqrt{2} dB_0(t), \\d\hat{X}_v(t) &= - \hat{\gamma}_{\hat{\mu}}\big(t, \hat{X}_v(t), \hat{X}_0(t)\big) dt + \sqrt{2} dB_v(t), \quad v \in \{1, \ldots, \kap\},
                \end{aligned}
    \end{equation}
    with initial condition $\lambda \in \mPSb$.
\end{lemma}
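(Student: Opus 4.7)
The plan is to establish the three assertions in order: (i) construction of a Borel $\hat{\gamma}_{\hat{\mu}}$ satisfying \eqref{eq:wp:hatGamma}, (ii) the uniform linear growth bound \eqref{eq:wp:gammaLinGrowth}, and (iii) weak existence and uniqueness in law for \eqref{eq:wp:frozenEq}. The main technical hurdle is ensuring joint Borel measurability of a disintegration in step (i); once that is secured, the remaining estimates are essentially immediate and the SDE well-posedness reduces to classical results.

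For (i), I will decompose the drift as
\[
b(\x) = \nabla U(x_0) + \nabla \intPot(x_0 - x_1) + \sum_{v=2}^\kap \nabla \intPot(x_0 - x_v).
\]
The first two summands on the right are deterministic functions of $(Y_0(t), Y_1(t))$, so only the last sum requires genuine conditioning. I will invoke the disintegration theorem on the Polish space $(\R^d)^{1+\kap}$ to obtain, for each $t$, a regular conditional distribution $\hat{\mu}_t^{x,y}$ of $(Y_2(t), \ldots, Y_\kap(t))$ given $(Y_0(t), Y_1(t)) = (x,y)$. A standard monotone-class argument applied to a countable determining family of bounded continuous test functions then yields a version of $(t, x, y) \mapsto \hat{\mu}_t^{x,y}$ that is jointly Borel measurable; this uses that $t \mapsto \hat{\mu}_t$ is itself Borel measurable as a consequence of $\hat{\mu}$ being a Borel measure on path space. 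Setting
\[
\hat{\gamma}_{\hat{\mu}}(t, x, y) := \nabla U(x) + \nabla \intPot(x-y) + \sum_{v=2}^\kap \int \nabla \intPot(x - z_v)\,\hat{\mu}_t^{x,y}(dz_2, \ldots, dz_\kap)
\]
where the kernel is defined, and $\hat{\gamma}_{\hat{\mu}}(t,x,y) := \nabla U(x)$ otherwise, produces a Borel version of the conditional expectation appearing in \eqref{eq:wp:hatGamma}.

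For (ii), Assumption \ref{as:results:lyapunov} yields $|\nabla U(x)| \leq C(1+|x|)$, while the boundedness of $\nabla \intPot$ controls each of the remaining $\kap$ terms by $\|\nabla \intPot\|_{L^\infty(\R^d)}$, hence
\[
|\hat{\gamma}_{\hat{\mu}}(t, x, y)| \leq C(1+|x|) + \kap\|\nabla \intPot\|_{L^\infty(\R^d)} \leq C'(1 + |x| + |y|),
\]
uniformly over $\hat{\mu} \in \pathMeasures$, $t \in [0,T]$, and $x,y \in \R^d$, which gives \eqref{eq:wp:gammaLinGrowth}; note in particular that the hypothesis \eqref{eq:wp:hatMoment} on $\hat{\mu}$ plays no role in this step, thanks to the boundedness of $\nabla \intPot$.

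For (iii), the full drift of \eqref{eq:wp:frozenEq}, viewed as a function of $(t, \hat{x}_0, \ldots, \hat{x}_\kap)$, is jointly Borel measurable and satisfies a linear growth estimate by Assumption \ref{as:results:lyapunov} combined with step (ii). The diffusion coefficient is the constant $\sqrt{2}$ times the $d(1+\kap)$-dimensional identity. Since $\lambda \in \mPSb$ has finite second moment, weak existence and uniqueness in law for \eqref{eq:wp:frozenEq} with initial distribution $\lambda$ then follow from Propositions 5.3.6 and 5.3.10 of \cite{karatzas1991stochastic} via Girsanov's theorem, completing the proof.
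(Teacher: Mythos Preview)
Your proposal is correct, and steps (ii) and (iii) match the paper's argument essentially verbatim, including the invocation of Propositions~5.3.6 and~5.3.10 of \cite{karatzas1991stochastic}. The difference lies in step~(i): the paper does not build the disintegration by hand but instead invokes Proposition~5.1 of \cite{brunick2013mimic} to obtain a progressively measurable version of the conditional expectation directly. To apply that result, the paper first uses Assumption~\ref{as:results:lyapunov} together with the moment hypothesis \eqref{eq:wp:hatMoment} to verify $\int_0^T \int |b(\x)|\,\hat{\mu}_t(d\x)\,dt < \infty$, which is the integrability hypothesis needed there. Your decomposition is slightly sharper in that, by separating off the $(x_0,x_1)$-measurable part of $b$, you only need to condition the bounded piece $\sum_{v\ge 2}\nabla\intPot(x_0-x_v)$, so \eqref{eq:wp:hatMoment} is not actually required for the construction. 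On the other hand, the paper's route is shorter and avoids the somewhat delicate joint-in-$t$ measurable selection that you sketch via a monotone-class argument; if you keep your approach, it would be worth either making that selection argument precise or citing a standard reference (e.g., \cite{brunick2013mimic} or Stricker--Yor) for the existence of a jointly measurable regular conditional distribution indexed by $t$.
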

\begin{proof}  By Assumption \ref{as:results:lyapunov} and  \eqref{eq:wp:hatMoment}, we have
\begin{equation*}
    \int_0^T \bigg[ \int_{(\R^d)^{ 1 + \kap}} \bigg(\nabla U(x_0) + \sum_{v = 1}^\kap \nabla \intPot(x_0 - x_v) \bigg) \hat{\mu}_t(d\x) \bigg] dt < \infty
\end{equation*}
Then by Proposition 5.1 of \cite{brunick2013mimic}, there exists a function $\hat{\gamma}_{\hat{\mu}}$ that satisfies  \eqref{eq:wp:hatGamma}. By \eqref{eq:results:drift} 
and \eqref{eq:wp:hatGamma} it follows that 
\begin{equation*}
    \hat{\gamma}_{\hat{\mu}}(t, x, y) \leq |\nabla U(x)| + \kap \|\nabla \intPot\|_\infty, \quad x, y \in \R^d.
\end{equation*}
Since $\nabla U$ satisfies a linear growth condition by Assumption \ref{as:results:lyapunov} and $\nabla \intPot$ is bounded by assumption, this implies 
 \eqref{eq:wp:gammaLinGrowth}, which shows $\hat{\gamma}_{\hat{\mu}}$ satisfies a linear growth condition. Therefore by Propositions 5.3.6 and 5.3.10 of \cite{karatzas1991stochastic}, there exists a unique in law weak solution to the SDE \eqref{eq:wp:frozenEq}. \end{proof}
\begin{definition}[Frozen \CMVE{}] Suppose $(U, \intPot)$ satisfy Assumption \ref{as:results:lyapunov} with $\|\nabla \intPot\|_{L^\infty} < \infty$. Let $\hat{\mu} \in \pathMeasures$ satisfy \eqref{eq:wp:hatMoment} and let $\hat{\gamma}_{\hat{\mu}}$ be as in Lemma \ref{lem:wp:frozenLemma}. We call \eqref{eq:wp:frozenEq} the \emph{frozen \CMVE{}} on $[0, T]$ with potentials $(U, W)$, frozen measure $\hat{\mu}$ and initial condition $\lambda \in \mPSb$, and use $\Phi: \pathMeasures \rightarrow \pathMeasures$ to denote the map that takes any frozen measure $\hat{\mu}$ to the unique law of the associated frozen SDE  \eqref{eq:wp:frozenEq}.    
\label{def:wp:frozen}
\end{definition}

\begin{proof}[Proof of existence in Theorem \ref{thm:results:bddWp}]
By Definition \ref{def:int:local} of the \CMVE{} and Definition \ref{def:wp:frozen} of $\Phi$, it is clear that $\mu$ defines a solution to the \CMVE{} if and only if it is a fixed point of $\Phi$ in $\mathcal{M}_{\kap, d}^T$. In view of Schauder's fixed point theorem, to establish existence of a fixed point for $\Phi$, it suffices to verify the following claims:
\begin{enumerate}
    \item There is a weakly closed, convex set $\mathfrak{S} \subset \pathMeasures$ such that $\Phi(\mathfrak{S}) \subset \mathfrak{S}$.
    \item $\Phi$ is weakly continuous on $\mathfrak{S}$.
\end{enumerate}

We first introduce some helpful notation that will be used throughout the proof. For $\hat{\mu} \in \pathMeasures$ satisfying \eqref{eq:wp:hatMoment}, let $\hat{\gamma}_{\hat{\mu}}$ be the measurable function defined in Lemma \ref{lem:wp:frozenLemma}, and define the related measurable function $\zeta_{\hat{\mu}}: [0, T] \times \R^d \times \R^d \rightarrow \R^d$ by \begin{equation}
    \zeta_{\hat{\mu}}(t, x, y) = \hat{\gamma}_{\hat{\mu}}(t,  x, y) - \nabla U(x), \quad (t, x, y) \in [0, T] \times \R^d \times \R^d.
    \label{eq:wp:zeta}
\end{equation} By \eqref{eq:results:drift} and \eqref{eq:wp:hatGamma} it follows that 
\begin{equation*}
    \zeta_{\hat{\mu}}(t, x, y) = \bbE^{\hat{\mu}}\bigg[\sum_{v = 1}^\kap \nabla \intPot(Y_0(t) - Y_v(t)) \bigg| Y_0(t) = x, Y_v(t) = y\bigg], \quad \hat{\mu}_t\text{-a.e. }x, y \in \R^d,\quad t \in [0, T],
    \label{eq:wp:zeta2}
\end{equation*}
and $\zeta_{\hat{\mu}}(t, x, y) = 0$ for all $x,y \in \R^d$, on the remaining $\hat{\mu}_t$-null set. This yields the following uniform bound on $\zeta_{\hat{\mu}}$:
\begin{equation}
\|\zeta_{\hat{\mu}}\|_{L^\infty(\hat{\mu})} \leq \|\nabla W\|_{L^\infty(\R^d)}, \quad \hat{\mu} \in \mP\big(\mC_T^{d(1 + \kap)}\big).
\label{eq:wp:bddZeta}
\end{equation}
The proof proceeds via the following three steps. 

{\em Step 1: Identify a candidate subset.} To define the subset $\mathfrak{S}  \subset\pathMeasures$ to which we can apply Schauder's fixed point theorem, we introduce an intermediary set $\mathfrak{D} \subset \mP_{\kap, d, T}$. Since $\nabla U$ satisfies Assumption \ref{as:results:lyapunov}, Propositions 5.3.6 and 5.3.10 of \cite{karatzas1991stochastic} guarantee that there is a unique in law weak solution to the following SDE: 
    \begin{equation*}
        dZ_v(t) = - \nabla U\big(Z_v(t)\big) dt + \sqrt{2} dB_v(t), \quad v \in \{0, \ldots, \kap\}, \quad \mathbf{Z}(0) \sim \lambda.
    \end{equation*}
Let $\rho := \mL(\mathbf{Z}) \in \pathMeasures$ denote the law of this unique weak solution. Define $\mathfrak{D} \subset \pathMeasures$ to be the set of measures $\hat{\mu}$ that satisfy the following three properties: 
\begin{itemize}
    \item $\hat{\mu}_0 = \lambda$ and $\hat{\mu}$ satisfies \eqref{eq:wp:hatMoment}.
    \item The time-marginals $\hat{\mu}_t$ satisfy the symmetry property \eqref{eq:not:symmetry1_} for all $t \in [0, T]$.
    \item We have 
    \begin{equation}
            \bbE^{\rho}\bigg [ \bigg|\frac{d\hat{\mu}}{d\rho} \bigg|^2\bigg] \leq \sup_{\hat{\nu} \in \mathcal{P}(( \mC_T^d)^{1 + \kap})} \bbE^{\rho}\bigg[ \bigg|\frac{d\Phi(\hat{\nu})}{d\rho}\bigg|^2\bigg] =:C_\Phi.     
            \label{eq:wp:RNBound}
    \end{equation} 
\end{itemize}   
Note that by Girsanov's theorem, Novikov's condition (see e.g., Corollary 3.5.16 of \cite{karatzas1991stochastic}), and \eqref{eq:wp:bddZeta}, the boundedness of $\nabla \intPot$ implies that the constant $C_\Phi$ of \eqref{eq:wp:RNBound} satisfies \begin{equation*}
C_\Phi \leq e^{T\|\nabla \intPot\|_{L^\infty(\R^d)}} < \infty.
  \end{equation*}
It is clear from Remark \ref{rk:not:exchangeability} and Fatou's lemma  that $\mathfrak{D}$ is a weakly closed subset of 
$\pathMeasures$, and further it is easy to see that it is also convex. Moreover, the first item in the definition of $\mathfrak{D}$ and Lemma \ref{lem:wp:frozenLemma} imply that $\Phi$ is well-defined on $\mathfrak{D}$.
  
We now show $\Phi(\mathfrak{D}) \subset \mathfrak{D}$. It is clear from the above that \eqref{eq:wp:RNBound} is preserved by $\Phi$. Moreover, Lemma \ref{lem:wp:frozenLemma} and Theorem \ref{thm:ap1:wellPosed2}(1), guarantee that $\Phi(\hat{\mu})_0 = \lambda$ and that \eqref{eq:wp:hatMoment} is preserved by $\Phi$. To show that the symmetry \eqref{eq:not:symmetry1_} of time-marginals is preserved by $\Phi$, we argue that symmetry properties of the law of the trajectories of the frozen SDE \eqref{eq:wp:frozenEq} follow from symmetry properties of the time-marginals of the frozen measure $\hat{\mu}_t$. For $\hat{\mu} \in \mathfrak{D}$, let $\hat{\X}$ represent the random variable associated with $\Phi(\hat{\mu})$, that is, the solution to \eqref{eq:wp:frozenEq} with frozen measure $\hat{\mu}$.  Since the time marginals $\hat{\mu}_t$ satisfy \eqref{eq:not:symmetry1_},  the drift of \eqref{eq:wp:frozenEq} as a function on $(\R^d)^{1 + \kap}$ is invariant under any permutation $\tau$ of $\{1, \ldots, \kap\}$. Thus for any such $\tau$, the measure $\Phi^\tau(\hat{\mu})$ given by \begin{align*}
        \Phi^\tau(\hat{\mu}) := \mathcal{L}(\hat{X}_0, 
        \hat{X}_{\tau(1)}, \ldots, \hat{X}_{\tau(\kap)}),
    \end{align*}
    is also a solution to \eqref{eq:wp:frozenEq}. By uniqueness of solutions to the frozen SDE \eqref{eq:wp:frozenEq}, we have $\Phi^\tau(\hat{\mu}) = \Phi(\hat{\mu})$ and the time-marginals of $\Phi(\hat{\mu})$ also satisfy \eqref{eq:not:symmetry1_}. 
    
    We define our candidate set $\mathfrak{S} := \overline{\text{conv}(\Phi(\mathfrak{D}))}$ to be the closed convex hull of $\Phi(\mathfrak{D})$ in $\pathMeasures$. Since $\mathfrak{D}$ is weakly closed and convex, it follows that $\Phi(\mathfrak{S}) \subset \Phi(\mathfrak{D}) \subset \mathfrak{S} \subset \mathfrak{D}$.
     
     {\em Step 2: Show that $\Phi$ is weakly continuous on $\mathfrak{S}$ and apply Schauder's fixed point theorem.} We first demonstrate that given any weakly convergent sequence $\hat{\mu}^l \rightarrow \hat{\mu}$ in $\mathfrak{S}$, the time marginals converge in total variation. In other words, letting $d_{\TV}$ denote  total variation distance, we have  
     \begin{equation}
         \lim_{l \rightarrow \infty} d_{\TV}(\hat{\mu}^l_t, \hat{\mu}_t) = 0, \quad \text{a.e. }t\in (0, T).
         \label{eq:wp:TVConvergence}
     \end{equation}

     By Lemma \ref{lem:wp:frozenLemma} and Definition \ref{def:wp:frozen}, each $\hat{\mu} \in \Phi(\mathfrak{D})$ is the solution to an SDE with a linear growth drift. By Theorem \ref{thm:ap1:wellPosed2}(2), for all $t \in [0, T]$, $\hat{\mu}_t$ has a bounded positive density with a H\"older coefficient and exponent that depend only on  $(U, \intPot, T, \kap, \lambda)$. Moreover, this uniform H\"older property is preserved by convex combinations,  that is, for all $t \in (0, T]$ and compact $B \subset (\R^d)^{1 + \kap}$ there exists $\alpha = \alpha(t, B, \lambda, U, \intPot) \in (0, 1]$ and $C = C(t, B, \lambda, U, \intPot) < \infty$ such that for all $\hat{\mu} \in \text{conv}(\Phi(\mathfrak{D}))$, we have  \begin{equation}
     \label{eq:wp:uniformHolder}
         \sup_{x, y \in B} \frac{|\hat{\mu}_t(x) - \hat{\mu}_t(y)|}{|x - y|^{\alpha}} \leq C.
     \end{equation}

    Now let $\{\hat{\mu}^l\}_{l \geq 1}$ be a sequence in the convex hull of $\Phi(\mathfrak{D})$ and suppose $\hat{\mu}^l$ converges weakly to some $\hat{\mu} \in \pathMeasures$. By the uniform H\"older property \eqref{eq:wp:uniformHolder}, for each $t \in (0, T]$ and compact set $B$ the sequence $\{\hat{\mu}^l_t\}_{l \geq 1}$ is equicontinuous on $B$. Therefore, by the Arzela-Ascoli theorem, every subsequence of $\{\hat{\mu}^l_t\}_{l \geq 1}$ has a further subsequence that converges uniformly on compacts. Since we have $\hat{\mu}^l_t \rightarrow \hat{\mu}_t$ weakly, we therefore must have  $\hat{\mu}^l_t \rightarrow \hat{\mu}_t$ uniformly on compact sets. The total variation convergence \eqref{eq:wp:TVConvergence} then follows from Scheffe's lemma. 
     
  Next, note that Assumption \ref{as:results:lyapunov}, the uniform bound \eqref{eq:wp:bddZeta} on $\hat{\zeta}$,  Lemma \ref{lem:wp:entropyEstimates}, and \eqref{eq:not:symmetry1_}, together imply \begin{equation}
    \begin{aligned}
        \mH(\Phi(\hat{\mu}) \,|\, \Phi(\hat{\mu}^l)) &= \frac{1}{4} \int_0^T \bbE^{\Phi(\hat{\mu})}\bigg[ \sum_{v = 1}^\kap\big|\zeta_{\hat{\mu}^l}(t, Y_v(t), Y_0(t)) - \zeta_{\hat{\mu}}(t, Y_v(t), Y_0(t))|^2\bigg] dt, \\   &= \frac{\kap}{4} \int_0^T \bbE^{\Phi(\hat{\mu})}\Big[ \big|\zeta_{\hat{\mu}^l}(t, Y_1(t), Y_0(t)) - \zeta_{\hat{\mu}}(t, Y_1(t), Y_0(t))|^2\Big] dt,
    \end{aligned}
    \label{eq:wp:existenceEntropy}
    \end{equation}
    By the convergence in  \eqref{eq:wp:TVConvergence}, Theorem 3.1 of \cite{crimaldi2005two} (see also Proposition 3.4 of \cite{conforti2023projected}), and \eqref{eq:wp:zeta2}, we have
    \begin{equation*}
            \lim_{l \rightarrow \infty}\mathbb{E}^{\Phi(\hat{\mu})}\Big[\big|\zeta_{\hat{\mu}^l}(t, Y_1(t), Y_0(t)) - \zeta_{\hat{\mu}}(t, Y_1(t), Y_0(t))|^2\Big] = 0, \quad t \in (0, T].
    \end{equation*}
    Since $\zeta_{\hat{\mu}}$ is bounded, the bounded convergence theorem, \eqref{eq:wp:existenceEntropy}, and the above display imply that
    \begin{equation*}
        \lim_{l \rightarrow \infty} \mathcal{H}(\Phi(\hat{\mu})|\Phi(\hat{\mu}^l)) = 0.
    \end{equation*}
    Therefore by Pinsker's inequality, $\hat{\mu}^l \rightarrow \hat{\mu}$ weakly in $\mathfrak{S}$ implies $d_{TV}(\Phi(\hat{\mu}), \Phi(\hat{\mu}^l)) \rightarrow 0$. Therefore, $\Phi$ is weakly continuous on $\mathfrak{S}$ and this concludes the second claim. 
    
Thus, by Schauder's fixed point theorem, there exists a fixed point $\mu \in \mathfrak{S}$ of $\Phi$. By Proposition \ref{thm:ap1:wellPosed}(2), $\mu$ is a continuous positive function. Hence, we can choose $\gamma$ to be the associated conditional expectation of $\mu$. That is, with $b$ as in \eqref{eq:results:drift}, we set
    \begin{equation}
        \gamma(s, x, y):= \frac{1}{\bar{\mu}_s(x, y)}\int_{(\R^d)^{\kap-1}} b(\x) \mu_s(x, y, x_2, \ldots, x_\kap) \prod_{v = 2}^\kap dx_v, \quad (s, x, y) \in [0, T] \times \R^d \times \R^d.
        \label{eq:wp:gamma}
    \end{equation}
    
    {\em Step 3: Conclude that the fixed point is a solution to the \CMVE{}.} Let $\mu \in \mathfrak{S}$ be the fixed  of $\Phi$ described above, let $\gamma$ be as defined in \eqref{eq:wp:gamma}, and let $\X$ be a stochastic process with law $\mu$. By the definition of $\mathfrak{S}$, we have by the first item in the definition of $\mathfrak{D}$ that $\mu_0 = \lambda$. Moreover, as a fixed point of $\Phi$, $\mu$ is a solution to \eqref{eq:not:mLocal} and $\gamma$ as defined in \eqref{eq:wp:gamma} satisfies \eqref{eq:not:gamma}. Therefore to show that $(\mu, \gamma, \X)$ is a solution to the \CMVE{}, it remains to verify that $\X$ satisfies \eqref{eq:cmve:integrability}, $\mu$ is an element of $\mathcal{M}_{\kap, d}^T$, and that $\gamma$ satisfies the linear growth condition \eqref{eq:res:gammaLinGrowth}. By \eqref{eq:results:potential}, \eqref{eq:wp:gamma},    
    Assumption \ref{as:results:lyapunov}, and \eqref{eq:wp:bddZeta}, there exists $C \in (0, \infty)$ such that
    \begin{equation*}
        \sup_{s \in \R_+} \gamma(\mu_s, x, y) \leq  C(1 + |x|) + \|\nabla \intPot\|_{L^\infty(\R^d)},
    \end{equation*}
    Since $\mu$ is the solution to a SDE with a linear growth drift, we see that by Theorem \ref{thm:ap1:wellPosed2}(1), Assumption \ref{as:results:lyapunov}, and the above display, we have 
    \begin{equation}
    \label{eq:wp:finiteMoments}
        \int_0^T \mathbb{E} \Big[ |b(\X_t)|^2 + |\gamma(\mu_t, X_1(t), X_0(t)|^2 + |\gamma(\mu_t, X_0(t), X_1(t))|^2\Big] dt < \infty.
    \end{equation}
    Thus, $\X$ satisfies \eqref{eq:cmve:integrability}. Finally, we show that $\mu \in \mathcal{M}_{\kap, d}^T$. By the definition of $\mathfrak{S}$, $\mu_t$ satisfies \eqref{eq:not:symmetry1_} for all $t \in [0, T]$ and therefore it suffices to show the second symmetry property \eqref{eq:not:symmetry2_} for the time marginals $\{\mu_t\}_{t \in [0, T]}$.    
    Let $(X_0, X_1)$ be distributed according to the $(0, 1)$-marginal of $\mu$. Lemma \ref{lem:wp:marginalSDE} implies that  $({X}_1(t), {X}_0(t))$ is exchangeable for all $t \in [0, T]$. Thus  $\mu_t$ satisfies \eqref{eq:not:symmetry2_}. 
    \end{proof}
    \begin{proof}[Proof of uniqueness in Theorem \ref{thm:results:bddWp}]
    Suppose there are two linear growth solutions $(\mu^1, \gamma^1)$ and $(\mu^2, \gamma^2)$ to the \CMVE{}. Applying Lemma \ref{lem:wp:entropyEstimates} to the \CMVE{} \eqref{eq:not:mLocal}, for all $t \in [0, T]$ we have
    \begin{equation}
        \mathcal{H}(\mu^1[t]|\mu^2[t]) = \frac{1}{4}\int_0^t \sum_{v = 1}^\kap\bbE^{\mu^1}\Big[ |\gamma^1(s,Y_v(s), Y_0(s)) - \gamma^2(s, Y_v(s), Y_0(s))|^2\Big] ds.
        \label{eq:wp:bddUniq1}
    \end{equation}
    For $s \in [0, T]$, let $\mu^i_s( \cdot | x, y)$ denote the conditional law of $\mu^i_s$ given $\{X_0 = x, X_1 = y\}$ for $i = 1, 2$.
    Recall the definition of $\gamma$ in \eqref{eq:not:gamma} and $b$ in \eqref{eq:results:drift}. Since $\mu^1, \mu^2 \in \mPS$ are continuous and positive by Proposition \ref{thm:ap1:wellPosed}(2), we can apply \eqref{eq:not:symmetry1_} and Lemma \ref{lem:wp:pinsker} and observe that for all $s \in [0, t]$ and $x, y \in \R^d$, we have
    \begin{eqnarray*}
    && |\gamma(\mu^1_s, x, y) - \gamma(\mu^2_s, x, y)|^2\\ 
     && \qquad= \big| \bbE^{\mu^1}\big[ b(\Y(s))|Y_0(s) = x, Y_1(s) = y\big] - \bbE^{\mu^2}\big[ b(\Y(s))|Y_0(s) = x, Y_1(s) = y\big] \big|^2 \\
        &&\qquad =  \kap^2 \big| \bbE^{\mu^1}\big[\nabla \intPot(x - Y_2(s)) |Y_0(s) = x, Y_1(s) = y\big] - \bbE^{\mu^2}\big[ \nabla \intPot(x - Y_2(s))|Y_0(s) = x, Y_1(s) = y\big] \big|^2 \\
        && \qquad \leq 2(1 + \kap\|\nabla \intPot\|_{L^\infty(\R^d)}) \mathcal{H}\big(\mu^1_s(\cdot|x, y)\,|\, \mu^2_s(\cdot|x, y)\big).       
    \end{eqnarray*}
    For $v \in \{1, \ldots, \kap\}$, the above display together with the chain rule for relative entropy imply that 
    \begin{equation}
        \bbE^{\mu^1}[ |\gamma^1(s, Y_v(s), Y_0(s)) - \gamma^2(s, Y_v(s), Y_0(s))|^2] \leq 2( 1+ \|\nabla \intPot\|_{L^\infty(\R^d)}) \mathcal{H}(\mu^1_s | \mu^2_s), \quad s \in [0, t],
        \label{eq:wp:bddUniq2}
    \end{equation}
    By the data processing inequality for relative entropy, we have $\mathcal{H}(\mu^1_s|\mu^2_s) \leq \mathcal{H}(\mu^1[s]|\mu^2[s])$. Combining this with \eqref{eq:wp:bddUniq1} and \eqref{eq:wp:bddUniq2} yields
    \begin{equation*}
        \mathcal{H}(\mu^1[t]|\mu^2[t]) \leq \frac{\kap \|\nabla \intPot\|_{L^\infty(\R^d)}}{4}\int_0^t \mathcal{H}(\mu^1[s]|\mu^2[s]) ds, \quad t \in [0, T].
    \end{equation*}
    By Gronwall's inequality, we conclude that $\mathcal{H}(\mu^1[t]|\mu^2[t]) = 0$ for all $t \in [0, T]$ and therefore linear growth solutions to \CMVE{} are unique. 
     
    Finally, we show global existence and uniqueness by a standard iteration argument. Fix $t_0 > 0$. We have shown that there exists a unique solution $\mu$ to the \CMVE{} on $[0, t_0]$ with potentials $(U, \intPot)$ and initial condition $\lambda$. Moreover by Theorem \ref{thm:ap1:wellPosed}, $\mu_{t_0} \in \mPSb$, and there exists a unique solution to the \CMVE{} on $[t_0,2 t_0]$ with potentials $(U, \intPot)$ and initial condition $\mu_{t_0}$. Thus by the Markov property, $\mu$ extends to a unique solution of the \CMVE{} on $[0, 2t_0]$ with potentials $(U, \intPot)$ and initial condition $\lambda$. Repeating this argument shows that $\mu$ extends to a unique solution to the \CMVE{} on $[0, \infty)$.
\end{proof}

\section{Proof of the H-Theorem}
\label{sec:lyap:Htheorems}
\subsection{Preliminaries} We provide two preliminary lemmas that will be useful in the proof of Theorem \ref{thm:results:lyap}. The first is used in Section \ref{sec:hTheoremProof} to compute $\energy_\kap(\mu_t)$. 
\begin{lemma}
    \label{prop:lyap:pdeProp} Let $(U, \intPot)$ satisfy Assumption \ref{as:results:lyapunov}. Let $(\mu, \gamma)$ be a linear growth solution to the \CMVE{} on $[0, T]$ with potentials $(U, \intPot)$ and initial condition $\lambda \in \mPSb$. Recall the definitions of $b$, $g$, and $\eta$ from \eqref{eq:results:drift}, \eqref{eq:results:potential}, and \eqref{eq:not:eta} respectively, and define
    \begin{equation}
    \bar{\eta}(t, x, y) := \big( \gamma(t, x, y), \gamma(t, y, x) \big), \quad (t, x, y) \in [0, T] \times (\R^d)^2.
    \label{eq:lyap:barEta}
\end{equation}
Then for almost every $0 \leq r < t < T$, we have
    \begin{equation}
    \begin{split}
        \int_{(\R^d)^{1 + \kap}}  \Big( \log \mu_t(\x)& + g(\x) \Big) \mu_t(\x)d\x - \int_{(\R^d)^{1 + \kap}}  \Big( \log \mu_r(\x) + g(\x) \Big) \mu_r(\x)d\x \\
        =& - \int_r^t \int_{(\R^d)^{1 + \kap}} \Big(\nabla_{\x} \mu_s(\x) + \eta(s, \x) \mu_s(\x)\Big) \cdot \bigg(\nabla_{\x}\log  \mu_s(\x)  + \nabla_{\x} g(\x) \bigg) d\x ds,
    \end{split}
    \label{eq:lyap:PDEProp1}
\end{equation}
and
\begin{equation}
    \begin{split}
        &\int_{\R^d\times \R^d}  \bar{\mu}_t(x_0, x_1)\log {\bar{\mu}}_t(x_0, x_1) d\x - \int_{\R^d\times \R^d} \bar{\mu}_r(x_0, x_1) \log {\bar{\mu}}_r(x_0, x_1)  d\x \\
        =& - \int_r^t \int_{\R^d\times \R^d} \Big(\nabla_{(x_0, x_1)} \bar{\mu}_s(x_0, x_1) + \bar{\eta}(s, x_0, x_1))\bar{\mu}_s(x_0, x_1) \Big) \cdot \nabla_{(x_0, x_1)} \log {\bar{\mu}}_s(x_0, x_1)   dx_0 dx_1 ds.
    \end{split}
        \label{eq:lyap:PDEProp2}
\end{equation}
\end{lemma}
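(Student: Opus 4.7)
The plan is to observe that, by \eqref{eq:not:SimpleCMVE}, the flow $\{\mu_t\}_{t \in [0,T]}$ solves (in the distributional sense) the linear Fokker--Planck equation
\begin{equation*}
\partial_t \mu_t = \nabla_{\x} \cdot \bigl( \nabla_{\x}\mu_t + \eta(t,\cdot)\,\mu_t \bigr)
\end{equation*}
on $(0,T) \times (\R^d)^{1+\kap}$, while by Lemma \ref{lem:wp:marginalSDE} the edge marginal $\{\bar\mu_t\}$ solves the analogous equation with drift $\bar\eta$. The identities \eqref{eq:lyap:PDEProp1} and \eqref{eq:lyap:PDEProp2} are then the classical \emph{entropy plus potential} dissipation identities for these linear equations, integrated in time. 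Proposition \ref{thm:ap1:wellPosed} together with Lemma \ref{lem:wp:marginalSDE} supply exactly the regularity and integrability---positivity, local H\"older continuity, $W^{1,1}$-regularity for a.e.\ $t$, finite second moment, finite entropy, and finite time-integrated Fisher information---required to make the computation rigorous.

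For \eqref{eq:lyap:PDEProp1}, I would test the Fokker--Planck equation against $\log\mu_s + g$, which is well-defined by strict positivity of $\mu_s$ from Proposition \ref{thm:ap1:wellPosed}(2). Using the chain rule $\partial_s(\mu_s \log \mu_s) = (\log \mu_s + 1)\,\partial_s \mu_s$, mass conservation $\int \partial_s \mu_s\, d\x = 0$, and integrating by parts in $\x$ and then in $s \in (r,t)$, one obtains \eqref{eq:lyap:PDEProp1} formally. The boundary terms at spatial infinity are handled via a smooth radial cutoff $\chi_R \colon (\R^d)^{1+\kap} \to [0,1]$ with $\chi_R \equiv 1$ on $B_R$, supported in $B_{2R}$, and $|\nabla \chi_R| \leq C/R$. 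The cutoff error
\begin{equation*}
\int_r^t \int \bigl(\nabla_{\x}\mu_s + \eta(s,\cdot)\mu_s\bigr)\cdot \nabla \chi_R \,\bigl(\log \mu_s + g\bigr)\, d\x\, ds
\end{equation*}
vanishes as $R \to \infty$ by Cauchy--Schwarz, invoking the time-integrated Fisher information bound of Proposition \ref{thm:ap1:wellPosed}(3), the linear growth of $\eta$ from \eqref{eq:res:gammaLinGrowth} combined with Assumption \ref{as:results:lyapunov}, the quadratic bound on $g$ noted in Remark \ref{rk:potentials}, and the uniform second-moment bound of Proposition \ref{thm:ap1:wellPosed}(1).

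The proof of \eqref{eq:lyap:PDEProp2} follows the same template, now testing the Fokker--Planck equation for $\bar\mu_s$ against $\log\bar\mu_s$. Lemma \ref{lem:wp:marginalSDE} supplies the analogue of Proposition \ref{thm:ap1:wellPosed}(3)---namely finiteness of $\int_0^T \int |\nabla_{(x_0,x_1)} \bar\mu_s|^2/\bar\mu_s \, dx_0\, dx_1\, ds$---while the linear growth of $\bar\eta$ (inherited from \eqref{eq:res:gammaLinGrowth}) and the second-moment control on $\bar\mu_s$ (obtained by projection from $\mu_s$) handle the drift contribution. Strict positivity and local H\"older regularity of $\bar\mu_s$ for the chain-rule step come from applying Theorem \ref{thm:ap1:wellPosed2} to the marginal linear Fokker--Planck equation.

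The main technical obstacle is purely one of integrability: one must verify that all integrands appearing after integration by parts are absolutely integrable on $(r,t)\times (\R^d)^{1+\kap}$, so that Fubini's theorem applies and the pointwise-in-$s$ identity can be recovered as a genuine time integral. This is why the identities are only asserted for almost every $0 < r < t < T$---the exceptional set is precisely the complement of the common Lebesgue points of the right-hand sides as functions of $s$. Once the integrability is secured from the above ingredients, no new idea is required beyond the chain rule for $W^{1,1}$-functions and standard integration by parts.
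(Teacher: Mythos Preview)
Your proposal is correct and follows essentially the same approach as the paper. The paper packages the technical work into an appendix result, Lemma \ref{lem:ap1:lyapunov1}, which carries out precisely the entropy-plus-potential dissipation computation for linear Fokker--Planck equations with linear-growth drift; the proof of the present lemma then consists of observing (via Proposition \ref{prop:ap1:superposition} and Lemma \ref{lem:wp:marginalSDE}) that both $\{\mu_t\}$ and $\{\bar\mu_t\}$ solve such equations and invoking that lemma. One refinement worth noting: in the proof of Lemma \ref{lem:ap1:lyapunov1} the paper uses mollification of $\mu$ in space (in addition to the spatial cutoff you describe) in order to obtain a classical solution for which $\partial_s \mu_s^\eps$ exists pointwise and the chain rule $\partial_s(\mu_s^\eps \log \mu_s^\eps) = (1+\log\mu_s^\eps)\,\partial_s\mu_s^\eps$ is literally valid; your formulation via ``the chain rule for $W^{1,1}$-functions'' is a reasonable shorthand, but the time-regularity step is where the mollification does real work.
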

\begin{proof}
    Since $(U, \intPot)$ satisfy Assumption \ref{as:results:lyapunov},  by \eqref{eq:results:drift} and \eqref{eq:results:potential} it follows that $b$ satisfies a linear growth condition and there exists $C \in (0, \infty)$ such that $g(x) \leq C(1 + |x|^2)$ for all $x \in \R^d$. Since $\mu$ is a linear growth solution to the \CMVE{}, by Proposition \ref{prop:ap1:superposition}, $\{\mu_t\}_{t \in [0, T]}$ solves
\begin{equation*}
    \partial_t \mu_t(\x) = \Delta_{\x} \mu_t(\x) + \nabla_{\x} \cdot \big( \eta(t, \x) \mu_t(\x)\big), \quad (t, \x) \in \R_+ \times (\R^d)^{1 + \kap}.
\end{equation*}
In particular, $\mu_t$ solves a linear Fokker-Planck equation. Hence, we can apply Lemma \ref{lem:ap1:lyapunov1} to obtain \eqref{eq:lyap:PDEProp1}. Since $\mu$ is a linear growth solution to the \CMVE{}, by Lemma \ref{lem:wp:marginalSDE} $\{\bar{\mu}_t\}_{t \geq 0}$ are the time marginals to the solution of the linear SDE \eqref{eq:wp:marginalSDE}. Applying Proposition \ref{prop:ap1:superposition} and then Lemma \ref{lem:ap1:lyapunov1} yields \eqref{eq:lyap:PDEProp2}.
\end{proof}
We introduce some special notation for gradients in this section. For $f:(\R^d)^{1 + \kap} \rightarrow \R$, we let $\nabla_v f(\x)$ denote the gradient with respect to the $v$-th coordinate of $f$. That is, \[\nabla_{\x} f(\x) = (\nabla_0 f(\x), \nabla_1 f(\x), \ldots, \nabla_\kap f(\x)).\] 
This will be especially useful when performing calculations inside expectations.

The next lemma establishes a useful symmetry property of the function $\nabla_\x \log \mu_s(\x)$. \begin{lemma}
 \label{lem:lyap:symLemma}
     Let $(\mu,\gamma, \X)$ be a solution to the \CMVE{} on $[0, T]$ with potentials $(U, \intPot)$ and initial condition $\lambda \in \mPSb$. Let $f : \R^d \times \R^d \rightarrow \R^d$ be a measurable function such that for almost every $s \in \R_+$ we have
\begin{equation}
    \bbE\Big[\big|f\big(X_0(s), X_v(s)\big)\big|^2 + \big|f\big(X_v(s), X_0(s)\big)\big|^2 \Big] < \infty.
    \label{eq:lyap:symLemma1}
\end{equation}
Then the following properties hold:
\begin{enumerate}
    \item For $u \in \{0, \ldots, \kap\}$, let $\bar{u} = 0$ when $u = 0$ and $\bar{u} = 1$ otherwise. Given any $v \in \{1, \ldots, \kap\}$ and $u \in \{0, v\}$, for almost every $s \in \R_+$ we have 
    \begin{equation}
        \bbE\Big[f\big(X_0(s), X_v(s)\big) \nabla_{u} \log \mu_s\big(\X(s)\big)\Big] = \bbE\Big[f\big(X_0(s), X_v(s)\big) \nabla_{\bar{u}} \log \bar{\mu}_s\big(X_0(s), X_v(s)\big)\Big].
        \label{eq:lyap:symLemma2}
    \end{equation}
    \item For all $v \in \{1, \ldots, \kap\}$ and $s \in \R_+$ we have
    \begin{equation}
        \bbE\Big[f\big(X_0(s), X_v(s)\big) \nabla_{0} \log \bar{\mu}_s\big(X_0(s), X_v(s)\big)\Big] = \bbE\Big[f\big(X_v(s), X_0(s)\big) \nabla_{1} \log \bar{\mu}_s\big(X_0(s), X_v(s)\big)\Big].
        \label{eq:lyap:symLemma3}
    \end{equation}
\end{enumerate}
 \end{lemma}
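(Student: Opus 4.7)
\textbf{Proof proposal for Lemma~\ref{lem:lyap:symLemma}.}

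The two identities are of different character: part~(1) is a disintegration identity that exploits leaf exchangeability of $\mu_s \in \mathcal{M}_{\kap,d}$, while part~(2) is a change-of-variables consequence of edge symmetry. Both hinge on the regularity of $\mu_s$ and $\bar{\mu}_s$ afforded by Proposition~\ref{thm:ap1:wellPosed} and Lemma~\ref{lem:wp:marginalSDE}: in particular, positivity of the densities and the $L^1$ integrability of their weak gradients for a.e.\ $s$. The integrability hypothesis~\eqref{eq:lyap:symLemma1} on $f$, together with the finiteness of $\int |\nabla_\x \mu_s|^2 / \mu_s\, d\x$ from Proposition~\ref{thm:ap1:wellPosed}(3), ensures via Cauchy--Schwarz that all expectations appearing in the two identities are finite.

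For part~(1), the first step is to rewrite the left-hand side as $\int f(x_0, x_v)\, \nabla_u \mu_s(\x)\, d\x$ using $\mu_s > 0$. Since $f$ depends only on $(x_0, x_v)$ and the index $u$ belongs to $\{0, v\}$, Fubini's theorem lets me first integrate out the remaining coordinates $\{x_w : w \in \{1, \ldots, \kap\} \setminus \{v\}\}$, and the gradient $\nabla_u$ can then be pulled outside that partial marginalization since $x_u$ is not among the variables being integrated. The result is $\nabla_u$ applied to the $(0, v)$-marginal of $\mu_s$. By leaf exchangeability, this $(0, v)$-marginal equals the edge marginal $\bar{\mu}_s$. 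Reintroducing the density factor $\bar{\mu}_s$ and interpreting the gradient in the correct slot of $\bar{\mu}_s(x_0, x_1)$ gives $\nabla_0 \log \bar{\mu}_s$ when $u = 0$ and $\nabla_1 \log \bar{\mu}_s$ when $u = v$, which is precisely the definition of $\bar{u}$ in the statement.

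For part~(2), I would write the left-hand side as $\int f(x, y)\, (\partial_1 \log \bar{\mu}_s)(x, y)\, \bar{\mu}_s(x, y)\, dx\, dy$, where $\partial_i$ denotes differentiation with respect to the $i$-th argument. The change of variables $(x, y) \mapsto (y, x)$ converts this to $\int f(y, x)\, (\partial_1 \log \bar{\mu}_s)(y, x)\, \bar{\mu}_s(y, x)\, dx\, dy$. Edge symmetry $\bar{\mu}_s(y, x) = \bar{\mu}_s(x, y)$ (from $\mu_s \in \mathcal{M}_{\kap, d}$) simultaneously gives $\bar{\mu}_s(y, x) = \bar{\mu}_s(x, y)$ in the density factor and, by differentiating the identity $\log \bar{\mu}_s(a, b) = \log \bar{\mu}_s(b, a)$ in the first argument, yields $(\partial_1 \log \bar{\mu}_s)(y, x) = (\partial_2 \log \bar{\mu}_s)(x, y)$. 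The integral then collapses to $\mathbb{E}[f(X_v, X_0)\, \nabla_1 \log \bar{\mu}_s(X_0, X_v)]$, which is the right-hand side.

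The main technical point, as expected, lies in the interchange of $\nabla_u$ with partial marginalization in part~(1). This is not deep but does require care: starting from $\mu_s \in W^{1,1}((\R^d)^{1+\kap})$ (Proposition~\ref{thm:ap1:wellPosed}(3)), one verifies by testing against smooth compactly supported $\phi(x_0, x_v)$ and using the definition of weak derivative that the weak $x_u$-gradient of the $(0, v)$-marginal of $\mu_s$ is indeed the $(0, v)$-marginal of $\nabla_u \mu_s$. Equivalently, a mollification of $\mu_s$ reduces matters to the classical smooth case where the interchange is trivial, after which one passes to the limit in $L^1$. All other steps (integration by parts in the definition of the weak gradient, leaf exchangeability, edge symmetry, and the change of variables) are formal consequences of the structural properties of $\mu_s$ recorded in Definition~\ref{def:not:symProb} and the regularity from Proposition~\ref{thm:ap1:wellPosed}.
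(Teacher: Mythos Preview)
Your proposal is correct and follows essentially the same approach as the paper's proof. For part~(1) the paper likewise rewrites the left side as $\int f(x_0,x_v)\,\nabla_u\mu_s(\x)\,d\x$, invokes $\mu_s\in W^{1,1}$ from Proposition~\ref{thm:ap1:wellPosed}(3) to commute $\nabla_u$ with the partial marginalization over the remaining leaves, and then identifies the result with $\nabla_{\bar u}\bar\mu_s$; for part~(2) the paper uses the functional identity $\nabla_0\log\bar\mu_s(x,y)=\nabla_1\log\bar\mu_s(y,x)$ from edge symmetry followed by exchangeability of $(X_0(s),X_v(s))$, which is exactly your change-of-variables argument expressed in two steps rather than one.
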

\begin{proof}
Fix $v \in \{1, \ldots, \kap\}$ and $u \in \{0, v\}$. For almost every $s \in \R_+$, by Proposition \ref{thm:ap1:wellPosed}(3) we have $\mu_s \in W^{1, 1}((\R^d)^{1 + \kap})$ and by Definition \ref{def:int:local}(4), $\mu_s \in \mPS$. Recall the definition of $\bar{u}$ from Lemma 6.2.(2). Then we have 
\begin{equation*}
    \int_{(\R^d)^{\kap - 1}} \nabla_{u} \mu_s(\x) \prod_{v' = 1;\, v' \neq v}^\kap dx_{v'} = \nabla_{\bar{u}} \bar{\mu}_s(x_0, x_v).
\end{equation*}
By Proposition \ref{thm:ap1:wellPosed}(3), we have that $\nabla_\x \log \mu_s(\x) \in L^2(\mu_s)$ for almost every $s \in \R_+$. Moreover, since $\mu$ is a linear growth solution to the \CMVE{}, Lemma \ref{lem:wp:marginalSDE} implies that $\nabla_{(0, 1)} \log \bar{\mu}_s(x_0, x_1) \in L^2(\mu_s)$ for almost every $s \in \R_+$. Therefore combining \eqref{eq:lyap:symLemma1}, the last display, and Fubini's theorem, yields
    \begin{equation*}
        \begin{aligned}
            \bbE\Big[f\big(X_0(s), X_v(s)\big) \nabla_{u} \log \mu_s\big(\X(s)\big)\Big] &= \int_{(\R^d)^{1 + \kap}} f(x_0, x_v) \nabla_{u} \mu_s(\x) d\x \\ &= \int_{\R^d \times \R^d} f(x_0, x_v) \nabla_{\bar{u}} \bar{\mu}_s(x_0, x_v) d\x \\
            &= \bbE\Big[f\big(X_0(s), X_v(s)\big) \nabla_{\bar{u}} \log \bar{\mu}_s\big(X_0(s), X_v(s)\big)\Big],
        \end{aligned}
    \end{equation*}
    which proves \eqref{eq:lyap:symLemma2}.
    
Next we turn to the proof of \eqref{eq:lyap:symLemma3}. Recall that $\mu_s \in \mPS$ by item (4) in Definition \ref{def:int:local}. By the symmetry property \eqref{eq:not:symmetry2_} of measures in $\mPS$, we have
    \begin{equation*}
        \bbE\Big[f\big(X_0(s), X_v(s)\big) \nabla_{0} \log \bar{\mu}_s\big(X_0(s), X_v(s)\big)\Big] = \bbE\Big[f\big(X_0(s), X_v(s)\big) \nabla_{1} \log \bar{\mu}_s\big(X_v(s), X_0(s)\big)\Big].
    \end{equation*}
    Moreover $\mu_s \in \mPS$ also implies (by Remark \ref{rk:not:exchangeability}) that the pair $(X_0(s),X_v(s))$ is exchangeable and hence,
    \begin{equation*}
        \bbE\Big[f\big(X_0(s), X_v(s)\big) \nabla_{0} \log \bar{\mu}_s\big(X_v(s), X_0(s)\big)\Big] = \bbE\Big[f\big(X_v(s), X_0(s)\big) \nabla_{1} \log \bar{\mu}_s\big(X_0(s), X_v(s)\big)\Big].
    \end{equation*}
    The previous two displays, when combined, yield \eqref{eq:lyap:symLemma3}.
\end{proof}
\subsection{Proof of the H-theorem}\label{sec:hTheoremProof}
In this section, we prove Theorem \ref{thm:results:lyap}.  

\begin{proof}[Proof of Theorem \ref{thm:results:lyap}] Fix a linear growth solution $(\mu, \gamma, \X)$ to the \CMVE{} on $[0, \infty)$ with potentials $(U, \intPot)$ satisfying Assumption \ref{as:results:lyapunov} and initial law $\lambda \in \mPSb$. Recall the definition of $b$ in \eqref{eq:results:drift}. 

Let $\mathcal{D}_T \subset \{(r, t): 0 \leq r < t \leq T\}$ be the set on which both \eqref{eq:lyap:PDEProp1} and \eqref{eq:lyap:PDEProp2} hold. By Proposition \ref{prop:lyap:pdeProp}, $\mathcal{D}_T$ is a set of full measure. Fix $(r, t) \in \mathcal{D}_T$ such that $r < t$. We split the left hand side of \eqref{eq:results:energyIdentity} into two parts:
\begin{equation}
    \label{eq:pf:LyapunovSplit}
    \energy_\kap(\mu_t) - \energy_\kap(\mu_r) = \Upsilon - \frac{\kap}{2}\bar{\Upsilon},
\end{equation}
where
\begin{align}
    \Upsilon &:=         \bbE\big[\log \mu_t\big( \X(t) \big) + g\big( \X(t) \big)\big] - \bbE\big[\log \mu_r(\X(r)) 
    + g(\X(r))\big],
    \label{eq:lyap:Energy0} \\
    \bar{\Upsilon} &:=         \bbE\big[\log {\bar{\mu}}_t( \X(t))\big] - \bbE\big[\log {\bar{\mu}}_r(\X(r))\big]. \label{eq:lyap:EnergyC}
\end{align}

We begin by computing $\Upsilon$.  By \eqref{eq:lyap:Energy0} and Proposition \ref{prop:lyap:pdeProp}, we have:
    \begin{equation}
        \label{eq:pf:uncondLyapu}
    \begin{aligned}
        \Upsilon 
&= - \int_r^t \Big(\Upsilon^1(s) + \Upsilon^1(s) + \Upsilon^3(s)\Big) ds,
    \end{aligned}
    \end{equation}
    where for almost every $s \in [r, t]$, we define
    \begin{equation}
        \begin{aligned}
    \label{eq:lyap:U0_children}
            \Upsilon^1(s) &:= \int_{(\R^d)^{1 + \kap}}  |\nabla_\x \log \mu_s(\x)|^2 \mu_s(\x)\, d\x = \bbE\Big[ \big|\nabla_\x \log \mu_s\big(\X(s)\big)\big|^2\Big] , \\
            \Upsilon^2(s) &:= \int_{(\R^d)^{1 + \kap}}  \eta(s,
\x)\cdot \nabla_{\x} g(\x) \mu_s(\x)  \,d\x = \bbE\Big[ \eta\big(s, \X(s)\big) \cdot \nabla_\x  g\big(\X(s)\big)\Big] , \\
            \Upsilon^3(s) &:= \int_{(\R^d)^{1 + \kap}}  (\eta(s, \x) + \nabla_{\x} g(\x)) \cdot \nabla_{\x} \mu_s(\x) \, d\x  \\ &= \bbE\Big[ \big( \eta\big(s, \X(s)\big) + \nabla_\x g\big(\X(s)\big) \big) \cdot \nabla_{\x} \log \mu_s\big(\X(s)\big)\Big].
        \end{aligned}
    \end{equation}
We start by expanding $\Upsilon^1(s)$. Since $\mu_s \in \mPS$, by \eqref{eq:not:symmetry1_} we have
\begin{equation}
\begin{aligned}
     \Upsilon^1(s) = \bbE\bigg[\big|\nabla_{0} \log \mu_s\big(\X (s)\big) \big|^2\bigg] + \kappa \bbE\bigg[\big|\nabla_{1} \log \mu_s\big(\X (s)\big) \big|^2\bigg].
\end{aligned}
\label{eq:pf:term3}
\end{equation}
Next, we compute $\Upsilon^2(s)$. By \eqref{eq:results:potential} it follows that
    \begin{equation}
        \nabla_{v} g(\x) = \left\{
        \begin{aligned}        
        &\nabla U(x_0) + \frac{1}{2} \sum_{v = 1}^{\kap} \nabla \intPot(x_0 - x_v), &\quad &\text{if }v = 0, \\
        &- \frac{1}{2}\nabla \intPot(x_0 - x_v),  &\quad &\text{if }v \in\{1, \ldots, \kap\}.
              \end{aligned}
        \right.
        \label{eq:lyap:gGradient}
    \end{equation}
Substituting this into \eqref{eq:lyap:U0_children} and using \eqref{eq:not:eta} and \eqref{eq:results:drift}, we can write
\begin{equation}
        \label{eq:lyap:Kcalculation0}
\begin{aligned}
            \Upsilon^2(s) = \bbE\bigg[b\big(\X(s)\big)\bigg( \nabla U(X_0(s)) &+ \frac{1}{2}\sum_{ v = 1}^\kap \nabla \intPot(X_0(s) - X_v(s))\bigg)\bigg] \\ &- \frac{1}{2}\sum_{v = 1}^\kap\bbE\Big[ \gamma\big(s, X_v(s), X_0(s)\big) \nabla \intPot(X_0(s) - X_v(s)) \Big].
\end{aligned}
\end{equation}
Since $\mu_s \in \mPS$, for each $v \in \{1, \ldots, \kap\}$, \eqref{eq:not:symmetry2_} and the fact that $ \nabla \intPot$ is odd imply that
\begin{equation}
    \begin{aligned}
    \bbE\Big[ \gamma\big(s, X_v(s), X_0(s)\big) \nabla \intPot(X_0(s) - X_v(s)) \Big] &= \bbE\Big[ \gamma\big(s, X_1(s), X_0(s)\big) \nabla \intPot(X_0(s) - X_1(s)) \Big] \\
    &= \bbE\Big[ \gamma\big(s, X_0(s), X_1(s)\big) \nabla \intPot(X_1(s) - X_0(s)) \Big] \\
    &= - \bbE\Big[ \gamma\big(s,X_0(s), X_1(s)\big) \nabla \intPot\big( X_0(s) - X_1(s)\big)\Big]
        \end{aligned}
        \label{eq:lyap:Kcalculuation1}
\end{equation}
Due to the definition of $\gamma$ in \eqref{eq:not:gamma}, the tower property yields
\begin{equation*}
    \begin{aligned}
       \bbE\Big[ \gamma(s, X_0(s), X_1(s)) \nabla \intPot\big( X_0(s) - X_1(s)\big)\Big] &= \bbE\Big[ \bbE \big[ b\big(\X(s)\big)\big|X_0(s), X_1(s)\big] \nabla \intPot\big( X_0(s) - X_1(s)\big)\Big] \\
        &=\bbE\big[ b\big(\X(s)\big) \nabla \intPot\big(X_0(s) - X_1(s)\big)\big],
    \end{aligned}
\end{equation*}
Together with \eqref{eq:lyap:Kcalculation0} and \eqref{eq:results:drift},  this implies that
\begin{equation}
    \label{eq:pf:term1}
    \Upsilon^2(s) = \bbE\big[b\big( \X(s) \big)^2\big].
\end{equation}

Next, we simplify the expression for $\Upsilon^3(s)$ in \eqref{eq:lyap:U0_children}. By \eqref{eq:results:drift}, \eqref{eq:not:eta}, and \eqref{eq:lyap:gGradient} it follows that
\begin{equation}
    \begin{aligned}
    \Upsilon^3(s) = \Upsilon^{3, 1}(s) + \Upsilon^{3, 2}(s),
\end{aligned}
\label{eq:lyap:U3}
\end{equation}
where we define
\begin{equation}
    \begin{aligned}
    \Upsilon^{3, 1}(s) &:=\bbE \bigg[\Big(b\big(\X(s)\big) + \nabla_{0}g\big(\X(s)\big) \Big) \cdot \nabla_{0}\log \mu_s\big(\X(s)\big)- \sum_{v = 1}^\kappa\frac{1}{2}\nabla \intPot\big(X_0(s) - X_v(s)\big) \cdot \nabla_{v} \log \mu_s\big(\X(s)\big)\bigg] \\
    \Upsilon^{3, 2}(s) &:= \sum_{v = 1}^\kappa \bbE\Big[\gamma\big(s, X_v(s), X_0(s)\big) \cdot \nabla_{v} \log \mu_s\big(\X(s)\big)\Big].
    \end{aligned}
    \label{eq:lyap:newDeltas}
\end{equation}
By Assumption \ref{as:results:lyapunov}, $\nabla \intPot$ satisfies a linear growth condition. Therefore by Proposition \ref{thm:ap1:wellPosed}(1), the condition \eqref{eq:lyap:symLemma1} holds with $f(x, y) = \nabla \intPot(x - y)$. Then through two applications of \eqref{eq:lyap:symLemma2} and \eqref{eq:lyap:symLemma3} of Lemma \ref{lem:lyap:symLemma} with $f(x, y) = \nabla \intPot(x - y)$, together with the fact that $\nabla \intPot$ is odd, we obtain 
\begin{equation}
\begin{aligned}
\bbE\Big[ \nabla \intPot\big( X_0(s) - X_v(s)\big) \cdot \nabla_{v} \log \mu_s\big(\X(s)\big)\Big] = - \bbE\Big[ \nabla \intPot\big( X_0(s) - X_v(s)\big) \cdot \nabla_{0} \log \mu_s\big(\X(s)\big)\Big]
\end{aligned}
    \label{eq:lyap:gradRho_gradK}
\end{equation}
Combining \eqref{eq:results:drift}, \eqref{eq:lyap:gGradient}, \eqref{eq:lyap:newDeltas}, and \eqref{eq:lyap:gradRho_gradK}, it follows that
\begin{equation}
    \begin{aligned}
    \Upsilon^{3, 1}(s) &= \bbE \bigg[ \bigg( b\big(\X(s)\big) + \nabla_{0} g\big(\X(s)\big) + \frac{1}{2} \sum_{v = 1}^\kap \nabla \intPot\big(X_0(s) - X_v(s)\big)\bigg) \cdot \nabla_{0} \log \mu_s\big(\X(s)\big) \bigg] \\
    &=  2\bbE\Big[  b\big( \X(s) \big) \cdot \nabla_{0} \log \mu_s\big(\X(s)\big)\Big].
    \end{aligned}
    \label{eq:lyap:U3_1}
\end{equation}
Since $\mu$ is a linear growth solution, by \eqref{eq:res:gammaLinGrowth} and Proposition \ref{thm:ap1:wellPosed}(1) the condition \eqref{eq:lyap:symLemma1} holds with $f(x, y) = \gamma(s, y, x)$. Therefore, \eqref{eq:lyap:symLemma3} of Lemma \ref{lem:lyap:symLemma} with $f(x, y) = \gamma(s, y, x)$ and $u = 0$, together with \eqref{eq:not:symmetry2_}, \eqref{eq:not:gamma}, and the tower property, imply that for every $v \in \{1, \ldots, \kap\}$ we have
\begin{equation*}
\begin{aligned}
    \bbE\Big[\gamma\big(s, X_v(s), X_0(s)\big)\cdot \nabla_{v}\log \mu_s\big(\X(s)\big)\Big] 
    &= \bbE \Big[\gamma\big(s, X_0(s), X_1(s)\big) \cdot \nabla_{0} \log {\bar{\mu}}_s\big(X_0(s), X_1(s)\big) \Big] \\
    &= \bbE \Big[b\big(\X(s)\big) \cdot \nabla_{0} \log {\bar{\mu}}_s\big(X_0(s), X_1(s)\big) \Big],
    \end{aligned}
\end{equation*}
Combining the above display with the definition of $\Upsilon^{3, 2}(s)$ in \eqref{eq:lyap:newDeltas} yields
\begin{equation}
\Upsilon^{3, 2}(s) = \kap \bbE \Big[b\big(\X(s)\big) \cdot \nabla_{0} \log {\bar{\mu}}_s\big(X_0(s), X_1(s)\big) \Big].
    \label{eq:lyap:U3_2}
\end{equation}
Therefore, by \eqref{eq:lyap:U3}, \eqref{eq:lyap:U3_1}, and \eqref{eq:lyap:U3_2}, it follows that
\begin{equation}
\label{eq:pf:term2}
\begin{aligned}
 \Upsilon_3(s) = \bbE\Big[b\big( \X(s) \big)\cdot \Big(2 \nabla_{0} \log \mu_s\big( \X(s) \big) + \kappa \nabla_{0}\log {\bar{\mu}}_s\big(X_0(s), X_1(s)\big)\Big)\Big].
\end{aligned}
\end{equation}
Substituting \eqref{eq:pf:term3}, \eqref{eq:pf:term1}, and \eqref{eq:pf:term2}, back into \eqref{eq:pf:uncondLyapu}, we obtain
\begin{equation}
    \label{eq:pf:uncondLyapunov}
    \begin{aligned}
            \Upsilon = - \int_0^t\bbE\bigg[ \Big|b\big( \X(s) &\big)  + \nabla_{0} \log \mu_s\big( \X(s) \big)\Big|^2\bigg] \\&+ \kappa \bbE\bigg[ \Big|\nabla_{1} \log \mu_s\big( \X(s) \big)\Big|^2 + b\big( \X(s) \big) \cdot \nabla_{0} \log  {\bar{\mu}}_s\big(X_0(s), X_1(s)\big)\bigg] ds.        
    \end{aligned}
\end{equation}

Next, we compute $\bar{\Upsilon}$. By \eqref{eq:lyap:EnergyC} and Lemma \ref{prop:lyap:pdeProp} we have
    \begin{equation}
    \label{eq:pf:condLyapunov}
    \begin{aligned}
        \bar{\Upsilon} = - \int_r^t \Big(\bar{\Upsilon}^1(s) + \bar{\Upsilon}^2(s) \Big) ds,
    \end{aligned}
    \end{equation}
    where, recalling the definition of $\bar{\eta}$ from \eqref{eq:lyap:barEta}, we define
    \begin{align*}
        \bar{\Upsilon}^1(s) :&= \bbE\Big[ \big|\nabla_{(0, 1)} \log {\bar{\mu}}_s\big(X_0(s), X_1(s)\big)\big|^2\Big], \\
        \bar{\Upsilon}^2(s) :&= \bbE\Big[\bar{\eta}\big(s, X_0(s), X_1(s))\big) \cdot \nabla_{(0, 1)} \log \bar{\mu}\big(X_0(s), X_1(s)\big) \Big].
    \end{align*}
Since $\mu$ is a linear growth solution to the \CMVE{}, Lemma \ref{lem:wp:marginalSDE} implies that $\nabla_{(0, 1)} \log \bar{\mu}_s(x_0, x_1) \in L^2(\mu_s)$ for almost every $s \in \R_+$. Therefore \eqref{eq:lyap:symLemma1} holds with $f(x, y) = \nabla_{u} \log \bar{\mu}_s(x, y)$ for $u \in (0, 1)$, and by \eqref{eq:lyap:symLemma3} of Lemma \ref{lem:lyap:symLemma} we have
    \begin{equation}
        \begin{aligned}
       \bar{\Upsilon}^1(s) = \bbE\Big[ \big|\nabla_{(0, 1)} \log {\bar{\mu}}_s\big(X_0(s), X_1(s)\big)\big|^2\Big] = 2  \bbE\Big[ \big|\nabla_{1} \log {\bar{\mu}}_s\big(X_0(s), X_1(s)\big)\big|^2\Big].            
        \end{aligned}
        \label{eq:pf:term4}
    \end{equation}
    Next, we simplify the expression for $\bar{\Upsilon}^2(s)$.  Note that by \eqref{eq:lyap:barEta},
    \begin{equation}
    \begin{aligned}
        \bar{\Upsilon}^2(s) = \bbE\Big[ \gamma\big(s, X_1(s), X_0(s) \big) \cdot\nabla_{1}\log \bar{\mu}_s\big(X_0(s), X_1(s)\big) + b\big(\X(s)\big) \cdot \nabla_{0} \log \bar{\mu}_s\big(X_0(s), X_1(s)\big) \Big].
    \end{aligned}
    \label{eq:lyap:UC_1}
    \end{equation}
   We can apply \eqref{eq:lyap:symLemma3} with $f(x, y) = \gamma(s, y, x)$ (justified as before by Proposition \ref{thm:ap1:wellPosed}), \eqref{eq:not:gamma}, and the tower property to obtain
    \begin{equation*}
        \begin{aligned}
            \bbE\Big[ \gamma\big(s, X_1(s), X_0(s)\big) \cdot\nabla_{1} \log {\bar{\mu}}_s\big(X_0(s), X_1(s)\big)
        \Big] 
        & = \bbE\Big[ \gamma\big(s, X_0(s), X_1(s)\big) \cdot\nabla_{0} \log {\bar{\mu}}_s\big(X_0(s), X_1(s)\big)
        \Big] \\
        &= \bbE\Big[ b\big(\X(s)\big) \cdot \nabla_{0}  \log {\bar{\mu}}_s\big(X_0(s), X_1(s)\big)
        \Big].
        \end{aligned}
    \end{equation*}
    The last two displays when combined yields
    \begin{equation}
        \bar{\Upsilon}^2(s) = 2 \bbE\Big[b\big( \X(s) \big)\cdot \nabla_{0} \log {\bar{\mu}}_s\big(X_0(s), X_1(s)\big) \Big] . 
        \label{eq:pf:term5}
    \end{equation}
    Next, substitute \eqref{eq:pf:term4} and \eqref{eq:pf:term5} into \eqref{eq:pf:condLyapunov} to obtain
    \begin{equation}
    \begin{aligned}
                        \frac{\kap}{2}\bar{\Upsilon} = -\kap \int_0^t  \bbE\Big[\big|\nabla_{1} \log {\bar{\mu}}_s\big(X_0(s), X_1(s)\big)\big|^2  + b\big( \X(s) \big) \cdot \nabla_{0} \log {\bar{\mu}}_s\big(X_0(s), X_1(s)\big) \Big]ds.
    \end{aligned}
                \label{eq:pf:condLyapu}
    \end{equation}
    Then \eqref{eq:pf:uncondLyapunov}, \eqref{eq:pf:condLyapu}, and \eqref{eq:pf:LyapunovSplit} together imply that
    \begin{align*}
        \energy_\kap(\mu_t) - \energy _\kap(\mu_r)
        = -\int_r^t\Big(\bbE&\Big[ \big|b\big( \X(s) \big) + \nabla_{0} \log \mu_s\big( \X(s) \big)\big|^2\Big] \\&+ \kappa \bbE\Big[\big|\nabla_{1}\log\mu_s\big( \X(s) \big)\big|^2\Big]  - \kappa \bbE\Big[\big|\nabla_{1} \log {\bar{\mu}}_s\big(X_0(s), X_1(s)\big)\big|^2\Big]\Big)ds
    \end{align*} 
    
    To finish the proof of \eqref{eq:results:energyIdentity}, it suffices to show that
     \begin{equation*}
         \info_\kap(\mu_s) = \bbE\Big[ \big| b\big(\X(s)\big) + \nabla_{0} \log \mu_s\big(\X(s)\big) \big|^2 + \kap \Big( \big| \nabla_{1} \log \mu_s\big(\X(s)\big) \big|^2 - \big|\nabla_{1} \log {\bar{\mu}}_s\big(X_0(s), X_1(s)\big) \big|^2\Big)\Big].
     \end{equation*}
    Let $\delta$ denote the absolute difference of the two sides of the above display. By \eqref{eq:results:info}, we have
    \begin{equation*}
        \delta = 2\kap \bigg(\bbE\Big[\big|\nabla_{1} \log \bar{\mu}_s\big(X_0(s), X_1(s)\big)\big|^2\Big] - \bbE\Big[\nabla_{1} \log \bar{\mu}_s\big(X_0(s), X_1(s)\big)\cdot  \nabla_{1} \log \mu_s\big(\X(s)\big)\Big]\bigg).
    \end{equation*}
    Once again, we can apply \eqref{eq:lyap:symLemma2} of Lemma \ref{lem:lyap:symLemma} with $f(x, y) = \nabla_{1} \log \bar{\mu}_s(x, y)$ and $u = 1$ to conclude that we have $\delta = 0$. This completes the proof of \eqref{eq:results:energyIdentity}.
\end{proof}

\subsection{Lower bound and level sets of $\bbH_\kap$} 

We now prove Proposition \ref{prop:results:energyBound}. 
\label{sec:levelSets}
\begin{proof}[Proof of Proposition \ref{prop:results:energyBound}]
We first write the sparse free energy in a slightly different form that shares similarities with a good rate function identified in \cite{ChenRamYas}. Let $q$ and $R_q$ be as given in Assumption \ref{as:results:invMeasure}. Define the probability measure $\alpha_q \in \mP(\R^d)$ by \begin{equation}
    \alpha_q(dx) := \frac{1}{R_q} e^{-q(x)}dx.
    \label{eq:lyap:alpha}
\end{equation}
    Fix $\nu \in \mPSb$. By definition of $\mPSb$, $\nu$ has finite entropy and hence, by the results of \cite{ChenRamYas}, the map $\nu \mapsto \entropy( \nu | \alpha_q^{\otimes(1 + \kap)} ) - \frac{\kap}{2} \entropy( \bar{\nu} | \alpha_q^{\otimes 2})$ is a rate function. Then we have \begin{equation}
\entropy\big( \nu \big| \alpha_q^{\otimes(1 + \kap)} \big) - \frac{\kap}{2} \entropy\big( \bar{\nu} \big| \alpha_q^{\otimes 2}\big) \geq 0, \quad \nu \in \mPS.
\label{eq:lyap:rateFunctionPositive}
\end{equation}

Since $\nu$ is an element of $\mPSb$ and therefore has has finite entropy, we have
\begin{equation*}
    \begin{aligned}
        &\int_{(\R^d)^{1 + \kap}} \nu(\x) \log \nu(\x) dx = \entropy\big( \nu \big| \alpha_q^{\otimes(1 + \kap)} \big) - (\kap + 1) \Big( \bbE^\nu[q(Y_0)] + \log R_q\Big), \\
        &\int_{\R^d \times \R^d} \bar{\nu}(x_0, x_1) \log \bar{\nu}(x_0, x_1) dx_0 dx_1 = \entropy\big(\bar{\nu}\big| \alpha_q^{\otimes 2}\big) - 2\Big(\bbE^\nu[q(Y_0)] + \log R_q\Big).
    \end{aligned}
\end{equation*}
The last displays, together with \eqref{eq:results:potential} and \eqref{eq:results:entropy}, yields
\begin{equation}
\label{eq:lyap:alternativeEnergy}
    \energy_\kap(\nu) = 
\entropy\big( \nu \big| \alpha_q^{\otimes(1 + \kap)} \big) - \frac{\kap}{2} \entropy\big( \bar{\nu} \big| \alpha_q^{\otimes 2}\big) - \log R_q + \bbE^{\nu}[g(\mathbf{Y}) - q(Y_0)].
\end{equation}
By the definition of $g$ in \eqref{eq:results:potential}, it follows that
\begin{equation*}
    \bbE^\nu[g(\Y) - q(Y_0)] = \bbE^\nu\bigg[ U(Y_0) - q(Y_0) + \frac{1}{2}\sum_{v = 1}^\kap \intPot(Y_0 - Y_v)\bigg].
\end{equation*}
Since $\nu \in \mPS$, by \eqref{eq:not:symmetry2_} we have
\begin{equation*}
    \bbE^\nu[h(Y_0)] = \frac{1}{2}\bbE^\nu[h(Y_0) + h(Y_1)],
\end{equation*}
for $h = U$ and $h = q$. Similarly, by  \eqref{eq:not:symmetry1_}, we have
\begin{equation*}
\bbE^\nu\bigg[ \frac{1}{2}\sum_{v = 1}^\kap \intPot(Y_0 - Y_v)\bigg] = \frac{\kap}{2} \bbE^\nu[\intPot(Y_0 - Y_1)].
\end{equation*}
Define the function $L:\R^d \times \R^d \rightarrow \R$ by
\begin{equation}
    L(x, y) := U(x) + U(y) + \kap \intPot(x - y) - q(x) - q(y) \geq 0, \quad x, y \in \R^d,
    \label{eq:lyap:L}
\end{equation}
where the nonnegativity of $L$ follows from \eqref{eq:results:fBound}. The last four displays together imply that
\begin{equation}
\label{eq:lyap:gqNonNegative}
    \begin{aligned}
        \bbE^\nu[g(\X) - q(X_0)] = \frac{1}{2} \bbE^{\bar{\nu}}\big[ L(X_0, X_1)\big] \geq 0.
    \end{aligned}
\end{equation}

Combining \eqref{eq:lyap:alternativeEnergy} and \eqref{eq:lyap:L}, we have
\begin{equation}
    \bbH_\kap(\nu) = \entropy\big( \nu \big| \alpha_q^{\otimes(1 + \kap)} \big) - \frac{\kap}{2} \entropy\big( \bar{\nu} \big| \alpha_q^{\otimes 2}\big) - \log R_q + \frac{1}{2}\bbE^{\bar{\nu}}[L(Y_0, Y_1)]
    \label{eq:lyap:altSFE}
\end{equation}
By \eqref{eq:lyap:altSFE}, \eqref{eq:lyap:rateFunctionPositive}, and \eqref{eq:lyap:gqNonNegative}, we obtain $\bbH_\kap(\nu) \geq - \log R_q$ for all $\nu \in \mPSb$. The lower semicontinuity of $\bbH_\kap$ then follows fro \eqref{eq:lyap:altSFE}:
\begin{itemize}
    \item[(i)] The map $\nu \mapsto \mH(\nu|\alpha^{\otimes (1 + \kap)}) - \tfrac{\kap}{2} \mH(\bar{\nu}|\alpha^{\otimes 2})$ is lower semicontinuous by the results of \cite{ChenRamYas}
    \item[(ii)] The map $\nu \mapsto \bbE^{\bar{\nu}}[L(X_0, X_1)]$ is lower semicontinuous by \eqref{eq:lyap:L} and Fatou's lemma.
\end{itemize}

Next we turn to the proof of \eqref{eq:conv:secondMoment}. Suppose $(U, \intPot)$ satisfy Assumption \ref{as:results:strongCoercivity} and fix $\nu \in \mathcal{R}(M)$. The fact that $\nu \in \mPS$, together with \eqref{eq:zeros:strongCoercivity} and \eqref{eq:lyap:L}, implies that
\begin{equation*}
\begin{aligned}
 \int_{(\R^d)^{1 + \kap}} |\x|^2 \nu(d\x) = \frac{1 + \kap}{2} \int_{(\R^d)^{1 + \kap}} \big(|x_0|^2 + |x_1|^2\big)  \bar{\nu}(dx_0dx_1) \leq \ \frac{1 + \kap}{2\tilde{C}_q}\bbE^{\bar{\nu}}[L(Y_0, Y_1)].
\end{aligned}
\end{equation*}
Combining the above display with \eqref{eq:lyap:altSFE} and \eqref{eq:lyap:rateFunctionPositive}, we have 
\begin{equation*}
 \int_{(\R^d)^{1 + \kap}} |\x|^2 \nu(d\x) \leq \frac{1+ \kap}{2 \tilde{C}_q} \big(\bbH_{\kap}(\nu) + \log R_q\big) \leq \frac{1+ \kap}{2 \tilde{C}_q} (M + \log R_q),
\end{equation*}
where the last inequality follows from $\nu \in \mathcal{R}(M)$. Thus, \eqref{eq:conv:secondMoment} holds with $C_{q, M} = \frac{(1 + \kap)(M + \log R_q)}{2\tilde{C}_q}$. 

Finally, we establish \eqref{eq:conv:levelsets} and the compactness of $\mathcal{R}(M)$. In light of \eqref{eq:conv:secondMoment}, \eqref{eq:conv:levelsets} follows from showing elements of $\mathcal{R}(M)$ have finite entropy. We have by \eqref{eq:lyap:altSFE} and the non-negativity of $L$ that
\begin{equation}
    \entropy\big( \nu \big| \alpha_q^{\otimes(1 + \kap)} \big) - \frac{\kap}{2} \entropy\big( \bar{\nu} \big| \alpha_q^{\otimes 2}\big) \leq \log R_q + \bbH_\kap(\nu) \leq \log R_q + M.
\end{equation}
Then by the results of \cite{ChenRamYas}, the last display implies that $\nu$ has finite entropy. Thus $\nu \in \mPSb$ and \eqref{eq:conv:levelsets} holds. To prove that $\mathcal{R}(M)$ is compact, we note that by \eqref{eq:conv:secondMoment}. $\mathcal{R}(M)$ is uniformly integrable and thus tight (e.g., Problem 1.5.8 in \cite{billingsley1968convergence}). The desired compactness then follows from Prokhorov's theorem.
\end{proof}

\section{Stationary Distributions of the \CMVE{}}
\label{sec:lyap:zeros}

\subsection{\FPES{}}\label{sec:stationaryProofs} In this section we prove Theorem \ref{thm:results:zeros}. Recall that $\mS_\kap$ is the set of zeros of $\info_\kap$ defined in \eqref{eq:results:mE}. We first note in the following lemma that \FPES{} satisfy mild regularity properties. The identity \eqref{eq:zeros:changeDandI} justifies an exchange of integral and derivative that will be useful later.

\begin{lemma}
\label{lem:zeros:betheRegularity} Let $(U, \intPot)$ satisfy Assumption \ref{as:results:invMeasure}. Suppose $\nu \in \mPS$ is a \FPE{}. Then $\nu$ along with its marginal distributions $\bar{\nu}$, $\nu_0$, and its conditional distribution $\bar{\nu}(\cdot|\cdot)$ are differentiable Lebesgue almost everywhere. Moreover, for almost every $x \in \R^d$ we have
\begin{equation}
    \nabla \int_{\R^d} e^{-\intPot(x - y) - \frac{1}{\kap}U(y)}\nu_0(y)^{\frac{\kap - 1}{\kap}} dy = - \int_{\R^d} \nabla \intPot(x - y) e^{-\intPot(x - y) - \frac{1}{\kap}U(y)}\nu_0(y)^{\frac{\kap - 1}{\kap}} dy.
    \label{eq:zeros:changeDandI}
\end{equation}
\end{lemma}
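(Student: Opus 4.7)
My plan is to first establish the identity \eqref{eq:zeros:changeDandI} together with differentiability of $\nu_0$ directly from the fixed-point equation \eqref{eq:zeros:fixedPoint}, and then to bootstrap to the remaining claims using the explicit formulas \eqref{eq:zeros:muBar} and \eqref{eq:zeros:altCBP}. Raising \eqref{eq:zeros:fixedPoint} to the $\kap$-th power rewrites it as $\nu_0(x) = \mathcal{Z}_{\nu_0}^{-\kap} e^{-U(x)} I(x)^\kap$ where
\[
I(x) := \int_{\R^d} e^{-\intPot(x-y) - U(y)/\kap}\nu_0(y)^{(\kap-1)/\kap}\, dy.
\]
Since $U \in C^1(\R^d)$ by Assumption \ref{as:results:lyapunov} (implicit in Definition \ref{def:results:CBP}), it will suffice to show $I \in C^1(\R^d)$ with gradient obtained by differentiation under the integral sign; this will simultaneously yield \eqref{eq:zeros:changeDandI}.

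The main step is to construct, for each compact $K \subset \R^d$, a Lebesgue-integrable majorant of the pointwise $x$-gradient $-\nabla \intPot(x-y) e^{-\intPot(x-y) - U(y)/\kap}\nu_0(y)^{(\kap-1)/\kap}$ uniform over $x \in K$. The coercivity bound \eqref{eq:results:fBound} rearranges to $-\intPot(x-y) \leq \tfrac{1}{\kap}(U(x) + U(y) - q(x) - q(y))$, which, combined with continuity of $U$ and the uniform lower bound on $q$ from Assumption \ref{as:results:invMeasure}, gives $e^{-\intPot(x-y) - U(y)/\kap} \leq C_K\, e^{-q(y)/\kap}$ uniformly on $K$. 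Assumption \ref{as:results:lyapunov} meanwhile yields $|\nabla \intPot(x-y)| \leq C_K'(1+|y|)$ for $x \in K$. Hence it is enough to verify integrability of $(1+|y|) e^{-q(y)/\kap}\nu_0(y)^{(\kap-1)/\kap}$, and H\"older's inequality with conjugate exponents $(\kap, \kap/(\kap-1))$ bounds this by
\[
\left(\int_{\R^d} (1+|y|)^\kap e^{-q(y)}\, dy\right)^{1/\kap}\bigg(\int_{\R^d} \nu_0(y)\, dy\bigg)^{(\kap-1)/\kap},
\]
which is finite by \eqref{eq:results:fMoments} since $\nu_0$ is a probability density. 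The dominated convergence theorem applied to difference quotients then delivers both $I \in C^1(\R^d)$ and \eqref{eq:zeros:changeDandI}.

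For the bootstrap, observe that the integrand defining $I(x)$ is strictly positive on a set of positive $y$-measure (since $\nu_0$ is a probability density, hence not Lebesgue-a.e.\ zero), so $I > 0$ everywhere and therefore $\nu_0 > 0$ everywhere on $\R^d$. Differentiability a.e.\ of $\bar{\nu}$ then follows from its explicit $C^1$ expression in \eqref{eq:zeros:muBar}, that of $\nu$ from \eqref{eq:zeros:altCBP}, and $\bar{\nu}(\cdot\mid\cdot) = \bar{\nu}(\cdot,\cdot)/\nu_0(\cdot)$ inherits differentiability as a quotient of differentiable functions with strictly positive denominator. The principal obstacle is constructing the uniform-in-$x$ majorant: one must simultaneously tame the linear growth of $\nabla \intPot$ and the possible degeneracy of $e^{-\intPot(x-y)}$ (which is \emph{not} assumed bounded) using only the integrability \eqref{eq:results:fMoments} and the coercivity \eqref{eq:results:fBound}. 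This is precisely where the H\"older pairing $(\kap, \kap/(\kap-1))$ is essential, as it is exactly tuned to convert the $(\kap-1)/\kap$ power of $\nu_0$ into unit $L^1$ mass.
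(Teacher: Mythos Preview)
Your proof is correct and follows essentially the same strategy as the paper: both arguments combine the coercivity bound \eqref{eq:results:fBound} to control $e^{-W(x-y)}$ by $e^{-q(y)/\kap}$, the linear growth of $\nabla W$, and H\"older's inequality with the conjugate pair $(\kap,\kap/(\kap-1))$ tuned to the power $\nu_0^{(\kap-1)/\kap}$, then justify the interchange of derivative and integral (you via dominated convergence on difference quotients, the paper via Fubini and weak derivatives). Your presentation is in fact slightly more explicit about constructing the uniform majorant; one small caveat is that your claim ``$\nu_0>0$ everywhere'' relies on the fixed-point identity \eqref{eq:zeros:fixedPoint}, which is only asserted $\nu_0$-a.e., so strictly speaking you obtain a $C^1$ version of $\nu_0$ on $\{\nu_0>0\}$ --- but the paper's proof is equally informal on this point, and it suffices for all downstream uses.
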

\begin{proof}
Assumption \ref{as:results:lyapunov} and Assumption \ref{as:results:invMeasure} imply that the following estimate holds all compact sets $K \subset \R^d$:
\begin{equation}
    \int_K \bigg(\int_{\R^d} |\nabla \intPot(x - y)|^\kap e^{- \kap \intPot(x - y) - U(x)} dx\bigg)^{\frac{1}{\kap}} dy < \infty.
    \label{eq:zeros:apriori}
\end{equation}
Combining Holder's inequality with \eqref{eq:zeros:apriori}, it follows that
\begin{equation*}
    \exp\Big( - \tfrac{1}{\kap}U(y) - \intPot(x - y) \Big) \nu_0(y)^{ \frac{\kap-1}{\kap}}, \, \nabla \intPot(x - y) \exp\Big( - \tfrac{1}{\kap}U(y) - \intPot(x - y) \Big) \nu_0(y)^{ \frac{\kap-1}{\kap}} \in L^1_{\text{loc}}\big((\R^d)^2\big).
\end{equation*}
Then by Fubini's theorem, \eqref{eq:zeros:changeDandI} is satisfied.
Moreover, \eqref{eq:zeros:changeDandI} together with \eqref{eq:zeros:fixedPoint}, \eqref{eq:zeros:apriori}, and Holder's inequality imply that $\nu_0$ is differentiable almost everywhere. By \eqref{eq:zeros:muBar}, we have that $\bar{\nu}(x_v, x_0)$ is also differentiable almost everywhere in $(x_v, x_0)$. By \eqref{eq:zeros:muBar}, we have, that
\begin{equation*}
    \log \bar{\nu}(x_v|x_0) = - \log \mathcal{Z}_{\nu_0} - \frac{U(x_0) + U(x_v)}{\kap} - \intPot(x_0 - x_v) + \frac{\kap - 1}{\kap} \log \nu_0(x_v) - \frac{1}{\kap} \log \nu_0(x_0).
\end{equation*} The above display and the differentiability of $\nu_0$ imply that $\bar{\nu}(x_v|x_0)$ is also differentiable almost everywhere in $(x_v, x_0)$.     
\end{proof}
We now turn to the proof of Theorem \ref{thm:results:zeros}.
\begin{proof}[Proof of Theorem \ref{thm:results:zeros}] The proof will follow from the following three claims.

\textbf{Claim 1:} A measure $\nu \in \mPS$ satisfies $\bbI_\kap(\nu) = 0$ if and only if $\nu$ is an absolutely continuous 1-MRF of the form \eqref{eq:zeros:1MRF} satisfying
\begin{equation}
    \label{eq:zeros:info2}
    \nabla_{x_v}\big(\log \nu(\x) - \log \bar{\nu}(x_0, x_v)\big) = 0,
\end{equation}
for all $v \in \{1, \ldots, \kap\}$ and
\begin{equation}
    \label{eq:zeros:info1}
    \nabla U(x_0) + \sum_{v = 1}^\kap \nabla \intPot(x_0 - x_v) + \nabla_{x_0} \log \nu_0(x_0) + \sum_{v = 1}^\kap \nabla_{x_0} \log \bar{\nu}(x_v|x_0) = 0,
\end{equation}
for $\nu$-a.e. $\x \in (\R^d)^{1 + \kap}$.

\begin{proof}[Proof of Claim 1]
Suppose first that $\nu \in \mPS$ is an absolutely continuous 1-MRF of the form \eqref{eq:zeros:1MRF} satisfying \eqref{eq:zeros:info2} and \eqref{eq:zeros:info1}. By \eqref{eq:zeros:1MRF}, we have \begin{equation}
    \nabla_{x_0} \log \nu(\x) = \nabla_{x_0} \log \nu_0(x_0) + \sum_{v = 1}^\kap \nabla_{x_0} \bar{\nu}(x_v|x_0), \quad \nu\text{-a.e. }\x \in (\R^d)^{1 + \kap}.
    \label{eq:zeros:1MRF2}
\end{equation}
Then the above display, \eqref{eq:zeros:info1}, \eqref{eq:zeros:info2}, and \eqref{eq:results:info} imply that $\bbI_\kap(\nu) = 0$.

Now suppose that $\nu \in \mPS$ satisfies $\bbI_\kap(\nu) = 0$. By \eqref{eq:results:info}, $\nu$ is absolutely continuous. Moreover, by \eqref{eq:results:info} and the fact that $\nu \in \mPS$, for all $v \in \{1, \ldots, \kap\}$ we have 
\begin{equation*}
    \int_{(\R^d)^{1 + \kap}} \bigg|\nabla_{x_1} \log \frac{\nu(\x)}{\bar{\nu}( x_0, x_1)}\bigg|^2 \nu(d\x) =   \int_{(\R^d)^{1 + \kap}} \bigg|\nabla_{x_v} \log \frac{\nu(\x)}{\bar{\nu}( x_0, x_v)}\bigg|^2 \nu(d\x) = 0.
\end{equation*}
Therefore, \eqref{eq:zeros:info2} is satisfied. Fix $v \in \{1, \ldots, \kap\}$. By \eqref{eq:zeros:info2} it follows that
\begin{equation*}
    \nabla_{x_v} \log \nu( x_{\{1, \ldots, \kap\} \backslash \{v\}} | x_0, x_v) = 0,
\end{equation*}
which yields
\begin{equation*}
        \nu( x_{\{1, \ldots, \kap\} \backslash \{v\}} | x_0, x_v) = \nu( x_{\{1, \ldots, \kap\} \backslash \{v\}} | x_0).
\end{equation*}
Fix $A \subset \{1, \ldots, \kap\} \backslash \{v\}$.
On integration, the last display implies that we have
\begin{equation}
    \label{eq:zeros:preMRF}
        \nu( x_{A} | x_0, x_v) = \nu( x_{A} | x_0).
\end{equation}
Fix $B \subset \{0, \ldots, \kap\}$ such that $\rt \in B$ and $A \cap B = \emptyset$, let $m := |B| - 1$, and write $B = \{x_0, x_{b_1}, \ldots, x_{b_m}\}$. Then repeated applications of Bayes' formula and \eqref{eq:zeros:preMRF} yields
\begin{equation*}
    \nu(x_A|x_B) = \nu(x_A | x_0).
\end{equation*}
Therefore, $\nu$ is a 1-MRF on $\mathbb{T}_{\kap}^1$. By the Hammersley-Clifford theorem (e.g. Theorem 2.30 of \cite{georgii1988gibbs}) and the symmetry properties \eqref{eq:not:symmetry1_} and \eqref{eq:not:symmetry2_}, we see $\nu$ is of the form \eqref{eq:zeros:1MRF} for some $\nu_0 \in \mP(\R^d)$ and $\bar{\nu} \in \mP( \R^d \times \R^d)$. By \eqref{eq:results:info} and \eqref{eq:results:drift}, we have
\begin{equation*}
    \int_{(\R^d)^{1 + \kap}} \bigg| \nabla U(x_0) + \sum_{v = 1}^\kap \nabla W(x_0 - x_v) + \nabla_{x_0} \log \nu(\x) \bigg|^2 \nu(d\x) = 0.
\end{equation*}
Then \eqref{eq:zeros:info1} follows by substituting \eqref{eq:zeros:1MRF2} into the above display.
\end{proof}

\textbf{Claim 2:} If $\nu \in \mPS$  is a \FPE{}, then $\bbI_\kap(\nu) = 0$.

\begin{proof}[Proof of Claim 2]
 Let $\nu \in \mPS$ solve \FPE{}. Lemma \ref{lem:zeros:betheRegularity} implies that $\nu$ and its associated conditional and marginal distributions are differentiable Lebesgue almost everywhere. By Claim 1, it suffices to show that $\nu$ satisfies \eqref{eq:zeros:info2} and \eqref{eq:LSI:info1}. The condition \eqref{eq:zeros:info2} is therefore clearly satisfied from \eqref{eq:zeros:1MRF}. To check \eqref{eq:zeros:info1}, observe from \eqref{eq:zeros:muBar} that
\begin{equation*}
    \nabla_{x_0} \log \bar{\nu}(x_v|x_0) = - \frac{1}{\kap} \nabla U(x_0) - \nabla \intPot(x_0 - x_v) - \frac{1}{\kap} \nabla_{x_0} \log \nu_0(x_0), \quad \bar{\nu}\text{-a.e. }(x_0, x_v) \in \R^d \times \R^d.
\end{equation*}
Thus we see that \eqref{eq:zeros:info1} is satisfied, and $\info_\kap(\nu) = 0$. \end{proof}
\textbf{Claim 3:} If $\nu \in \mPS$ satisfies $\bbI_\kap(\nu) = 0$, then $\nu$ is a \FPE{}. 
\begin{proof}[Proof of Claim 3]
We show that any $\nu \in \mPS$ with $\info_{\kap}(\nu) = 0$ satisfies \eqref{eq:zeros:1MRF}, \eqref{eq:zeros:fixedPoint}, and \eqref{eq:zeros:muBar}. Note that \eqref{eq:zeros:info1} implies that for $\nu$-a.e. $\x \in (\R^d)^{ 1 + \kap}$, 
\begin{equation*}
    U(x_0) + \log \nu_0(x_0) + \sum_{v = 1}^\kap \Big(\intPot(x_0 - x_v) + \log \bar{\nu}(x_v | x_0) \Big)
\end{equation*}
does not depend on $x_0$ and is thus a measurable function of $(x_1, \ldots, x_\kap)$, which we denote by $F$. Further, from the above display, we see that $F(x_1, \ldots, x_k) = w(x_1) + \cdots w(x_k)$ for some $w:\R^d \rightarrow \R$. Then we have
\begin{equation}
    \label{eq:zeros:algEqn1}
    \sum_{v = 1}^\kap h(x_0, x_v) = - U(x_0) - \log \nu_0(x_0), \quad \nu\text{-a.e. }\x \in (\R^d)^{1 + \kap},
\end{equation}
where
\begin{equation}
    \label{eq:zeros:h}
    h(x_0, x_v) := \intPot(x_0 - x_v) + \log \bar{\nu}(x_v | x_0) - w(x_v).
\end{equation}
By \eqref{eq:zeros:algEqn1}, for $\bar{\nu}$-a.e $(x_0, x_v)$ the function $h(x_0, x_v)$ only depends on $x_0$ and thus takes the form
\begin{equation*}
    h(x_0, x_v)  = - \frac{1}{\kap}\Big(U(x_0) + \log \nu_0(x_0)\Big) =: h(x_0), \quad \bar{\nu}\text{-a.e. } (x_0, x_v) \in \R^d.
\end{equation*}
By \eqref{eq:zeros:h} this implies that
\begin{equation*}
    \intPot(x_0 - x_v) + \log \bar{\nu}(x_v | x_0) - w(x_v) = - \frac{1}{\kap}\Big(U(x_0) + \log \nu_0(x_0)\Big),
\end{equation*}
which when rearranged yields
\begin{equation}
    \label{eq:zeros:conditional}
    \bar{\nu}(x_v | x_0) = e^{ - \intPot(x_0 - x_v) - \frac{1}{\kap}U(x_0)} \frac{e^{w(x_v)}}{\nu_0(x_0)^{\frac{1}{\kap}}},
\end{equation}
and hence
\begin{equation}
    \label{eq:zeros:muBar2}
    \bar{\nu}(x_v, x_0) = e^{ - \intPot(x_0 - x_v) - \frac{1}{\kap}U(x_0)}e^{w(x_v)}\nu_0(x_0)^{\frac{\kap - 1}{\kap}}.
\end{equation}
Since $\nu \in \mPS$, by \eqref{eq:not:symmetry2_}, we have $\bar{\nu}(x_v, x_0) = \bar{\nu}(x_0, x_v)$ and by \eqref{eq:zeros:muBar2}, it follows that
\begin{equation*}
    e^{ - \intPot(x_0 - x_v) - \frac{1}{\kap}U(x_0)}e^{w(x_v)}\nu_0(x_0)^{\frac{\kap - 1}{\kap}} = e^{ - \intPot(x_v - x_0) - \frac{1}{\kap}U(x_v)}e^{w(x_0)}\nu_0(x_v)^{\frac{\kap - 1}{\kap}}.
\end{equation*}
Since $\intPot$ is even by Assumption \ref{as:results:lyapunov}, we can rearrange terms to obtain
\begin{equation*}
    e^{ - \frac{1}{\kap}U(x_0)}e^{- w(x_0)}\nu_0(x_0)^{\frac{\kap - 1}{\kap}} = e^{ - \frac{1}{\kap}U(x_v)}e^{-w(x_v)}\nu_0(x_v)^{\frac{\kap - 1}{\kap}}.
\end{equation*}
Noticing that the left-hand side depends only on $x_0$ and the right-hand side depends only on $x_v$, we deduce that there exists a constant $C \in \R$ such that
\begin{equation*}
    e^{ - \frac{1}{\kap}U(x_v)}e^{- w(x_v)}\nu_0(x_v)^{\frac{\kap - 1}{\kap}} = C, \quad \text{for all }v \in \{0, 1, \ldots, \kap\}.
\end{equation*}
or equivalently,
\begin{equation*}
    e^{w(x_v)} = \frac{1}{C}e^{ - \frac{1}{\kap}U(x_v)}\nu_0(x_v)^{\frac{\kap - 1}{\kap}}, \quad \text{for all }v \in \{0, 1, \ldots, \kap\}.
\end{equation*}
Substituting the above relation back into \eqref{eq:zeros:muBar2} we obtain 
\begin{equation*}
    \bar{\nu}(x_v, x_0) =  \frac{1}{C}\exp\Big( - \tfrac{U(x_0) + U(x_v)}{\kap} - \intPot(x_0 -x_v)\Big) \big[\nu_0(x_0)\nu_0(x_v)\big]^{\frac{\kap-1}{\kap}}, \quad \text{for all }v \in \{0, 1, \ldots, \kap\}.
\end{equation*}
Since $\bar{\nu}$ is a probability measure, we see from \eqref{eq:zeros:zMu} that $C = \mathcal{Z}_{\nu_0}$. Therefore, $\bar{\nu}$ satisfies \eqref{eq:zeros:muBar}. To see that $\nu_0$ satisfies \eqref{eq:zeros:fixedPoint}, we integrate the above display over $x_0$ to conclude that
\begin{equation*}
    \nu_0(x_v) = \frac{1}{\mathcal{Z}_{\nu_0}} \nu_0(x_v)^{\frac{\kap - 1}{\kap}}e^{- \frac{1}{\kap}U(x_v)} \int_{\R^d} e^{- \intPot(x_0 -x_v) - \frac{1}{\kap}U(x_0)} \nu_0(x_0)^{\frac{\kap - 1}{\kap}} dx_0.
\end{equation*}
Dividing both sides by $\nu_0(x_v)^{\frac{\kap - 1}{\kap}}$ we conclude that $\nu_0$ satisfies \eqref{eq:zeros:fixedPoint}.
\end{proof}
The equivalence of $\mS_\kap$ and the set of \FPES{} then follows from Claim 2 and Claim 3.
\end{proof}
\subsection{Bijection between stationary distributions and zeros of $\bbI_\kap$}
\label{ss:statProof}

To establish the correspondence claimed in Theorem \ref{thm:results:stationary}, we will first establish regularity and integrability properties of the the zeros of $\bbI_\kap$ in the following proposition.

\begin{proposition}[Regularity of \FPES{}]
    \label{lem:zeros:entropyAndMoments}
    Suppose $(U, \intPot)$ satisfies Assumption \ref{as:results:fixedPoint}. If $\nu \in \mS_\kap$, then $\nu\in \mPSb$ and we have
    \begin{equation}
    \label{eq:zeros:marginalFisher}
        \int_{\R^d} \big| \nabla \log \nu_0(x)\big|^2 \nu_0(x) dx < \infty,
    \end{equation}
    Furthermore, $\nu$ is bounded, continuous, and positive, has moments of all orders, and we have for all $x \in \R^d$ that  \begin{equation}
\begin{aligned}
    \nabla \log \nu_0(x) 
    &= - \nabla U(x) - \kappa \bbE^{\bar{\nu}}\big[ \nabla \intPot(Y_0 - Y_1) | Y_0 = x\big].
\end{aligned}    
\label{eq:zeros:nablaMu0}
\end{equation}
\end{proposition}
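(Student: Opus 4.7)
The plan is to leverage Theorem \ref{thm:results:zeros}, which identifies $\nu \in \mS_\kap$ with the explicit \FPE{} form \eqref{eq:zeros:altCBP}, reducing all claims to quantitative properties of the one-dimensional fixed point density $\nu_0$ satisfying \eqref{eq:zeros:fixedPoint}. The central estimate, from which essentially everything else follows, is a Gibbs-type pointwise bound $\nu_0(x) \leq C\, e^{-q(x)}$ driven by the coercivity Assumption \ref{as:results:invMeasure}.

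First, to prove the pointwise bound, I would apply \eqref{eq:results:fBound} inside the integrand of \eqref{eq:zeros:fixedPoint}, writing $W(x-y) + U(y)/\kap \geq (q(x)+q(y))/\kap - U(x)/\kap$, which cancels the $e^{-U(x)/\kap}$ prefactor and produces
\begin{equation*}
\nu_0(x)^{1/\kap} \leq \mathcal{Z}_{\nu_0}^{-1}\, e^{-q(x)/\kap}\int_{\R^d} e^{-q(y)/\kap}\, \nu_0(y)^{(\kap-1)/\kap}\, dy;
\end{equation*}
H\"older's inequality with exponents $(\kap, \kap/(\kap-1))$ then bounds the integral by $R_q^{1/\kap}$, yielding $\nu_0(x) \leq C_{\nu_0}\, e^{-q(x)}$. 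From this bound, uniform boundedness of $\nu_0$ is immediate since $q$ is bounded below, moments of all orders for $\nu_0$ follow from \eqref{eq:results:fMoments}, and together with \eqref{eq:zeros:altCBP} and the polynomial growth of $U$ and $W$ from Assumption \ref{as:results:lyapunov} these propagate to $\nu$. Continuity and strict positivity of $\nu_0$ would then be deduced by dominated convergence in \eqref{eq:zeros:fixedPoint}, where the same coercivity estimate combined with $\nu_0 \leq C_{\nu_0}\, e^{-q}$ dominates the integrand locally uniformly in $x$ by a multiple of $e^{-q(y)}$; continuity and positivity of $\nu$ then follow from \eqref{eq:zeros:altCBP}. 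Finite entropy of $\nu$, and hence $\nu \in \mPSb$, follows from its boundedness and finite second moment via the standard Gaussian lower-bound comparison $\int \nu \log \nu \geq -\tfrac{1}{2}\int |\x|^2 \nu - C$.

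The remaining identity \eqref{eq:zeros:nablaMu0} is obtained by taking logarithms in \eqref{eq:zeros:fixedPoint} and differentiating, with Lemma \ref{lem:zeros:betheRegularity} (specifically \eqref{eq:zeros:changeDandI}) justifying passage of the gradient under the integral. Recognizing the $y$-integrand of the resulting ratio as a constant multiple of $\bar{\nu}(x, y)$ via \eqref{eq:zeros:muBar}, and using \eqref{eq:zeros:fixedPoint} to identify the denominator as $\mathcal{Z}_{\nu_0}\, e^{U(x)/\kap}\, \nu_0(x)^{1/\kap}$, collapses the quotient to $-\bbE^{\bar{\nu}}[\nabla \intPot(Y_0 - Y_1)\,|\,Y_0 = x]/\kap$; multiplying through by $\kap$ yields \eqref{eq:zeros:nablaMu0}. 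The Fisher information bound \eqref{eq:zeros:marginalFisher} is then immediate on combining \eqref{eq:zeros:nablaMu0} with the linear growth of $\nabla U$ from Assumption \ref{as:results:lyapunov}, the linear growth of the conditional expectation from Assumption \ref{as:results:fixedPoint}, and the finite second moment of $\nu_0$ established above. The main technical obstacle is the pointwise bound on $\nu_0$, since it is the single gateway to every qualitative property claimed; the H\"older step depends crucially on the exponent $(\kap-1)/\kap$ appearing in \eqref{eq:zeros:fixedPoint} being exactly conjugate to $\kap$, and verifying the dominated convergence input for continuity and for the differentiation under the integral uses the same coercivity estimate in an essential way.
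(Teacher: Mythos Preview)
Your core strategy matches the paper exactly: the Gibbs-type pointwise bound $\nu_0 \leq C e^{-q}$ via coercivity and H\"older, its propagation to boundedness and all moments of $\nu$ through \eqref{eq:zeros:altCBP}, the differentiation of \eqref{eq:zeros:fixedPoint} for \eqref{eq:zeros:nablaMu0} using Lemma~\ref{lem:zeros:betheRegularity}, and the Fisher-information bound from linear growth are all carried out identically in the paper. Your finite-entropy argument via Gaussian comparison is a valid and slightly cleaner alternative to the paper's route, which instead extracts $|\log\nu_0(x)| \lesssim 1+|x|^2$ from the gradient identity.

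There is one substantive gap in your continuity and positivity step. Dominated convergence does show that the right-hand side of \eqref{eq:zeros:fixedPoint} defines a continuous, strictly positive function $G$ on all of $\R^d$, but Definition~\ref{def:results:CBP} asserts the fixed point equation only $\nu_0$-a.e., so you know $\nu_0 = G^\kap$ only on $\{\nu_0>0\}$. On $\{\nu_0=0\}$ you have $G>0$ and no equation, so $G^\kap$ is not a representative of the density $\nu_0$ unless you first prove $\{\nu_0=0\}$ is Lebesgue-null --- which is exactly the positivity statement you are trying to establish, making the argument circular as written. The paper avoids this by taking a different route: after establishing \eqref{eq:zeros:nablaMu0} $\nu_0$-a.e., it combines the resulting linear-growth bound on $\nabla\log\nu_0$ with the finite Fisher information and moments to place $\nu_0\in W^{1,p}(\R^d)$ for every $p<\infty$, then invokes Morrey's inequality to obtain a continuous representative without any a~priori knowledge of where $\nu_0$ vanishes. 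Positivity is handled by a separate connectedness argument: the fixed point equation gives a uniform lower bound $\nu_0\geq\delta_R$ $\nu_0$-a.e.\ on each ball $\bar B_R$, and continuity forces the relatively open set $\{\nu_0>0\}\cap\bar B_R$ to also be closed, hence all of $\bar B_R$.

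A minor arithmetic point: the quotient you obtain after differentiating the log of \eqref{eq:zeros:fixedPoint} is $-\bbE^{\bar\nu}[\nabla W(Y_0-Y_1)\mid Y_0=x]$ with no factor $1/\kap$; the $\kap$ enters only when you clear the $1/\kap$ multiplying $\log\nu_0$ on the left, and your final identity is correct.
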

\begin{proof}
Fix $\nu \in \mS_\kap$ and let $q$ and $R_q$ be the quantities in Assumption \ref{as:results:invMeasure} (which is implied by Assumption \ref{as:results:fixedPoint}). Since $\mathbb{I}_\kap(\nu) = 0$, by Proposition \ref{thm:results:zeros}, $\nu$ is a \FPE{} \eqref{eq:zeros:1MRF}-\eqref{eq:zeros:muBar}.

To establish $\nu \in \mPSb$, we first show $\nu$ is bounded $\nu_0^{\otimes (1 + \kap)}$-almost everywhere. Note that this almost everywhere bound will be hold everywhere once continuity of $\nu_0$ is established. By \eqref{eq:results:fBound}, H\"older's inequality, and the fact that $\nu_0$ is a probability density, we have
\begin{equation}
\begin{aligned}
    \nu_0(x)^{\frac{1}{\kap}} &\leq \frac{1}{\mathcal{Z}_{\nu_0}} \int_{\R^d} e^{- \frac{1}{\kap}(q(x) + q(y))} \nu_0(y)^{\frac{\kap-1}{\kap}} dy 
    \leq \frac{1}{\mathcal{Z}_{\nu_0}} \big(e^{-q(x)}R_q\big)^{\frac{1}{\kap}}, \quad \nu_0\text{-a.e. }x \in \R^d.
\end{aligned}
\label{eq:zeros:nu0bound}
\end{equation}
The uniform lower bound on $q$ then this implies that $\nu_0$ is uniformly bounded. By \eqref{eq:zeros:altCBP} and Assumption \ref{as:results:invMeasure}, we have
\begin{equation*}
    \nu(\x) \leq \frac{1}{\mathcal{Z}_{\nu_0}^\kap} \exp\bigg(- \frac{1}{\kap}\sum_{v=1}^\kap\big(q(x_0) + q(x_v)\big)\bigg) \prod_{v = 1}^\kap\nu_0(x_v)^{ \frac{\kap-1}{\kap}}, \quad \nu_0^{\otimes (1 + \kap)}\text{-a.e. }\x \in (\R^d)^{1 + \kap}.
\end{equation*}
Thus the lower bound on $q$ and upper bound on $\nu_0$ imply that $\nu$ is bounded $\nu_0^{\otimes (1 + \kap)}$-almost everywhere. Next we show $\nu$ has finite moments of all orders. Since $\nu \in \mPS$, by Remark \ref{rk:not:exchangeability}, \eqref{eq:zeros:nu0bound}, and \eqref{eq:results:fMoments}, there exists $C \in(0,\infty)$ such that for all $p \in [1, \infty)$, we have
\begin{equation}
    \int_{(\R^d)^{1 + \kap}} |\x|^p \nu(d\x) = \kap \int_{\R^d} |x_0|^p \nu_0(dx_0) \leq C \int_{\{x\,:\, \nu_0(x) > 0\}} |x|^p e^{-q(x)} dx < \infty. 
    \label{eq:zeros:2ndMoment}
\end{equation}

Next, we show that $\nu$ has finite entropy. Along the way, we will show that the identity \eqref{eq:zeros:nablaMu0} holds for $\nu_0$-a.e. $x \in \R^d$, which will extend to all $x \in \R^d$ once continuity and positivity of $\nu_0$ is established. By \eqref{eq:zeros:1MRF} it follows that
\begin{equation*}
    \begin{aligned}
        \int_{(\R^d)^{1 + \kap}} \log \nu(\x) d\nu(\x) = \kap \int_{\R^d \times \R^d} \log \bar{\nu}(x_0, x_1) \bar{\nu}(dx_0, dx_1) - (\kap - 1) \int_{\R^d} \log \nu_0(x) d\nu_0(x).
    \end{aligned}
\end{equation*}
On the other hand, by \eqref{eq:zeros:muBar} and \eqref{eq:results:potentialGrowth}, for some $C_1 \in (0, \infty)$ we have
\begin{equation*}
\begin{aligned}
        &\bigg|\int_{\R^d \times \R^d} \log \bar{\nu}(x_0, x_1) \bar{\nu}(dx_0, dx_1)\bigg| \\ = &\bigg|\int_{\R^d \times \R^d} \bigg( \log \mathcal{Z}_{\nu_0} - \frac{U(x_0) + U(x_1)}{\kap} - \intPot(x_0 -x_1) + \frac{\kap-1}{\kap}\big(\log\nu_0(x) + \log \nu_0(y)\big)\bigg) \bar{\nu}(dx_0, dx_1)\bigg| \\
        \leq  &C_1 \bigg( 1 + \int_{\R^d} |x|^2 \nu_0(dx) + \bigg|\int \log \nu_0(x) \nu_0(dx)\bigg|\bigg).
\end{aligned}
\end{equation*}
Then the last two displays show that $\nu$ and $\bar{\nu}$ have finite entropy if $\nu_0$ has finite entropy and second moment. By \eqref{eq:zeros:fixedPoint} and \eqref{eq:zeros:muBar}, for $\nu_0\text{-a.e. }x,z\in \R^d$ we have
\begin{equation*}
    \begin{aligned}
        \bar{\nu}(z|x) &= \frac{1}{\mathcal{Z}_{\nu_0}} \exp \bigg( - \frac{U(x) + U(z)}{\kap} - \intPot(x - z) \bigg) \nu_0(z)^{\frac{\kap-1}{\kap}}\nu_0(x)^{ -\frac{1}{\kap}}
        \\
        &= \frac{\exp\Big( - \tfrac{1}{\kap}U(z) - \intPot(x - z) \Big) \nu_0(z)^{ \frac{\kap-1}{\kap}}}{\int_{\R^d} \exp\Big( - \tfrac{1}{\kap}U(y) - \intPot(x - y) \Big) \nu_0(y)^{ \frac{\kap-1}{\kap}} dy }.
    \end{aligned}
\end{equation*}
Combining the previous display with \eqref{eq:zeros:changeDandI}, it follows for $\nu_0$-a.e. $x \in \R^d$ that
\begin{equation}
    \begin{aligned}
        \bbE^{\bar{\nu}} \big[ \nabla \intPot(Y_0 - Y_1) | Y_0 = x\big] &= \int_{\R^d} \nabla \intPot(x - z) \bar{\nu}(z |x) dz \\
        &= - \nabla \bigg[ \log \int_{\R^d} \exp\Big( - \tfrac{1}{\kap}U(z) - \intPot(x - z) \Big) \nu_0(z)^{ \frac{\kap-1}{\kap}} dz  \bigg].
    \end{aligned}
    \label{eq:zeros:gradK}
\end{equation}
By \eqref{eq:zeros:fixedPoint}, we have
\begin{equation*}
    \frac{1}{\kap}\log \nu_0(x) = - \log \mathcal{Z}_{\nu_0} - \frac{1}{\kap}U(x) + \log \int_{\R^d} e^{-\intPot(x - y) - \frac{1}{\kap}U(y)}\nu_0(y)^{\frac{\kap-1}{\kap}}dy, \quad \nu_0\text{-a.e. }x\in \R^d.
\end{equation*}
Taking the gradient of the above display and substituting in \eqref{eq:zeros:gradK} yields \eqref{eq:zeros:nablaMu0} for $\nu_0$-a.e. $x \in \R^d$. Together with Assumption \ref{as:results:lyapunov} and Assumption \ref{as:results:fixedPoint}, this implies the existence of $c \in (0, \infty)$ such that $|\log \nu_0(x)| \leq c( 1 + |x|^2)$ for
 $\nu_0$-a.e. $x \in \R^d$. Therefore $\nu_0$ and hence $\nu$ has finite entropy. This together with \eqref{eq:zeros:2ndMoment} shows that $\nu$ lies in $\mPSb$.

Next, we establish \eqref{eq:zeros:marginalFisher}. Since \eqref{eq:zeros:nablaMu0} holds for $\nu_0$-a.e. $x \in \R^d$, Assumption \ref{as:results:lyapunov} and Assumption \ref{as:results:fixedPoint} together imply that there exists $C_2 \in (0, \infty)$ such that
\begin{equation*}
    |\nabla \log \nu_0(x)|^2 = \big| \nabla U(x) + \kappa \bbE^{\bar{\nu}}\big[ \nabla \intPot(Y_0 - Y_1) | Y_0 = x\big] \big|^2 \leq C_2(1 + |x|^2), \quad \nu_0\text{-a.e. }x \in \R^d.
\end{equation*}
Therefore  \eqref{eq:zeros:marginalFisher} follows on integrating the above display with respect to $\nu_0$ and invoking \eqref{eq:zeros:2ndMoment}.

We now show that $\nu_0$ is continuous.  Once again, since \eqref{eq:zeros:nablaMu0} holds for $\nu_0$-a.e. $x \in \R^d$, Assumption \ref{as:results:lyapunov} and Assumption \ref{as:results:fixedPoint} together with the fact that $\nu_0$ is uniformly bounded $\nu_0$-a.e. yields the existence of $C_3 \in (0, \infty)$ such that
\begin{equation*}
    |\nabla \nu_0(x)| = \big| \nabla U(x) + \kappa \bbE^{\bar{\nu}}\big[ \nabla \intPot(Y_0 - Y_1) | Y_0 = x\big] \big| \nu_0(x) \leq C_3(1 + |x|), \quad \nu_0\text{-a.e. }x \in \R^d.
\end{equation*}
By Corollary 9.3 of \cite{bobkov2022fisher}, the Cauchy-Schwarz inequality, the finite Fisher information of $\nu_0$ in \eqref{eq:zeros:marginalFisher}, and the finite moments of $\nu_0$ in \eqref{eq:zeros:2ndMoment}, the above display implies that for $p \in [1, \infty)$ we have
\begin{equation*}
    \begin{aligned}
    \int_{\R^d} |\nabla \nu_0(x)|^p dx &= \int_{\{x\,:\,\nu_0(x) > 0\}} |\nabla \nu_0(x)|^p dx \\ &\leq \bigg[\int_{\{x\,:\,\nu_0(x) > 0\}} |\nabla \log \nu_0(x)|^2 \nu_0(dx)\bigg]^{\frac{1}{2}} \bigg[\int_{\{x\,:\,\nu_0(x) > 0\}} |\nabla \nu_0(x)|^{2p-2} \nu_0(dx)\bigg]^{\frac{1}{2}} \\ &< \infty.
    \end{aligned}
\end{equation*}
Therefore $\nu_0 \in W^{1, p}(\R^d)$ for all $p \in [1, \infty)$, and so $\nu_0$ is continuous by Morrey's inequality (e.g., see Theorem 5.6.4 of \cite{evans1998PDE}). Thus $\nu_0$ and hence $\nu$ are bounded on all of $\R^d$ and $(\R^d)^{1 + \kap}$ respectively.

Finally, we show $\nu_0$ is positive. Since $\nu_0$ is a continuous probability density, there exists $\eps > 0$ and a compact subset $K$ of $\R^d$ such that $K_{\eps}$ has positive Lebesgue measure, and $\nu_0(x) > \eps$ for all $x \in K_{\eps}$. Recall that $B_R$ denotes the open ball of radius $R$ in $\R^d$. By \eqref{eq:zeros:fixedPoint} and the linear growth conditions on $(U, \intPot)$ in Assumption \ref{as:results:lyapunov}, for all $R \in (0, \infty)$ there exists a positive $\delta_R \in (0, \infty)$ such that 
\begin{equation*}
    \nu_0(x) \geq \frac{1}{\mathcal{Z}_{\nu_0}^\kap} e^{-U(x)} \bigg[ \eps^{\frac{\kap - 1}{\kap}} \int_{K_{\eps}} e^{-\frac{1}{\kap}U(y) - \intPot(x - y)} dy\bigg]^{\kap} \geq \delta_R, \quad \nu_0\text{-a.e. }x \in \bar{B}_R.
\end{equation*}
Since this holds $\nu_0$-a.e. and $\nu_0$ is continuous, either $\nu_0(x) = 0$ for all $x \in \bar{B}_R$ or $\nu_0(x) > 0$ for all $x \in \bar{B}_R$. Suppose towards contradiction that there exists $R^* \in (0, \infty)$ such that $\nu(x) = 0$ for all $x \in \bar{B}_{R^*}$. Then, for all $R > R_*$, the above dichotomy implies that $\nu(x) = 0$ on $\bar{B}_R$ for all $R > R^*$. Then Fatou's lemma implies
\begin{equation*}
    \int_{\R^d} \nu_0(x) dx \leq \liminf_{R \rightarrow \infty} \int_{\bar{B}_R} \nu_0(x) dx = 0.
\end{equation*}
However, this is impossible since $\nu_0$ is a probability measure. Thus $\nu_0(x) > 0$ for all $x \in \R^d$, and \eqref{eq:zeros:nablaMu0} holds everywhere on $\R^d$. \end{proof}

By Theorem \ref{thm:results:zeros}, we establish in the following corollary that the \FPES{} are in correspondence with an $\R^d$ version of the fixed point problem in Definition 1.3 of \cite{lacker2023stationary}. We note that the results  we invoke from \cite{lacker2023stationary} hold on $\R^d$ with essentially no change to the proofs therein.

\begin{definition}[Definition 1.3 of \cite{lacker2023stationary}]
    Let $F:\R^d \rightarrow \R$ be a measurable function $C \in \R$. We say $(F, C)$ solves the \emph{Lacker-Zhang fixed point problem} if
    \begin{align}
        &F(x) = C - \log \int_{\R^d} \exp\Big( - U(y) - \intPot(x - y)- (\kap - 1) F(y)\Big) dy, \quad \text{\normalfont a.e. }x \in \R^d, \label{eq:zeros:LZ1} \\
        &\int_{\R^d} \exp \big(- U(x) - \kap F(x) \big) dx < \infty. \label{eq:zeros:LZ2}
    \end{align} 
    \label{def:lackerZhang}
\end{definition}
 
\begin{corollary}[Lacker-Zhang fixed points are \FPES{}]
\label{cor:LackerZhang}
    Suppose $(U, \intPot)$ satisfies Assumption \ref{as:results:fixedPoint}. Let $\nu_0$ be a solution to the \FPE{} in the sense of Definition \ref{def:results:CBP}. Then, $(F, 0)$
    solves the Lacker-Zhang fixed point problem with
    \begin{equation}
        F(x) = \frac{1}{\kap}\Big(U(x) + \log \nu_0(x) \Big), \quad x \in \R^d.
        \label{eq:zeros:LZfromBP}
    \end{equation} Conversely, if $(F, C)$ solves the Lacker-Zhang fixed point problem, then there is a solution $\nu$ to \FPE{} in the sense of Definition \ref{def:results:CBP} characterized by its marginal $\nu_0$, which is given by
    \begin{equation}
        \nu_0(x) = \frac{1}{\mathcal{Z}_F} e^{-U(x) - \kap F(x)}, \quad  \mathcal{Z}_F := \int_{\R^d}  e^{-U(y) - \kap F(y)} dy, \quad x \in \R^d.
        \label{eq:zeros:BPfromLZ}
    \end{equation}    
\end{corollary}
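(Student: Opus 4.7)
The plan is to establish the bijection by direct algebraic manipulation: the two fixed point relations \eqref{eq:zeros:fixedPoint}--\eqref{eq:zeros:zMu} and \eqref{eq:zeros:LZ1}--\eqref{eq:zeros:LZ2} transform into one another under the correspondence between $\nu_0$ and $F$ given by \eqref{eq:zeros:LZfromBP}/\eqref{eq:zeros:BPfromLZ}. No new analytic input is required beyond the continuity, strict positivity, and integrability of \FPES{} already established in Proposition \ref{lem:zeros:entropyAndMoments}.

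For the forward direction, assume $\nu_0$ solves the \FPE{} and define $F$ as in \eqref{eq:zeros:LZfromBP}. Proposition \ref{lem:zeros:entropyAndMoments} ensures $\nu_0$ is continuous and strictly positive, so $F$ is well-defined pointwise. I would then solve the definition of $F$ for $\nu_0$, obtaining $\nu_0(y) = \exp(-U(y) - \kap F(y) + \mathrm{const})$ up to an additive constant, and substitute this into the integrand of \eqref{eq:zeros:fixedPoint}. After collecting exponents the integrand becomes a scalar multiple of $\exp\bigl(-U(y) - \intPot(x-y) - (\kap-1) F(y)\bigr)$, so on taking logarithms the Cayley recursion rearranges exactly into \eqref{eq:zeros:LZ1} for some constant $C$; an additive shift of $F$ absorbs $C$ to $0$. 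The integrability \eqref{eq:zeros:LZ2} is then automatic, since the integrand there is a constant multiple of $\nu_0(x)$, which is a probability density.

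For the converse, given a Lacker-Zhang pair $(F, C)$, condition \eqref{eq:zeros:LZ2} guarantees $\mathcal{Z}_F < \infty$, so $\nu_0$ defined by \eqref{eq:zeros:BPfromLZ} is a valid probability density. Substituting this into the right-hand side of \eqref{eq:zeros:fixedPoint} and simplifying, the integrand reduces to a scalar multiple of $\exp\bigl(-U(y) - \intPot(x-y) - (\kap-1) F(y)\bigr)$, so \eqref{eq:zeros:LZ1} directly yields the Cayley identity \eqref{eq:zeros:fixedPoint}. Compatibility with the normalization \eqref{eq:zeros:zMu} is obtained by multiplying \eqref{eq:zeros:fixedPoint} by $e^{-U(x)/\kap}\nu_0(x)^{(\kap-1)/\kap}$ and integrating in $x$: the resulting double integral is exactly $\mathcal{Z}_{\nu_0}$, so the recursion is self-consistent. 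The remaining pieces of the \FPE{} -- namely the edge marginal $\bar{\nu}$ and the joint density $\nu$ -- are then reconstructed from $\nu_0$ via the explicit formulas \eqref{eq:zeros:muBar} and \eqref{eq:zeros:1MRF}.

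The main obstacle is the bookkeeping of the three normalization constants $\mathcal{Z}_{\nu_0}$, $\mathcal{Z}_F$, and $C$ under the change of variables. In particular, the shift argument that achieves $C = 0$ in the forward direction requires solving a scalar equation with coefficient $\kap - 2$, so some care is needed when $\kap = 2$; however, once the constants are reconciled, the rest reduces to routine exponent arithmetic. The symmetry conditions \eqref{eq:not:symmetry1_}--\eqref{eq:not:symmetry2_} required for $\nu \in \mPS$ follow automatically from the product structure \eqref{eq:zeros:1MRF}, the symmetric form of $\bar{\nu}$ in \eqref{eq:zeros:muBar}, and the evenness of $\intPot$ imposed in Assumption \ref{as:results:lyapunov}.
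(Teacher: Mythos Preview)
Your overall strategy---direct algebraic substitution between \eqref{eq:zeros:fixedPoint} and \eqref{eq:zeros:LZ1}---is exactly what the paper does, and the converse direction together with the integrability check \eqref{eq:zeros:LZ2} match the paper's argument essentially verbatim.

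The forward direction, however, has a genuine gap. The statement fixes $F$ \emph{exactly} by \eqref{eq:zeros:LZfromBP}, so you are not free to shift it afterwards; doing so would prove that \emph{some} $F$ satisfies \eqref{eq:zeros:LZ1} with $C=0$, not the one in the corollary. The paper does not shift: it substitutes the given $F$ directly and reads off the resulting constant. Your own observation that an additive shift $F\mapsto F+a$ changes $C$ by $(\kappa-2)a$ shows that for $\kappa=2$ no shift alters $C$ at all, so the device cannot work there---this is a real obstruction, not merely a point requiring ``care.'' The fix is to drop the shift entirely and track the constant that falls out of the substitution. Relatedly, when you invert \eqref{eq:zeros:LZfromBP} there is no ``up to an additive constant'': the relation $\log\nu_0=\kappa F-U$ is exact (note also the sign on $\kappa F$ differs from what you wrote).
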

\begin{proof}
Suppose $\nu_0$ is a \FPE{} in the sense of Definition \ref{def:results:CBP}.  By Proposition \ref{lem:zeros:entropyAndMoments}, $\nu_0$ is continuous and \eqref{eq:zeros:nablaMu0} holds for all $x \in \R^d$. Let $F$ be as defined in \eqref{eq:zeros:LZfromBP}. Taking logarithms of both sides of \eqref{eq:zeros:fixedPoint} and substituting \eqref{eq:zeros:LZfromBP} shows that $F$ solves \eqref{eq:zeros:LZ1} with $C = 0$. The second condition \eqref{eq:zeros:LZ2} holds by \eqref{eq:zeros:LZfromBP} and the fact that $\nu_0$ is a probability measure.

Now, suppose $(F, C)$ solves the fixed point equation of \cite{lacker2023stationary} and define $\mathcal{Z}_F := \int_{\R^d} e^{-U(x) - \kap F(x)} dx.$ Then, by substituting \eqref{eq:zeros:BPfromLZ} into \eqref{eq:zeros:LZ1}, we see that \eqref{eq:zeros:fixedPoint} is satisfied. Moreover  \eqref{eq:zeros:LZ2} and the form of $\mathcal{Z}_F$ in \eqref{eq:zeros:BPfromLZ} shows that $\nu_0$ is a probability measure. In light of the discussion in Remark \ref{rk:results:FPE}, $\nu_0$ extends to a solution of \FPE{}.
\end{proof}

We now prove Theorem \ref{thm:results:stationary}. One direction will be proved directly through Theorem \ref{thm:results:lyap}. The other exploits Theorem \ref{thm:results:zeros} after establishing that \FPES{} are in fact stationary distributions of \CMVE{}.
\begin{proof}[Proof of Theorem \ref{thm:results:stationary}]
First suppose that $\nu$ is a stationary distribution of the \CMVE{}. 
By assumption, there exists a linear growth solution $\mu^\nu$ to the \CMVE{} on $[0, \infty)$ with potentials $(U, \intPot)$ and initial condition $\nu$. Since Assumption \ref{as:results:fixedPoint} implies that $(U, \intPot)$ satisfy Assumption \ref{as:results:lyapunov}, by Theorem \ref{thm:results:lyap} we can fix $0 < s < t < \infty$ such that 
\begin{equation*}
    0 = \lyap_\kap(\mu^\nu_t) - \lyap_\kap(\mu^\nu_s) = \int_s^t \info_\kap(\mu^\nu_r) dr = (t - s) \info_\kap(\nu),
\end{equation*}
where we used $\mu^\nu_t = \mu^\nu_s = \nu$ by stationarity. Since $t - s > 0$, we have $\info_\kap(\nu) = 0$.

Next, suppose $\info_\kap(\nu) = 0$. By Theorem \ref{thm:results:zeros}, $\nu$ is absolutely continuous and its density satisfies the \FPE{} \eqref{eq:zeros:1MRF}-\eqref{eq:zeros:muBar}. Since $(U, \intPot)$ satisfies Assumption \ref{as:results:fixedPoint}, by Proposition \ref{lem:zeros:entropyAndMoments} we have that $\nu \in \mPSb$ and is continuous and positive everywhere. Hence the conditional distribution $\bar{\nu}(\cdot|\cdot)$ from Definition \ref{def:results:CBP} is defined everywhere. Then the function $\tilde{\zeta}^\nu:\R^d \rightarrow \R^d$ given by
\begin{equation}
    \label{eq:zeros:hatZeta} 
    \tilde{\zeta}^\nu(x) := \int_{\R^d} \nabla \intPot(x - y) \bar{\nu}(y|x) dy = \bbE^{\bar{\nu}}\big[\nabla \intPot(Y_0 - Y_1)\big|Y_0 = x \big], \quad x \in \R^d ,
\end{equation}
is well-defined. Since $\nu \in \mPS$ is a 1-MRF, by \eqref{eq:not:symmetry1_} for all $x, y \in \R^d$ and $v \in \{1, \ldots, \kap\}$ we have
\begin{equation*}
    \tilde{\zeta}^\nu(x) = \bbE^{\bar{\nu}}\big[\nabla \intPot(Y_0 - Y_1)\big|Y_0 = x \big] = \bbE^{{\nu}}\big[\nabla \intPot(Y_0 - Y_v)\big|Y_0 = x, Y_1 = y \big].
\end{equation*}
Note that Assumption \ref{as:results:fixedPoint} implies that $\tilde{\zeta}^\nu$ satisfies a linear growth condition. Define the measurable function $\tilde{\gamma}^\nu:\R^d \times \R^d \rightarrow \R^d$ by
\begin{equation}
\tilde{\gamma}^\nu(x, y) := \nabla U(x) + \nabla \intPot(x - y) + (\kap - 1) \tilde{\zeta}^\nu(x).    
\label{eq:zeros:hatGamma1}
\end{equation}
Combining the last two displays yields
\begin{equation}
    \tilde{\gamma}^\nu(x, y) = \bbE^{\nu} \bigg[ \nabla U(Y_0) + \sum_{v = 1}^\kap \nabla \intPot(Y_0 - Y_v)\bigg|Y_0 = x, Y_1 = y\bigg].
    \label{eq:zeros:hatGamma2}
\end{equation}
By Assumption \ref{as:results:lyapunov}, $\tilde{\gamma}^\nu$ inherits a linear growth condition from $\tilde{\zeta}^\nu$. Since $\nabla_x \log \bar{\nu}(x | y) = \nabla_{x} \log \bar{\nu}(x, y)$, \eqref{eq:zeros:muBar}, \eqref{eq:zeros:nablaMu0}, \eqref{eq:zeros:hatZeta} and \eqref{eq:zeros:hatGamma1} together imply that
\begin{equation}
\begin{aligned}    \nabla_x \log \bar{\nu}(x | y) &= \frac{\kap - 1}{\kap} \nabla_x \log \nu_0(x) - \frac{1}{\kap} \nabla U(x) - \nabla \intPot(x - y) \\
&= - \nabla U(x) - \nabla \intPot(x-y) - (\kappa - 1) \bbE^\nu\big[ \nabla \intPot(Y_0 - Y_1) | Y_0 = x\big] \\
&= -\tilde{\gamma}^\nu (x, y).
\end{aligned}
\label{eq:zeros:gradLogConditional}
\end{equation}
Similarly, define the measurable function $\tilde{\eta}^\nu: (\R^d)^{1 + \kap} \rightarrow (\R^d)^{1 + \kap} $ by
    \begin{equation*}
        \big(\tilde{\eta}^\nu(\x)\big)_v :=  \left\{
        \begin{aligned}
            &\nabla U(x_0) + \sum_{v = 1}^\kap \nabla \intPot(x_0 - x_v), &\quad &v = 0,
            \\
            &\tilde{\gamma}^{\nu}(x_v, x_0), &\quad &v \in \{1, \ldots, \kap\}.
        \end{aligned}
        \right.
    \end{equation*}

Then by \eqref{eq:zeros:1MRF},\eqref{eq:zeros:info1}, \eqref{eq:zeros:gradLogConditional}, and the above display, we have $\nabla_{\x} \log \nu(\x) + \tilde{\eta}^\nu( \x) = 0$, and therefore $\nu$ solves the following (stationary) Fokker-Planck equation:
\begin{equation*}
    \Delta_{\x} \nu(\x) + \nabla_{\x} \cdot\big( \tilde{\eta}^\nu(\x) \nu(\x)\big) = 0.
\end{equation*}
Since $\tilde{\gamma}^\nu$ and hence $\tilde{\eta}^\nu$ satisfies a linear growth condition, Proposition \ref{prop:ap1:superposition} implies that the trajectory $\{\mu_t\}_{t \geq 0}$ with $\mu_t = \nu$ for all $t \geq 0$ is the law of the weak solution of the SDE:
\begin{equation}
    d\tilde{\X}(t) = - \tilde{\eta}^\nu\big(\tilde{\X}(t)\big) dt + \sqrt{2} d\mathbf{B}_t,
    \label{eq:zeros:hatSDE}
\end{equation}
with initial condition $\tilde{\X}(0) \sim \nu$. Since $(U, W)$ satisfies Assumption \ref{as:results:lyapunov} and $\tilde{\gamma}^\nu$ satisfies a linear growth condition, by Proposition \ref{lem:zeros:entropyAndMoments} and Fubini's theorem we conclude the existence of $C \in (0 , \infty)$ such that \begin{equation*}
\begin{aligned}
    \mathbb{E} \bigg[\int_0^T \Big(|b(\tilde{\X}_t|^2 + |\gamma^\nu(\tilde{X}_0(t), \tilde{X}_1(t))|^2 + |\gamma^\nu(\tilde{X}_1(t), \tilde{X}_0(t))|^2 \Big) dt \bigg] & \leq C \int_0^T \bbE\big[|\tilde{\X}(t)|^2\big] \\
    &= C(1 + \kap) T \int_{\R^d}|x|^2 \nu_0(dx) \\ & < \infty,
\end{aligned}
\end{equation*}
for all $T \in (0, \infty)$. Therefore $\tilde{\X}$ satisfies \eqref{eq:cmve:integrability}, and thus by \eqref{eq:zeros:hatSDE} and \eqref{eq:zeros:hatGamma2}, $(\mu, \tilde{\gamma}^\nu, \tilde{\X})$ is a solution to the \CMVE{} on $[0, \infty)$ with potentials $(U, \intPot)$ and initial condition $\nu$, where $\tilde{\gamma}^\nu$ is extended trivially to a function on $[0, \infty) \times \R^d \times \R^d$ that is constant in the first argument. Moreover, since $\tilde{\gamma}^\nu$ satisfies a linear growth condition $(\mu, \tilde{\gamma}^\nu, \tilde{\X})$ is a linear growth solution and thus $\nu$ is a stationary distribution of the \CMVE{} with potentials $(U, \intPot)$. \end{proof}

\subsection{Long-time limits of solutions to the \CMVE{}} \label{sec:convergenceProofs} In this section we prove Theorem \ref{thm:results:convergence}. We first collect the following useful lemma, which allows one to control the Fisher information of a marginal distribution.
\begin{lemma}
    \label{lem:conv:marginalInformation}
    Suppose $\nu \in W^{1, 1}((\R^d)^{1 + \kap})$. Then, we have
    \begin{equation*}
        \int_{\R^d \times \R^d} |\nabla \log \bar{\nu}(x_0, x_1)|^2 \bar{\nu}(dx_0, dx_1) \leq \int_{(\R^d)^{1 + \kap}} |\nabla \log \nu(\x)|^2 \nu(d\x).
    \end{equation*}
\end{lemma}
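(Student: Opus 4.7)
The plan is to prove this by recognizing $\nabla \log \bar\nu$ as a conditional expectation of $\nabla_{(x_0,x_1)} \log \nu$ and applying conditional Jensen's inequality, which is the standard ``data processing'' inequality for Fisher information in the form needed here.

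First I would justify exchanging differentiation and integration. Since $\nu \in W^{1,1}((\R^d)^{1+\kap})$, for Lebesgue-a.e. $(x_0,x_1) \in \R^d \times \R^d$ we have the identity
\begin{equation*}
    \nabla_{(x_0,x_1)} \bar\nu(x_0,x_1) = \int_{(\R^d)^{\kap-1}} \nabla_{(x_0,x_1)} \nu(\x)\, dx_2 \cdots dx_\kap,
\end{equation*}
with both sides finite a.e.\ by Fubini. On the set where $\bar\nu(x_0,x_1) > 0$, dividing by $\bar\nu$ gives the representation
\begin{equation*}
    \nabla \log \bar\nu(x_0,x_1) = \int_{(\R^d)^{\kap-1}} \nabla_{(x_0,x_1)} \log \nu(\x)\, \nu(dx_2, \ldots, dx_\kap \mid x_0, x_1),
\end{equation*}
where $\nu(\cdot \mid x_0,x_1)$ denotes the conditional density (well-defined $\bar\nu$-a.e.).

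Next I would apply the conditional Jensen inequality to the (vector-valued) integrand above. For $\bar\nu$-a.e.\ $(x_0,x_1)$,
\begin{equation*}
    \bigl|\nabla \log \bar\nu(x_0,x_1)\bigr|^2 \leq \int_{(\R^d)^{\kap-1}} \bigl|\nabla_{(x_0,x_1)} \log \nu(\x)\bigr|^2\, \nu(dx_2, \ldots, dx_\kap \mid x_0, x_1).
\end{equation*}
Multiplying by $\bar\nu(x_0,x_1)$, integrating over $\R^d \times \R^d$, and using the tower property yields
\begin{equation*}
    \int_{\R^d\times \R^d} |\nabla \log \bar\nu|^2\, d\bar\nu \leq \int_{(\R^d)^{1+\kap}} \bigl|\nabla_{(x_0,x_1)} \log \nu(\x)\bigr|^2\, \nu(d\x).
\end{equation*}
Finally, the pointwise bound $|\nabla_{(x_0,x_1)} \log \nu|^2 \leq |\nabla_{\x} \log \nu|^2$ (since the left side is a coordinate projection of the right) gives the desired inequality.

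The only delicate step is justifying the interchange of gradient and integral and the validity of the conditional expectation representation on the set $\{\bar\nu > 0\}$; both follow from $\nu \in W^{1,1}$ together with Fubini, but require a little care in truncation/approximation if $\log \nu$ is not integrable. To handle this cleanly, I would regularize by considering $\nu_\eps := \nu + \eps \rho$ for a smooth compactly supported bump $\rho$ (or a mollification), establish the inequality for the smoothed versions where all quantities are finite, and pass to the limit using Fatou's lemma on the left and dominated convergence (or monotone limits in $\eps$) on the right. This truncation step is where most of the technical attention would be needed; the algebraic inequality itself is immediate from conditional Jensen.
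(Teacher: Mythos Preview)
Your proposal is correct and is essentially the same argument as the paper's: after using $\nu \in W^{1,1}$ and Fubini to pass the gradient inside the marginalizing integral, the paper applies Cauchy--Schwarz in the form $|\int \nabla_{x_v}\nu\,|^2 \le \bar\nu \int |\nabla_{x_v}\nu|^2/\nu$, which is exactly your conditional Jensen step written without dividing by $\bar\nu$ first. The regularization you propose at the end is not needed in the paper's treatment---since both sides are nonnegative (possibly $+\infty$) and the pointwise inequality holds $\bar\nu$-a.e., one integrates directly; if the right-hand side is infinite the inequality is trivial.
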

\begin{proof}
    Since $\nu \in W^{1, 1}((\R^d)^{1 + \kap})$, we have for $v \in \{0, 1\}$ and Lebesgue-almost every $\x \in (\R^d)^{1 + \kap}$ that
    \begin{equation*}
        \nabla_{x_v} \int_{(\R^d)^{\kap - 1}} \nu(\x) \prod_{u = 2}^\kap dx_u = \nabla_{x_v} \bar{\nu}(x_0, x_1).
    \end{equation*}
    By the Cauchy-Schwarz inequality, we have
    \begin{equation*}
        |\nabla_{x_v} \bar{\nu}(x_0, x_1)|^2 \leq \bar{\nu}(x_0, x_1) \int_{(\R^d)^{\kap - 1}} \frac{|\nabla \nu(\x)|^2}{\nu(\x)} \prod_{u = 2}^\kap dx_u.
    \end{equation*}
    Dividing both sides by $\bar{\nu}(x_0, x_1)$ and integrating over $\R^d \times \R^d$ finishes the proof.
\end{proof}

We now prove Theorem \ref{thm:results:convergence}. Note that Theorem \ref{thm:results:lyap} and Proposition \ref{prop:results:energyBound} establish that $\bbH_\kap$ is a strong global Lyapunov function. Since, in addition, Proposition \ref{lem:conv:compactLevelSets} shows that $\bbH_\kap$ has compact level sets, the proof of Theorem \ref{thm:results:convergence} uses an argument similar to those used in proofs of LaSalle's invariance principle in metric spaces (e.g., see Chapter 9 of \cite{cazenave1998semilinear}).

\begin{proof}[Proof of Theorem \ref{thm:results:convergence}]
    Let $(\mu, \gamma)$ be the solution to the \CMVE{} with initial condition $\lambda$. The proof proceeds by showing that the set of possible limit points of $\mu$ is recurrent under the \CMVE{} flow and then establishing that $\mS_\kap$ contains this set.
    
    We first identify the set of possible limit points of $\{\mu_t\}_{t \geq 0}$. Set $M := \lyap_{\kap}(\lambda)$ and let $\mathscr{T} \subset [0, \infty)$ be the set of $t \geq 0$ such that $\mu_t \in W^{1, 1}((\R^d)^{1 + \kap})$ and \eqref{eq:results:energyIdentity} holds. Since $\mu$ is a linear growth solution to the \CMVE{}, Proposition \ref{thm:ap1:wellPosed}(3) implies that $\mathscr{T}$ is a set of full measure. Define $\omega(M)$ to be the set of all possible limit points of the \CMVE{} with potentials $(U, \intPot)$ and initial sparse free energy no greater than $M$:
 \begin{equation}
        \omega(M) := \Bigg\{ \nu \in \mPS : 
        \begin{aligned}\exists \xi \in& \mPSb, \text{ a solution $\mu^\xi$ to the \CMVE{} with }\mu^\xi_0 = \xi, \text{ and } \{t_n\}_{n = 1}^\infty \subset \mathscr{T}, \\ &\text{ with } t_n \rightarrow \infty \text{ s.t. } \lyap_{\kap}(\xi) \leq M \text{ and } \lim_{n\rightarrow \infty} \mu_{t_n}^{\xi} = \nu \text{ in } \mP((\R^d)^{1 + \kap}). \end{aligned}\Bigg\},
        \label{eq:zeros:omegaLimitSet}
    \end{equation}
    Note that by the assumed existence of a solution $(\mu, \gamma)$ to the \CMVE{} on $[0, \infty)$ with potentials $(U, \intPot)$, Theorem \ref{thm:results:lyap}, and the compactness of the level sets of $\bbH_\kap$ detailed in Proposition \ref{lem:conv:compactLevelSets}, we have that $\omega(M)$ is not empty.

We now show that $\mathbb{H}_\kap(\nu) \leq M$ for all $\nu \in \omega(M)$.    
By Proposition \ref{lem:conv:compactLevelSets}, $\lyap_\kap$ is lower semicontinuous. Then \eqref{eq:zeros:omegaLimitSet} and Theorem \ref{thm:results:lyap} then imply that for any $\nu \in \omega(M)$, there exists $\xi \in \mPSb$ with $\lyap_\kap(\xi)\leq M$ and a linear growth solution $\mu^\xi$ to the \CMVE{} on $[0, \infty)$ with potentials $(U, \intPot)$ and initial condition $\xi$ such that for some sequence $\{t_n\}_{n \geq 1} \subset \mathscr{T}$ we have
    \begin{equation}
        \lyap_\kap(\nu) \leq \liminf_{n \rightarrow \infty} \lyap_{\kap}(\mu_{t_n}^\xi) \leq \lyap_\kap(\xi) \leq M.
    \end{equation} 
    
 Next, letting $d_{LP}$ denote the Levy-Prokhorov metric, we prove 
 \begin{equation}
 \label{eq:zeros:convergence1}
     \lim_{t \rightarrow \infty} d_{LP}(\mu_t, \omega(M)) = 0.
 \end{equation}
 Suppose towards contradiction that $\liminf_{t \rightarrow \infty} d_{LP}(\mu_t, \omega(M))  \geq \eps$ for some $\eps > 0$. Then there exists a sequence $\{t_n\}_{n \geq 1}\subset \mathscr{T}$ such that $t_n \rightarrow \infty$ as $n \rightarrow \infty$ and
    \begin{equation*}
        \liminf_{n \rightarrow \infty} d_{LP}(\mu_{t_n}, \omega(M)) \geq \eps.
    \end{equation*}Recall the definition of $\mathcal{R}(M)$ in \eqref{eq:conv:levelsets}. 
    Since $M = \bbH_\kap(\lambda)$, by Theorem \ref{thm:results:lyap} $\bbH_\kap$ decreases along the trajectory $\{\mu_t\}_{t\geq 0}$ and we have $\{\mu_{t_n}\}_{n \geq 1} \subset \mathcal{R}(M)$. Since $\mathcal{R}(M)$ is compact by Proposition \ref{lem:conv:compactLevelSets}, $\{\mu_{t_n}\}_{n \geq 1}$ has a convergent subsequence in $\mathcal{R}(M)$. Then by passing to a further subsequence (which we also denote as $\{t_n\}_{n \geq 1}$), there exists $\mu_\infty \in \omega(M)$ such that $d(\mu_{t_n}, \mu_\infty) \rightarrow 0$. But this is a contradiction, and hence \eqref{eq:zeros:convergence1} holds.

Now, fix $\nu \in \omega(M)$ such that, there exists $\{t_n\}_{n \geq 1} \subset \mathscr{T}$ such that $\mu_{t_n} \rightarrow \nu$. In light of \eqref{eq:zeros:convergence1}, to prove \eqref{eq:results:convergenceToEkap} it suffices to show $\nu \in \mathcal{S}_\kap$. To this end, note that since $\mathscr{T}$ has full Lebesgue measure, we can choose $\{t_n\}_{t \geq 1}$ such that $t_n - t_{n-1} \geq 1$. Since $\mathbb{H}(\mu_{t_n})$ is a bounded monotone sequence and hence, Cauchy. Furthermore by Theorem \ref{thm:results:lyap} for any $\eps > 0$ there exists $N_\eps < \infty$ such that for all $n \geq N_\eps$, we have
    \begin{equation*}
        \begin{aligned}
            \eps \geq \lyap_{\kap}(\mu_{t_{n-1}}) - \lyap_{\kap}(\mu_{t_n}) &= \int_{t_{n-1}}^{t_n} \info_{\kap}(\mu_r) dr \geq \inf_{t_{n-1} < r < t_{n}} \info_{\kap}(\mu_r) \geq \inf_{t_{n-1} < r} \info_{\kap}(\mu_r).
        \end{aligned}
    \end{equation*}
    Therefore 
       $\liminf_{n \rightarrow \infty} \mathbb{I}( \mu_{t_n}) \leq \eps$ for all $\eps > 0$ 
    . Sending $\eps \downarrow 0$, we conclude
    \begin{equation}
        \liminf_{n \rightarrow \infty} \mathbb{I}(\mu_{t_n}) = 0. 
        \label{eq:zeros:liminfInfo}
    \end{equation}
    Thus to show $\nu \in \mS_\kap$, it suffices to show that $\mathbb{I}_{\kap}(\nu) \leq \liminf_{n \rightarrow \infty} \mathbb{I}_{\kap}(\mu_{t_n})$. By \eqref{eq:zeros:liminfInfo}, without loss of generality we can pass to another subsequence, which we denote again by $\{t_n\}_{n \geq 1}$, to obtain $\sup_{n \in \N}\mathbb{I}(\mu_{t_n}) \leq \eps$ for some $\eps > 0$. Furthermore, Theorem \ref{thm:results:lyap} implies that $\lyap_\kap(\mu_{t_n}) \leq M$ for all $n \geq 1$.  Then, using $\{\mu_{t_n}\}_{n \geq 1} \subset \mathcal{R}(M)$,  by \eqref{eq:results:info}, Assumption \ref{as:results:lyapunov}, and \eqref{eq:conv:secondMoment}, it follows that there exist constants $C_1, C_2 \in (0, \infty)$ such that
    \begin{equation}
        \int_{(\R^d)^{1 + \kap}} |\nabla_{x_0} \log \mu_{t_n}(\x)|^2 \mu_{t_n}(d\x) \leq 2\info_{\kap}(\mu_{t_n}) + C_1\int_{(\R^d)^{1 + \kap}}(1 + |\x|^2) \mu_{t_n}(d\x) \leq 2\eps + C_2M.
        \label{eq:zeros:fisherBound1}
    \end{equation}
    Since $t_n \in \mathscr{T}$, we have $\mu_{t_n} \in W^{1, 1}((\R^d)^{1 + \kap})$. Then we can apply \eqref{eq:not:symmetry2_} and Lemma \ref{lem:conv:marginalInformation} to obtain
    \begin{equation*}
    \begin{aligned}
        \int_{\R^d \times \R^d} |\nabla_{x_1} \log \bar{\mu}_{t_n}(x_0, x_1)|^2 \bar{\mu}_{t_n}(dx_0, dx_1) &= \int_{\R^d \times \R^d} |\nabla_{x_0} \log \bar{\mu}_{t_n}(x_0, x_1)|^2 \bar{\mu}_{t_n}(dx_0, dx_1)\\ &\leq \int_{(\R^d)^{1 + \kap}} |\nabla_{x_0} \log \mu_{t_n}(\x)|^2 \mu_{t_n}(\x).
    \end{aligned}
    \end{equation*}
    When combined with \eqref{eq:zeros:fisherBound1} and the definition of $\info_\kap$ in \eqref{eq:results:info}, this implies the existence of $\tilde{C} \in (0, \infty)$ such that
    \begin{equation}
\int_{(\R^d)^{1 + \kap}} |\nabla_{x_1} \log \mu_{t_n}(\x)|^2 \mu_{t_n}(\x) < \frac{2}{\kap}\info_{\kap}(\mu_{t_n}) + \int_{\R^d \times \R^d} |\nabla_{x_1} \log \bar{\mu}_{t_n}(x_0, x_1)|^2 \mu_{t_n}(dx_0, dx_1) \leq \tilde{C}.
\label{eq:zeros:fisherBound2}
\end{equation}
Together, \eqref{eq:zeros:fisherBound1} and \eqref{eq:zeros:fisherBound2} imply
\begin{equation*}
    \sup_{n \in \N} \bigg[\int_{(\R^d)^{1 + \kap}}\frac{|\nabla_{\x} \mu_{t_n}(\x)|^2}{\mu_{t_n}(\x)} d\x +    \int_{\R^d \times \R^d}\frac{|\nabla_{\x} \bar{\mu}_{t_n}(x_0, x_1)|^2}{\bar{\mu}_{t_n}(x_0, x_1)} dx_0 dx_1\bigg] < \infty.
\end{equation*}
Then following the arguments of Theorem 14.2 of \cite{bobkov2022fisher}, we can extract a subsequence (which again we denote by $\{t_n\}_{n \geq 1}$) such that 
\begin{equation}
    \sqrt{\mu_{t_n}(\x)}\nabla_{\x}\log \mu_{t_n}(\x)  {\rightarrow}  \sqrt{\nu(\x)}\nabla_{\x}\log \nu(\x)  \text{ weakly in }L^2((\R^d)^{1+ \kap}). 
    \label{eq:conv:conv1}
\end{equation}
and
\begin{equation}
        \sqrt{\mu_{t_n}(\x)} \nabla_{x_1} \log \bar{\mu}_{t_n}(x_0, x_1)  \rightarrow \sqrt{\nu(\x)}\nabla_{x_1} \log\bar{\nu}(x_0, x_1) \text{ weakly in }L^2((\R^d)^{1 + \kap}). 
        \label{eq:conv:conv2}
\end{equation}
Once again by Theorem \ref{thm:results:lyap}, we have $\lyap_\kap(\mu_{t_n}) \leq M$ for all $n \geq 1$. Therefore Assumption \ref{as:results:lyapunov} and \eqref{eq:conv:secondMoment} ensure that there exists $C \in (0, \infty)$ such that
\begin{equation*}
    \sup_{n \in \N} \int_{(\R^d)^{1 + \kap}} |b(\x)|^2 \mu_{t_n}(d\x) \leq C \sup_{n \in \N} \int_{(\R^d)^{1 + \kap}} \big(1 + |\x|^2\big) \mu_{t_n}(d\x) <  \infty.
\end{equation*}
By Theorem 3.18 of \cite{brezis2011Functional}, we have that  $b(\x)\sqrt{\mu_{t_n}(\x)} \rightarrow b(\x)\sqrt{\nu(\x)}$ weakly in $L^2((\R^d)^{1 + \kap})$. It is worth emphasizing that the weak convergence here (and in the remainder of the paragraph) is in $L^2$ and not in the probabilistic sense. Together with \eqref{eq:conv:conv1} and \eqref{eq:conv:conv2} this convergence implies
\begin{equation*}
    \big(b(\x) + \nabla_{\x}\log \mu_{t_n}(\x)  \big)\sqrt{\mu_{t_n}(\x)} {\rightarrow} \big(b(\x) + \nabla_{\x}\log \nu(\x) \big)\sqrt{\nu(\x)}\text{ weakly in }L^2((\R^d)^{1 + \kap}),
\end{equation*}
and
\begin{equation*}
 \sqrt{\mu_{t_n}(\x)} \nabla_{x_1} \big( \log \mu_{t_n}(\x) - \log \bar{\mu}_{t_n}(x_0, x_1)\big)  \rightarrow \sqrt{\nu(\x)}\nabla_{x_1} \big( \log \nu(\x) - \log\bar{\nu}(x_0, x_1)\big) \text{ weakly in }L^2((\R^d)^{1 + \kap}). 
\end{equation*}
Recalling the definition of $\info_\kap$ in \eqref{eq:results:info} and the lower semi-continuity of the $L^2$-norm with respect to weak limits in $L^2((\R^d)^{1 + \kap})$, the last two displays together imply.
\begin{equation*}
    \mathbb{I}_\kap(\nu) \leq \liminf_{n \rightarrow \infty}\mathbb{I}_\kap(\mu_{t_n}).
\end{equation*}
We finish by invoking \eqref{eq:zeros:liminfInfo} to obtain $\info_{\kap}(\nu) =0.$
\end{proof}

\section{Renormalized Entropy and Exponential Convergence}
\label{sec:k2Stuff}
\subsection{Sparse free energy as the limit of renormalized entropies} 
\label{sec:entropyRenormalization}
We present in this section the proof of Theorem \ref{thm:results:2entropy}. Recall the definition of the truncated line graph $\bbT^n_2$ from Definition \ref{def:results:Tn2} and the definition of the lift map $\psi_\nu^n$ from Definition \ref{def:results:2measures}. Also, recall that $\x^{(n)} = (x_{-n}, \ldots, x_{n}) \in (\R^d)^{V_n}$ is a state vector on the entire truncated tree $\bbT_2^n$ and the vector $\x =(x_{-1}, x_0, x_1) \in (\R^d)^3$ represents a root neighborhood state. 

The proof of Theorem \ref{thm:results:2entropy} requires the following lemma which establishes that $\psi_\nu^n$ is a 2-MRF (see Definition \ref{def:MRF}) and facilitates the computation of expectations under $\psi_\nu^n$. 

\begin{lemma}[Symmetries of the lift map]
\label{lem:entropy:symmetry}
    Suppose $\nu \in \mathcal{M}_{2, d}$ is absolutely continuous with respect to Lebesgue measure. Let $\psi_\nu^n$ be defined as in \eqref{eq:entropy:2reconstruction}. Then $\psi_\nu^n$ is a 2-MRF on $(\R^d)^{V_n}$, and can be formulated equivalently in the following ways:
\begin{equation}
       \psi_\nu^n\big(\x^{(n)}\big) =
\begin{dcases}
    \nu(x_{-1}, x_\rt, x_{1}) \prod_{v =1} ^{n-1} \nu(x_{v+1} | x_v, x_{v-1})\nu(x_{-v-1}|x_{-v}, x_{-v+1}) , & \\
    \nu(x_{n-2}, x_{n-1}, x_{n}) \prod_{v = -n}^{n-3} \nu(x_v|x_{v+1}, x_{v+2}),  & \\
    \nu(x_{-n}, x_{-n+1}, x_{-n+2}) \prod_{v = -n+3}^{n} \nu(x_{v}|x_{v-1}, x_{v-2}). &
\end{dcases}
    \label{eq:entropy:altReconstruction}
\end{equation}
Moreover, we have
        \begin{equation}
        \int_{(\R^d)^{ \bbT_2^n}}  f(x_{v-1}, x_v, x_{v+1}) \psi_\nu^{n}(\x^{(n)})d\x^{(n)} = \int_{(\R^d)^3} f(\x) \nu(\x) d\x. 
        \label{eq:entropy:expectations}
    \end{equation}
    for all $f \in L^1(\nu)$ and $v \in \{-n+1, \ldots, n-1\}$.  
\end{lemma}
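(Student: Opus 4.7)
The plan is to prove the three assertions of the lemma---the alternative factorizations in \eqref{eq:entropy:altReconstruction}, the 2-MRF property, and the marginal identity \eqref{eq:entropy:expectations}---in turn, with each building on the previous.

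First, I would establish the three alternative formulations in \eqref{eq:entropy:altReconstruction} by direct algebraic manipulation of \eqref{eq:entropy:2reconstruction}. The workhorse identity is the chain rule $\nu(x_{v-1}, x_v, x_{v+1}) = \bar{\nu}(x_{v-1}, x_v)\,\nu(x_{v+1}\mid x_{v-1}, x_v) = \bar{\nu}(x_v, x_{v+1})\,\nu(x_{v-1}\mid x_v, x_{v+1})$, where the two-dimensional marginals of $\nu$ coincide with $\bar{\nu}$ thanks to the leaf-exchangeability and edge-symmetry conditions from Definition \ref{def:not:symProb}. For the first form in \eqref{eq:entropy:altReconstruction}, one keeps the central triple $\nu(x_{-1}, x_0, x_1)$ intact and, for each $v \in \{1, \ldots, n-1\}$, rewrites the triple at position $v$ as $\bar{\nu}(x_{v-1}, x_v)\,\nu(x_{v+1}\mid x_{v-1}, x_v)$ (peeling to the right) and the triple at position $-v$ as $\bar{\nu}(x_{-v}, x_{-v+1})\,\nu(x_{-v-1}\mid x_{-v}, x_{-v+1})$ (peeling to the left); the extracted $\bar{\nu}$'s telescope against the edge factors in the denominator of \eqref{eq:entropy:2reconstruction}, leaving only the conditionals. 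The second and third forms follow by instead peeling every triple uniformly from one end. As a by-product, each alternative form exhibits $\psi_\nu^n$ as the density of a second-order Markov chain started from a proper probability distribution, so $\psi_\nu^n$ is automatically a probability density.

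Second, to show that $\psi_\nu^n$ is a 2-MRF, observe that \eqref{eq:entropy:2reconstruction} (or any of the forms in \eqref{eq:entropy:altReconstruction}) expresses $\psi_\nu^n$ as a product of positive potentials, each depending on at most three consecutive coordinates. In the second-power graph of $\bbT_2^n$---the graph connecting any two vertices within graph distance $2$---these triples of three consecutive vertices are precisely the cliques, and the Hammersley--Clifford theorem (e.g., Theorem 2.30 of \cite{georgii1988gibbs}) then yields the 2-MRF property. Alternatively, the conditional independence can be verified directly from the factorization: for any $A \subset V_n$, every potential factor that depends on some $x_i$ with $i \in A$ has all its other arguments in $A \cup \partial^2 A$, so $\psi_\nu^n(x_A \mid x_{V_n \setminus A})$ depends on $x_{V_n \setminus A}$ only through $x_{\partial^2 A}$.

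Third, for the marginal identity \eqref{eq:entropy:expectations}, fix $v \in \{-n+1, \ldots, n-1\}$. Applying the Step~1 telescoping argument but centered at $v$ instead of $0$, I would derive the ``$v$-centered'' formulation
\[
\psi_\nu^n(\x^{(n)}) = \nu(x_{v-1}, x_v, x_{v+1}) \prod_{u=v+2}^{n}\nu(x_u\mid x_{u-2}, x_{u-1}) \prod_{u=v-2}^{-n}\nu(x_u\mid x_{u+1}, x_{u+2}),
\]
whose special cases at $v = 0$, $v = n-1$, and $v = -n+1$ are exactly the three forms in \eqref{eq:entropy:altReconstruction}. Integrating out $x_n$ with $\int \nu(x_n\mid x_{n-2}, x_{n-1})\,dx_n = 1$ removes the rightmost factor; then $x_{n-1}$ can be removed using the next conditional; and so on, alternately peeling from the two ends. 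After integrating out every variable outside $\{v-1, v, v+1\}$ in this manner, only $\nu(x_{v-1}, x_v, x_{v+1})$ remains, giving \eqref{eq:entropy:expectations} by Fubini's theorem.

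The main obstacle is careful bookkeeping of the $\bar{\nu}$ factors in Step 1, making sure the extracted edge marginals align (up to the edge-symmetry identity $\bar{\nu}(x,y) = \bar{\nu}(y,x)$) with those in the denominator of \eqref{eq:entropy:2reconstruction}, as well as justifying that the conditional densities used in Step 3 are well-defined and integrate to one on the $\nu$-positive set. The symmetry properties built into $\mathcal{M}_{2,d}$ are essential for the first point, since they guarantee that all two-dimensional marginals of $\nu$ agree with $\bar{\nu}$; the second point is routine since $\nu$ is absolutely continuous.
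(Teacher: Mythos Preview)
Your proposal is correct and follows essentially the same strategy as the paper: both use Bayes' formula and the symmetry properties of $\mathcal{M}_{2,d}$ to derive the alternative factorizations, and both invoke a Hammersley--Clifford type argument for the 2-MRF property. The one noteworthy difference is in the proof of the marginal identity \eqref{eq:entropy:expectations}: the paper proceeds by induction on $n$, using the first form in \eqref{eq:entropy:altReconstruction} to express $\psi_\nu^{n+1}$ as $\psi_\nu^n$ times two boundary conditionals, and then handles the boundary indices $v \in \{-n, n\}$ separately via the second and third forms. Your approach instead derives a general $v$-centered factorization and peels all extraneous variables directly in one pass. Your route is slightly more streamlined since it treats all $v$ uniformly and avoids a separate boundary argument, while the paper's induction has the modest advantage of making the recursive structure of $\psi_\nu^n$ explicit; both arguments are elementary and rest on exactly the same computation, namely that each conditional density integrates to one.
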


\begin{proof}[Proof of Lemma \ref{lem:entropy:symmetry}]
First we prove that $\psi^n_\nu$ is a 2-MRF. We define the 2-cliques of $\bbT_n^2$ to be the induced subgraphs of $\bbT_n^2$ of diameter at most 2. Notice that the 2-cliques of $\bbT_n^2$ are precisely the edge $\{-n, -n+1\}$ and the induced subgraphs on both $\{v, v+1\}$ and $\{v-1, v, v+1\}$ for $v \in \{-n+1, \ldots, n-1\}$. By \eqref{eq:entropy:2reconstruction}, $\psi^n_\nu$ can be expressed as a product of functions on 2-cliques and thus, by a version of the Clifford-Hammersley theorem (e.g. Theorem 3.9 of \cite{lauritzen1996graphical} or Proposition 3.2 of \cite{lacker2021MRF}), $\psi_\nu^n$ is a 2-MRF.
The assertion \eqref{eq:entropy:altReconstruction} follows \eqref{eq:entropy:2reconstruction} and Bayes' formula.

Next we show \eqref{eq:entropy:expectations} by induction. Notice that \eqref{eq:entropy:expectations} is trivial for $n = 1$. Assume towards induction that \eqref{eq:entropy:expectations} is true for some $n \in \N$. It then suffices to check that it also holds when $n$ replaced with $n+1$. By the first expression in \eqref{eq:entropy:altReconstruction}, we have
\begin{equation*}
    \begin{aligned}
    \psi^{n+1}_\nu(\x^{(n)}) &= \psi_\nu^n(\x^{(n)})\nu(x_{n+1}|x_n, x_{n-1}) \nu(x_{-n-1}|x_{-n}, x_{-n+1}), \quad \x^{(n+1)} \in (\R^d)^{V_{n+1}}.
    \end{aligned}
\end{equation*} Let $\tilde{V}_{n, v} := V_n / N_v(\bbT^{n}_2)$. Then for $v \in \{-n+1, \ldots, n-1\}$ and $f \in L^1(d\nu)$, we can use the above display, integrate out $x_{n+1}$ and $x_{-n-1}$, and apply the inductive hypothesis to obtain for $\nu$-a.e. $(x_{v-1}, x_v, x_{v+1}) \in (\R^d)^3$ the following:
\begin{equation*}
    \begin{aligned}
        \int_{(\R^d)^{2n}} f(x_{v-1}, x_v, x_{v+1})\psi_\nu^{n+1}\big(\x^{(n+1)}\big) \prod_{u \in \tilde{V}_{n+1, v}} dx_u
        &= \int_{(\R^d)^{2n-2}} f(x_{v-1}, x_v, x_{v+1})\psi_\nu^{n, 2}\big(\x^{(n)}\big) \prod_{u \in \tilde{V}_{n, v}} dx_u \\ 
        &= f(x_{v-1}, x_v, x_{v+1}) \nu(x_{v-1}, x_v, x_{v+1}).
    \end{aligned}
\end{equation*}
Then for $v \in \{-n+1, \ldots, n-1\}$, \eqref{eq:entropy:expectations} follows on integrating both sides of the above display over $(\R^d)^3$. It only remains to show that \eqref{eq:entropy:expectations} holds for $v \in \{-n, n\}$. Since the argument for $v = \pm n$ are exactly the same, we present the proof for only $v = n$. By integrating out $x_{-n-1}$ we have
\begin{equation*}
    \begin{aligned}
    &\int_{(\R^d)^{2n}} \prod_{u = -n-1}^{n-2} \nu(x_u|x_{u+1}, x_{u+2}) \prod_{u = -n-1}^{n-2} dx_u = \int_{(\R^d)^{2n-1}} \prod_{u = -n}^{n-2} \nu(x_u|x_{u+1}, x_{u+2}) \prod_{u = -n}^{n-2} dx_u. 
    \end{aligned}
\end{equation*}
Successively integrating out the conditional distributions as in the above display, we arrive at
\begin{equation*}
    \begin{aligned}
    &\int_{(\R^d)^{2n}} \prod_{u = -n-1}^{n-2} \nu(x_u|x_{u+1}, x_{u+2}) \prod_{u = -n-1}^{n-2} dx_u 
    = 1.
    \end{aligned}
\end{equation*}
Combining the above display with the second expression in \eqref{eq:entropy:altReconstruction},  for $f \in L^1(d\nu)$ and $\nu$-a.e. $(x_{v-1}, x_v, x_{v+1}) \in (\R^d)^3$, and applying Fubini's theorem, we obtain
\begin{equation*}
    \int_{(\R^d)^{2n}} f(x_{n-1}, x_n, x_{n+1}) \psi_{\nu}^{n+1}(\x^{(n+1)}) \prod_{u = -n-1}^{n-2} dx_u = f(x_{n-1}, x_n, x_{n+1})\nu(x_{n-1}, x_n, x_{n+1}).
\end{equation*}
Integrating both sides over $(\R^d)^3$ finishes the proof of \eqref{eq:entropy:expectations}.\end{proof}

We now turn to the proof of Theorem \ref{thm:results:2entropy}. 
Recall the definition of the family of Gibbs measures $\{\theta^n\}_{n \in \N}$ and partition functions $\{\mathcal{Z}^n\}_{n \in \N}$ from Definition \ref{def:results:gibbs}.

\begin{proof}[Proof of Theorem \ref{thm:results:2entropy}]
We first show the convergence of $\frac{1}{2n+1}\log \mathcal{Z}^n$ in \eqref{eq:results:Hstar} by a subadditivity argument. Let $q$ be as in Assumption \ref{as:results:invMeasure}. By the lower bound on $q$, there exists $C_q \in \R$ such that $C_q > - \inf_{x \in \R^d}q(x)$. Then by \eqref{eq:results:fBound} and \eqref{eq:results:gibbs}, for every $k, m \in \N$, we have
\begin{equation*}
    \begin{aligned}
        \mathcal{Z}^{k+m} &= \int_{(\R^d)^{V_{k+m}}} \exp\bigg( -\frac{1}{2} \newW(x_k, y_{-m}) - \frac{1}{2}\sum_{(u, v) \in E^k}\newW(x_u, x_{v}) - \frac{1}{2}\sum_{(u, v) \in E^m}\newW(y_u, y_v))\bigg) d\x^{(k)} d\mathbf{y}^{(m)}\\
        & \leq \mathcal{Z}^k  \mathcal{Z}^m\int_{(\R^d)^{V_{k+m}}} \exp\bigg( -\frac{q(x_k) + q(y_{-m})}{2} \bigg) \theta^k(d\x^{(k)}) \theta^m(d\mathbf{y}^{(m)})\\
        &\leq e^{C_q} \mathcal{Z}^k \mathcal{Z}^m.
    \end{aligned}
\end{equation*}
Taking logarithms of both sides of the last display, we have the following near sub-additivity property:
\begin{equation*}
    \log \mathcal{Z}^{k+m} \leq C_q + \log \mathcal{Z}^k + \log \mathcal{Z}^m.
\end{equation*}
Applying Theorem 2 of \cite{hammersley1962subadditive} to $f(n) = \log \mathcal{Z}^n$, we conclude that $\frac{1}{2n+1}\log \mathcal{Z}^n$ converges to some constant $\bbH_2^* \in \R$ as $n \rightarrow \infty$.

We conclude by showing the identity \eqref{eq:results:2convergence}.   Since $\nu \in \mathcal{Q}_{2, d}$, by the definition of $Q$ in \eqref{eq:results:W} and Assumption \ref{as:results:lyapunov} we know $ \log \nu \in L^1(d\nu)$ and $Q \in L^1(d\nu)$, and thus by the definition of $g$ in \eqref{eq:results:potential} we have
\begin{equation}
    \int_{(\R^d)^3} g(\x) \nu(d\x) = \int_{(\R^d)^2} \frac{1}{2} \newW(x, y) \bar{\nu}(dx, dy)
    \label{eq:entropy:gAndW}
\end{equation}
Moreover, the finite entropy condition on $\nu$ guarantees the absolute continuity of $\nu$. To conclude, observe that \eqref{eq:results:gibbs}, \eqref{eq:entropy:2reconstruction}, and Lemma \ref{lem:entropy:symmetry} with $f = Q$ and $f = \log \nu$, yield the following:
\begin{equation*}
\begin{aligned}
        &\mathcal{H}(\psi^n_\nu | \theta^{n}) - \log \mathcal{Z}^n \\=  &\int_{(\R^d)^{V_n}} \bigg[\sum_{v = -n+1}^{n-1} \log \nu(x_{v-1}, x_v, x_{v+1}) - \sum_{v = -n+1}^{n-2} \log \bar{\nu}(x_v, x_{v+1}) + \sum_{(u, v) \in E^n} \frac{1}{2}\newW(x_u, x_v)\bigg] \psi^n_\nu(d\x^{(n)}) \\
        = & (2n-1) \int_{(\R^d)^3} \log \nu(\x) \nu(d\x) - (2n-2) \int_{(\R^d)^2} \log \bar{\nu}(x, y) \bar{\nu}(dx, dy) + 2n \int_{(\R^d)^3} g(\x) \nu(d\x) ,
\end{aligned}
\end{equation*}
where we used \eqref{eq:not:symmetry1_} and \eqref{eq:entropy:gAndW} in the last equality. By the definition of $\energy_2$ in \eqref{eq:results:entropy} and the last display, it follows that
\begin{equation*}
\begin{aligned}
\bbH_2(\nu) = &\lim_{n \rightarrow \infty}\bigg(\frac{2n-1}{2n+1} \int_{(\R^d)^3} \log \nu(\x) \nu(d\x) - \frac{2n-2}{2n+1} \int_{(\R^d)^2} \log \bar{\nu}(x, y) \bar{\nu}(dx, dy) + \frac{2n}{2n+1} \int_{(\R^d)^3} g(\x) \nu(d\x) \bigg).  \end{aligned}
\end{equation*}
Combining the last two displays with \eqref{eq:results:Hstar} yields \eqref{eq:results:2convergence}.
\end{proof}

\subsection{Log-Sobolev inequality}
\label{sec:LSI:LSI}
This section is dedicated to the proof of Theorem \ref{thm:results:LSI}. We recall the definition of a log-Sobolev inequality, (e.g., see Chapter 5 of \cite{bakry2014diffusion}).
\begin{definition}[log-Sobolev inequality]
\label{def:LSI}
    A measure $\theta \in \mP(\R^m)$ satisfies a log-Sobolev inequality (LSI) with constant $C <\infty$ if for every continuously differentiable function $f: \R^m \rightarrow \R$ such that $\mathbb{E}^\theta[f^2] = 1$, we have
\begin{align*}
    \bbE^\theta[f^2 \log f^2] \leq C \bbE^{\theta}[|\nabla f|^2].
\end{align*}
We use ${C}_{LS}(\theta)$ to denote the best possible constant $C$.
\end{definition}

Our technique relies on properties of the Gibbs measures $\theta^{n}$ defined in Definition \ref{def:results:gibbs} and its conditional distributions. Therefore we first prove the following characterization of the conditional distributions of $\theta^{n}$.
\begin{lemma}[conditional Gibbs measures]
    Suppose $(U, \intPot)$ satisfy Assumption \ref{as:results:invMeasure}. Fix $n \in \N$. Define the family of functions
\begin{equation}
    U^n_v(x):= \begin{cases}
    U(x) &\quad v  \in \{-n+1,\ldots, n-1\}, \\
    \frac{1}{2}U(x) &\quad v \in \{-n, n\}.
    \end{cases}, \quad x \in \R^d,
    \label{eq:LSI:uvn}
\end{equation}
and
\begin{equation}
    H_n\big(\x^{(n)}\big) := - \sum_{v = -n}^n \bigg(U^n_v(x_v) + \sum_{u \sim v} \frac{1}{2}\intPot(x_v - x_u) \bigg), \quad \x^{(n)} \in (\R^d)^{V_n}.
    \label{eq:LSI:hamiltonian}
\end{equation}
Let $\theta^n$ be the Gibbs measure of Definition \ref{def:results:gibbs}. We have the following alternative representation of $\theta^n$:
\begin{equation}
    \theta^{n}(d\x^{(n)}) = \frac{1}{\mathcal{Z}^n} e^{-H_n(\x^{(n)})} d\x^{(n)},
    \label{eq:LSI:gibbs}
\end{equation}
where $Z^{n}$ was defined in \eqref{eq:results:gibbsPFN}. Moreover $\theta^n$ is a 1-MRF, and for each $v \in \bbT_2^n$ and $x_{\partial v} \in (\R^d)^{N_v(\bbT_2^n)}$, the conditional distributions $\theta^n_v$ take the following form:
\begin{equation}
\begin{aligned}
    \theta_v^n\big(dx_v|x_{\partial v}\big) &= \frac{1}{\mathcal{Z}^n_v\big(x_{\partial v}\big)} \exp\bigg( - U^n_v(x_v) - \sum_{u \sim v} \intPot(x_v - x_u) \bigg) dx_v, \\
{\mathcal{Z}^n_v\big(x_{\partial v}\big)} &:= \int_{\R^d} \exp\bigg( - U^n_v(x) - \sum_{u \sim v} \intPot(x - x_u) \bigg) dx.
\end{aligned}
\label{eq:LSI:theta^n_v}
\end{equation}
Moreover for each $v$ and $x_{\partial v} \in (\R^d)^{N_v(\bbT_2^n)}$, $\theta^n_v(\cdot | x_{\partial v})$ has finite moments of all orders.
\label{lem:LSI:condGibbs}
\end{lemma}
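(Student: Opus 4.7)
The plan is to verify the four assertions in order; all follow from direct computation, with the finite moment claim being the only delicate point.

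For the rewriting \eqref{eq:LSI:gibbs}--\eqref{eq:LSI:hamiltonian}, I would substitute the formula \eqref{eq:results:W} for $\newW$ into the exponent of \eqref{eq:results:gibbs} and reorganize the resulting double sum by vertex. Each edge $(u,v) \in E_n$ contributes $\tfrac12 U(x_u) + \tfrac12 U(x_v) + \intPot(x_u - x_v)$, so collecting by vertex produces a coefficient of $U(x_v)$ equal to $\tfrac12 \deg_{\bbT_2^n}(v)$. Since the two boundary vertices $\pm n$ have degree $1$ and all other vertices have degree $2$, this matches the definition of $U_v^n$ in \eqref{eq:LSI:uvn}; the interaction contributions reassemble symmetrically as $\sum_v \tfrac12 \sum_{u \sim v}\intPot(x_v - x_u)$, which is exactly the second term in \eqref{eq:LSI:hamiltonian}, yielding \eqref{eq:LSI:gibbs}.

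The 1-MRF property then follows because the density in \eqref{eq:LSI:gibbs} factorizes as a product of interactions supported on the maximal cliques of $\bbT_2^n$ (which are precisely its edges), so one may invoke a standard Hammersley--Clifford-type theorem such as Theorem 3.9 of \cite{lauritzen1996graphical}. From this factorization I would read off \eqref{eq:LSI:theta^n_v} by extracting the terms in $H_n$ that depend on $x_v$, namely $U_v^n(x_v) + \sum_{u \sim v}\intPot(x_v - x_u)$, and normalizing in $x_v$ with all remaining coordinates held fixed; the 1-MRF property guarantees that this computed density coincides with the conditional law of $x_v$ given all remaining coordinates, not merely given $x_{\partial v}$.

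For the finite moment claim, the strategy is to dominate the conditional density in \eqref{eq:LSI:theta^n_v} by a constant multiple of $e^{-q(x_v)}$ (or $e^{-q(x_v)/2}$ at the boundary) using the coercivity bound \eqref{eq:results:fBound} and then invoke \eqref{eq:results:fMoments}. At an interior vertex with neighbors $u_1, u_2$, applying \eqref{eq:results:fBound} with $\kap = 2$ to each pair $(v, u_i)$ and summing gives
\begin{equation*}
    U(x_v) + \sum_{i=1}^{2}\intPot(x_v - x_{u_i}) \geq q(x_v) + \tfrac12\sum_{i=1}^{2}\bigl(q(x_{u_i}) - U(x_{u_i})\bigr),
\end{equation*}
which yields the desired $e^{-q(x_v)}$ domination, and then finite moments of all orders follow from \eqref{eq:results:fMoments}. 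The main obstacle is the boundary case: a single application of \eqref{eq:results:fBound} produces only a $\tfrac12 U(x_v)$ contribution which exactly cancels the $U_v^n = \tfrac12 U$ term in the exponent, leaving only a $\tfrac12 q(x_v)$-decay in $x_v$. I would close this case by appealing to the superlinear or quadratic growth of $U$ arising in the settings of Remark \ref{rk:results:assumptions} (which forces $|x|^p e^{-q(x)/2}$ to be integrable), or equivalently by a Gaussian-type comparison available under Assumption \ref{as:results:LSI}, under which all later results in the section are stated.
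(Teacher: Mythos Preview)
Your treatment of the first three assertions (the rewriting \eqref{eq:LSI:gibbs}, the 1-MRF property via Hammersley--Clifford, and the explicit conditional density \eqref{eq:LSI:theta^n_v}) matches the paper's proof, which dispatches them in one line each.

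For the finite moment claim your explicit domination argument is more careful than the paper's, which simply asserts that finite moments of the joint $\theta^n$ pass to the conditionals $\theta^n_v(\cdot\,|\,x_{\partial v})$ for \emph{every} $x_{\partial v}$ (a step that, as written, only yields the conclusion for almost every $x_{\partial v}$). However, your handling of the boundary case introduces a gap relative to the lemma's stated hypotheses: you propose to invoke the quadratic growth scenarios of Remark~\ref{rk:results:assumptions} or Assumption~\ref{as:results:LSI}, whereas the lemma assumes only Assumption~\ref{as:results:invMeasure}. In fact no extra hypothesis is needed, because $\int_{\R^d}|x|^p e^{-q(x)/2}\,dx<\infty$ already follows from Assumption~\ref{as:results:invMeasure}. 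To see this, split $\{|x|>1\}$ according to whether $q(x)\ge 2(p+d+1)\log|x|$ or not: on the first set $|x|^p e^{-q(x)/2}\le |x|^{-(d+1)}$, while on the second the lower bound on $q$ gives $e^{-q(x)/2}\le e^{-\inf q/2}$ and the inequality $1<|x|^{2(p+d+1)}e^{-q(x)}$ lets you dominate $|x|^p$ by $|x|^{3p+2d+2}e^{-q(x)}$, which is integrable by \eqref{eq:results:fMoments}. With this observation your boundary estimate closes under Assumption~\ref{as:results:invMeasure} alone.
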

\begin{proof}
The representation \eqref{eq:LSI:gibbs} follows immediately from the definition of $\theta^n$ and $W$ in Definition \ref{def:results:gibbs}. By the Clifford-Hammersley Theorem (e.g. Proposition 3.1 in \cite{lacker2021MRF}) and the form of $\theta^n$ in \eqref{eq:results:gibbs}, we have that $\theta^n$ is a 1-MRF in the sense of Definition \ref{def:MRF}. By Bayes rule and \eqref{eq:LSI:gibbs}, the conditional measures $\theta^n_v$ satisfy \eqref{eq:LSI:theta^n_v}. By Assumption \ref{as:results:invMeasure} and since $\theta^n$ is continuous, $\theta^n$ has finite moments and hence $\theta^n_v(\cdot|x_{\partial v})$ has finite moments for all $x_{\partial v} \in (\R^d)^{N_v(\bbT_2^n)}$. 
\end{proof}

We now present our main assumption for Theorem \ref{thm:results:LSI}. First, define
    \begin{equation}
    \label{eq:LSI:w}
        w(r, \varphi) := \sup_{\substack{x, y \in \R^d \\|x - y| = r}} - \bigg \langle \frac{x - y}{|x - y|}, \nabla \varphi(x) - \nabla \varphi(y) \bigg \rangle, \quad r \in (0, \infty), \varphi \in C^1(\R^d). 
    \end{equation}
Roughly speaking, $\omega(r, \varphi)$ controls the convexity of $\varphi$ at scale $r$, and similar functions have been used to study properties of long-time behavior of McKean-Vlasov equations \cite{eberle2019QuantHarrisThm, guillin2022lsi}. We also note a similarity with the \emph{integrated convexity profile} in (1.2.5) of \cite{conforti2023projected}.
\begin{assumption}
    \label{as:results:LSI}
    Assume $(U, \intPot)$ satisfies Assumption \ref{as:results:lyapunov} and Assumption \ref{as:results:invMeasure}. In addition, assume the following properties hold:
    \begin{enumerate}
        \item The condition
        \begin{equation}
            \label{eq:results:H3}
            \hat{c}_{\text{Lip}, i} := \frac{1}{4} \int_0^\infty \exp\bigg( \frac{1}{4}\int_0^s w_i(u) du \bigg) s\, ds < \infty, \quad i = 0, 1,
        \end{equation}
        is satisfied, where
        \begin{align*}
            w_0(r) :&= \sup_{ z \in \R^d} w\Big(r, \frac{1}{2}U + \intPot( \cdot - z) \Big), \\
            w_1(r) :&= \sup_{ z, z' \in \R^d} w\Big(r, U + \intPot( \cdot - z) + \intPot( \cdot - z') \Big).
        \end{align*}
        \item There exists $\hat{C}_{LS} < \infty$ such that the conditional measures $\theta^n_v$ defined in \eqref{eq:LSI:theta^n_v} satisfy a log-Sobolev inequality with constant $\hat{C}_{LS}$ uniformly in $x_{\partial v} \in (\R^d)^{N_v(\bbT_2^n)}$. That is, for all $n \in \N$, $v \in \bbT_n^2$, and $x_{\partial v} \in (\R^d)^{N_v(\bbT_2^n)}$, we have 
        \begin{equation*}
        \begin{aligned}
            \entropy\Big( \nu \Big| \theta^{n}_v(\cdot | x_{\partial v}) \Big) &\leq \hat{C}_{LS} \mI\Big( \nu \Big| \theta^{n}_v(\cdot | x_{\partial v}) \Big), \quad \nu \in \mP(\R^d).
        \end{aligned}
        \end{equation*}
        \item We have $\intPot \in C^2(\R^d)$ and
        \begin{equation}
            \delta^*_0 := \hat{c}_{\text{Lip}, 0} \| \nabla^2 \intPot\|_{L^\infty(\R^d)} < 1, \quad \delta^*_1 := 2\hat{c}_{\text{Lip}, 1} \| \nabla^2 \intPot\|_{L^\infty(\R^d)} < 1.
            \label{eq:results:H5}
        \end{equation}
    \end{enumerate}
\end{assumption}
\begin{remark}
\normalfont
\label{rk:LSI:convexity}
Assumption \ref{as:results:LSI} is similar to the conditions of Theorem 9 of \cite{guillin2022lsi}. Also note that while \ref{as:results:LSI}(2) is stated as a uniform-in-$n$ condition, by symmetry note that the conditional measures are the same for all $n \in \N$. In fact, it suffices to check that the conditional LSI holds for some fixed $n$, and only at a boundary point $v \in \{n, -n\}$ and any point in the bulk $v \in \{-n+1, \ldots, n-2\}$.

It is easy to check that if $(U, \intPot)$ are strongly convex, then conditions (1) and (2) of Assumption \ref{as:results:LSI} are satisfied; in particular, (1) is satisfied immediately by the definition of strong convexity, and (2) is satisfied by the classical Bakry-\'{E}mery condition, see (e.g. Corollary 5.7.2 of \cite{bakry2014diffusion}). A notable example is when $d = 1$ and $U(x) = (\alpha + \beta)|x|^2/2$ and $\intPot(x) = - \beta |x|^2/4$ with $\alpha > |\beta| > 0$. A simple calculation then shows
    \begin{equation*}
        \begin{cases} \omega_0(r) &= - \frac{\alpha r}{2} \\ \omega_1(r) &= - \alpha r \end{cases}, \quad \begin{cases}
            \hat{c}_{\text{Lip}, 0} = \frac{2}{\alpha} \\ \hat{c}_{\text{Lip}, 1} =  \frac{1}{\alpha}
        \end{cases}, \quad \|\nabla^2 \intPot \|_{L^\infty(\R^d)} = \frac{|\beta|}{2}.
    \end{equation*}
    Therefore, since $\alpha > |\beta| > 0$, \eqref{eq:results:H3} and \eqref{eq:results:H5} hold. Assumption 7.6(2) holds since the conditional measures $\theta^n_v$ are explicit Gaussian measures (see \cite{hu2024gaussian} for more details). It is shown in \cite{hu2024gaussian} that $\alpha > |\beta|$ is necessary for the convergence of the associated finite-dimensional interacting Ornstein-Uhlenbeck processes, thereby establishing that Assumption \ref{as:results:LSI} is tight in the case of quadratic potentials.

\end{remark}
Here we present three additional results that will be used in the proof of Theorem \ref{thm:results:LSI}. The first is a generator estimate for Fokker-Planck equations that clarifies \eqref{eq:results:H3} in Assumption \ref{as:results:LSI}. 
\begin{proposition}[Theorem 1.1 in \cite{wu2009gradient}]
    \label{thm:LSI:wu}
     Let $\varphi \in C^2(\R^d)$. Define the differential operator $\mathcal{L}_\varphi := \Delta - \nabla \varphi \cdot \nabla$. Let  $\mL_\varphi^{-1}$ denote the Poisson operator, which is the inverse of $\mL_\varphi$ on the Banach space of Lipschitz continuous functions $C_{\text{\normalfont Lip}, 0}(\R^d)$. That is, for $f \in C_{\text{\normalfont Lip}, 0}(\R^d)$, $h = \mL_\varphi^{-1} f$ if $\mL_{\varphi} h = f$. Recall the definition of $w(r, \varphi)$ in \eqref{eq:LSI:w}. If
    \begin{align*}
        \hat{c}(\varphi) := \frac{1}{4} \int_0^\infty s \exp\bigg(\int_0^s \frac{1}{4}w(r, \phi) dr \bigg) ds < \infty,
    \end{align*}
    then $\mL_\varphi^{-1}$ is a bounded operator on $C_{\text{\normalfont Lip}, 0}(\R^d)$ and we have $\|\mathcal{L}_\varphi^{-1} \|_{\text{\normalfont Lip}} < \hat{c}(\varphi).$
\end{proposition}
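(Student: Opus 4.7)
My approach would be probabilistic, exploiting the fact that $\mathcal{L}_\varphi$ generates the SDE $dX_t = -\nabla \varphi(X_t)\, dt + \sqrt{2}\, dB_t$ with semigroup $P_t f(x) = \mathbb{E}[f(X_t^x)]$. On the Banach space $C_{\mathrm{Lip},0}(\R^d)$, I would represent the Poisson operator as $\mathcal{L}_\varphi^{-1} f(x) = -\int_0^\infty P_t f(x)\, dt$; the finiteness of $\hat c(\varphi)$ is precisely what will guarantee convergence of this integral in the Lipschitz norm. Applying Minkowski's inequality and Lipschitzness of $f$ then reduces the task to the coupling bound
\begin{align*}
    \int_0^\infty \mathbb{E}|X_t^x - X_t^y|\, dt \leq \hat c(\varphi)\,|x-y|
\end{align*}
for some well-chosen coupling of two copies of the SDE started at $x$ and $y$.

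For this bound I would use reflection coupling, under which the distance $r_t := |X_t^x - X_t^y|$ satisfies, up to the coupling time $\tau = \inf\{t : r_t = 0\}$, the one-dimensional SDE
\begin{align*}
    dr_t = -\bigl\langle (X_t^x - X_t^y)/r_t,\, \nabla\varphi(X_t^x) - \nabla\varphi(X_t^y)\bigr\rangle\, dt + 2\sqrt{2}\, dW_t \leq w(r_t,\varphi)\, dt + 2\sqrt{2}\, dW_t,
\end{align*}
where the upper bound is exactly what \eqref{eq:LSI:w} provides and $W$ is a standard one-dimensional Brownian motion. After the coupling time the two processes can be identified, so $r_t = 0$ for $t \geq \tau$. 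Stochastic comparison then lets me dominate $r_t$ by the one-dimensional diffusion $\tilde r_t$ solving $d\tilde r_t = w(\tilde r_t,\varphi)\, dt + 2\sqrt{2}\, dW_t$ absorbed at $0$, so it suffices to bound $u(r_0) := \mathbb{E}_{r_0}\bigl[\int_0^\infty \tilde r_t\, dt\bigr]$ by $\hat c(\varphi)\,r_0$.

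The function $u$ solves the ODE $4 u''(r) + w(r,\varphi)\, u'(r) = -r$ on $(0,\infty)$ with $u(0) = 0$ and a decay condition at infinity coming from absorption. This is first-order linear in $u'$, and integrating against the factor $\mu(r) := \exp\bigl(\tfrac14 \int_0^r w(s,\varphi)\, ds\bigr)$ gives the explicit formula
\begin{align*}
    u'(r) = \mu(r)^{-1}\Bigl[\,u'(0) - \tfrac14 \int_0^r s\,\mu(s)\, ds\,\Bigr].
\end{align*}
Requiring $u'$ to remain bounded (equivalently, that $u$ grow at most linearly at infinity) forces $u'(0) = \tfrac14 \int_0^\infty s\,\mu(s)\, ds = \hat c(\varphi)$, which is finite by hypothesis and matches the quantity in the statement.

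The main obstacle is converting $u'(0) = \hat c(\varphi)$ into the uniform bound $u(r)/r \leq \hat c(\varphi)$ needed for the Lipschitz estimate. My plan would be to show $u$ is concave by verifying that $u'$ is non-increasing from the explicit formula, so that $u(0) = 0$ together with $u'(0) = \hat c(\varphi)$ gives $u(r) \leq \hat c(\varphi)\,r$ everywhere; the strict inequality in the statement then comes from the fact that $u(r)/r < u'(0)$ whenever $r > 0$. Making this rigorous requires care: one must justify stochastic comparison under only the integral hypothesis on $w$ (not pointwise monotonicity), handle the absorbing boundary at $0$ to ensure time-integrability of $\tilde r_t$, and verify that the semigroup representation of $\mathcal{L}_\varphi^{-1}$ is well-defined on $C_{\mathrm{Lip},0}$, likely by first quotienting by additive constants or imposing a normalization at a reference point.
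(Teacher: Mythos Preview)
The paper does not prove this proposition; it is quoted from \cite{wu2009gradient} and invoked as a black box in the proof of Theorem~\ref{thm:results:LSI}. There is thus no argument in the paper to compare yours against.

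Your reflection-coupling strategy is the standard route to Lipschitz bounds on Poisson operators and is almost certainly the approach in Wu's original proof. The representation $\mathcal{L}_\varphi^{-1}f = -\int_0^\infty P_t f\,dt$, the one-dimensional comparison for $r_t = |X_t^x - X_t^y|$ under reflection coupling, the ODE $4u'' + w(\cdot,\varphi)\,u' = -r$ for $u(r_0) = \mathbb{E}_{r_0}\int_0^\infty \tilde r_t\,dt$, and the identification $u'(0) = \hat c(\varphi)$ via the integrating factor $\mu(r) = \exp\bigl(\tfrac14\int_0^r w(s,\varphi)\,ds\bigr)$ are all correct.

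The gap is precisely where you locate it, but it is more serious than your hedge suggests. Your plan to deduce $u(r)\le \hat c(\varphi)\,r$ by showing $u'$ is non-increasing does not work in general. From
\[
u'(r)=\frac{1}{4\mu(r)}\int_r^\infty s\,\mu(s)\,ds,
\]
the inequality $u'(r)\le u'(0)$ is equivalent to $\int_r^\infty s\mu(s)\,ds \le \mu(r)\int_0^\infty s\mu(s)\,ds$, and this can fail. For instance, perturbing the strongly convex profile $w(r)=-cr$ (where $u'\equiv 1/c$) by a small positive bump near some scale $a>0$ already yields $u'(r)>u'(0)$ on $(0,a)$, hence $u(r)>u'(0)\,r$ there. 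Since your coupling argument delivers only $\|\mathcal{L}_\varphi^{-1}\|_{\mathrm{Lip}}\le \sup_{r>0} u(r)/r$, and $\hat c(\varphi)=u'(0)=\lim_{r\downarrow 0}u(r)/r$ is the \emph{infimum} of this ratio in such examples, the argument as written does not recover the stated constant. Getting the sharp bound $\hat c(\varphi)$ requires either a more refined Lyapunov function than the raw distance (this is what the Chen--Wang/Eberle machinery builds) or a direct analysis that does not pass through the crude domination $r_t\le\tilde r_t$; it is not a matter of simply reading monotonicity off the formula.
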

The following result is the main tool in the proof of the uniform log-Sobolev inequality \eqref{eq:LSI:preLimitLSI} in Theorem \ref{thm:results:LSI}. It provides a criterion for a uniform LSI to hold for a sequence of finite particle systems. 
\begin{theorem}[Theorem 0.1 in \cite{zergalinski1992dobrushin}]
    \label{thm:LSI:zergalinski}
     Fix $n \in \mathbb{N}$ and $\lambda$ be a 1-MRF on $\R^{\bbT_2^n}$. Recall the notation $\x^{(n)}$ defined in Definition \ref{def:results:Tn2}. For $v \in \bbT_2^n$ and $\x^{(n)} \in (\R^d)^{V_n}$, let $\hat{\x}_v := \{x_u\}_{u \neq v} \in (\R^d)^{2n}$. Denote the corresponding family of specifications (i.e. marginal conditional measures) of $\lambda$ by $ \lambda_v(x_v | \hat{\x}_v)$. 
     
     Suppose the following two properties hold.
    \begin{enumerate}
        \item Expressed in terms of the constant $C_{LS}$ from Definition \ref{def:LSI}, the specifications $\{\lambda_v\}_{v \in \bbT^n_2}$ satisfy the uniform log-Sobolev inequality,
        \begin{align*}
            \tilde{C}_{LS, n} = \sup_{v \in \bbT^n_2, \, \hat{\x}_v \in (\R^d)^{2n}} C_{LS}\Big(\lambda_v\big(\cdot \big| \hat{\x}_v\big)\Big) < \infty.
        \end{align*}
        \item There exist constants $c^n_{uv}(\lambda) \geq 0$ for every $u, v \in \bbT_2^n$, such that every smooth strictly positive function $f: \R^{\bbT_2^n} \rightarrow \R$ satisfies the following for all $\hat{\x}_v \in (\R^d)^{2n}$ :
        \begin{equation}
        \label{eq:LSI:zerg2}
            \bigg | \nabla_{x_u} \sqrt{ \bbE^{\lambda_v}[f^2|\hat{\x}_v]}\bigg| \leq \sqrt{\bbE^{\lambda_v}[|\nabla_{x_u}f|^2|\hat{\x}_v]} + c^n_{uv}(\lambda) \sqrt{\bbE^{\lambda_v}[|\nabla_{x_v}f|^2|\hat{\x}_v]},
        \end{equation}
        where there exists $\delta \in (0, 1)$ such that
        \begin{equation}
            \sup_{u \in \bbT_2^n} \max\bigg\{ \sum_{v \in \bbT_2^n} c^n_{uv}(\lambda), \sum_{v \in \bbT_2^n} c^n_{vu}(\lambda)\bigg\} \leq \delta < 1.
            \label{eq:LSI:zerg3}
        \end{equation}
    \end{enumerate} 
    Then $\lambda$ satisfies an LSI with constant
        \begin{align*}
            C_{LS}(\lambda) \leq \frac{\tilde{C}_{LS, n}}{(1 - \delta)^2}.
        \end{align*}
\end{theorem}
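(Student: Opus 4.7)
The plan is to prove this LSI via the Gibbs-sampler sweeping technique of Zegarlinski, combining the conditional LSIs from hypothesis (1) with the interdependence control from hypothesis (2). First I introduce, for each $v \in \bbT_2^n$, the sweeping operator $M_v g := \sqrt{\bbE^{\lambda_v}[g^2 \mid \hat{\x}_v]}$, which annihilates the $x_v$-dependence of $g^2$ while preserving the global $L^2(\lambda)$-norm. The cornerstone identity, which follows from the tower property together with $\bbE^\lambda[(M_v f)^2] = \bbE^\lambda[f^2]$, is the one-site entropy chain rule
\begin{equation*}
    \operatorname{Ent}_\lambda(f^2) \;=\; \operatorname{Ent}_\lambda((M_v f)^2) \;+\; \bbE^\lambda\big[\operatorname{Ent}_{\lambda_v}(f^2 \mid \hat{\x}_v)\big].
\end{equation*}
Hypothesis (1) then yields the single-site bound $\bbE^\lambda[\operatorname{Ent}_{\lambda_v}(f^2 \mid \hat{\x}_v)] \leq 2\tilde{C}_{LS,n}\, \bbE^\lambda[|\nabla_{x_v} f|^2]$.

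Next I fix an enumeration $V_n = \{v_1, \ldots, v_N\}$ with $N = 2n+1$ and set $g_0 := f$, $g_k := M_{v_k} g_{k-1}$. Iterating the chain rule over one full sweep produces the telescoping identity
\begin{equation*}
    \operatorname{Ent}_\lambda(f^2) \;=\; \operatorname{Ent}_\lambda(g_N^2) \;+\; \sum_{k=1}^N \bbE^\lambda\big[\operatorname{Ent}_{\lambda_{v_k}}(g_{k-1}^2 \mid \hat{\x}_{v_k})\big].
\end{equation*}
Unlike the product-measure case, the residual $\operatorname{Ent}_\lambda(g_N^2)$ does not vanish, since sweeping site $v_k$ may reintroduce dependence on earlier sites $v_j$ through the specification $\lambda_{v_k}(\cdot \mid \hat{\x}_{v_k})$. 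I therefore need two companion estimates derived from hypothesis (2): one controlling how the gradient propagates through a single sweep, and one ensuring strict contraction of the residual entropy after each full sweep so that iterating the scheme closes.

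For the gradient propagation, setting $a_u(g) := \|\nabla_{x_u} g\|_{L^2(\lambda)}$ and taking $L^2(\lambda)$-norms in the pointwise inequality \eqref{eq:LSI:zerg2} gives the clean linear transfer rule
\begin{equation*}
    a_u(M_v g) \;\leq\; a_u(g) + c^n_{uv}(\lambda)\, a_v(g) \quad (u \neq v), \qquad a_v(M_v g) = 0.
\end{equation*}
Condition \eqref{eq:LSI:zerg3} says that the interdependence matrix $C := (c^n_{uv})$ has both row- and column-sums bounded by $\delta$, hence $\|C\|_{\ell^2 \to \ell^2} \leq \delta$ by the Schur test (interpolating between the $\ell^1\to\ell^1$ and $\ell^\infty\to\ell^\infty$ norms). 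A careful bookkeeping across the sweep, viewing $\mathbf{a}(g_k)$ as obtained from $\mathbf{a}(f)$ by a product of matrices $I - E_{v_k,v_k} + C E_{v_k,v_k}$, yields both the aggregate bound $\sum_{k=1}^N a_{v_k}(g_{k-1})^2 \leq (1-\delta)^{-2}\, \bbE^\lambda[|\nabla f|^2]$ and a contraction $\|\mathbf{a}(g_N)\|_{\ell^2} \leq \delta \|\mathbf{a}(f)\|_{\ell^2}$. Iterating the one-sweep estimate across successively produced functions and summing the resulting geometric series with ratio $\delta$, combined with the conditional LSI bound, produces
\begin{equation*}
    \operatorname{Ent}_\lambda(f^2) \;\leq\; \frac{2\tilde{C}_{LS,n}}{(1-\delta)^2}\, \bbE^\lambda\big[|\nabla f|^2\big],
\end{equation*}
which by Definition \ref{def:LSI} gives exactly $C_{LS}(\lambda) \leq \tilde{C}_{LS,n}/(1-\delta)^2$.

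The main obstacle is the last step: showing that the residual entropy $\operatorname{Ent}_\lambda(g_N^2)$ after one full sweep can be bounded by a multiple strictly less than one of $\operatorname{Ent}_\lambda(f^2)$, so that iterating the one-sweep scheme produces a convergent geometric series with the sharp prefactor. This requires both the row-sum and column-sum bounds in \eqref{eq:LSI:zerg3} (used to interpolate to $\ell^2$), and is precisely what produces the $(1-\delta)^{-2}$ factor rather than a looser $(1-\delta)^{-1}$.
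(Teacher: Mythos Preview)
The paper does not prove this statement; it is quoted verbatim as Theorem~0.1 of Zegarlinski~\cite{zergalinski1992dobrushin} and used as a black box in the proof of Theorem~\ref{thm:results:LSI}. So there is no ``paper's own proof'' to compare against.

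Your outline is indeed the Zegarlinski sweeping scheme, and the first four paragraphs track the standard argument well: the one-site entropy chain rule, the telescoping over a sweep, the conditional LSI for the consumed entropy, and the linear gradient-transfer rule $a_u(M_v g) \leq a_u(g) + c^n_{uv}\, a_v(g)$ together with the Schur bound $\|C\|_{\ell^2\to\ell^2}\leq\delta$. These are exactly the ingredients.

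The gap is in your final paragraph, where you describe the closing step as bounding the residual $\operatorname{Ent}_\lambda(g_N^2)$ by a fraction strictly less than one of $\operatorname{Ent}_\lambda(f^2)$. That is not how the argument closes, and trying to do it that way is circular: you would need an LSI for $\lambda$ to control $\operatorname{Ent}_\lambda(g_N^2)$ by $\|\nabla g_N\|^2$, which is what you are proving. The actual mechanism is to iterate the sweep \emph{infinitely} many times, producing $g_0=f, g_1, g_2, \ldots$, and to use the gradient contraction $\|\mathbf a(g_{mN})\|_{\ell^2}\leq \delta^m\|\mathbf a(f)\|_{\ell^2}$ to show two things: (i) the total Dirichlet contribution $\sum_{k\geq 1} a_{v_k}(g_{k-1})^2$ over all sweeps is summable with the bound $(1-\delta)^{-2}\|\nabla f\|_{L^2(\lambda)}^2$, and (ii) the residual $\operatorname{Ent}_\lambda(g_k^2)\to 0$ as $k\to\infty$, which follows from ergodicity of the Gibbs sampler (the gradient decay forces $g_k$ to a constant in $L^2(\lambda)$, and entropy is lower semicontinuous). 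With (i) and (ii) in hand, the telescoping identity yields the claimed bound directly. Your sketch has the right pieces but misattributes where the geometric series appears: it is in the infinite Dirichlet sum, not in a self-referencing entropy inequality.
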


We now establish an additional result, which plays a crucial role in the proof of Theorem \ref{thm:results:LSI}(2). It can be thought of as the relative Fisher information analogue of Theorem \ref{thm:results:2entropy}.
\begin{proposition}[Convergence of renormalized Fisher information]
    \label{lem:LSI:lyapConv}
 Assume $(U, \intPot)$ satisfies Assumption \ref{as:results:invMeasure} and $\nu \in \mPS$ satisfies \eqref{eq:LSI:finiteFI}. Then we have
    \begin{equation}
        \label{eq:LSI:infoConv}
        \lim_{n \rightarrow \infty} \frac{1}{2n+1}\mI(\psi_\nu^n | \theta^n) = \info_2(\nu).
    \end{equation}
\end{proposition}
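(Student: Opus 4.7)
The plan is to expand $\mathcal{I}(\psi_\nu^n|\theta^n) = \sum_{v\in V_n}\int|\nabla_{x_v}\log(\psi_\nu^n/\theta^n)|^2\,d\psi_\nu^n$ vertex-by-vertex, identify a bulk contribution of exactly $(2n-3)\info_2(\nu)$, and show that the four boundary vertices contribute only $O(1)$. Combining the explicit form of $\psi_\nu^n$ in \eqref{eq:entropy:2reconstruction} with \eqref{eq:LSI:gibbs} and using Bayes's rule to pair each $\bar\nu$-factor in $\log\psi_\nu^n$ with an adjacent $\nu$-factor, I would show that for any bulk vertex $v\in\{-n+2,\dots,n-2\}$,
\[
G_v \;:=\; \nabla_{x_v}\log(\psi_\nu^n/\theta^n) \;=\; A_v + B_v + C_v,
\]
where, after identifying the triple $(x_{v-1},x_v,x_{v+1})$ with $(x_{-1},x_0,x_1)$, the middle piece $B_v = [b + \nabla_{x_0}\log\nu](x_{v-1},x_v,x_{v+1})$ is precisely the integrand of the first square in $\info_2(\nu)$, while the outer pieces $A_v = \nabla_{x_v}\log\nu(x_{v-2}\mid x_{v-1},x_v)$ and $C_v = \nabla_{x_v}\log\nu(x_{v+2}\mid x_{v+1},x_v)$ correspond, via $\log\nu(x_{\pm1}\mid x_0,x_{\mp 1}) = \log[\nu/\bar\nu]$ and leaf exchangeability, to the two copies of $\nabla_{x_1}\log[\nu/\bar\nu(x_0,x_1)]$ whose squared norms give the second term (with coefficient $\kappa=2$) in $\info_2(\nu)$. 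Applying Lemma~\ref{lem:entropy:symmetry} (equation \eqref{eq:entropy:expectations}) to each of the three-vertex windows then yields
\[
\int\big(|A_v|^2+|B_v|^2+|C_v|^2\big)\,d\psi_\nu^n = \info_2(\nu).
\]

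The main obstacle is showing that the three cross-terms vanish, and this is where the alternative factorisations \eqref{eq:entropy:altReconstruction} of the lift map play the decisive role. For $\int A_v B_v\,d\psi_\nu^n$ and $\int A_v C_v\,d\psi_\nu^n$ I would invoke the backward-Markov factorisation (second identity in \eqref{eq:entropy:altReconstruction}): it implies that the $\psi_\nu^n$-conditional density of $x_{v-2}$ given $(x_{v-1},x_v,x_{v+1},\dots,x_n)$ equals $\nu(x_{v-2}\mid x_{v-1},x_v)$, so that
\[
\mathbb{E}^{\psi_\nu^n}\!\big[A_v\,\big|\,x_{v-1},\dots,x_n\big] \;=\; \int \nabla_{x_v}\nu(x_{v-2}\mid x_{v-1},x_v)\,dx_{v-2} \;=\; \nabla_{x_v}\,1 \;=\; 0;
\]
since both $B_v$ and $C_v$ are measurable for that conditioning $\sigma$-field, the tower property forces these two cross-terms to vanish. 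The exchange of gradient and integral above is justified by the finite Fisher information assumption \eqref{eq:LSI:finiteFI} together with the marginal bound from Lemma~\ref{lem:conv:marginalInformation}. The remaining cross-term $\int B_v C_v\,d\psi_\nu^n$ is handled by the mirror argument using the forward-Markov factorisation (third identity in \eqref{eq:entropy:altReconstruction}), which makes the conditional density of $x_{v+2}$ given $(x_{-n},\dots,x_{v+1})$ equal to $\nu(x_{v+2}\mid x_v,x_{v+1})$, so that $C_v$ is centred given $B_v$'s $\sigma$-field.

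Finally, for each of the four boundary vertices $v\in\{-n,-n+1,n-1,n\}$, the same bookkeeping gives a truncated expression for $G_v$ (missing one or two of the pieces $A_v,C_v$, and with $\tfrac{1}{2}\nabla U$ in place of $\nabla U$ at $v=\pm n$ by \eqref{eq:LSI:uvn}); using Assumption~\ref{as:results:lyapunov}, the bound \eqref{eq:LSI:finiteFI}, Lemma~\ref{lem:conv:marginalInformation}, and the identification of the relevant marginals of $\psi_\nu^n$ with $\nu$ or $\bar\nu$ from Lemma~\ref{lem:entropy:symmetry}, one obtains $\int|G_v|^2\,d\psi_\nu^n \le M(\nu,U,\intPot)$ uniformly in $n$. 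Summing over all vertices yields $\mathcal{I}(\psi_\nu^n|\theta^n) = (2n-3)\,\info_2(\nu) + O(1)$, and dividing by $2n+1$ and sending $n\to\infty$ gives \eqref{eq:LSI:infoConv}.
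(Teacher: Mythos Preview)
Your proposal is correct and follows essentially the same strategy as the paper's proof. The paper decomposes the bulk gradient as $\Theta_1+\Theta_2$ with $\Theta_1=B_v$ and $\Theta_2=A_v+C_v$, packages your conditioning argument for the vanishing cross-terms into a separate orthogonality lemma (Lemma~\ref{lem:LSI:orthogonal}), and handles the four boundary vertices via the same uniform-in-$n$ a priori bound; the final sandwich $(2n-3)\info_2(\nu)\le \mathcal{I}(\psi_\nu^n|\theta^n)\le (2n-3)\info_2(\nu)+4\tilde C$ is identical to yours.
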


Proposition \ref{lem:LSI:lyapConv} is proved in Section \ref{sec:LSI:convFI}. We are now ready to prove Theorem \ref{thm:results:LSI}.

\begin{proof}[Proof of Theorem \ref{thm:results:LSI}] 
First, we establish (1) and the uniform log-Sobolev inequality \eqref{eq:LSI:preLimitLSI}. Note that $\theta^n$ is a 1-MRF by Lemma \ref{lem:LSI:condGibbs}. We therefore can appeal to Theorem \ref{thm:LSI:zergalinski}, which provides explicit upper bounds on the LSI constant for Gibbs measures. To this end, we first show that the second condition of Theorem \ref{thm:LSI:zergalinski} holds with the same $\delta$ for each $\theta^{n}$. Before proceeding, we introduce some convenient notation. For $n \in \N$, $v \in \bbT_2^n$, and $f: \R^d \rightarrow \R$, let \[\bbE^{\theta^n_v}[f|x_{\partial v}] := \bbE^{\theta^n_v}[f(Y_v)|Y_{\partial v} = x_{\partial v} ]. \]
Assumption \ref{as:results:LSI}(2) implies that the first condition of Theorem \ref{thm:LSI:zergalinski} holds. To verify the second condition of Theorem \ref{thm:LSI:zergalinski}, define
    \begin{equation}
        c^n_{uv}(\theta^n) := \begin{cases} \frac{1}{2} \delta_{1}^*1_{u \sim v} &\quad v \in \{-n+1, \ldots, n-1\} \\
    \delta_{0}^* 1_{u \sim v} &\quad v \in \{-n, n\}
        \end{cases}, \quad u \in \bbT_2^n.
        \label{eq:LSI:candidateC}
    \end{equation}    
    with $\delta_0^*, \delta_1^* \in (0, 1)$ in \eqref{eq:results:H5}. By Lemma 17 of \cite{guillin2022lsi} and the fact that $\theta^n$ is a 1-MRF,  \eqref{eq:LSI:zerg2}-\eqref{eq:LSI:zerg3} hold with $\delta = \max\{\delta_0^*, \delta_1^*\}$ if for all $n \in \N$, and $a \in C_b^1(\R^d)$,    
    \begin{equation}
        \nabla_{x_u} \bbE^{\theta_v^n}[a|x_{\partial v}] < c_{uv}^n(\theta^n) \bbE^{\theta_v^n}\big[ |\nabla a|\big|x_{\partial v}\big], \quad u, v \in \bbT_n^2, \: x_{\partial v} \in (\R^d)^{\partial v},
        \label{eq:LSI:suffCond}
    \end{equation}
    
   It is evident that for all $u, v \in \mathbb{T}_2^n$ such that $u \not \sim v$, $\nabla_{x_u} \bbE^{\theta_v^n}[a|x_{\partial v}] = 0,$ and therefore \eqref{eq:LSI:candidateC} holds with $c^n_{uv}(\theta^n) = 0$ for all $u \not \sim v$. It will suffice to calculate $\nabla_{x_{v+1}}\bbE^{\theta_v^n}[a|x_{\partial v}]$ for $v \in \{-n, \ldots, n-1\}$, as the other term $\nabla_{x_{v-1}}\bbE^{\theta_v^n}[a|x_{\partial v}]$ for $v \in \{-n+1, \ldots, n\}$ is computed in the exact same way. By \eqref{eq:LSI:theta^n_v} and the product rule, we have,
    \begin{equation}
    \begin{aligned}
    \label{eq:LSI:gradExpectation}
        \nabla_{x_{v+1}}\bbE^{\theta_v^n}[a|x_{\partial v}] = \int_{\R^d} a(x_v) \nabla &\intPot(x_v - x_{v+1}) \theta^n_v(dx_v|x_{\partial v})  \\&- \int_{\R^d} a(x_v) \theta^n_v(dx_v|x_{\partial v}) \int \nabla \intPot(x_v - x_{v+1})\theta_v^n(dx_v|x_{\partial v}).        
    \end{aligned}
    \end{equation}
    Note that by \eqref{eq:LSI:theta^n_v}, Assumption \ref{as:results:invMeasure}, and boundedness of $a$, we have $a(\cdot) \nabla_{x_{v+1}} \theta^n(\cdot | x_{\partial v}) \in L^1(\R^d)$ and thus the exchange of derivative and integral is justified. Define the function $\Psi_v:\R^d \times (\R^d)^{\partial v} \rightarrow \R^d$ by
    \begin{equation}
        \Psi_v(x_v, x_{\partial v}) := \nabla \intPot(x_v - x_{v+1}) - \bbE^{\theta_v^n}\big[\nabla \intPot (\cdot - x_{v+1})|x_{\partial v}\big],
        \label{eq:LSI:PSI}
    \end{equation}
    and the function $b_v^n: \R^d \times (\R^{d})^{\partial v} \rightarrow \R$ by
\begin{align*}
    b_v^n\big( x_v \big| x_{\partial v} \big) := U_v^n(x_v) + \sum_{u \sim v} \intPot(x_v - x_u),
\end{align*}
with $U_v^n$ as defined in \eqref{eq:LSI:uvn}. Define the conditional generator $\mL_v^n$ by
\begin{equation}
\label{eq:LSI:condGenerator}
    \mathcal{L}^n_v(x_{\bar{v}}) :=  \Delta_{x_v} - \nabla_{x_v} b_v^n\big( x_v \big| x_{\partial v} \big) \cdot \nabla_{x_v}.
\end{equation}
Notice that for any $v \in \bbT_n^2$, functions $f_1, f_2 \in C^1_b(\R^d)$, and $\hat{\x}_v \in (\R^d)^{2n}$, by \eqref{eq:LSI:theta^n_v} we have 
\begin{equation*}
\begin{aligned}
    \int_{\R^d} \big(\Delta f_1(x_v) \big)f_2(x_v) \theta_v^n(dx_v|x_{\partial v}) =     - \int_{\R^d} \nabla &f_1(x_v) \cdot \nabla f_2(x_v) \theta_v^n(dx_v|x_{\partial v}) \\
    &+ \int_{\R^d} \Big(\nabla_{x_v} b^n\big(x_v|x_{\partial v}\big) \cdot \nabla f_1(x_v) \Big)f_2(x_v) \theta_v^n(dx_v|x_{\partial v}).     
\end{aligned}
\end{equation*}
Thus $\theta^n_v$ is reversible with respect to $\mathcal{L}^n_v$ in the sense of Definition 1.6.1 and (1.6.2) of \cite{bakry2014diffusion}. Therefore by \eqref{eq:LSI:gradExpectation}, \eqref{eq:LSI:PSI}, \eqref{eq:LSI:condGenerator}, and integrating by parts, for all $z \in \R^d$ with $|z| = 1$ we have
    \begin{align*}
        \Big|z \cdot \nabla_{x_{v+1}} \bbE^{\theta_v^n}[a|x_{\partial v}]\Big| &= \Bigg|\int_{\R^d} a(x_v) \big[z \cdot \Psi_v(x_v, x_{v+1}) \big] \theta_v^n(dx_v|x_{\partial v}) \Bigg|\\
        & = \Bigg|\int_{\R^d} \Big( \mL^n_v(x_{\bv}) a(x_v) \Big) ( (\mL_v^n)^{-1}(x_{\bv}) \big) \Big[z \cdot \Psi_v(x_v, x_{v+1}) \Big]\theta_v^n(dx_v|x_{\partial v})\Bigg| \\
        & = \Bigg|\int_{\R^d} \Big( \nabla_{x_v} a(x_v) \Big) \cdot \big(\nabla_{x_v} (\mL_v^n)^{-1}(x_{\bv}) \big) \Big[z \cdot\Psi_v(x_v, x_{v+1})\Big] \theta_v^n(dx_v|x_{\partial v})\Bigg|. 
    \end{align*}
    The above display therefore shows that
    \begin{equation}
        \label{eq:LSI:equation1}\begin{aligned}
        \Big|z \cdot \nabla_{x_{v+1}} \bbE^{\theta_v^n}[a|x_{\partial v}]\Big| \leq \Big( \bbE^{\theta_v^n}\big[|\nabla a|\big|x_{\partial v}\big] \Big) \sup_{x_v \in \R^d} \Big | \big( \nabla_{x_v} (\mL_v^n)^{-1}(x_{\bv})\big) \Big[z \cdot \Psi_v(x_v, x_{v+1}) \Big]\Big|.            
        \end{aligned}
    \end{equation}
    By applying Proposition \ref{thm:LSI:wu} with $\varphi = b_v^n(\cdot|x_{\partial v})$ and item (1) of Assumption \ref{as:results:LSI}, for $v \in \{-n, n\}$ we have
        \begin{align*}
        \sup_{x_v \in \R^d} \Big | \big( \nabla_{x_v} (\mL_v^n)^{-1}) \Big[z \cdot \Psi_v(\cdot, x_{v+1}) \Big]\Big| 
        & \leq \sup_{x_v \in \R^d} \big\|(\mL_v^n)^{-1}(x_{\bv})\|_{\text{Lip}} \Big|\Psi_v(x_v, x_{v+1}) \cdot z \Big| \\
        & \leq \hat{c}_{\text{Lip}, 0} \sup_{x_v \in \R^d} \big | \Psi_v(x_v, x_{v+1}) \cdot z |,
    \end{align*}
    and similarly for $v \in \{-n+1, \ldots, n-1\}$, 
    \begin{align*}
        \sup_{x_v \in \R^d} \Big | \big( \nabla_{x_v} (\mL_v^n)^{-1}) \Big[z \cdot \Psi_v(\cdot, x_{v+1}) \Big]\Big| 
        & \leq \sup_{x_v \in \R^d} \big\|(\mL_v^n)^{-1}(x_{\bv})\|_{\text{Lip}} \Big|\Psi_v(x_v, x_{v+1}) \cdot z \Big| \\
        & \leq \hat{c}_{\text{Lip}, 1} \sup_{x_v \in \R^d} \big | \Psi_v(x_v, x_{v+1}) \cdot z |.
    \end{align*}
    Moreover \eqref{eq:LSI:PSI}, together with the mean-value theorem, implies that
    \begin{align*}
        \sup_{x_v \in \R^d} | \Psi_v(x_v, x_{v+1}) \cdot z | &\leq \sup_{x_v, x_{v+1} \in \R^d} \big | \nabla_{x_v}  \nabla \intPot(x_v - x_{v+1}) \big| \leq \|\nabla^2 \intPot\|_{L^\infty(\R^d)}.
    \end{align*}
    Combining the previous three displays with \eqref{eq:results:H5}, we have
    \begin{equation*}
        \sup_{x_v \in \R^d} \Big | \big( \nabla_{x_v} (\mL_v^n)^{-1}) \Big[z \cdot \Psi_v(\cdot, x_{v+1}) \Big]\Big| \leq \begin{cases}
            \hat{c}_{\text{Lip}, 0} \|\nabla^2\intPot\|_{L^\infty(\R^d)} = \delta_0^*, &\quad v \in \{-n, n\}, \\
            \hat{c}_{\text{Lip}, 1} \|\nabla^2\intPot\|_{L^\infty(\R^d)} = \tfrac{1}{2}\delta_1^*, &\quad v \in \{-n+1, \ldots, n-1\}.
        \end{cases}.
        \label{eq:LSI:equation2}
    \end{equation*}
Therefore the above display together with \eqref{eq:LSI:equation1} imply that \eqref{eq:LSI:suffCond} holds with $c^n_{uv}(\theta^n)$ as in \eqref{eq:LSI:candidateC}. Thus, the second condition of Theorem \ref{thm:LSI:zergalinski} holds with $\delta = \max\{\delta_0^*, \delta_1^*\}$ for all $n \in \N$. Thus, we can apply Theorem \ref{thm:LSI:zergalinski} to yield \eqref{eq:LSI:preLimitLSI} with $C_\theta = \hat{C}_{LS}/(1 - \delta)^2$ for $\hat{C}_{LS}$ as in Assumption \ref{as:results:LSI}.

 Next, we establish (2) and \eqref{eq:results:LSI}. Applying \eqref{eq:LSI:preLimitLSI} with $\Lambda = \psi_\nu^n$ for each $n$, dividing both sides by $2n+1$, sending $n \rightarrow \infty$, and using Theorem \ref{thm:results:2entropy} and Proposition \ref{lem:LSI:lyapConv}, one obtains \eqref{eq:results:LSI} with $C_0 = 2C_{LS}$. 

Finally, we show (3) and the exponential convergence \eqref{eq:results:expConv}. Let $(\mu, \gamma)$ be a linear growth solution to the $\kap$-CMVE. By Theorem \ref{thm:ap1:wellPosed}, \eqref{eq:LSI:finiteFI} is satisfied by $\mu_t$ for all $t > 0$. Therefore we can apply \eqref{eq:results:LSI} to $\mu_t$ for almost every $t \in (0, \infty)$, and \eqref{eq:results:expConv} holds by Theorem \ref{thm:results:lyap} and Gronwall's inequality.
\end{proof}
\subsection{Uniqueness of stationary distributions} In view of Theorem \ref{thm:results:stationary} and \ref{thm:results:zeros}, the set of stationary distributions of the $2$-MLFE are in one-to-one correspondence with the set of \FPE{} fixed points. To prove Theorem \ref{thm:results:uniqueRegime}, we invoke Corollary \ref{cor:LackerZhang} to obtain uniqueness of the Cayley fixed points from the results of \cite{lacker2023stationary}.
\label{sec:uniquenessRegime}
\begin{proof}[Proof of Theorem \ref{thm:results:uniqueRegime}] First, we show that there is a unique Cayley fixed point. By Corollary \ref{cor:LackerZhang}, it suffices to check that solutions to the fixed point problem of \cite{lacker2023stationary} (reproduced in Definition \ref{def:lackerZhang}) are unique. Since $(U, \intPot)$ satisfy Assumption \ref{as:results:lyapunov} and $\nabla U$ and $\nabla \intPot$ are locally bounded, by Assumption \ref{as:results:invMeasure} we have
\begin{equation*}
    \int_{\R^d} \int_{\R^d} \exp \Big( - U(x) - U(y) - 2 K(x - y) \big) dx dy \leq R_q^2,
\end{equation*}
and (1.15) of \cite{lacker2023stationary} holds. Inspection of the proof of Theorem 1.9 of \cite{lacker2023stationary} shows that its conclusions hold for the $\R^d$ version of the fixed point equation given in Definition \ref{def:lackerZhang}. Then by Corollary \ref{cor:LackerZhang} there is a unique \FPE{}. Thus by Theorem \ref{thm:results:zeros} $\pi$ is the unique zero of $\mathbb{I}_2$, and by Theorem \ref{thm:results:stationary}, if there exists a solution to the 2-MLFE with potentials $(U, W)$ and initial condition $\pi$, then $\pi$ is the unique stationary distribution of the \CMVE{}.

Next, we  turn to the proof of \eqref{eq:results:sfeForm}. It will suffice to show that for $\bbH_2^*$ as in Theorem \ref{thm:results:2entropy}, we have
\begin{equation}
    \bbH_2(\nu) - \bbH_2^* = \mH(\nu|\pi) - \mH(\bar{\nu}|\bar{\pi}), \quad \nu \in \mathcal{Q}_{2, d}.
    \label{eq:LSI:sfeForm}
\end{equation}
Substituting in $\nu = \pi$ in the above display yields $\bbH_2^* = \bbH_2(\pi)$ and hence \eqref{eq:results:sfeForm}. To this end, we will show that \begin{equation}
    \lim_{n \rightarrow \infty} \frac{1}{2n+1} \mH(\psi_\nu^n | \psi_\pi^n) = \mH(\nu|\pi) - \mH(\bar{\nu}|\bar{\pi}), \label{eq:entropy:psiNupsiPientropy}
\end{equation}
and that $(2n+1)^{-1}\mH(\psi^n_\nu|\psi^n_\pi)$ and $(2n+1)^{-1}\mH(\psi^n_\nu|\theta^n)$ agree asymptotically. In particular, by Theorem \ref{thm:results:2entropy} and \eqref{eq:entropy:psiNupsiPientropy}, the equality \eqref{eq:LSI:sfeForm} holds if we have
\begin{equation}
\lim_{n \rightarrow \infty} \frac{1}{2n+1} \Big( \mH(\psi_\nu^n | \psi_\pi^n) - \mH(\psi_\nu^n | \theta^n)\Big)  = 0.
\label{eq:entropy:sfeLimits}
\end{equation}

First, we obtain upper bounds on $\log \pi_0, \log \bar{\pi}$, and $\log \pi$. As discussed in Remark \ref{rk:potentials}, we can take $\intPot(0) = 0$ without loss of generality. Let $q$ be as in Assumption \ref{as:results:invMeasure} and $c \in \R$ be a lower bound on $q$. By \eqref{eq:results:fBound}, we have $|q(x)| \leq \max\{c, |U(x)|\}$ and thus by \eqref{eq:zeros:fixedPoint} and Assumption \ref{as:results:lyapunov}, we have for some $C \in (0, \infty)$ that
\begin{equation}
    |\log \pi_0(x)| \leq 2 \log \mathcal{Z}_{\pi_0} + \log R_q + C\big( 1 + |x|^2\big), \quad x \in \R^d.
    \label{eq:LSI:logPi0Bound}
\end{equation}
Recall from  \eqref{eq:zeros:altCBP} that
\begin{equation}
\label{eq:LSI:piForm}
    \pi(x_{-1}, x_0, x_{1}) = \frac{1}{\mathcal{Z}_{\pi_0}^2} e^{ - \frac{U(x_1) + U(x_{-1})}{2} - U(x_0) - \intPot(x_0 - x_1) - \intPot(x_0 - x_{-1})} \sqrt{\pi_0(x_1)\pi_0(x_{-1})}.
\end{equation}
Combining the above display with \eqref{eq:results:fBound} and \eqref{eq:LSI:logPi0Bound} implies the existence of $C_{0} \in (0, \infty)$ such that
\begin{equation}
    |\log\pi(\x)| \leq C_{0}(1 + |\x|^2), \quad \x \in (\R^d)^3
    \label{eq:LSI:logPiBound}
\end{equation}
Similarly, \eqref{eq:zeros:muBar}, \eqref{eq:results:fBound}, and \eqref{eq:LSI:logPi0Bound} implies that there exists $C_1 \in (0, \infty)$ such that
\begin{equation}
    |\log\bar{\pi}(x_0, x_{1})| \leq C_{1}(1 + |x_0|^2 + |x_1|^2), \quad (x_0, x_1) \in \R^d \times \R^d.
    \label{eq:LSI:logPiBarBound}
\end{equation}
Let $\nu \in \mathcal{Q}_{2, d}$. By the definition of $\psi_\nu^n$ in \eqref{eq:entropy:2reconstruction}, we have
\begin{equation*}
\begin{aligned}
    \mH(\psi_\nu^n| \psi^n_\pi) &= \int_{(\R^d)^{V_n}} \bigg[ \sum_{v = -n+1}^{n-1} \log \frac{\nu(x_{v-1}, x_v, x_{v+1})}{\pi(x_{v-1}, x_v, x_{v+1})} - \sum_{v = -n}^{n-1} \log \frac{\bar{\nu}(x_v, x_{v+1})}{\bar{\pi}(x_v, x_{v+1})}\bigg] \psi_\nu^n(d\x^{(n)}) \\
    &= \sum_{v = -n+1}^{n-1} \int_{(\R^d)^{V_n}}\Big( \log \nu(x_{v-1}, x_v, x_{v+1})  -  \log \pi(x_{v-1}, x_v, x_{v+1}) \Big)\psi_\nu^n(d\x^{(n)})  \\
    & + \sum_{ v = -n}^{n-1} \int_{(\R^d)^{V_n}} \Big(\log \bar{\nu}(x_v, x_{v+1}) - \log \bar{\pi}(x_v, x_{v+1}) \Big)\psi_\nu^n(d\x^{(n)})  
\end{aligned}
\end{equation*}
Since $\nu$ has finite entropy and second moment, by \eqref{eq:LSI:logPiBound} and \eqref{eq:LSI:logPiBarBound} we can apply Lemma \ref{lem:entropy:symmetry} with $f =\log \nu - \log \pi$ and $f = \log \bar{\nu} - \log \bar{\pi} $ to the previous display to obtain
\begin{equation}
\begin{aligned}
    \mH(\psi_\nu^n| \psi^n_\pi) = (2n-1) \mH(\nu|\pi) - (2n) \mH(\bar{\nu}|\bar{\pi}).
\end{aligned}\label{eq:LSI:psiNuPiEntropy}
\end{equation}
Therefore, dividing both sides of the above display by $2n+1$ and taking limits yields \eqref{eq:entropy:psiNupsiPientropy}.

Next, we show \eqref{eq:entropy:sfeLimits}. We first obtain alternative forms of $\mH(\psi^n_\nu|\theta^n)$ and $\mH(\psi^n_\nu|\psi^n_\pi)$ to facilitate the desired calculation. By \eqref{eq:LSI:gibbs}, we have
\begin{equation}
    \mH(\psi^n_\nu|\theta^n) =\log \mathcal{Z}^n + \int_{(\R^d)^{V_n}} \Big(\log \psi^n_\nu(\x^{(n)}) + H_n(\x^{(n)}) \Big) \psi^n_\nu(\x^{(n)})
    \label{eq:entropy:altEntropy1}
\end{equation}
Since $\pi$ is a \FPE{}, we have by \eqref{eq:zeros:1MRF}-\eqref{eq:zeros:zMu} that
\begin{equation*}
    \pi(x_1|x_0, x_{-1}) =  \pi(x_1|x_0) = \frac{1}{\mathcal{Z}_{\pi_0}} \exp\bigg( - \frac{U(x_0) + U(x_1)}{2} - \intPot(x_0 - x_1) \bigg) \pi_0(x_1)^{\frac{1}{2}} \pi_0(x_0)^{-\frac{1}{2}}.
\end{equation*}
Then, combining the first form of $\psi^n_\pi$ in \eqref{eq:entropy:altReconstruction} with the last display, \eqref{eq:LSI:piForm}, and \eqref{eq:LSI:hamiltonian} yields
\begin{equation}
    \psi^n_\pi\big(d\x^{(n)}\big) = \frac{1}{\mathcal{Z}_{\pi_0}^{2n}} \exp( - H_n(\x^{(n)}) ) \pi_0(x_{-n})^{\frac{1}{2}}\pi_0(x_n)^{\frac{1}{2}} d\x^{(n)},
    \label{eq:LSI:piReconstruction}
\end{equation}
for all $n \in \N$. Moreover, since $\psi^n_\pi$ is a probability measure, we have
\begin{equation}
    \label{eq:LSI:pinPfn}
    \tilde{\mathcal{Z}}^n := \mathcal{Z}_{\pi_0}^{2n} = \int_{(\R^d)^{V_n}} \exp\Big( - H_n\big(\x^{(n)}\big) \Big) \pi_0(x_{-n})^{\frac{1}{2}}\pi_0(x_n)^{\frac{1}{2}} d\x^{(n)}.
\end{equation}
Since $\nu \in \mPSb$, by \eqref{eq:LSI:logPi0Bound} we can apply Lemma \ref{lem:entropy:symmetry} with $f = \log \pi_0$ with the last two displays to obtain
\begin{equation*}
\begin{aligned}
    \mH(\psi^n_\nu|\psi_\pi^n) &= \log \tilde{\mathcal{Z}}^n + \int_{(\R^d)^{V_n}} \Big(\log \psi^n_\nu(\x^{(n)}) + H_n(\x^{(n)}) \Big) \psi^n_\nu(\x^{(n)})  + \int_{\R^d}\log \pi_0(x) \nu_0(dx).
\end{aligned}
\end{equation*}
Combining the last display with \eqref{eq:entropy:altEntropy1} yields
\begin{equation*}
    \mH(\psi^n_\nu|\theta^n) - \mH(\psi^n_\nu|\psi^n_\pi) = \log \mathcal{Z}^n - \log \tilde{\mathcal{Z}}^n - \int_{\R^d} \log \pi_0(x) \nu_0(dx).
\end{equation*}
The integral on the right hand side of the previous display is finite by \eqref{eq:LSI:logPi0Bound} and the fact that $\nu \in \mPSb$. Then \eqref{eq:entropy:sfeLimits} would follow from
\begin{equation}
    \lim_{n \rightarrow \infty} \frac{1}{2n+1}\big(\log \mathcal{Z}^n - \log \tilde{\mathcal{Z}}^n \big) = 0.
    \label{eq:LSI:pFnConvergence}
\end{equation}
Since $\bbI_2(\pi) = 0$ and $(U, \intPot)$ satisfy Assumption \ref{as:results:fixedPoint}, Proposition \ref{lem:zeros:entropyAndMoments} implies that $\pi_0$ has finite entropy. By \eqref{eq:LSI:piReconstruction} and \eqref{eq:LSI:gibbs}, and Lemma \ref{lem:entropy:symmetry} with $f = \log \pi_0$, we have
\begin{equation}
    \mH(\psi^n_\pi | \theta^n) = \int_{(\R^d)^{V_n}} \log \frac{\psi^n_\pi(\x^{(n)})}{\theta^n(\x^{(n)})} \psi^n_\pi(d\x^{(n)}) =  \log \mathcal{Z}^n - \log \tilde{\mathcal{Z}}^n + \int_{\R^d} \log \pi_0(x) \pi_0(dx).
    \label{eq:altEntropy2}
\end{equation}
Since $(U, \intPot)$ additionally satisfy Assumption \ref{as:results:LSI}, Theorem \ref{thm:results:LSI}(1) implies that for all  $n \in \N$, we have
\begin{equation}
    \mH(\psi^n_\pi | \theta^n) \leq C _\theta\mI(\psi^n_\pi | \theta^n),
    \label{eq:LSI:piLSI}
\end{equation}
where $C_\theta \in (0, \infty)$ is as in Theorem \ref{thm:results:LSI}. By \eqref{eq:LSI:gibbs} and \eqref{eq:LSI:piReconstruction}, we have
\begin{equation*}
\begin{aligned}
    \mI(\psi^n_\pi | \theta^n) &= \int_{(\R^d)^{V_n}} \bigg| \nabla \log \frac{d\psi^n_\pi}{d\theta^n}\big(\x^{(n)}\big)\bigg|^2 \psi^n_\pi\big(d\x^{(n)}\big) \\
    &= \int_{(\R^d)^{V_n}} \Big| \frac{1}{2}\nabla \log \pi_0(x_n) + \frac{1}{2}\nabla \log \pi_0(x_{-n})\Big|^2 \psi^n_\pi(d\x^{(n)})  \\
    & \leq \frac{1}{2} \int_{(\R^d)^{V_n}} \Big[\big| \nabla \log \pi_0(x_n)\big|^2 + \big|\nabla \log \pi_0(x_{-n})\big|^2\Big] \psi^n_\pi(d\x^{(n)})  \\
    &= \int_{\R^d} \big| \nabla \log \pi_0(x)\big|^2 \pi_0(dx),
\end{aligned}
\end{equation*}
where in the last equality, we used \eqref{eq:zeros:marginalFisher} and Lemma 8.1 with $f = |\nabla \log \pi_0|^2$. By \eqref{eq:altEntropy2}, \eqref{eq:LSI:piLSI}, the above display, and the fact that $\pi_0$ has finite entropy, we have
\begin{equation*}
\sup_{n \in \N}\Big\{\big|\log \mathcal{Z}^n - \log \tilde{\mathcal{Z}^n}\big| \Big\}  =  \sup_{n \in \N}\bigg\{ \mH(\psi^n_\pi | \theta^n) + \bigg| \int_{\R^d} \log \pi_0(x) \pi_0(dx)\bigg| \bigg\} < \infty.
\end{equation*}
Thus \eqref{eq:LSI:pFnConvergence}  and hence \eqref{eq:entropy:sfeLimits} hold. 
\end{proof}

\subsection{Convergence of renormalized Fisher information} 
\label{sec:LSI:convFI} In this section we prove Proposition \ref{lem:LSI:lyapConv}.  The following lemma will be useful in the proof.
\begin{lemma}
\label{lem:LSI:orthogonal}
    Let $\nu \in \mPS$ satisfy \begin{equation}
 \label{eq:LSI:finiteFI2}
     \int_{(\R^d)^{1 + \kap}} \Big(\big|\nabla_{\x} \log \nu(\x)\big|^2 \Big) \nu(d\x) < \infty.
 \end{equation}
    Fix $n \in \N$ with $n \geq 3$. The following properties hold for $u \in V_{n-2}$: 
    \begin{enumerate}
        \item For all $A \subset \{-n, \ldots, u+1\}$ and $f:(\R^d)^A \rightarrow \R^d$ such that \begin{equation}
        \label{eq:LSI:fInt}
\int_{(\R^d)^{V_n}} |f(x_A)|^2 \psi^n_\nu(d\x^{(n)}) < \infty,
\end{equation}
we have
\begin{equation}
    \int_{(\R^d)^{V_n}} f(x_A) \cdot \nabla_{x_u} \log \nu(x_{u+2}|x_{u+1}, x_u) \psi^n_\nu(d\x^{(n)}) = 0.
    \label{eq:LSI:f0}
\end{equation}
\item  For all $B \subset \{u-1, \ldots, n\}$ and $f:(\R^d)^B \rightarrow \R^d$ such that \begin{equation*}
\int_{(\R^d)^{V_n}} |f(x_B)|^2 \psi^n_\nu(d\x^{(n)}) < \infty,
\end{equation*}we have
\begin{equation*}
    \int_{(\R^d)^{V_n}} f(x_B) \cdot \nabla_{x_v} \log \nu(x_{u-2}|x_{u-1}, x_u) \psi^n_\nu(d\x^{(n)}) = 0.
\end{equation*}
    \end{enumerate}
\end{lemma}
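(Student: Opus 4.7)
The plan is to exploit the three equivalent factorizations of the 2-MRF lift $\psi^n_\nu$ given in \eqref{eq:entropy:altReconstruction} of Lemma \ref{lem:entropy:symmetry}. For each claim I would select the factorization that makes the conditional density appearing inside the score---$\nu(x_{u+2}|x_{u+1}, x_u)$ for (1) and $\nu(x_{u-2}|x_{u-1}, x_u)$ for (2)---appear as an explicit multiplicative factor of $\psi^n_\nu$. The key observation is then the elementary identity $\int \nu(y|z, w)\,dy = 1$, valid for every $(z,w)$, so that differentiating in either conditioning variable yields zero.

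Concretely for claim (1), I would invoke the third representation in \eqref{eq:entropy:altReconstruction} to factor
\[
\psi^n_\nu(\x^{(n)}) = P(x_{-n}, \ldots, x_{u+1}) \cdot \nu(x_{u+2}|x_{u+1}, x_u) \cdot Q(x_{u+1}, x_{u+2}, \ldots, x_n),
\]
with $P$ gathering the initial triple $\nu(x_{-n}, x_{-n+1}, x_{-n+2})$ together with all factors $\nu(x_v|x_{v-1}, x_{v-2})$ for $-n+3 \leq v \leq u+1$, and $Q := \prod_{v = u+3}^n \nu(x_v|x_{v-1}, x_{v-2})$; the constraint $u \in V_{n-2}$ guarantees $u+2 \in \{-n+4, \ldots, n\}$, so the desired factor is indeed present in the product. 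Integrating $Q$ sequentially in $x_n, x_{n-1}, \ldots, x_{u+3}$ collapses each factor via $\int \nu(x_v|x_{v-1}, x_{v-2})\,dx_v = 1$, giving $\int Q\,dx_{u+3}\cdots dx_n = 1$. Since $A \subset \{-n, \ldots, u+1\}$, $f(x_A)$ is independent of $x_{u+2}$, so Fubini's theorem reduces the target integral to
\[
\int f(x_A)\,P(x_{-n}, \ldots, x_{u+1}) \left[\int \nabla_{x_u}\nu(x_{u+2}|x_{u+1}, x_u)\,dx_{u+2}\right] dx_{-n}\cdots dx_{u+1},
\]
and the bracketed inner integral vanishes because it equals $\nabla_{x_u}\!\int \nu(x_{u+2}|x_{u+1}, x_u)\,dx_{u+2} = \nabla_{x_u}(1) = 0$. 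Claim (2) follows by the symmetric argument based on the second representation in \eqref{eq:entropy:altReconstruction}, which exposes $\nu(x_{u-2}|x_{u-1}, x_u)$ as the $v = u-2$ factor; the hypothesis $B \subset \{u-1, \ldots, n\}$ plays the analogous role and ensures $f(x_B)$ is independent of $x_{u-2}$.

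The main technical obstacle will be rigorously justifying the two exchanges above. For Fubini, Cauchy--Schwarz together with the hypothesis $\int |f|^2\,d\psi^n_\nu < \infty$ reduces matters to bounding $\int |\nabla_{x_u}\log \nu(x_{u+2}|x_{u+1}, x_u)|^2\,d\psi^n_\nu$; decomposing the conditional score as $\nabla_{x_u}\log \nu(x_u, x_{u+1}, x_{u+2}) - \nabla_{x_u}\log \bar{\nu}(x_u, x_{u+1})$ and applying \eqref{eq:entropy:expectations} with hypothesis \eqref{eq:LSI:finiteFI2} for the first term, and Lemma \ref{lem:conv:marginalInformation} for the second, supplies the required control. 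For the interchange of $\nabla_{x_u}$ and $\int dx_{u+2}$, the finite Fisher information of $\nu$ guaranteed by \eqref{eq:LSI:finiteFI2} together with Cauchy--Schwarz yields local integrability of $\nabla_{x_u}\nu$ on $(\R^d)^3$, which is enough for a standard dominated convergence argument after restricting to a compact exhaustion in $x_u$.
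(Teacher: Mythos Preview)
Your proposal is correct and follows essentially the same route as the paper: both select the third factorization in \eqref{eq:entropy:altReconstruction} to expose $\nu(x_{u+2}|x_{u+1},x_u)$ as an explicit factor, integrate out the variables with indices above $u+2$, and reduce to showing the inner integral $\int \nabla_{x_u}\nu(x_{u+2}|x_{u+1},x_u)\,dx_{u+2}$ vanishes via the identity $\int \nu(\cdot|x_{u+1},x_u)=1$. The only minor difference is in the justification of the derivative--integral interchange: the paper obtains global $L^1$ integrability of $\nabla\nu$ on $(\R^d)^3$ directly from Cauchy--Schwarz and \eqref{eq:LSI:finiteFI2} (which is slightly cleaner than your local-integrability-plus-exhaustion sketch), but this is a technical detail rather than a substantive divergence.
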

\begin{proof}
    The two claims are proved in the exact same way. For brevity we prove only the first. By the third form in \eqref{eq:entropy:altReconstruction}, \eqref{eq:LSI:finiteFI2}, and \eqref{eq:LSI:fInt}, we can integrate out $x_i$ for $i \not \in  \{-n, \ldots, u+2\}$ and apply Fubini's theorem to obtain
    \begin{equation*}
    \begin{aligned}
        &    \int_{(\R^d)^{V_n}} f(x_A) \cdot \nabla_{x_u} \log \nu(x_{u+2}|x_{u+1}, x_u) \psi^n_\nu(d\x^{(n)}) 
    \\ =& \int_{(\R^d)^{u + 1 + n}}  \Xi(x_{u+1}, x_u) \cdot f(x_A)\nu(x_{-n}, x_{-n+1}, x_{-n+2}) \prod_{j =-n+2}^u \nu(x_{j+1}|x_j, x_{j-1}) \prod_{k = -n}^{u+1} dx_k,
    \end{aligned}
    \end{equation*}
    where
    \begin{equation*}
        \Xi(x_{u+1}, x_u) := \int_{\R^d}\big(\nabla_{x_u} \log \nu(x_{u+2}|x_{u+1}, x_u) \big) \nu(x_{u+2}|x_{u+1}, x_u)  dx_{u+2}.
    \end{equation*}
    Next we show
    \begin{equation}
        \Xi(x_{u+1}, x_u) = \nabla_{x_u} \int_{\R^d} \nu(x_{u+2}|x_{u+1}, x_u)  dx_{u+2} =0.
        \label{eq:LSI:Xi0}
    \end{equation}
    Then it suffices to justify the change of integral and derivative. We have
    \begin{equation}
    \begin{aligned}
        \Xi(x_{u+1}, x_u) &= \int_{\R^d} \nabla_{x_u} \nu(x_{u+2}|x_{u+1}, x_u) dx_{u+2} \\
        &= \frac{1}{\bar{\nu}(x_{u+1}, x_u)}\int_{\R^d}\nabla_{x_u} \nu(x_{u+2}, x_{u+1}, x_u) dx_{u+2}  - \frac{\nabla_{x_u} \bar{\nu}(x_{u+1}, x_u)}{\bar{\nu}(x_{u+1}, x_u)}.
    \end{aligned}
    \label{eq:LSI:Xi1}
    \end{equation}
    By Cauchy-Schwarz and \eqref{eq:LSI:finiteFI2}, we have
    \begin{equation*}
        \int_{(\R^d)^3} |\nabla \nu(\x)| d\x = \int_{(\R^d)^3} |\nabla  \log\nu(\x)|^2 \nu(\x) d\x < \infty, 
    \end{equation*}
    and therefore $
        |\nabla_{x_u} \nu(x_{u+2}, x_{u+1},x_u)| \in L^1\big((\R^d)^3\big).$ Thus by Fubini's theorem, we have
    \begin{equation*}
        \int_{\R^d}\nabla_{x_u} \nu(y, x_{u+1}, x_u) dy = \nabla_{x_u}\int_{\R^d} \nu(y, x_{u+1}, x_u) dy = \nabla_{x_u} \bar{\nu}(x_{u+1}, x_u),
    \end{equation*}
    The previous display and \eqref{eq:LSI:Xi1} therefore verify \eqref{eq:LSI:Xi0}, and we conclude that \eqref{eq:LSI:f0} holds.
\end{proof}

\begin{proof}[Proof of Proposition \ref{lem:LSI:lyapConv}]

The proof has a similar flavor to that of Theorem \ref{thm:results:2entropy}. Observe that by the definition of the Fisher information, we have
\begin{equation}
\label{eq:LSI:info1}
    I(\psi_\nu^{n} | \theta^n) = \sum_{v = -n}^n \int_{(\R^d)^{V_n}} \bigg|
    \nabla_{x_v} \log \frac{d\psi_\nu^n}{d\theta^n}(\x^{(n)}) \bigg|^2 \psi^n_\nu(d\x^{(n)}).
\end{equation}
Fix $v \in V_n$. By \eqref{eq:entropy:2reconstruction} and \eqref{eq:LSI:gibbs}, for $\psi^n_\nu$-a.e. $\x^{(n)} \in (\R^d)^{V_n}$ we have
\begin{equation}
\begin{aligned}
    \nabla_{x_v}\log \frac{d\psi_\nu^n}{d\theta^n} (\x^{(n)})
    &= \nabla_{x_v} \bigg( \sum_{u = -n + 1}^{n-1} \log \nu(x_{\bar{u}}) - \sum_{u = -n+1}^{n-2} \log \bar{\nu}(x_u, x_{u+1}) + \frac{1}{2}\sum_{(u, w) \in E^n} \newW(x_u, x_w) \bigg) \\
    &= \nabla_{x_v} \bigg( \sum_{u \in \bar{v}} \log \nu(x_{\bar{u}}) - \sum_{u \sim v}\bigg( \log \bar{\nu}(x_u, x_{v}) + \frac{1}{2}\newW(x_u, x_v) \bigg)\bigg).
\end{aligned}
\label{eq:LSI:gradFormula}
\end{equation}
We first obtain an \textit{a priori} bound on the $L^2(d\psi_\nu^n)$-norm of the above term. 

There exists $C \in (0, \infty)$ such that
\begin{equation*}
\begin{aligned}
    &\int_{(\R^d)^{V_n}} \bigg|
    \nabla_{x_v} \log \frac{d\psi_\nu^n}{d\theta^n}(\x^{(n)}) \bigg|^2 \psi^n_\nu(d\x^{(n)}) \\
    = & \int_{(\R^d)^{V_n}} \bigg|
    \nabla_{x_v}  \bigg(\sum_{u \in \bar{v}} \log \nu(x_{\bar{u}}) - \sum_{u \sim v} \Big(\log \bar{\nu}(x_u, x_v) - \frac{1}{2}Q(x_u, x_v)\Big) \bigg)\bigg|^2 \psi^n_\nu(d\x^{(n)}) \\
    \leq & C \int_{(\R^d)^{V_n}} \bigg[ \sum_{u \in \bar{v}}  |\nabla_{x_v} \log \nu(x_{\bar{u}})|^2 + \sum_{u \sim v} \Big(|\nabla_{x_v} \bar{\nu}(x_u, x_v)|^2 + |Q(x_u, x_v)|^2\Big) \bigg] \psi^n_\nu(d\x^{(n)}).
\end{aligned}
\end{equation*}
By \ref{eq:LSI:finiteFI} and Lemma \ref{lem:conv:marginalInformation}, we can apply Lemma \ref{lem:entropy:symmetry} with $f = |b(\x)|^2 + |\nabla_{x_v} \log \nu|^2 $ and $f = |\nabla_{x_v} \log \bar{\nu}|^2$ to observe that for some $\tilde{C} \in (0, \infty)$, we have
\begin{equation}
    \int_{(\R^d)^{V_n}} \bigg|
    \nabla_{x_v} \log \frac{d\psi_\nu^n}{d\theta^n}(\x^{(n)}) \bigg|^2 \psi^n_\nu(d\x^{(n)}) \leq \tilde{C}, \quad v \in V_n.
    \label{eq:LSI:aPrioriInfoBound}
\end{equation}

Next, we obtain more refined bounds on the above quantity for $v \in V_{n-2}$. By \eqref{eq:results:W} and \eqref{eq:results:drift} we have
\begin{equation*}
    \nabla_{x_v}\sum_{u \sim v}  \frac{1}{2}  \newW(x_u, x_v) = \nabla U(x_v) + \sum_{u \sim v} \nabla \intPot(x_v - x_u) = b(x_{\bar{v}}), \quad v \in V_{n-2}
\end{equation*}
Moreover for $v \in V_{n-2}$ and $\psi^n_\nu$-a.e. $\x^{(n)} \in (\R^d)^{V_n}$ we have 
\begin{equation*}
\begin{aligned}
    \nabla_{x_v} \bigg( \sum_{u \sim \bar{v}} \log \nu(x_{\bar{u}}) - \sum_{(u, v) \in E^n} \log \bar{\nu}(x_u, x_{u+1}) \bigg) = \nabla_{x_v} \log \nu(&x_{v-2} | x_{v - 1}, x_v)  + \nu(x_{\bar{v}}) \\  &+ \nabla_{x_v}\log \nu(x_{v+2}|x_{v+1}, x_v).
\end{aligned}
 \end{equation*}
For $v \in V_{n-2}$, the previous two displays together imply that for $\psi^n_\nu$-a.e. $\x^{(n)} \in (\R^d)^{V_n}$ we have
\begin{equation}
    \bigg| \nabla_{x_v}\log \frac{d\psi_\nu^n}{d\theta^n}(\x^{(n)}) \bigg|^2 = |\Theta_1(x_{\bar{v}})|^2 + |\Theta_2(x_{\bar{v}}, x_{\partial\bar{v}})|^2 + \Theta_1(x_{\bar{v}}) \cdot \Theta_2(x_{\bar{v}}, x_{\partial\bar{v}}),
    \label{eq:LSI:ThetaDecomp}
\end{equation}
where
\begin{equation*}
    \begin{aligned}
        \Theta_1(x_{\bar{v}}) &:= b(x_{\bv}) + \nabla_{x_v} \log \nu(x_{\bv}), \\
        \Theta_2(x_{\bar{v}}, x_{\partial\bar{v}}) &:= \nabla_{x_v} \log \nu(x_{v-2}|x_{v-1}, x_v) + \nabla_{x_v} \log \nu(x_{v+2}| x_{v+1}, x_v).
    \end{aligned}
\end{equation*}
We note that by \eqref{eq:LSI:finiteFI}, we can apply Lemma \ref{lem:entropy:symmetry} with $f = |\Theta_1(x_{\bar{v}})|^2$ to obtain
\begin{equation}
\label{eq:LSI:Theta1}
\int_{(\R^d)^{V_n}} |\Theta_1(x_{\bar{v}})|^2 \psi_\nu^n(d\x^{(n)}) =
\int_{(\R^d)^3} |b(\x)+ \nabla_{x_0} \log \nu(\x)|^2 \nu(d\x) < \infty.
\end{equation}
Therefore applying Lemma \ref{lem:LSI:orthogonal} with $f = \Theta_1$ yields
\begin{equation}
\label{eq:LSI:thetaDot}
    \begin{aligned}
&\int_{(\R^d)^{V_n}} \Big( \Theta_1(x_{\bar{v}}) \cdot \Theta_2(x_{\bar{v}}, x_{\partial\bar{v}})\Big) \psi^n_{\nu}(d\x^{(n)}) \\
            = & \int_{(\R^d)^{V_n}} \Big( \Theta_1(x_{\bar{v}}) \cdot \nabla_{x_v} \nu(x_{v-2}|x_{v-1}, x_v)\Big) \psi^n_{\nu}(d\x^{(n)}) \\
            &\qquad\qquad\qquad\qquad\qquad+ \int_{(\R^d)^{V_n}} \Big( \Theta_1(x_{\bar{v}}) \cdot \nabla_{x_v} \nu(x_{v+2}|x_{v+1}, x_v)\Big) \psi^n_{\nu}(d\x^{(n)}) \\ = &0.
    \end{aligned}
\end{equation}
By \eqref{eq:LSI:finiteFI} and Lemma \ref{lem:conv:marginalInformation}, we have can apply Lemma \ref{lem:entropy:symmetry} with $f = \nabla_{\x} \log \nu$ and $f = \nabla_{x_1} \log \bar{\nu}$ to obtain
\begin{equation*}
\begin{aligned}
    &\sup_{v \in V_{n-2}}\int_{(\R^d)^{V_n}} \Big(|\nabla_{x_v} \log \nu(x_{v-2}|x_{v-1}, x_v)|^2 + |\nabla_{x_v} \log \nu(x_{v+2}|x_{v+1}, x_v)|^2\Big) \psi^n_{\nu}(d\x^{(n)}) < \infty
\end{aligned}    
\end{equation*}
Thus applying Lemma \ref{lem:LSI:orthogonal} and then Lemma \ref{lem:entropy:symmetry} with $f = \nabla_{x_v} \log \nu(x_{v\pm2}|x_{v\pm1}, x_v)$, yields
\begin{equation}
\begin{aligned}
&\int_{(\R^d)^{V_n}} |\Theta_2(x_{\bar{v}}, x_{\partial \bar{v}})|^2 \psi_\nu^n(d\x^{(n)})
\\ = &\int_{(\R^d)^{V_n}} \Big(|\nabla_{x_v} \log \nu(x_{v+2}|x_{v+1}, x_v)|^2 + |\nabla_{x_v} \log \nu(x_{v-2}|x_{v-1}, x_v)|^2 \Big) \psi_\nu^n(d\x^{(n)})
\\ = &2\int_{(\R^d)^3} |\nabla_{x_1} \log \nu(x_{-1}|x_0, x_1)|^2 \nu(d\x).\end{aligned}
\label{eq:LSI:Theta2}
\end{equation}
Combining \eqref{eq:LSI:ThetaDecomp}, \eqref{eq:LSI:Theta1}, \eqref{eq:LSI:thetaDot}, and \eqref{eq:LSI:Theta2} with \eqref{eq:results:info} yields
\begin{equation*}
\begin{aligned}
\int_{(\R^d)^{V_n}}  \bigg| \nabla_{x_v}\log \frac{d\psi_\nu^n}{d\theta^n}(\x^{(n)}) \bigg|^2 \psi^n_\nu(d\x^{(n)}) = \bbI_2(\nu), \quad v \in V_{n-2}.
\end{aligned}
\end{equation*}
Thus, \eqref{eq:LSI:info1}, \eqref{eq:LSI:aPrioriInfoBound}, and the previous display  together imply
\begin{equation*}
    \frac{2n-3}{2n+1}\info_2(\nu) \leq \frac{1}{2n+1}\mathcal{I}(\psi^{n,2}_\nu|\theta^n) \leq \frac{4\tilde{C}}{2n+1} + \frac{2n-3}{2n+1}\info_2(\nu).
\end{equation*}
The conclusion follows on taking $n\rightarrow \infty$ in the above display. \end{proof}

\appendix
\section{Linear Fokker Planck Equations}
\label{_sec:ap:PDE}
In this appendix we prove several results about Fokker-Planck equations with drift satisfying a linear growth condition. First, we recall the definition of the following Sobolev space \cite{fokkerPlanck}. For $T > 0$, we define $\R^m_T := \R^m \times (0, T)$. For $u : \R^m_T \rightarrow \R$, define the $H^{s, p}$ norm by
\begin{equation*}
    \|u\|_{H^{s, p}(\R^m_T)} := \bigg[\int_0^T \|u_t\|_{W^{s, p}(\R^m)}^p dt\bigg]^{\frac{1}{p}},
\end{equation*}
and let $H^{s, p}(\R^m_T)$ denote the of space of measurable functions on $\R^m_T$ with finite $H^{s, p}$ norm.

Here, we include the necessary PDE techniques to justify the calculation of the derivative of the candidate Lyapunov function. Let $f: [0, T] \times \mathbb{R}^m \rightarrow \mathbb{R}^m$ be a measurable function. We consider the Fokker-Planck equation:
\begin{equation}
\label{eq:ap1:formalPDE}
    \partial_t \mu_t = \Delta \mu_t + \nabla \cdot (f_t \mu_t)
\end{equation}
We define weak solutions to the Fokker-Planck equation \eqref{eq:ap1:formalPDE} in the sense of Proposition 6.1.2(iii) in \cite{fokkerPlanck}.
\begin{definition}[Weak solution] A family of measures $\{\mu_t\}_{t \in [0, T]}$ is a weak solution to \eqref{eq:ap1:formalPDE} with drift $f$ and initial condition $\mu_0 \in \mP(\R^m)$ if for all $\phi \in C^{2, 1}(\R^m_T) \cap C(\R^m \times [0, T))$ such that there exists $R \in (0, \infty)$ such that for all $t \in [0, T]$ and $x \not \in B_R$, we have $\phi_t(x) = 0$,  we have
\begin{equation*}
    \int_{\R^m} \phi_t(x) \mu_t(dx) = \int_{\R^m} \phi_0(x) \mu_0(dx) + \int_0^t \int_{\R^d} (\partial_s \phi_s(x) + \Delta _s\phi(x) - f_s(x) \cdot \nabla \phi_s(x) )\mu_s(dx) ds
\end{equation*}
for almost every $t \in [0, T]$. We let $\mu(dt, dx) := \mu_t(dx) dt$ denote such a weak solution.
\label{def:ap1:wp}
\end{definition}
\begin{theorem}[Well-posedness of Fokker-Planck equations] Fix $T \in (0, \infty)$. Suppose $f:[0, T] \times \mathbb{R}^m \rightarrow \mathbb{R}^m$ is a measurable function that satisfies the linear growth condition \begin{equation}
    \sup_{t \leq T}|f_t(x)| \leq C_f(1 + |x|), \quad \text{a.e. }x \in \R^d,
    \label{eq:ap1:linGrowthCondition}
\end{equation}
for some $C_f \in (0, \infty)$. Suppose $\mu_0 \in \mP(\R^m)$ satisfies has finite entropy and second moment. Then the following properties hold.
\begin{enumerate}
    \item The Fokker-Planck equation  \eqref{eq:ap1:formalPDE} has a unique weak solution $\{\mu_t\}_{t \in [0, T]}$ such that $\mu_t \in \mP(\R^m)$ is a probability measure for all $t \in [0, T]$. Moreover, $\mu_t$ has a finite second moment, that is,
\begin{equation}
\label{eq:ap1:moments}
    \sup_{t \in [0, T]} \int_{\R^m} |x|^2 \mu_t(x)dx < \infty.
\end{equation}    
    \item There exists a positive locally H\"older continuous function $\mu:[0, T] \times \R^{m} \rightarrow (0, \infty)$ such that $\mu_t(dx) = \mu_t(x) dx$.  The H\"older coefficient and exponent of $\mu$ depends only on $(m,T)$, the linear growth condition of $f$, and the initial condition $\mu_0$. Moreover, $\mu_t \in W^{1, p}_{loc}(\R^m)$ for almost every $t \in [0, T]$ and $\mu \in H^{1, p}_{loc}(\R^m_T)$ for all $p \geq 1$.
    \item  For every $t \in [0, T]$, we have
\begin{equation}
\label{eq:ap1:logGrad}
    \int_0^t \int_{\R^{m}} \frac{|\nabla \mu_s(x)|^2}{\mu_s(x)} dx ds < \infty.
\end{equation}
Moreover, we have $\mu \in H^{1, 1}(\R^m_T)$ and $\mu_t \in W^{1, 1}(\R^m)$ for almost every $t \in (0, T)$.  
\item We have for all $t \in [0, T]$ that
\begin{equation}
    \int_{\R^m} \mu_t(x) |\log \mu_t(x)| dx < \infty
    \label{eq:ap1:stupidEntropy}
\end{equation}
and
\begin{equation}
        \int_0^T \int_{\R^m} \mu_t(x) |\log \mu_t(x)| dx < \infty.
        \label{eq:ap1:stupidEntropy2}
\end{equation}
\end{enumerate}

\label{thm:ap1:wellPosed2}
\end{theorem}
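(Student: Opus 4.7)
The plan is to reduce each assertion to a standard result from the theory of linear Fokker--Planck equations, for which \cite{fokkerPlanck} will be the main reference, and to combine these with moment estimates obtained via the SDE representation guaranteed by the superposition principle. Since $f$ satisfies the linear growth condition \eqref{eq:ap1:linGrowthCondition}, in particular $f \in L^p_{\text{loc}}(\R^m_T)$ for every $p \in [1, \infty)$, which will be the key regularity input for all four items.

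For item (1), I would first construct a weak solution to \eqref{eq:ap1:formalPDE} via the Itô SDE $dZ(t) = -f(t, Z(t))\,dt + \sqrt{2}\,dB(t)$ with $Z(0) \sim \mu_0$. Propositions~5.3.6 and~5.3.10 of \cite{karatzas1991stochastic} give existence and uniqueness in law of weak solutions under \eqref{eq:ap1:linGrowthCondition}, and a standard Gronwall estimate applied to $\bbE[|Z(t)|^2]$ (using $|f_t(x)|^2 \leq 2C_f^2(1 + |x|^2)$ and the finite second moment of $\mu_0$) yields \eqref{eq:ap1:moments}. Then $\mu_t := \mL(Z(t))$ is a weak solution. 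Uniqueness in the class of probability solutions follows from the superposition principle of Trevisan (Theorem~2.5 of Figalli / Section~6.7 of \cite{fokkerPlanck}): any weak probability solution to \eqref{eq:ap1:formalPDE} with linear growth drift is the one-dimensional marginal of a martingale solution to the associated SDE, and weak uniqueness of the SDE transfers to uniqueness of $\{\mu_t\}$.

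For item (2), the local boundedness of $f$ together with the uniform (in $t$) ellipticity of the operator puts \eqref{eq:ap1:formalPDE} in the scope of Theorem~6.3.1 and Corollary~6.4.3 of \cite{fokkerPlanck}, giving that $\mu$ has a density $\mu(t, x)$ that is locally H\"older continuous in $(t, x) \in (0, T) \times \R^m$ with exponent and coefficient depending only on the local $L^\infty$ norm of $f$, the dimension $m$, and $T$. Positivity of $\mu$ then follows from the parabolic Harnack inequality (Theorem~8.1.1 of \cite{fokkerPlanck}) since a locally bounded drift places us in the uniformly parabolic setting. The local Sobolev regularity $\mu \in H^{1, p}_{\text{loc}}(\R^m_T)$ and $\mu_t \in W^{1, p}_{\text{loc}}(\R^m)$ for a.e.\ $t$ and all $p \in [1, \infty)$ follows from the local parabolic $L^p$-estimates of Section~7.1 of \cite{fokkerPlanck}, again using the local boundedness of $f$.

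For item (3), the target is the integrated Fisher information bound \eqref{eq:ap1:logGrad}. The standard route is to test the PDE against $\log(\mu + \eps)$ (or equivalently to compute $\tfrac{d}{dt}\int \mu_t \log \mu_t$), which formally yields
\begin{equation*}
    \int_0^t \int_{\R^m} \frac{|\nabla \mu_s|^2}{\mu_s}\, dx\, ds = \int_{\R^m} \mu_0 \log \mu_0\, dx - \int_{\R^m} \mu_t \log \mu_t\, dx + \int_0^t \int_{\R^m} f_s \cdot \nabla \mu_s\, dx\, ds.
\end{equation*}
The last integral is controlled by Young's inequality against half of the left-hand side plus $\int_0^t\int |f_s|^2 \mu_s$, which is finite by \eqref{eq:ap1:linGrowthCondition} and \eqref{eq:ap1:moments}. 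The entropy terms are handled using the upper bound from item (4) (below) for the positive part and the moment bound together with the elementary inequality $|x \log x| \leq x \log x + 2 e^{-|y|}|y|\, x + 2 e^{-|y|}$ applied with $y = (1+|x|^2)^{1/2}$ for the negative part. Rigorous justification requires an approximation via cutoffs and truncations $\mu_s^\eps := (\mu_s \wedge R) \vee \eps$ on balls $B_R$, passing to the limit using the local regularity from item (2); this is essentially carried out in Theorem~7.4.1 of \cite{fokkerPlanck}. The $H^{1, 1}$ bound is immediate from \eqref{eq:ap1:logGrad} and Cauchy--Schwarz: $\int |\nabla \mu_s| \leq (\int |\nabla \mu_s|^2 / \mu_s)^{1/2}$.

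For item (4), the upper tail of the entropy follows from a comparison with a reference heat-kernel-type Gaussian: since $f$ has linear growth, the standard Fokker--Planck comparison (cf.\ Section~6.6 of \cite{fokkerPlanck}) yields a Gaussian upper bound $\mu_t(x) \leq C_t \exp(c_t |x|^2)$ which combined with \eqref{eq:ap1:moments} gives $\int \mu_t (\log \mu_t)_+\, dx < \infty$. The lower tail $\int \mu_t (\log \mu_t)_-\,dx$ is bounded in terms of the second moment via the classical inequality $\entropy(\mu_t \| \gamma) \geq 0$ for a suitable Gaussian reference $\gamma$, which gives $\int \mu_t \log \mu_t\,dx \geq \int \mu_t \log \gamma(x)\,dx \geq -C(1 + \int |x|^2 \mu_t(dx))$. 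Together these yield \eqref{eq:ap1:stupidEntropy}, and \eqref{eq:ap1:stupidEntropy2} follows on integration in $t$.

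The main obstacle will be the rigorous justification of the entropy--dissipation computation in item (3), as the formal integration by parts requires care given only local Sobolev regularity and potentially unbounded $f$. I expect this to follow a cutoff-and-truncation scheme as in Chapter~7 of \cite{fokkerPlanck}, where one first obtains the identity for the smooth mollified density $\mu^{\eps, R}_t := \eta_R(x)(\mu_t * \rho_\eps)(x) + \eps$ on a bounded domain, controls the commutator errors using the local $H^{1, p}$ bound from item (2), and passes to the limit $\eps \downarrow 0$, $R \uparrow \infty$ using Fatou and the moment bound.
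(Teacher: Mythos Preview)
Your proposal is correct and follows essentially the same strategy as the paper: reduce each item to results in \cite{fokkerPlanck}. The paper's proof is terser --- it simply cites Theorem~9.4.8 and Example~7.1.3 for item~(1), Corollaries~6.4.3 and~8.3.7 for item~(2), Theorem~7.4.1 for item~(3), and Proposition~8.2.5 for item~(4) --- whereas you spell out the underlying mechanisms in more detail.

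The one genuine difference is in item~(1): you construct the solution via the SDE (Karatzas--Shreve Propositions~5.3.6/5.3.10) and deduce PDE uniqueness from the superposition principle, while the paper stays on the PDE side and invokes the probability-solution uniqueness result of Theorem~9.4.8 in \cite{fokkerPlanck} directly (using only that $f$ is locally bounded). Your route is equally valid and arguably more in keeping with how the result is used elsewhere in the paper, but the paper's route avoids the detour through Trevisan's superposition principle. For item~(4) you sketch a direct Gaussian-comparison argument, whereas the paper simply cites Proposition~8.2.5 of \cite{fokkerPlanck}; your sketch is fine, though the ``Gaussian upper bound'' you invoke for $(\log\mu_t)_+$ is not quite what Section~6.6 of \cite{fokkerPlanck} provides --- the cleaner argument (and what \cite{fokkerPlanck} does) bounds $\mathcal{H}(\mu_t|\gamma)$ from above via the entropy identity for diffusions with linear-growth drift.
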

\begin{proof}
We show property (1) first. Since $f$ satisfies the linear growth condition \eqref{eq:ap1:linGrowthCondition}, it is locally bounded. Then Theorem 9.4.8 of \cite{fokkerPlanck} gives existence and uniqueness of probability solutions to the Cauchy problem. Since $\{\mu_t\}_{t \in [0, T]}$ is a probability solution, the local boundedness of $f$ implies that $f \in L^p_{\text{loc}}(\mu)$ for any $p \geq 1$. Lemma 9.1.1 of \cite{fokkerPlanck} associates solutions of the Cauchy problem with solutions to the Fokker-Planck equation \eqref{eq:ap1:formalPDE} in the sense of Definition \ref{def:ap1:wp}. By Example 7.1.3 of \cite{fokkerPlanck}, the linear growth condition \eqref{eq:ap1:linGrowthCondition} implies \eqref{eq:ap1:moments}.

Next, we turn to property (2). By Corollary 6.4.3 of \cite{fokkerPlanck}, since $f \in L^p_{loc}(\mu)$, then $\mu$ has a locally H\"older continuous density. Moreover, a careful analysis of the proof of Corollary 6.4.3 of \cite{fokkerPlanck} shows that the H\"older coefficient depends only on degree of the integrability of $f$, which is governed by the linear growth condition, as stated in Theorem 3.7 of \cite{conforti2023projected}. Moreover Corollary 6.4.3 of \cite{fokkerPlanck} implies that for all $p \in [1, \infty)$, $\mu_t \in W^{1, p}_{loc}(\R^m)$ for almost every $t \in [0, T]$ and $\mu \in H^{1, p}_{loc}(\R^m_T)$. By Corollary 8.3.7 of \cite{fokkerPlanck}, local boundedness of $f$ implies that the density $\mu$ is positive on all of $\R_T^m$. 

Next, we show property (3). Due to the linear growth condition \eqref{eq:ap1:linGrowthCondition} of $f$ and finite entropy of $\mu_0$, we can use Theorem 7.4.1 of \cite{fokkerPlanck} to deduce \eqref{eq:ap1:logGrad} and that $\mu_t \in W^{1, 1}(\R^{m})$ for almost every $t \in [0, T]$. The Cauchy-Schwarz inequality and \eqref{eq:ap1:logGrad} implies that $\mu \in H^{1, 1}(\R^m_T)$. 

Finally, we establish the property (4). By the linear growth condition \eqref{eq:ap1:linGrowthCondition} on $f$, we can use \eqref{eq:ap1:moments} and the proof of Proposition 8.2.5 in \cite{fokkerPlanck} to obtain \eqref{eq:ap1:stupidEntropy}.
\end{proof}

The next lemma is used to compute $\energy_\kap(\mu_t)$ in Theorem \ref{thm:results:lyap}. If solutions to \CMVE{} were smooth, then the following lemma would be a trivial consequence of the weak formulation of the Fokker-Planck equation in Definition \ref{def:ap1:wp}. However, the conditional expectation $\eta$ defined in \eqref{eq:not:eta} may be highly irregular. Since we only consider linear growth solutions to the \CMVE{}, our result must be established for Fokker-Planck equations with measurable drift satisfying a linear-growth condition. The proof is a standard truncation and mollification argument.

\begin{lemma}
\label{lem:ap1:lyapunov1}
Suppose $f:[0, T]\times \R^m \rightarrow \R^m$  and $\mu_0 \in \mP(\R^m)$ satisfy the conditions of Theorem \ref{thm:ap1:wellPosed2}. Let $g: \R^m \rightarrow \R$ be a $C^1$ function such that $\nabla g$ satisfies a linear growth condition. Let $\{\mu_t\}_{t \in [0, T]}$ be the weak solution to \eqref{eq:ap1:formalPDE}, which exists by (1) of Theorem \ref{thm:ap1:wellPosed2}. Then for almost every $0 < r < t < T$, we have
\begin{equation}
    \begin{split}
        \int_{\R^m} \mu_t \big( \log \mu_t + g \big) dx - &\int_{\R^m} \mu_r \big( \log \mu_r + g \big) dx = \\
        & - \int_r^t \int_{\R^m} (\nabla \mu_s + f_s \mu_s) \cdot \bigg(\frac{\nabla \mu_s}{\mu_s}  + \nabla g \bigg) dx ds.
    \end{split}
    \label{eq:ap1:lyapunovPart1}
\end{equation}
    
\end{lemma}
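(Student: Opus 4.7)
\medskip

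\noindent \textbf{Proof proposal for Lemma \ref{lem:ap1:lyapunov1}.}  The identity is a chain rule for the functional $\nu \mapsto \int (\log\nu + g)\,\nu\,dx$ along the linear Fokker--Planck flow, and the plan is to obtain it by testing the weak formulation of \eqref{eq:ap1:formalPDE} against a smooth compactly supported approximation of $\log\mu + g$ and passing to the limit.  My first step is to collect the full regularity available from Theorem \ref{thm:ap1:wellPosed2}: $\mu$ is a positive locally Hölder continuous density, $\mu\in H^{1,p}_{\mathrm{loc}}(\mathbb{R}^m_T)$ for every $p\ge 1$, $\mu_t \in W^{1,1}(\mathbb{R}^m)$ for a.e.\ $t$, the Fisher integral $\int_0^T\!\int|\nabla\mu_s|^2/\mu_s\,dx\,ds$ is finite, the second moments $\sup_t\int|x|^2\mu_t\,dx$ are bounded, and the entropies $\sup_t\int\mu_t|\log\mu_t|\,dx$ are finite.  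Together with the linear growth bounds \eqref{eq:ap1:linGrowthCondition} and $|\nabla g(x)|\le C(1+|x|)$ (whence $|g(x)|\le C(1+|x|^2)$ after fixing a base point), these bounds will justify every dominated convergence argument below.

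\medskip

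\noindent Next I introduce three layers of approximation.  Let $\chi_R \in C_c^\infty(\mathbb{R}^m)$ satisfy $\chi_R\equiv 1$ on $B_R$, $\operatorname{supp}\chi_R\subset B_{2R}$, and $|\nabla\chi_R|+|\Delta\chi_R|\le C/R$.  Let $\rho_\epsilon$ be a standard space-time mollifier and put $\mu^\epsilon(s,x) := (\rho_\epsilon *_{s,x} \tilde\mu)(s,x)$, where $\tilde\mu$ extends $\mu$ to $\mathbb{R}^{m+1}$ by $\mu_0$ for $s<0$ and $\mu_T$ for $s>T$.  For $\delta>0$, set
\[
\phi^{R,\epsilon,\delta}(s,x) := \chi_R(x)\bigl[\log\bigl(\mu^\epsilon(s,x)+\delta\bigr) + g(x)\bigr].
\]
This is $C^{2,1}$ in $(s,x)$ and vanishes outside $B_{2R}$, so is admissible in Definition \ref{def:ap1:wp}.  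By writing the weak formulation at both endpoints $r$ and $t$ and subtracting (valid for a.e.\ $0<r<t<T$), then integrating the $\Delta\phi$ term by parts in $x$ using $\mu_s\in W^{1,1}(\mathbb{R}^m)$ for a.e.\ $s$, one obtains
\[
\int \phi^{R,\epsilon,\delta}_t\mu_t\,dx - \int \phi^{R,\epsilon,\delta}_r\mu_r\,dx
= \int_r^t\!\!\int \bigl(\partial_s\phi^{R,\epsilon,\delta}\,\mu_s - \nabla\phi^{R,\epsilon,\delta}\cdot(\nabla\mu_s + f_s\mu_s)\bigr)\,dx\,ds.
\]

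\medskip

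\noindent Now I pass to the limit in the order $\delta\downarrow 0$, then $\epsilon\downarrow 0$, then $R\to\infty$.  For $\delta\downarrow 0$, dominated convergence applies since $\mu^\epsilon$ is continuous and strictly positive on the (fixed) compact support of $\chi_R$, so $\log(\mu^\epsilon+\delta)\to\log\mu^\epsilon$ and $\partial_s\mu^\epsilon/(\mu^\epsilon+\delta) \to \partial_s\mu^\epsilon/\mu^\epsilon = \partial_s\log\mu^\epsilon$ uniformly on supports.  For $\epsilon\downarrow 0$, I use: (a) strong $L^1_{\mathrm{loc}}$ convergence $\mu^\epsilon\to\mu$ from the Hölder continuity; (b) $\nabla\log\mu^\epsilon\to\nabla\log\mu$ in $L^2_{\mathrm{loc}}(\mu)$, a standard DiPerna--Lions type commutator argument exploiting $\mu\in H^{1,p}_{\mathrm{loc}}$; and (c) the crucial cancellation
\[
\int_r^t\!\!\int \chi_R\,(\partial_s\log\mu^\epsilon)\,\mu_s\,dx\,ds \;\longrightarrow\; \int_r^t\!\!\int \chi_R\,\partial_s\mu_s\,dx\,ds \;=\; \int \chi_R(\mu_t-\mu_r)\,dx,
\]
which follows by replacing $\mu_s$ by $\mu^\epsilon_s$ (the difference vanishes by (a)) and then noting $\int\chi_R\partial_s\mu^\epsilon\,dx$ integrated in $s$ telescopes.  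Combining these gives the identity with $\phi^{R,\epsilon,\delta}$ replaced by $\chi_R(\log\mu+g)$; the $\partial_s\phi\cdot\mu$ piece cancels with the contribution $\int\chi_R\mu_t-\int\chi_R\mu_r$ extracted from the left-hand side, leaving exactly the divergence-type integrand on the right.

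\medskip

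\noindent The final step is $R\to\infty$, and this is the main technical obstacle.  The boundary error splits $\nabla[\chi_R(\log\mu+g)] = \nabla\chi_R\cdot(\log\mu+g) + \chi_R\,\nabla(\log\mu+g)$, and I must show
\[
\int_r^t\!\!\int \nabla\chi_R\cdot(\log\mu_s+g)\,(\nabla\mu_s + f_s\mu_s)\,dx\,ds \;\longrightarrow\; 0.
\]
Using $|\nabla\chi_R|\le C/R$ supported in the annulus $B_{2R}\setminus B_R$, Cauchy--Schwarz, and $|f_s(x)|\le C(1+|x|)$, $|g(x)|\le C(1+|x|^2)$, this reduces to showing that the integrands are uniformly integrable on $\mathbb{R}^m_T$.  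Uniform integrability in turn follows from the joint bounds $\int_0^T\!\int |\nabla\mu|^2/\mu<\infty$, $\sup_t\int|x|^2\mu_t<\infty$, and $\sup_t\int\mu_t|\log\mu_t|<\infty$ of Theorem \ref{thm:ap1:wellPosed2}, combined with the elementary inequality $u|\log u|\le C(1+u^2+\mathbf{1}_{u\le 1}\sqrt{u})$ to control $\mu|\log\mu|$ at the tails.  The dominant-terms $\chi_R\,\nabla(\log\mu+g)\cdot(\nabla\mu+f\mu)$ converge by monotone/dominated convergence to $\nabla(\log\mu+g)\cdot(\nabla\mu+f\mu) = (\nabla\mu/\mu + \nabla g)\cdot(\nabla\mu + f\mu)$, which is integrable on $[r,t]\times\mathbb{R}^m$ by Cauchy--Schwarz and the same bounds.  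The left-hand side $\int\chi_R(\log\mu_t+g)\mu_t - \int\chi_R(\log\mu_r+g)\mu_r$ converges to $\int(\log\mu_t+g)\mu_t - \int(\log\mu_r+g)\mu_r$ by dominated convergence using finite entropy and finite second moments.  Assembling these limits yields \eqref{eq:ap1:lyapunovPart1}.
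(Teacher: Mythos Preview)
Your overall architecture---cut off in space, mollify, pass to the limit---matches the paper's, and your handling of the $R\to\infty$ step is essentially the same as the paper's.  The $\delta$-regularization is harmless but redundant (once mollified, $\mu^\epsilon$ is already strictly positive on compacts), and the invocation of a ``DiPerna--Lions commutator'' in (b) is unnecessary: $\nabla\log\mu^\epsilon\to\nabla\log\mu$ in $L^2_{\mathrm{loc}}$ follows directly from $\nabla\mu^\epsilon\to\nabla\mu$ in $L^2_{\mathrm{loc}}$ together with the uniform positive lower bound $\mu^\epsilon\ge c_R$ on $B_{2R}$.

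The genuine gap is in step (c).  You test the weak equation for $\mu$ (not for $\mu^\epsilon$) against $\phi^{R,\epsilon,\delta}$, so the time-derivative term carries the factor $\mu_s$, not $\mu^\epsilon_s$.  To obtain the telescoping identity you need to replace $\mu_s$ by $\mu^\epsilon_s$, and you justify this by ``the difference vanishes by (a)'', i.e.\ by $L^1_{\mathrm{loc}}$ (or even uniform-on-compacts) convergence $\mu^\epsilon\to\mu$.  But $\partial_s\log\mu^\epsilon$ is not uniformly bounded in $\epsilon$: with a space--time mollifier one only has $|\partial_s\mu^\epsilon|\lesssim \epsilon^{-1}$ on compacts, while the H\"older regularity of $\mu$ gives at best $\|\mu-\mu^\epsilon\|_{L^\infty(B_{2R})}\lesssim \epsilon^\alpha$ with $\alpha\in(0,1)$.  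The product is $O(\epsilon^{\alpha-1})$, which does not vanish.  One can repair this by using the equation for $\mu^\epsilon$ to rewrite $\partial_s\mu^\epsilon=\Delta\mu^\epsilon+\nabla\!\cdot(f\mu)^\epsilon$ and integrating by parts once more in space, but this is a substantial additional step you have not supplied.

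The paper sidesteps the issue entirely by a different choice of mollification.  It mollifies \emph{in space only}, so that $\mu^\epsilon_t:=(\mu_t)_\epsilon$ is itself a classical solution of a Fokker--Planck equation with modified drift $\hat f^\epsilon_t:=(f_t\mu_t)_\epsilon/\mu^\epsilon_t$ (which inherits the same linear-growth bound as $f$).  One then tests \emph{this} equation against $\Psi^{\epsilon,R}_t=\psi_R(\log\mu^\epsilon_t+g)$; the time-derivative term is now $\int\psi_R(\partial_s\log\mu^\epsilon_s)\,\mu^\epsilon_s=\int\psi_R\,\partial_s\mu^\epsilon_s$, an exact identity with no error, and it telescopes to $\int\psi_R(\mu^\epsilon_t-\mu^\epsilon_r)\to 0$.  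The remaining limits $\epsilon\to0$ and $R\to\infty$ are then handled just as you do.
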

\begin{proof}
By (3) of Theorem \ref{thm:ap1:wellPosed2}, there exists a set $S_T \subset [0, T]$ of full Lebesgue measure such that $\mu_t \in W^{1, 1}(\R^m)$. We prove \eqref{eq:ap1:lyapunovPart1} for $r, t \in S_T$ and then extend to all $t \in [0, T]$ by continuity of the right hand side. Fix $r, t \in S_T$ such that $r <t$. Define the mollifier $\omega \in C_0^\infty(\R^m)$ by
    \begin{equation}
        \omega(x) =: c\exp \bigg( -\frac{1}{1 - |x|^2}\bigg) 1_{\{|x| < 1\}},
        \label{eq:ap1:mollifier}
    \end{equation}
    where $c$ is the constant that makes $\omega$ a probability density. Then $\omega \in C_0^\infty(\R^m)$. Fix $\eps > 0$. Define 
    \begin{equation*}
        \omega_\eps(x) := \eps^{- (1 + \kappa)} \omega \bigg( \frac{x}{\eps}\bigg).
    \end{equation*}
    Given any measurable function $\varphi$, We denote mollification by $\omega_\eps$ by:
    \begin{align*}
        (\varphi)_\eps(x) := \int_{\R^m} \varphi(y) \omega_\eps(x - y) dy.
    \end{align*}
    Consider the mollified solution $\mu_t^\eps := (\mu_t)_\eps$. We have (e.g. by the proof of Lemma 2.4 in \cite{bogachev2016distances}) that $\{\mu_t^\eps\}_{t \in [0, T]}$ is a classical (smooth) solution the equation
    \begin{equation*}
        \partial_t \mu_t^\eps = \Delta \mu_t^\eps + \hat{f}_t^\eps \cdot \nabla \mu_t^\eps,
    \end{equation*}
    with initial condition $\mu_0^\eps$, where
    \begin{equation*}
        \hat{f}_t^\eps := \frac{(f_t \mu_t)_\eps}{\mu_t^\eps}.
    \end{equation*}
    Note that $\hat{f}_t^\eps$ is well defined since by (2) of Theorem \ref{thm:ap1:wellPosed2}, $\mu_t$ is positive, and thus $\mu_t^\eps$ is positive. Then $\mue:[0, T] \times \R^m \rightarrow \R$ can be identified as the integral in time of a smooth function of space, and hence is absolutely continuous in time and smooth in space. By \eqref{eq:ap1:linGrowthCondition}, we have
   \begin{align*}
       \hat{f}_t^\eps(x) &= \frac{1}{\mu_t^\eps(x)} \int_{|y - x| < \eps} f_t(y) \mu_t(y) \omega_\eps(x - y) dy \\
       & \leq \frac{C_f}{\mu_t^\eps(x)} \int_{|y - x| < \eps} (1 + |y|) \mu_t(y) \omega_\eps(x - y) dy \\
       & \leq \frac{C_f(1 + \eps + |x|)}{\mu_t^\eps(x)} \int_{|y - x| < \eps} \mu_t(y) \omega_\eps(x - y) dy \\
       & \leq 2C_f( 1 + |x|),
   \end{align*}
   thus establishing that \eqref{eq:ap1:linGrowthCondition} holds with $f$ and $C_f$ replaced with $f^\eps$ and $C_{{f^\eps}} := 2C_f$ respectively. Let $\psi\in C^\infty_0(\R)$ be a compactly supported function taking values in $[0, 1]$ that is identically $1$ on $[-1/2, 1/2]$ and $0$ outside the ball of radius $[-1, 1]$. For $R \in (0, \infty)$, we define the functions $\psi_R \in C_0^\infty(\R^m)$ by
    \begin{align}
        \psi_R(x) := \psi \bigg( \frac{|x|}{R}\bigg),
        \label{eq:ap1:psiR}
    \end{align}and the function $\Psi^{\eps, R}: [0, T] \times \R^m \rightarrow \R$ by
    \begin{equation}
        \Psi_t^{\eps, R}(x) := \big(\log \mu_t^\eps(x) + g(x) \big) \psi_R(x).
        \label{eq:ap1:bigPsi}
    \end{equation}
     By the positivity of $\mu_t^\eps$, if follows  that $\Psi^{\eps, R}$ lies in $ C^{2, 1}(\R^m \times [0, T)) \times C(\R^m \times [0, T))$ and is compactly supported. By Theorem \ref{thm:ap1:wellPosed2}, Definition \ref{def:ap1:wp}, and integration by parts, we have
\begin{align}
     \int_{\R^m} \Psi^{\eps, R}_t \mu^\eps_t dx - \int_{\R^m} \Psi^{\eps, R}_r \mu^\eps_r dx 
    = I_1(\eps, R) - I_2(\eps, R),
    \label{eq:ap1:I0}
\end{align}
where
\begin{align}
    I_1(\eps, R) &:= \int_r^t \int_{\R^m} \partial_s \Psi^{\eps, R}_s \mu_s^\eps dx ds,
    \label{eq:ap1:I1} \\
    I_2(\eps, R) &:= \int_r^t \int_{\R^m} (\nabla \mu_s^\eps + \mu_s^\eps \hat{f}_t^\eps )\cdot  \nabla \Psi^{\eps, R}_s dx ds.
        \label{eq:ap1:I2}
\end{align}
We now compute the limits as $\eps \rightarrow 0$ of both sides. We start with the entropy terms. For any $R \in (0, \infty)$, by continuity of $\mu_t$, which follows from Theorem \ref{thm:ap1:wellPosed2}(2), there exists $C_R \in (0, \infty)$ such that
\begin{equation}
\|\mue\|_{L^\infty([r, t] \times B_{R+ \eps})} \leq \|\mu\|_{L^\infty([r, t] \times B_{R+1})} < C_R.
    \label{eq:ap1:upperBd}
\end{equation}
Since $\mu_t$ is also positive and continuous by Theorem \ref{thm:ap1:wellPosed2}(2), there exists $c_R \in (0, \infty)$ such that we have
\begin{equation}
\label{eq:ap1:lowerBd}
\begin{aligned}
    \inf_{s \in [r, t]} \inf_{x \in B_R} \mu_s^{\eps}(x) &= \inf_{s \in [r, t]} \inf_{x \in B_R} \int_{|y| \leq \eps} \mu_s(x -y) \omega_\eps(y) dy \\ &\geq \inf_{s \in [r, t]} \inf_{x \in B_{R + \eps}} \mu_s(x) \\ &\geq \inf_{s \in [r, t]} \inf_{x \in B_{R + 1}} \mu_s(x) \\ &=: c_{R} > 0.    
\end{aligned}
\end{equation}
In particular, $c_{R}$ does not depend on $\eps$. Together \eqref{eq:ap1:upperBd} and \eqref{eq:ap1:lowerBd} imply that $\mu^\eps_t(x) \log \mu_t^\eps(x)$ is uniformly bounded in $\eps$ from above and below for $x \in B_{R+1}$. Since \eqref{eq:ap1:psiR} ensures that $\psi_R$ is supported on $B_R$, by the bounded convergence theorem, we have
\begin{equation*}
    \lim_{\eps \rightarrow 0}\int_{\R^m} \psi_R(x) \mu_t^{\eps}(x) \log \mu_t^{\eps}(x) dx  = \int_{\R^m} \psi_R(x) \mu_t(x) \log \mu_t(x) dx.
\end{equation*}
Then by \eqref{eq:ap1:stupidEntropy} and the dominated convergence theorem,
\begin{equation*}
    \lim_{R \rightarrow \infty}  \lim_{\eps \rightarrow 0}\int_{\R^m} \psi_R(x) \mu_t^{\eps} \log \mu_t^{\eps} dx = \int_{\R^m} \mu_t \log \mu_t dx.
\end{equation*}
We can repeat the above argument for $\mu_r$ to obtain. 
\begin{equation}
\label{eq:ap1:1part1}
    \lim_{R \rightarrow \infty}  \lim_{\eps \rightarrow 0}\int_{\R^m} \psi_R(x) \mu_\tau^{\eps} \log \mu_\tau^{\eps} dx = \int_{\R^m} \mu_\tau \log \mu_\tau dx, \quad \tau \in \{r, t\}.
\end{equation}

Now, consider $I_1$. By absolute continuity in time of $\mue_s$, we have that $\partial_s \mue_s \in L^1_{\text{loc}}(\R^m_T)$ and
\begin{align*}
    \int_r^t \int_{\R^m} \partial_s \Psi_s^{\eps, R} \mue_s dx ds &= \int_r^t \int_{\R^m} \psi_R(x) \mue_s \partial_s \log \mue_s dx ds \\
    &= \int_r^t \int_{\R^m} \psi_R(x) \partial_s \mue_s dx ds \\
    &= \int_{\R^m} \psi_R(x) \bigg[\int_r^t \partial_s \mue_s ds \bigg]dx \\
    &= \int_{\R^m} \psi_R(x) (\mue_t - \mue_r) dx.
\end{align*}
Combining the above display, \eqref{eq:ap1:upperBd}, and the bounded convergence theorem, we obtain
\begin{align*}
    \lim_{\eps \rightarrow 0}\int_r^t \int_{\R^m} \partial_s \Psi_s^{\eps, R} \mue_s dx ds = \lim_{\eps \rightarrow 0}\int_{\R^m} \psi_R(x) (\mue_t - \mue_r) dx = \int_{\R^m} \psi_R(x) (\mu_t - \mu_r) dx.
\end{align*}
Therefore by \eqref{eq:ap1:I1}, the dominated convergence theorem, and the fact that $\mu_t$ and $\mu_0$ have probability densities, it follows that
\begin{equation}
\label{eq:ap1:1part2}
    \lim_{R \rightarrow \infty} \lim_{\eps \rightarrow 0}I_1(\eps, R) = \lim_{R \rightarrow \infty} \lim_{\eps \rightarrow 0}\int_r^t \int_{\R^m} \partial_s \Psi_s^{\eps, R} \mue_s dx ds = \lim_{R \rightarrow \infty}\int_{\R^m} \psi_R(x) (\mu_t - \mu_0) dx = 0.
\end{equation}

Next, recalling \eqref{eq:ap1:bigPsi}, we decompose $I_2$ from  \eqref{eq:ap1:I2} into two parts: 
\begin{align*}
    I_2(\eps, R) &= I_{2, 1}(\eps, R) + I_{2, 2}(\eps, R),
\end{align*}
where
\begin{align*}
    I_{2, 1}(\eps, R) &:= \int_r^t \int_{\R^m} (\nabla \mu_s^\eps + \mu_s^\eps \hat{f}_t^\eps )\cdot  \nabla (\log \mue_s + g)\psi_R \,dx ds \\
        I_{2, 2}(\eps, R) &:= \int_r^t \int_{\R^m} (\nabla \mu_s^\eps + \mu_s^\eps \hat{f}_t^\eps )\cdot  \nabla\psi_R (\log \mue_s + g) \,dx ds.
\end{align*}
By (2) of Theorem \ref{thm:ap1:wellPosed2}, $\mu \in H^{1, 2}_{loc}(\R^m_T)$ and by \eqref{eq:ap1:mollifier}, $\omega$ is compactly supported; therefore, we have $(\nabla \mu_t)_\eps = \nabla \mu_t^\eps$. Then by standard properties of mollifiers (see for example Appendix C of \cite{evans1998PDE}) it follows that $\nabla \mu^\eps \rightarrow \nabla \mu$ in $L^2_{loc}(\R^m_T)$. By Theorem 4.9 in \cite{brezis2011Functional}, there exists a subsequence $\{\eps_k\}$ converging to 0 and a function $h \in L^1(B_R \times [0, T])$ such that
\begin{equation}
    \sup_{k}|\nabla \mu^{\eps_k}_s(x)| +  \sup_{k}|\nabla \mu^{\eps_k}_s(x)|^2 \leq h_s(x), \quad (x, s) \in B_R \times [0, T].
    \label{eq:ap1:gradientL1bound}
\end{equation}
Now we compute the limits in $\eps$ and $R$ for $I_{2, 1}(\eps, R)$ by further decomposing it as
\begin{equation}
    \begin{aligned}
    I_{2, 1}(\eps, R) 
    &= I_{2, 1, 1}(\eps, R) + I_{2, 1, 2}(\eps, R),
\end{aligned}
    \label{eq:ap1:I21}
\end{equation}
where
\begin{equation}
    \begin{aligned}
    I_{2, 1, 1}(\eps, R) &:= \int_r^t \int_{\R^m} \psi_R \frac{|\nabla \mue_s|^2}{\mue_s} dx ds, \\
     I_{2, 1, 2}(\eps, R) &:=\int_r^t \int_{\R^m} \psi_R \bigg(\nabla \mue_s \nabla g + f_s^\eps \nabla \mue_s + \mue_sf_s^\eps \nabla g \bigg)dx ds.
     \label{eq:ap1:I2110}
\end{aligned}
\end{equation}
Recall the definition of $c_{R}$ from \eqref{eq:ap1:lowerBd}. By \eqref{eq:ap1:lowerBd} and \eqref{eq:ap1:gradientL1bound} it follows that
\begin{align*}
    \sup_{k}\frac{|\nabla \mu_s^{\eps_k}|^2}{\mu_s^{\eps_k}} \leq \frac{1}{c_{R}}h_s(x), \quad (x, s) \in B_R \times [r, t].
\end{align*}
Together with \eqref{eq:ap1:I2110} and the dominated convergence theorem, this implies that
\begin{equation}
    \label{eq:ap1:I211}
\lim_{k \rightarrow \infty} I_{2, 1, 1}(\eps_k, R) = \lim_{k \rightarrow \infty} \int_{r}^t \int_{\R^m} \psi_R \frac{|\nabla \mu^{\eps_k}_s|^2}{\mu^{\eps_k}_s} dx ds = \int_{r}^t \int_{\R^m} \psi_R \frac{|\nabla \mu_s|^2}{\mu_s} dx ds.
\end{equation}
On the other hand, by \eqref{eq:ap1:logGrad} and the dominated convergence theorem, we have
\begin{align*}
    \lim_{R \rightarrow \infty} \lim_{k \rightarrow \infty}I_{2, 1, 1}(\eps_k, R)  = \lim_{R \rightarrow \infty} \int_{r}^t \int_{\R^m} \psi_R \frac{|\nabla \mu_s|^2}{\mu_s} dx ds = \int_r^t\int_{\R^m}\frac{|\nabla \mu_s|^2}{\mu_s} dx ds.
\end{align*}
Next, observe that \eqref{eq:ap1:gradientL1bound}, the fact that $g \in C^2(\R^m)$, and the dominated convergence theorem yield the limit
\begin{align*}
    \lim_{k \rightarrow \infty} I_{2, 1, 2}(\eps_k, R) = \int_r^t \int_{\R^m} \psi_R \bigg(\nabla \mu_s \nabla g + f_s \nabla \mu_s + \mu_sf_s \nabla g \bigg)dx ds.
\end{align*}
Also, for any measurable $\varphi$, 
 by \eqref{eq:ap1:logGrad},  \eqref{eq:ap1:moments}, and the Cauchy-Schwarz inequality, it follows that that \begin{equation}
    \label{eq:ap1:polyGrowthIntegrability}
    \begin{aligned}
            \int_{r}^t\int_{\R^m} \varphi(x) \cdot \nabla \mu_s(x) dx &= \int_r^t\int_{\R^m} \sqrt{\mu_s(x)}\varphi(x) \cdot \frac{\nabla \mu_s(x)}{\sqrt{\mu_s(x)}} dx \\ &\leq \bigg[\int_r^t \int_{\R^m} |\varphi(x)|^2 \mu_s(x) dx\bigg]^{\tfrac{1}{2}}\bigg[ \int_r^t \int_{\R^m} \frac{|\nabla \mu_s(x)|^2}{\mu_s(x)} dx \bigg]^{\tfrac{1}{2}},
    \end{aligned}
\end{equation} 
Hence if $\varphi \in L^2(\mu)$ then $\varphi \cdot \nabla \mu \in L^1(\R^m \times [0, t])$.
Using the above display with $\varphi(x) = f_s(x) + \nabla g(x)$, and applying \eqref{eq:ap1:gradientL1bound}, the linear growth of $\nabla g$, \eqref{eq:ap1:moments}, and the dominated convergence theorem, we obtain
\begin{equation}
    \lim_{R \rightarrow \infty} \lim_{k \rightarrow \infty} I_{2, 1, 2}(\eps_k, R) = \int_r^t \int_{\R^m} \bigg((f_s + \nabla g)\cdot \nabla \mu_s + \mu_sf_s \nabla g \bigg)dx ds.
    \label{eq:ap1:I212}
\end{equation}
Therefore, combining \eqref{eq:ap1:I21},  \eqref{eq:ap1:I211}, and \eqref{eq:ap1:I212}, we conclude that
\begin{equation}
    \label{eq:ap1:1part3}
    \lim_{R \rightarrow \infty} \lim_{k \rightarrow \infty}I_{2, 1}(\eps_k, R) = \int_r^t \int_{\R^m} \frac{|\nabla \mu_s|^2}{\mu_s} dx ds + \int_r^t \int_{\R^m} \bigg(\nabla \mu_s \nabla g + f_s \nabla \mu_s + \mu_sf_s \nabla g \bigg)dx ds.
\end{equation}

In view of \eqref{eq:ap1:I0}-\eqref{eq:ap1:I2}, \eqref{eq:ap1:1part1}, \eqref{eq:ap1:1part2}, \eqref{eq:ap1:1part3}, to prove \eqref{eq:ap1:lyapunovPart1} if suffices to show that \begin{equation} \lim_{R \rightarrow \infty}\lim_{k \rightarrow \infty} I_{2, 2}(\eps_k, R) = 0. \label{eq:ap1:lastthing} \end{equation}
Once again, we decompose:
\begin{align*}
    I_{2, 2}(\eps, R) 
    &= I_{2, 2, 1}(\eps, R) + I_{2, 2, 2}(\eps, R),
\end{align*}
where
\begin{align*}
    I_{2, 2, 1}(\eps, R) &:= \int_r^t \int_{\R^m} \nabla \mu_s^\eps \cdot \nabla \psi_R \log \mue_s dx ds, \\
    I_{2, 2, 2}(\eps, R) &:=  \int_r^t \int_{\R^m} f_s^\eps \cdot \nabla \psi_R ( g \mue_s + \mue_s \log \mue_s) dx ds.
\end{align*}
Since $\psi_R$ has compact support, we integrate by parts to obtain
\begin{align*}
    I_{2, 2, 1}(\eps_k, R) &= - \int_r^t \int_{\R^m} \mu_s^{\eps_k} \nabla \cdot( \nabla \psi_R \log \mu^{\eps_k}_s) dx ds  \\
    &= - \int_r^t \int_{\R^m} \Delta \psi_R \mu_s^{\eps_k} \log \mu^{\eps_k}_s dx ds - \int_r^t \int_{\R^m} \nabla \mu_s^{\eps_k} \cdot \nabla \psi_R  dx ds.
\end{align*}
Applying Theorem \ref{thm:ap1:wellPosed2}, \eqref{eq:ap1:lowerBd}, and \eqref{eq:ap1:gradientL1bound} yields
\begin{align*}
\lim_{k \rightarrow \infty}    I_{2, 2, 1}(\eps_k, R) = - \int_r^t \int_{\R^m} \Delta \psi_R \mu_s \log \mu_s dx ds - \int_r^t \int_{\R^m} \nabla \mu_s \cdot \nabla \psi_R  dx ds.
\end{align*}
Since $\mu \in H^{1, 1}(\R^m_T)$, and $\sup_{R > 0} (\|\nabla \psi_R\|_{L^\infty(\R^m)} + \|\nabla^2 \psi_R\|_{L^\infty(\R^m)}) < \infty$ we can apply the above display, \eqref{eq:ap1:stupidEntropy2}, and the dominated convergence theorem to obtain
\begin{equation*}
    \lim_{R \rightarrow \infty} \lim_{k \rightarrow \infty}    I_{2, 2, 1}(\eps_k, R) = 0.
\end{equation*}
 By the uniform bound \eqref{eq:ap1:upperBd} and \eqref{eq:ap1:lowerBd}, we can use the bounded convergence theorem to get
\begin{align*}
    \lim_{k \rightarrow \infty} I_{2, 2, 2}(\eps_k, R) &= \lim_{k \rightarrow \infty} \int_r^t \int_{\R^m} f_s^{\eps_k} \cdot \nabla \psi_R ( g \mu^{\eps_k}_s + \mu^{\eps_k}_s \log \mu^{\eps_k}_s) dx ds, \\
    &= \int_r^t \int_{\R^m} f_s \cdot \nabla \psi_R ( g \mu_s + \mu_s \log \mu_s) dx ds.
\end{align*}
Since $g$ is a smooth function such that $\nabla g$ has linear growth, we have $g(x) \leq C(1 + |x|^2)$. By the bound \eqref{eq:ap1:moments}, we have $g \mu \in L^1(\R_T^m)$. Next, we control $f_s \cdot \nabla \psi_R$ for $R > 1$. Since $\psi \in C_0^\infty(\R^m)$, for all $j \in \N$ we have that $\nabla^j \psi_R$ remains bounded and supported on $\{|x| \leq R\}$. Moreover, we have by \eqref{eq:ap1:psiR} and the definition of $\psi$ that
\begin{equation}
    \sup_{x \in \R^m}|\nabla^j \psi_R(x)| \leq C_{ \psi, j}R^{-j}.
    \label{eq:ap1:gradPsi}
\end{equation}
for some $C_{\psi, j} > 0$ independent of $R$. For all $|x| \geq R$, we have $f_s \cdot \nabla \psi_R = 0$ since $\nabla \psi_R$ is supported on $|x| < R$. If $|x| < R$, we have by \eqref{eq:ap1:linGrowthCondition} and \eqref{eq:ap1:gradPsi} that
\begin{equation}
    |f_s(x) \cdot \nabla \psi_R(x)| \leq C_f(1 + |x|) C_{\psi, 1} R^{-1} \leq C_{b}C_{\psi, 1}(1 + R)R^{-1}  \leq 2C_fC_{\psi, 1}.
        \label{eq:ap1:bPsiBound}
\end{equation}
Then by the above discussion, \eqref{eq:ap1:bPsiBound} and \eqref{eq:ap1:stupidEntropy2}, we can apply the dominated convergence theorem to obtain
\begin{equation*}
    \lim_{R \rightarrow\infty}\lim_{k \rightarrow \infty} I_{2, 2, 2}(\eps_k, R) = \lim_{R \rightarrow\infty}\int_r^t \int_{\R^m} f_s \cdot \nabla \psi_R ( g \mu_s + \mu_s \log \mu_s) dx ds = 0,
\end{equation*}
which proves \eqref{eq:ap1:lastthing} and hence, concludes the proof.
\end{proof}

We also include the following \emph{superposition principle} from \cite{trevisan2016superposition} which allows us to go between analytic and probabilistic representations of Fokker-Planck equations.

\begin{proposition}[Superposition principle] \label{prop:ap1:superposition}
Suppose $f:[0, T] \times \R^m \rightarrow \R^m$ satisfies a linear growth condition. Suppose $\mu_0 \in \mP(\R^m)$ satisfies
\begin{equation}
\label{eq:ap1:finiteFirstMoment}
    \int_{\R^m} |x|^2 \mu_0(dx) < \infty.
\end{equation}
Then, the trajectory $\{\mu_t\}_{t \in [0, T]}$ is a weak solution to \eqref{eq:ap1:formalPDE} in the sense of Definition \ref{def:ap1:wp} if and only if there is a weak solution $\tilde{\mu} \in \mP(C[0, T]; \R^m))$ to the following SDE
\begin{equation}
\begin{aligned}
    dX(t) &= - f\big(t, X(t)\big) dt + \sqrt{2}dW_t,
    X(0) &\sim \mu_0
\end{aligned}
\label{eq:ap1:SDE}
\end{equation}
such that $\mu_t = \tilde{\mu}_t$ for all $t \in [0, T]$.
\end{proposition}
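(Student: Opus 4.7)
The plan is to reduce the statement to the superposition principle of Trevisan (Theorem 2.5 of \cite{trevisan2016superposition}), treating each implication separately.

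\emph{Easy direction (SDE $\Rightarrow$ FPE).} Suppose $\tilde{\mu} \in \mP(C([0,T];\R^m))$ is the law of a weak solution $X$ to the SDE \eqref{eq:ap1:SDE}. First I would check the integrability needed to apply It\^o's formula: the linear growth bound \eqref{eq:ap1:linGrowthCondition} together with the $L^2$-moment propagation (a standard consequence of Gr\"onwall applied to the SDE and  \eqref{eq:ap1:finiteFirstMoment}) yields
\begin{equation*}
  \int_0^T \int_{\R^m} |f_t(x)|^2 \,\mu_t(dx)\, dt < \infty, \qquad \sup_{t \in [0,T]}\int_{\R^m} |x|^2 \,\mu_t(dx) < \infty.
\end{equation*}
Then for any admissible $\phi \in C^{2,1}(\R^m_T) \cap C(\R^m \times [0,T))$ with compact spatial support for each $t$, It\^o's formula applied to $\phi(s,X(s))$ and taking expectations (the martingale part vanishes by the integrability above) yields the weak formulation of Definition \ref{def:ap1:wp} with $\mu_t = \tilde{\mu}_t$.

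\emph{Hard direction (FPE $\Rightarrow$ SDE).} This is where I would invoke \cite{trevisan2016superposition}. The key hypothesis in Trevisan's theorem is the integrability
\begin{equation*}
  \int_0^T \int_{\R^m} \big( |f_t(x)| + 1 \big) \,\mu_t(dx)\, dt < \infty.
\end{equation*}
To verify this, note that by item (1) of Theorem \ref{thm:ap1:wellPosed2}, the weak solution $\{\mu_t\}$ to \eqref{eq:ap1:formalPDE} is a probability measure for each $t$ and satisfies the uniform second-moment bound \eqref{eq:ap1:moments}. Combined with the linear growth \eqref{eq:ap1:linGrowthCondition} on $f$, this yields
\begin{equation*}
  \int_0^T \int_{\R^m} |f_t(x)| \,\mu_t(dx)\, dt \leq C_f T \bigg(1 + \sup_{t \in [0,T]} \int_{\R^m}|x|\, \mu_t(dx)\bigg) < \infty.
\end{equation*}
Trevisan's theorem then produces a probability measure $\tilde{\mu} \in \mP(C([0,T];\R^m))$ that is a solution to the associated martingale problem with marginals $\tilde{\mu}_t = \mu_t$ for all $t \in [0,T]$. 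Finally, I would apply a standard equivalence (e.g.\ Proposition 4.11 in Chapter 5 of \cite{karatzas1991stochastic}) between martingale problem solutions with the generator $\Delta - f \cdot \nabla$ and weak solutions to the SDE \eqref{eq:ap1:SDE}, on a possibly enlarged filtered probability space supporting a Brownian motion.

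\emph{Main obstacle.} There is no genuine obstacle beyond careful verification of hypotheses: the substantive content is contained in \cite{trevisan2016superposition}. The one point requiring slight care is ensuring the integrability bound for the drift holds, which in turn relies on having the uniform second-moment estimate \eqref{eq:ap1:moments} for the Fokker--Planck solution --- but this is exactly what Theorem \ref{thm:ap1:wellPosed2}(1) provides under linear growth of $f$ and finite second moment of $\mu_0$. Thus the proof reduces to a short citation-style argument.
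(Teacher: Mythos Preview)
Your approach is essentially the same as the paper's: both directions are reduced to Trevisan's superposition principle together with the Karatzas--Shreve equivalence between martingale problems and weak SDE solutions. One minor citation issue: you invoke Theorem~\ref{thm:ap1:wellPosed2}(1) for the second-moment propagation, but that theorem's stated hypotheses include finite entropy of $\mu_0$, which is not assumed here; the paper instead cites Example~7.1.3 of \cite{fokkerPlanck} directly for the moment bound, which requires only the second-moment assumption \eqref{eq:ap1:finiteFirstMoment}. This is easily fixed and does not affect the argument.
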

\begin{proof}
    First, suppose $\{\mu_t\}_{t \in [0, T]}$ is a weak solution to \eqref{eq:ap1:formalPDE} in the sense of of Definition \ref{def:ap1:wp}. By Example 7.1.3 of \cite{fokkerPlanck}, the condition \eqref{eq:ap1:finiteFirstMoment} implies that \begin{equation}
        \sup_{t \in [0, T]} \int_{\R^m}|x|^2\mu_t(dx) < \infty.
    \end{equation}
    Then, (2.3) of \cite{trevisan2016superposition} is satisfied and $\{\mu_t\}_{t \in [0, T]}$ can be modified to a narrowly continuous weak solution to the Fokker-Planck equation in the sense of Definition 2.2 of \cite{trevisan2016superposition}. Therefore, by Theorem \cite{trevisan2016superposition}, there exists a solution to the associated martingale problem. By Proposition 5.4.11 of \cite{karatzas1991stochastic}, there exists a weak solution to \eqref{eq:ap1:SDE}. 

    Now, suppose $\tilde{\mu} \in \mP(C[0, T]; \R^m))$ is a weak solution to \eqref{eq:ap1:SDE}. By Proposition 5.4.11 of \cite{karatzas1991stochastic}, there exists a solution to the martingale problem in the sense of Definition 2.4 of \cite{trevisan2016superposition}, where (2.6) is verified by \eqref{eq:wp:finiteMoments} and (3.18) of Problem 5.3.15 in \cite{karatzas1991stochastic}. By the discussion above Theorem 2.5 of \cite{trevisan2016superposition}, $\{\tilde{\mu}_t\}_{t \in [0, T]}$ is a solution to \eqref{eq:ap1:formalPDE} in the sense of Definition \ref{def:ap1:wp}.
\end{proof}

\bibliographystyle{amsplain}
\bibliography{refs}

\end{document}